\title{The free boundary Schur process and applications I}
\author{Dan Betea\thanks{IRIF, CNRS et Universit\'e Paris Diderot, Case 7014, F--75205 Paris Cedex 13, \texttt{dan.betea@gmail.com}} \and
  J\'er\'emie Bouttier\thanks{Institut de Physique Th\'eorique, Universit\'e Paris-Saclay, CEA, CNRS, F-91191 Gif-sur-Yvette, \texttt{jeremie.bouttier@ipht.fr} \protect\\
    \indent \ \,
    Univ Lyon, Ens de Lyon, Univ Claude Bernard, CNRS, Laboratoire de Physique, F-69342 Lyon, France} \and
  Peter Nejjar\thanks{IST Austria, 3400 Klosterneuburg, Austria, \texttt{peter.nejjar@ist.ac.at}. Supported by ERC Advanced  Grant  No. 338804 \protect\\
    \indent \ \, and ERC Starting Grant No.
 716117} \and
  Mirjana Vuleti\'{c}\thanks{Department of Mathematics, University of Massachusetts Boston, Boston, MA 02125, USA, \texttt{mirjana.vuletic@umb.edu}}}
\newtheorem{thm}{Theorem}
\newtheorem{prop}[thm]{Proposition}
\newtheorem{cor}[thm]{Corollary}
\newtheorem{lem}[thm]{Lemma}
\theoremstyle{definition}
\theoremstyle{remark}
\newtheorem{rem}[thm]{Remark}
\numberwithin{equation}{section}
\numberwithin{thm}{section}
\DeclareMathAlphabet{\mathpzc}{OT1}{pzc}{m}{it}
\newcommand{\R}{\mathbb{R}}
\newcommand{\Z}{\mathbb{Z}}
\newcommand{\N}{\mathbb{N}}
\newcommand{\C}{\mathbb{C}}
\newcommand{\Gami}{\Gamma_{-}}
\newcommand{\Gapl}{\Gamma_{+}}
\newcommand{\Gatpl}{\Gamma'_{+}}
\newcommand{\Gatmi}{\Gamma'_{-}}
\newcommand{\es}{\emptyset}
\DeclareMathOperator*{\pf}{pf}
\newcommand{\Par}{\mathcal{P}}
\newcommand{\F}{\mathcal{F}}
\newcommand{\s}{\mathcal{S}}
\DeclareMathOperator{\Prob}{Prob}
\DeclareMathOperator{\Ad}{Ad}
\newcommand{\GOE}{\mathrm{GOE}}
\newcommand{\GUE}{\mathrm{GUE}}
\newcommand{\GSE}{\mathrm{GSE}}
\newcommand{\bra}[1]{\langle #1 |}
\newcommand{\ket}[1]{| #1 \rangle}
\newcommand{\vv}{| 0 \rangle}
\newcommand{\vcv}{\langle 0 |}
\newcommand{\vfv}{| \underline{v} \rangle}
\newcommand{\ufcv}{\langle \underline{u} |}
\newcommand{\uv}{\langle \underline{u} | \underline{v} \rangle}
\newcommand{\vtfv}{| \underline{v,t} \rangle}
\newcommand{\utfcv}{\langle \underline{u,t} |}
\newcommand{\utfvtf}{\langle \underline{u,t} | \underline{v,t} \rangle}
\newcommand{\oneerv}{| \underline{1}_{ec} \rangle}
\newcommand{\im }{\mathrm{i}}  % this is the imaginary unit
\newcommand{\sgn}{\mathrm{sgn}}
\newcommand{\dx }{\mathrm{d}}
\newcommand{\dilog}{\mathrm{Li}_2}
\newcommand{\di}{\mathrm{Li}_2}
\newcommand{\x}{\mathpzc{x}}
\newcommand{\y}{\mathpzc{y}}
\let\a\undefined
\newcommand{\a}{\mathpzc{a}}
\newcommand{\X}{\mathpzc{X}}
\newcommand{\Y}{\mathpzc{Y}}
\let\A\undefined
\newcommand{\A}{\mathpzc{A}}
\newcommand{\h}{\mathpzc{h}}
\let\i\undefined
\let\k\undefined
\newcommand{\i}{\mathsf{i}}   % NOT the imaginary unit; see above
\newcommand{\k}{\mathsf{k}}
\def\fs{\footnotesize}
\begin{document}

\maketitle

\begin{abstract}
  We investigate the free boundary Schur process, a variant of the
  Schur process introduced by Okounkov and Reshetikhin, where we allow
  the first and the last partitions to be arbitrary (instead of empty
  in the original setting).  The pfaffian Schur process, previously
  studied by several authors, is recovered when just one of the
  boundary partitions is left free.  We compute the correlation
  functions of the process in all generality via the free fermion
  formalism, which we extend with the thorough treatment of ``free
  boundary states''.  For the case of one free boundary, our approach
  yields a new proof that the process is pfaffian. For the case of two
  free boundaries, we find that the process is not pfaffian, but a
  closely related process is.  We also study three different
  applications of the Schur process with one free boundary:
  fluctuations of symmetrized last passage percolation models, limit
  shapes and processes for symmetric plane partitions, and for plane
  overpartitions.
\end{abstract}

\section{Introduction} \label{sec:intro}

In this paper we introduce and study the free boundary Schur process,
a random sequence of partitions which we now define. Recall that an
(integer) \emph{partition} $\lambda$ is a nonincreasing sequence of
integers $\lambda_1 \geq \lambda_2 \geq \cdots$ which vanishes
eventually. Its \emph{size} is $|\lambda|:=\sum_{i \geq 1} \lambda_i$.
For two partitions $\lambda,\mu$ such that $\lambda \supset \mu$
(i.e.\ $\lambda_i \geq \mu_i$ for all $i$), let $s_{\lambda/\mu}$ be
the skew Schur function of shape $\lambda/\mu$ --- see the beginning of
Section~\ref{sec:fbcorr} for a summary of the relevant notions. Let us
fix a nonnegative integer $N$, two nonnegative real numbers $u$ and
$v$, and two families $(\rho_k^+)_{1\leq k \leq N}$ and
$(\rho_k^-)_{1\leq k \leq N}$ of specializations (which we can think
of as collections of variables). To a sequence of partitions of the
form
\begin{equation}
  \label{eq:seq}
  \mu^{(0)} \subset \lambda^{(1)} \supset \mu^{(1)} \subset \cdots
  \supset \mu^{(N-1)} \subset \lambda^{(N)} \supset \mu^{(N)}
\end{equation}
we assign a weight
\begin{equation}
  \label{eq:fbschur}
  \mathcal{W}(\vec{\lambda},\vec{\mu}) := u^{|\mu^{(0)}|} v^{|\mu^{(N)}|}
  \prod_{k=1}^N \left(s_{\lambda^{(k)}/\mu^{(k-1)}}\left(\rho_k^+\right)
    s_{\lambda^{(k)}/\mu^{(k)}}\left(\rho_k^-\right) \right).
\end{equation}
The \emph{partition function}
$Z \equiv Z(u,v,\rho_1^+,\rho_1^-,\ldots,\rho_N^+,\rho_N^-)$ is the
sum of weights of all sequences of the form \eqref{eq:seq}.  Under
certain assumptions on the parameters
$u,v,\rho^\pm_1,\ldots,\rho^\pm_N$ to be detailed in
Section~\ref{sec:fbcorr}, the partition function is finite, and
$\mathcal{W}/Z$ defines a probability distribution which is the
\emph{free boundary Schur process}.

For $u=v=0$, we recover the original Schur process of Okounkov and
Reshetikhin \cite{or}, which is such that the boundary partitions
$\mu^{(0)}$ and $\mu^{(N)}$ are both equal to the empty (zero) partition
$\emptyset$. For $u>0$ and $v=0$, only $\mu^{(N)}$ is constrained to
be zero, and we recover the so-called pfaffian Schur process \cite{br}
up to the inessential change that, in this reference, $\mu^{(0)}$ is
assumed to be the conjugate of an even partition --- see Remark~\ref{rem:oc}
below. Of course, the case $u=0$ and $v>0$ is equivalent by
symmetry. The new situation considered in this paper is when $uv>0$,
i.e.\ when both boundaries are free. Note that the constant sequence
equal to $\lambda$ has weight $(uv)^{|\lambda|}$; therefore it is
necessary to have $uv<1$ for the partition function to be finite. Let
us mention that, if we condition on having $\mu^{(0)}=\mu^{(N)}$, then
we recover Borodin's periodic Schur process \cite{B2007cyl}.
Conversely, the free boundary Schur process of length $N$ can be seen
as a symmetrized version of the periodic Schur process of length $2N$.

The Schur process may be viewed as a simple point process on
$\mathbb{Z}^2$ --- see Section~\ref{sec:fbcorr} below. As
such, a natural question is to characterize the nature of this
process. Okounkov and Reshetikhin showed that the original Schur
process is determinantal \cite{or}, while Borodin and Rains proved that the
pfaffian Schur process is, well, pfaffian \cite{br} --- see Appendix \ref{sec:pfaffian} for the definition of pfaffian point processes. In this paper, we shall
see that the free boundary Schur process is not pfaffian in general,
but a closely related point process is, and we will explicitly compute its
correlation kernel. This situation is reminiscent of the periodic
Schur process, which becomes determinantal only after a certain
``shift-mixing'' \cite{B2007cyl}.

\paragraph{Context and motivations.}
The problems we consider in this paper are part of an active area of
research dubbed ``integrable probability'' --- see for instance the
exposition in \cite{bg}. A first major result in the area was the
resolution of Ulam's problem by Baik, Deift and Johansson \cite{bdj}
who have shown that the longest increasing subsequence of a random
permutation exhibits Tracy--Widom GUE fluctuations around its mean,
and thus behaves like the largest eigenvalue of a large Gaussian
Unitary Ensemble random matrix \cite{tw}. By the Robinson--Schensted
correspondence, Ulam's problem is closely related to the so-called
Plancherel measure on the set of partitions, and Okounkov \cite{oko}
realized that this measure is a particular instance of a \emph{Schur
  measure}, whose determinantal correlations can be computed
explicitly using the infinite wedge space (or free fermion)
formalism. Asymptotics then reduces to simple saddle point
analysis. Around the same time, a discrete version of the Plancherel
measure, that of last passage percolation (LPP) in a rectangle with
independent geometric weights, has been analyzed by Johansson
\cite{joh2} using Schur measures and orthogonal polynomial
techniques. Using asymptotics of Meixner polynomials, Johansson showed
that this model belongs to the same universality class and that, in
particular, the last passage time also fluctuates according to the
Tracy--Widom GUE law.

The story continues with a series of papers by Baik and Rains
\cite{rai, br99, br1, br2} studying longest increasing subsequences in
random permutations subject to certain symmetry constraints (e.g.,\
involutions). Upon poissonization the corresponding processes are
pfaffian instead of determinantal, and new distributions like the
Tracy--Widom GOE and GSE laws appear as fluctuations.

In parallel, Okounkov and Reshetikhin \cite{or} introduced a
time-dependent generalization of the Schur measure called the
\emph{Schur process}, using again the infinite wedge space formalism
to prove its determinantal nature, and applied their result to
analyze large plane partitions. As mentioned above, in the original
setting, the process is constrained to start and end with the empty
partition. Borodin and Rains \cite{br} developed another approach to
the Schur process via the Eynard--Mehta theorem; they treated
similarly the \emph{pfaffian Schur process}, which appeared implicitly
in an earlier work of Sasamoto and Imamura \cite{si}, and corresponds
in our language to having one free boundary and one empty boundary
(alternatively it can be viewed as a symmetrized Schur process upon
interpreting the free boundary as a reflection axis). In a different
direction, Borodin~\cite{B2007cyl} considered the Schur process with
periodic boundary conditions.

In this paper, we explore the ``missing'' type of boundary conditions,
namely that of two free boundaries. Our main technical tool will be,
as in \cite{or}, the infinite wedge space/free fermion formalism. Free
boundaries are represented in this formalism as \emph{free boundary
  states}, which were first introduced in \cite{bcc} in order to
compute the partition function of free boundary steep tilings (an
instance of free boundary Schur process to appear in Section \ref{sec:overpartitions}). Here, we proceed to the next level of computing correlation functions, which requires understanding
the interplay between free boundary states and fermionic
operators. The determinantal nature of the original Schur process with
empty boundary conditions results from Wick's theorem for free
fermions. As we shall see, the adaptation of this theorem for free
boundaries is not completely straightforward, and involves extended
free boundary states which are not eigenvectors of the charge
operator. A consequence of this is that the free boundary Schur
process is neither determinantal nor pfaffian in general, but becomes
pfaffian after we perform a certain random vertical shift of the point
configuration, that translates in the point process language the
``charge mixing'' occurring in extended free boundary states. This
phenomenon has some similarities with Borodin's shift-mixing for the
periodic Schur process~\cite{B2007cyl}, but the fermionic picture is
rather different: as explained in~\cite{BB18}, for
periodic boundary conditions, Borodin's shift-mixing can be interpreted as the
passage to the grand canonical ensemble, needed to apply Wick's
theorem at finite temperature. In the case of a single free boundary,
the shift goes away, and our approach yields a new derivation of the
correlations functions of the pfaffian Schur process, alternative to
that by Borodin and Rains~\cite{br} and the very recent one by
Ghosal~\cite{gho17} using Macdonald difference operators.

Among other recent developments related to the pfaffian Schur process,
let us mention the work by Baik, Barraquand, Corwin and Suidan
studying its applications to LPP in half-space~\cite{BBCS17} and
facilitated TASEP~\cite{bbcs17bis}, and the work of Barraquand, Borodin,
Corwin and Wheeler~\cite{bbcw17} introducing its Macdonald analogue.
Here, we further investigate applications of the pfaffian Schur
process by considering symmetric LPP thus complementing the results of
\cite{br1,br2,BBCS17}, as well as symmetric plane partitions and
plane overpartitions --- two models which can be rephrased in terms of
lozenge and domino tilings, respectively. The fact that dimer models
with free boundaries are related to pfaffians is not surprising.
This was already observed for instance in~\cite{ste,CK11} via
nonintersecting lattice paths. See also~\cite{DFR12,Panova15} for
other limit shape results on tilings with free
boundaries. Applications of the Schur process with two free boundaries
will be investigated in a subsequent publication.

\paragraph{Outline.}

The paper is organized as follows: in Section~\ref{sec:mainres} we
list the main results of the paper, only introducing the basic
concepts needed for the statements. It is divided in two parts:
Section~\ref{sec:fbcorr} leads to the two fundamental
Theorems~\ref{thm:onebound} and~\ref{thm:main_thm} stating that
certain point processes associated with the free boundary Schur
process have pfaffian correlations, while
Section~\ref{sec:applications} deals with listing the applications we
draw from the first. Section~\ref{sec:fermions} is devoted to the
proof of our two fundamental theorems via the machinery of free
fermions. We also obtain in Theorem~\ref{thm:rhonfold} an expression
for the general multipoint correlation
functions. Sections~\ref{sec:slpp}, \ref{sec:symm_pp} and
\ref{sec:overpartitions} deal with asymptotic applications of
Theorem~\ref{thm:onebound} to models of symmetric last passage
percolation, symmetric plane partitions and plane overpartitions
respectively. Section~\ref{sec:conc} gathers some concluding remarks
and perspectives. We list, in the Appendices, odds and ends we deemed
too cumbersome to put in the main text.

\paragraph{Note.} This paper is a slightly abridged version of the
preprint \cite{BBNV17}. An extended abstract was presented at
FPSAC2017 \cite{MR3678626}.

\section{Main results}
\label{sec:mainres}

\subsection{Correlation functions of the free boundary Schur process}
\label{sec:fbcorr}

\paragraph{Preliminaries on symmetric functions.} We start with some
definitions and notations that are needed to state our results in
compact form. We refer to \cite[Chapter 1]{mac} or \cite[Chapter
7]{sta} for general background. Let $\mathsf{Sym}$ be the algebra of
symmetric functions, and let $h_n$ (resp.\ $p_n$) be the complete
homogeneous (resp.\ power sum) symmetric function of degree $n$.  For
two partitions $\lambda \supset \mu$, the skew Schur function
$s_{\lambda / \mu}$ is given by
$s_{\lambda / \mu}:=\det_{1 \leq i,j \leq \ell(\lambda)}
h_{\lambda_i-i+\mu_j-j}$ where $\ell(\lambda):=\max\{i:\lambda_i>0\}$
is the length of $\lambda$, and the ordinary Schur function
$s_\lambda$ is obtained by taking $\mu=\emptyset$.

A \emph{specialization} $\rho$ is an algebra homomorphism from
$\mathsf{Sym}$ to the field $\C$ of complex numbers.  It is uniquely
determined by its values on the $h_n$'s (or equivalently the $p_n$'s),
hence by the generating function
\begin{equation}
  H(\rho;t) := \sum_{n \geq 0} h_n(\rho) t^n = \exp \left(
    \sum_{n \geq 1} \frac{p_n(\rho) t^n}{n} \right).
\end{equation}
As is customary, for a symmetric function $f \in \mathsf{Sym}$, we
will write $f(\rho)$ in lieu of $\rho(f)$. For $\rho,\rho'$ two
specializations and $s$ a complex number, we denote by
$\rho \cup \rho'$ and $s\rho$ the specializations defined by
\begin{equation}
  H(\rho \cup \rho';t) := H(\rho;t) H(\rho';t), \qquad
  H(s\rho;t) := H(\rho;st)
\end{equation}
or equivalently $p_n(\rho \cup \rho') := p_n(\rho)+p_n(\rho')$,
$p_n(s \rho) := s^n p_n(\rho)$ for $n \geq 1$.
Denoting by $\Par$ the set of all partitions, we also define the
(possibly infinite) quantities
\begin{equation}
  \label{eq:hdef}
  \begin{split}
    H(\rho;\rho') &:= \sum_{\lambda \in \Par} s_\lambda(\rho) s_\lambda(\rho')
    = \exp \left( \sum_{n \geq 1} \frac{p_n(\rho) p_n(\rho')}{n} \right), \\
    \tilde{H}(\rho) &:= \sum_{\lambda \in \Par} s_\lambda(\rho)
    = \exp \left( \sum_{n \geq 1} \left (\frac{p_{2n-1}(\rho)}{2n-1} +
        \frac{p_n(\rho)^2}{2n} \right) \right).
  \end{split}
\end{equation}
The definitions in terms of the $s_\lambda$'s or the $p_n$'s are
equivalent by virtue of the so-called Cauchy and Littlewood identities
\cite[Theorem~7.12.1 and Corollary~7.13.8]{sta}.
Note that the notation $H(\cdot;\cdot)$ is consistent: for $\rho'$ the
specialization in the single variable $t$, we have
$H(\rho;\rho')=H(\rho;t)$.  We also have the relations
\begin{align}
  H(\rho; \rho' \cup \rho'') = H(\rho; \rho') H(\rho; \rho''), \qquad 
  \tilde{H} (\rho \cup \rho') = \tilde{H}(\rho) \tilde{H}(\rho') H(\rho; \rho').
\end{align}

A specialization $\rho$ is said \emph{nonnegative} if
$s_{\lambda/\mu}(\rho)$ is a nonnegative real number for any
$\lambda,\mu$. In view of \eqref{eq:fbschur}, all specializations
$\rho^\pm_1,\ldots,\rho^\pm_N$ should be nonnegative in order for the
weight $\mathcal{W}(\vec{\lambda},\vec{\mu})$ to be nonnegative.  A
necessary and sufficient condition \cite{tho, ais} for $\rho$ to be
nonnegative is that its generating function be of the form
\begin{equation}
  \label{eq:rhoposparam}
  H(\rho;t) = e^{\gamma t} \prod_{i \geq 1} \frac{1+\beta_i t}{1-\alpha_i t}
\end{equation}
where
$\gamma,\alpha_1,\beta_1,\alpha_2,\beta_2,\ldots$ form a summable
collection of nonnegative real numbers (in particular, when
$\gamma=\beta_1=\beta_2=\cdots=0$, we recover the specialization in
the variables $\alpha_1,\alpha_2,\ldots$).

\paragraph{Partition function.}

The computation of the partition function of the general free boundary
Schur process was essentially carried out in \cite[Section~5.3]{bcc}. In
our current notation it is given as follows.

\begin{prop}
  \label{prop:fbz}
  The partition function of the free boundary Schur process reads
  \begin{equation}
    \label{eq:fbz}
    Z = \prod_{1 \leq k \leq \ell \leq N} H(\rho_k^+;\rho_\ell^-)
    \prod_{n \geq 1} \frac{\tilde{H}(u^{n-1}v^{n}\rho^+)
       \tilde{H}(u^{n}v^{n-1}\rho^-) H(u^{2n}\rho^+;v^{2n}\rho^-)}{1-u^n v^n}
  \end{equation}
  where
  \begin{equation}
    \label{eq:rhopmdef}
    \rho^\pm := \rho^\pm_1 \cup \rho^\pm_2 \cup \cdots \cup \rho^\pm_N.
  \end{equation}
\end{prop}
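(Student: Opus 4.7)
My plan is to use the free fermion (infinite wedge) formalism central to this paper, essentially following the argument of \cite[Section~5.3]{bcc}. The first step is to recast $Z$ as a Fock space matrix element. Introducing the standard vertex operators $\Gamma_\pm(\rho)$ with matrix elements $\langle\lambda|\Gamma_-(\rho)|\mu\rangle = \langle\mu|\Gamma_+(\rho)|\lambda\rangle = s_{\lambda/\mu}(\rho)$, together with the \emph{free boundary states} $|u\rangle := \sum_\mu u^{|\mu|}|\mu\rangle$ and $\langle v| := \sum_\nu v^{|\nu|}\langle\nu|$ absorbing the boundary factors $u^{|\mu^{(0)}|}v^{|\mu^{(N)}|}$, the sum defining the partition function telescopes into
\begin{equation*}
Z = \langle v|\,\Gamma_+(\rho_N^-)\Gamma_-(\rho_N^+)\cdots\Gamma_+(\rho_1^-)\Gamma_-(\rho_1^+)\,|u\rangle.
\end{equation*}

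Second, I would apply the Heisenberg commutation $\Gamma_+(\rho)\Gamma_-(\rho') = H(\rho;\rho')\Gamma_-(\rho')\Gamma_+(\rho)$ repeatedly to push all $\Gamma_-$ operators leftward past all $\Gamma_+$ operators. In the original ordering, $\Gamma_+(\rho_k^-)$ sits to the left of $\Gamma_-(\rho_\ell^+)$ precisely when $\ell\le k$; each such swap contributes $H(\rho_\ell^+;\rho_k^-)$, and after relabeling indices the accumulated factor is $\prod_{1\le k\le\ell\le N}H(\rho_k^+;\rho_\ell^-)$. Collapsing adjacent operators of the same type via $\Gamma_\pm(\rho)\Gamma_\pm(\rho') = \Gamma_\pm(\rho\cup\rho')$ then reduces the problem to
\begin{equation*}
Z = \prod_{1\le k\le\ell\le N} H(\rho_k^+;\rho_\ell^-)\cdot\langle v|\,\Gamma_-(\rho^+)\Gamma_+(\rho^-)\,|u\rangle.
\end{equation*}

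The heart of the proof is the evaluation of this remaining overlap via the \emph{reflection identities}
\begin{equation*}
\Gamma_+(\rho)\,|u\rangle = \tilde{H}(u\rho)\,\Gamma_-(u^2\rho)\,|u\rangle, \qquad \langle v|\,\Gamma_-(\rho) = \tilde{H}(v\rho)\,\langle v|\,\Gamma_+(v^2\rho).
\end{equation*}
I would establish each by expanding the free boundary state, using the homogeneity $s_{\mu/\nu}(u\rho) = u^{|\mu|-|\nu|}s_{\mu/\nu}(\rho)$ to produce a sum $\sum_{\mu\supset\nu}s_{\mu/\nu}(u\rho)$, applying the skew Littlewood identity $\sum_{\mu\supset\nu}s_{\mu/\nu}(\sigma) = \tilde{H}(\sigma)\sum_{\nu'\subset\nu}s_{\nu/\nu'}(\sigma)$, and recognising the remaining sum as $\Gamma_-(u^2\rho)|u\rangle$ (with a mirror argument on the $\langle v|$ side). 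Iterating the two reflections alternately, the $n$-th bounce contributes one $\tilde{H}$ factor at the current scale; splitting $\tilde{H}(\sigma\cup\sigma') = \tilde{H}(\sigma)\tilde{H}(\sigma')H(\sigma;\sigma')$ via \eqref{eq:hdef} and using the bilinear scaling $H(xa;yb) = H(xya;b)$ to unify the $H$-arguments, the $n$-th cycle then yields precisely $\tilde{H}(u^{n-1}v^n\rho^+)\,\tilde{H}(u^nv^{n-1}\rho^-)\,H(u^{2n}\rho^+;v^{2n}\rho^-)$. After each cycle the residual specialization is multiplied by $uv$, so under the hypothesis $uv<1$ the iteration converges; in the limit the residual operator trivialises and one is left with the bare overlap $\langle v|u\rangle = \sum_\lambda(uv)^{|\lambda|} = \prod_{n\ge 1}(1-u^nv^n)^{-1}$, producing the geometric denominator. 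Combining with the finite product from the second step yields \eqref{eq:fbz}. The main obstacle is deriving the reflection identity precisely and accurately tracking the rescaling of the $\rho^\pm$ across the iteration.
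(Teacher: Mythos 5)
Your proposal follows essentially the same ``ping-pong'' argument as the paper: rewrite $Z$ as a free-fermion matrix element between free boundary states, commute the $\Gamma_\pm$ to produce the Cauchy factor $\prod_{1\le k\le\ell\le N}H(\rho_k^+;\rho_\ell^-)$, iterate the reflection relations to peel off $\tilde H$ factors, and close with the bare overlap $\prod_{n\ge1}(1-(uv)^n)^{-1}$. The only caveat is that the triple $\tilde H(u^{n-1}v^n\rho^+)$, $\tilde H(u^n v^{n-1}\rho^-)$, $H(u^{2n}\rho^+;v^{2n}\rho^-)$ does not literally come out of ``the $n$-th cycle'' as a block; the factors emerge interleaved across successive bounces and must be regrouped at the end, a bookkeeping subtlety you already flag.
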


  For $u=v=0$, the second product in the right hand side of \eqref{eq:fbz} reduces
  to $1$ and we recover the partition function of the original Schur
  process. For $u=1$ and $v=0$, the only nontrivial factor in the
  second product is $\tilde{H}(\rho^-)$ and we recover the partition
  function of the pfaffian Schur process \cite[Proposition 3.2]{br},
  up to slightly different conventions.

\paragraph{Simplifying assumptions.} To ease the forthcoming
discussion, we shall assume from now on that $u,v \leq 1$ and $uv<1$,
that the specializations $\rho_k^\pm$ are nonnegative and that the
series $H(\rho_k;\cdot)$ are analytic and nonzero in some disk of
radius $R>1$ --- see the preprint \cite{BBNV17} for a discussion of
more general assumptions. Our assumptions imply that $Z$ is finite and
that the free boundary Schur process is a probability distribution.

\paragraph{Point process.}

Following \cite{or}, we define the \emph{point configuration}
associated with a sample $(\vec{\lambda},\vec{\mu})$ of the free
boundary Schur process as
\begin{equation}
  \label{eq:pointconfig}
  \mathfrak{S}(\vec{\lambda}) := \left\{
    \left(i,\lambda^{(i)}_j-j+\frac{1}{2}\right), 1 \leq i \leq N , j \geq 1
  \right\} \subset \Z \times \Z'
\end{equation}
where $\Z':=\Z+1/2$ (having half-integer ordinates makes formulas
slightly more symmetric). This is a simple point process on
$\Z \times \Z'$.
Note that there is no loss of generality in considering only the
partitions $\lambda^{(1)},\ldots,\lambda^{(N)}$ in the definition of
the point configuration, and this makes the forthcoming formulas more
compact. One may study the statistics of the $\mu$'s by considering an
auxiliary Schur process with increased length and zero specializations
inserted where appropriate (as
$s_{\lambda/\mu}(0)=\mathbbm{1}_{\lambda=\mu}$).

\paragraph{Correlations for one free boundary.}

Let us first discuss the previously known case of the pfaffian Schur
process \cite{br}, obtained for $u=0$. By homogeneity of the Schur
functions, we may assume $v=1$ without loss of generality.

\begin{thm} \label{thm:onebound} For $u=0$ and $v=1$,
  $\mathfrak{S}(\vec{\lambda})$ is a pfaffian point process (see
  Appendix~\ref{sec:pfaffian} for the definition) whose
  correlation kernel entries are given by
  \begin{equation} \label{eq:Kint}
    \begin{split}
      K_{1,1}(i, k; i', k') &= \frac{1}{(2\im \pi)^2}
      \oint_{|z|=r} \frac{\dx z}{z^{k+1}} \oint_{|w|=r'} \frac{\dx w}{w^{k'+1}}
      F(i, z) F(i', w) \kappa_{1,1}(z,w), \\
      K_{1,2}(i,k;i',k') &= - K_{2,1}(i',k';i,k)
       = \frac{1}{(2\im \pi)^2}
      \oint_{|z|=r} \frac{\dx z}{z^{k+1}} \oint_{|w|=r'} \frac{\dx w}{w^{-k'+1}}
      \frac{F(i, z)}{F(i', w)} \kappa_{1,2}(z,w), \\
      K_{2,2}(i,k;i',k') &= \frac{1}{(2\im \pi)^2}
      \oint_{|z|=r} \frac{\dx z}{z^{-k+1}} \oint_{|w|=r'} \frac{\dx w}{w^{-k'+1}}
      \frac{1}{F(i, z) F(i', w)} \kappa_{2,2}(z,w)
    \end{split}
  \end{equation}
  where the radii $r,r'$ are such that $1 < r' < r < R$ if $i \leq i'$
  and $1 < r < r' < R$ otherwise, and where
  \begin{equation} \label{eq:kappasimp}
    \begin{aligned}
      &F(i,z) = \frac{\prod_{1 \leq \ell \leq i} H(\rho_\ell^+;z)}{
        H(\rho^+;z^{-1}) \prod_{i \leq \ell \leq N} H(\rho_\ell^-;z^{-1})},\qquad &
      &\kappa_{1,1}(z,w) = \frac{(z-w)\sqrt{zw}}{(z+1)(w+1)(zw-1)}, \\
      &\kappa_{1,2}(z,w) = \frac{(zw-1)\sqrt{zw}}{(z+1)(w-1)(z-w)}, &
      &\kappa_{2,2}(z,w) = \frac{(z-w)\sqrt{zw}}{(z-1)(w-1)(zw-1)}.
    \end{aligned}
  \end{equation}
\end{thm}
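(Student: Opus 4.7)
The plan is to use the infinite wedge / free fermion formalism. Recall the fermionic Fock space with vacuum $\vv$, dual vacuum $\vcv$, fermion fields $\psi(z) = \sum_{k \in \Z'} \psi_k z^k$ and $\psi^*(w) = \sum_{k \in \Z'} \psi^*_k w^{-k}$, and the standard vertex operators $\Gamma_\pm(\rho)$ satisfying
\[
  \Gamma_+(\rho)\,\psi(z) = H(\rho;z)\,\psi(z)\,\Gamma_+(\rho), \qquad
  \Gamma_-(\rho)\,\psi^*(z) = H(\rho;z^{-1})\,\psi^*(z)\,\Gamma_-(\rho),
\]
together with the analogous inverse-$H$ relations for the other mixed products and the Cauchy commutation $\Gamma_+(\rho)\Gamma_-(\rho') = H(\rho;\rho')\Gamma_-(\rho')\Gamma_+(\rho)$. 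The key new ingredient, introduced at the level of partition functions in \cite{bcc}, is the \emph{free boundary state}: for $u=0,v=1$, the right boundary is represented by $\ket{F} := \sum_\lambda \ket{\lambda}$, which formally equals $e^{Q}\vv$ for an explicit $Q$ quadratic in the creation modes $\psi_{-j+1/2}\psi^*_{j'-1/2}$ with $j,j'>0$. In other words, $\ket{F}$ is a Bogoliubov vacuum. The left boundary collapses to $\vcv$ since $u=0$.

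First I would write the $n$-point correlation function of $\mathfrak{S}(\vec\lambda)$ as a fermionic matrix element: for each marked site $(i_\alpha,k_\alpha)$, insert $\psi(z_\alpha)\psi^*(w_\alpha)$ between the $k=i_\alpha$ and $k=i_\alpha+1$ blocks of $\vcv \prod_{k=1}^N \Gamma_+(\rho_k^+)\Gamma_-(\rho_k^-) \ket{F}$, and extract the occupancy indicator by contour integrals $\oint \dx z_\alpha / z_\alpha^{k_\alpha+1}$, $\oint \dx w_\alpha / w_\alpha^{-k_\alpha+1}$ on small circles. Commuting every $\Gamma_\pm$ past each fermion insertion using the identities above produces precisely the scalar factors $F(i_\alpha,z_\alpha)^{\pm 1}$ appearing in \eqref{eq:Kint}, while the remaining pure-vertex-operator matrix element combines with the partition function formula of Proposition~\ref{prop:fbz} to give exactly the needed normalization. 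One is left with a reduced correlator of the form $\vcv \phi_1 \phi_2 \cdots \phi_{2n} \ket{F}/\vcv F\rangle$, where each $\phi_\alpha$ is $\psi(z_\alpha)$ or $\psi^*(w_\alpha)$.

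The final step is a pfaffian form of Wick's theorem: since $\ket{F}$ is a Bogoliubov vacuum, every such reduced correlator equals the pfaffian of the antisymmetric matrix of two-point functions $\vcv \phi_\alpha \phi_\beta \ket{F}/\vcv F\rangle$. There are three nontrivial two-point functions, corresponding to the three pairs $\psi\psi$, $\psi\psi^*$, $\psi^*\psi^*$, and a direct calculation using the mode expansion and the explicit quadratic form $Q$ identifies them with $\kappa_{1,1}(z,w)$, $\kappa_{1,2}(z,w)$, $\kappa_{2,2}(z,w)$ respectively. The ordering conditions $1 < r' < r < R$ versus $1 < r < r' < R$ are the contour-level manifestation of whether $\psi(z)$ is to be placed to the left or the right of $\psi^*(w)$ in the operator product, and they simultaneously select the correct geometric series expansions for $(z\pm 1)^{-1}, (w\pm 1)^{-1}$ and $(zw-1)^{-1}$. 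The main obstacle is establishing the Bogoliubov/exponential form of $\ket{F}$ and computing the three pair correlators at the delicate value $v=1$, where $\sum_\lambda \ket{\lambda}$ is only formally convergent; this is rescued by the standing assumption that each $H(\rho_k^\pm;\cdot)$ is analytic and nonvanishing in a disk of radius $R>1$, which allows the contours to be taken on $|z|=r$ with $r \in (1,R)$ and guarantees convergence of all intermediate series. Once the three $\kappa_{i,j}$ are in hand, the pfaffian expression for the correlation functions assembles into the entries $K_{1,1}, K_{1,2}, K_{2,2}$ of \eqref{eq:Kint}, completing the proof.
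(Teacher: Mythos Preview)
Your outline is essentially the paper's own argument: write correlators fermionically, commute the $\Gamma_\pm$ past the $\psi,\psi^*$ to produce the $F(i,z)^{\pm1}$ factors, recognise the free boundary state as a Bogoliubov-type vacuum $e^{X}\vv$ so that a pfaffian Wick lemma applies (the paper proves this by conjugating $e^{-X}$ to the left and invoking ordinary Wick), and finally evaluate the three two-point functions $\vcv\psi\psi\ket{F}$, $\vcv\psi\psi^*\ket{F}$, $\vcv\psi^*\psi^*\ket{F}$ to obtain the $\kappa_{i,j}$.

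Two small corrections. First, $e^{Q}\vv$ is not literally $\ket{F}=\sum_\lambda\ket{\lambda}$: it is the \emph{extended} state summing over all even charges (Lemma~\ref{lem:vfexp} in the paper), and one recovers $\ket{F}$ only after projecting onto charge $0$; this is harmless here because the left boundary $\vcv$ already enforces charge $0$, but it is exactly the mechanism that fails for two free boundaries. Second, your quadratic $Q$ must be built from the \emph{creation} modes $\psi_k$ with $k>0$ and $\psi^*_k$ with $k<0$ (so that $e^{Q}\vv\neq\vv$); the indices you wrote, $\psi_{-j+1/2}\psi^*_{j'-1/2}$ with $j,j'>0$, are annihilation modes.
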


\begin{rem} \label{rem:contint} The double contour integrals
  in~\eqref{eq:Kint} correspond to extracting coefficients in certain
  bivariate Laurent series. Note that only integer powers are involved
  since the $\sqrt{zw}$ in $\kappa$'s is compensated by $k,k'$ being
  half-integers.  Intuitively speaking, in each factor
  $F(i,z)^{\pm 1}$, a $H(\cdot;z)$ should be thought as a series in
  $z$ and a $H(\cdot;z^{-1})$ as a series in $z^{-1}$ (and similarly
  for $w$), while the $\kappa(z,w)$ should be thought as bivariate
  series in $z^{-1}$ and $w^{-1}$. In $\kappa_{1,2}(z,w)$, the pole
  $1/(z-w)$ should be expanded as $\sum_{k \geq 0} w^k/z^{k+1}$ for
  $i \leq i'$, and as $-\sum_{k < 0} w^k/z^{k+1}$ otherwise.
\end{rem}

\begin{rem} \label{rem:oc} Our expressions do not quite match those of
  \cite[Theorem~3.3]{br} mainly because Borodin and Rains impose that
  the ``free boundary'' is a partition whose conjugate has even
  parts. This change is inessential, and it is possible to go from one
  convention to another by a simple change of the boundary
  specialization. Actually, one can \emph{interpolate} between the two
  conventions by multiplying the weight \eqref{eq:fbschur} by an extra
  factor $\alpha^{oc}$, where $oc$ denotes the number of odd columns
  of the Young diagram of $\mu^{(N)}$ and where $\alpha$ is a
  nonnegative parameter smaller than $R$. With this extra weighting,
  Theorem~\ref{thm:onebound} still holds provided that we take
  $r,r'>\alpha$ and modify the $\kappa$'s into
  \begin{equation}
    \label{eq:kapoc}
    \begin{aligned}
      \kappa_{1,1}(z,w) &= \frac{(z-\alpha) (w-\alpha)(z-w) \sqrt{zw} }{(z^2-1)
        (w^2-1) (zw-1)}, \qquad & \kappa_{1,2}(z,w) &= \frac{(zw-1)
        (z-\alpha)\sqrt{zw}}{(z^2-1) (w-\alpha)(z-w)}, \\ \kappa_{2,2}(z,w)
      &= \frac{(z-w)\sqrt{zw}}{(z-\alpha)(w-\alpha)(zw-1)}. & &
    \end{aligned}
  \end{equation}
  For $\alpha=1$ we get back \eqref{eq:kappasimp}, while for
  $\alpha=0$ we recover \cite[Theorem~3.3]{br} up to a simple change
  of variables.  See Section~\ref{sec:variations} for the derivation.
\end{rem}

\paragraph{Correlations for two free boundaries.}

We now turn to the general case of two free boundaries.  Similarly to
the periodic Schur process studied in \cite{B2007cyl}, the random
point process $\mathfrak{S}(\vec{\lambda})$ is \emph{neither}
determinantal nor pfaffian in general, but a modification of it
is. More precisely, let us fix an auxiliary real parameter $t$, and
consider a $\Z$-valued random variable $D_t$ independent of the Schur
process, with law
\begin{equation}
  \Prob(D_t=d) = \frac{t^{2d} (uv)^{2d^2}}{\theta_3(t^2;(uv)^4)}.
\end{equation}
Here the normalization factor involves the Jacobi theta function
$\theta_3(z;q):=\sum_{n \in \Z} q^{n^2/2} z^n$ --- see
Appendix~\ref{sec:theta}.  We then consider the shifted point
configuration
\begin{equation}
  \mathfrak{S}_t(\vec{\lambda}):=\mathfrak{S}(\vec{\lambda})+(0,2D_t),
\end{equation}
that is to say we move all points of $\mathfrak{S}(\vec{\lambda})$
vertically by a same shift $2D_t$. Note that, in contrast with the
periodic Schur process \cite{B2007cyl,BB18}, we have to shift the point
configuration by an \emph{even} integer. As we shall see, the origin
of this shift in the free fermion formalism is rather different. 

\begin{thm} \label{thm:main_thm} The point process
  $\mathfrak{S}_t(\vec{\lambda})$ is pfaffian, and the entries of its
  correlation kernel still have the form \eqref{eq:Kint}, with the
  radii $r,r'$ now such that $\max(v,R^{-1}) < r,r' < \min(R,u^{-1})$,
  $r'<r$ if $i \leq i'$ and $r < r'$ otherwise, and with $F$ and
  $\kappa$ now given by
  \begin{equation}
    \label{eq:Fkap2b}
    \begin{split}
            F(i,z) &= \frac{\prod_{1 \leq \ell \leq i} H(\rho_\ell^+;z)}{
       \prod_{i \leq \ell \leq N} H(\rho_\ell^-;z^{-1})} \cdot 
     \prod_{n \geq 1} \frac{H(u^{2n} v^{2n-2} \rho^-;  z) H(u^{2n} v^{2n} \rho^+; z)}{
       H(u^{2n-2} v^{2n} \rho^+; z^{-1}) H(u^{2n} v^{2n} \rho^-; z^{-1})},\\
      \kappa_{1,1}(z,w) &= \frac{v^2 }{t z^{1/2} w^{3/2}} \cdot \frac{((uv)^2; (uv)^2)^2_{\infty}}  {(uz, uw, - \frac{v}{z}, - \frac{v}{w}; uv)_{\infty}} \cdot  \frac{\theta_{(uv)^2}(\frac{w}{z})}{\theta_{(uv)^2} (u^2 zw)} \cdot \frac{\theta_3 \left( ( \frac{t zw}{v^2})^2 ; (uv)^4 \right)}{\theta_3(t^2;(uv)^4)}, \\
      \kappa_{1,2}(z,w) &= \frac{w^{1/2}}{z^{1/2}} \cdot \frac{((uv)^2; (uv)^2)^2_{\infty}}{(uz, -uw,- \frac{v}{z}, \frac{v}{w}; uv)_{\infty}} \cdot \frac{\theta_{(uv)^2}(u^2 zw)}{\theta_{(uv)^2} (\frac{w}{z})} \cdot \frac{\theta_3 \left( ( \frac{t z}{w})^2 ; (uv)^4 \right)}{\theta_3(t^2;(uv)^4)}, \\
      \kappa_{2,2}(z,w) &= \frac{t v^2}{z^{1/2} w^{3/2}} \cdot \frac{((uv)^2; (uv)^2)^2_{\infty}}{(-uz, -uw,\frac{v}{z}, \frac{v}{w}; uv)_{\infty}} \cdot \frac{\theta_{(uv)^2}(\frac{w}{z})}{\theta_{(uv)^2} (u^2 zw)} \cdot \frac{\theta_3 \left( ( \frac{t v^2}{zw})^2 ; (uv)^4 \right)}{\theta_3(t^2;(uv)^4)}\\
    \end{split}
  \end{equation}
  where
  $(a_1,\ldots,a_m;q)_\infty:=\prod_{k=0}^\infty(1-a_1
  q^k)\cdots(1-a_m q^k)$
  is the infinite $q$-Pochhammer symbol with multiple arguments, and
  $\theta_q(z):=(z;q)_\infty(q/z;q)_\infty$ is the ``multiplicative''
  theta function --- see Appendix~\ref{sec:theta}. 
\end{thm}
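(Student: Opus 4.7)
The plan is to prove Theorem \ref{thm:main_thm} via the free fermion/infinite wedge formalism (to be developed in Section \ref{sec:fermions}), extending the one-boundary machinery used for Theorem \ref{thm:onebound}. First, encode the free boundary Schur process as a matrix element of a product of vertex operators $\Gamma_\pm(\rho_k^\pm)$ sandwiched between free boundary states $\langle B_v |$ and $| B_u \rangle$, where the boundary states are the ones introduced in \cite{bcc} to handle the unconstrained partitions $\mu^{(0)}, \mu^{(N)}$ weighted by $u^{|\mu^{(0)}|}, v^{|\mu^{(N)}|}$. Insert pairs of fermion operators $\psi(z_i) \psi^*(w_i)$ at the appropriate locations to express $n$-point correlation functions of $\mathfrak{S}(\vec\lambda)$ as such matrix elements.

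Unlike the vacuum $| 0 \rangle$ used for the original Schur process, the boundary state $| B_u \rangle$ is \emph{not} an eigenvector of the charge operator: it decomposes as a direct sum over all charge sectors, and the allowed charges pairing nontrivially with $\langle B_v |$ are those of the same parity. This obstructs any naive Wick theorem and is precisely the reason why $\mathfrak{S}(\vec\lambda)$ fails to be pfaffian. The remedy is to twist the boundary states with the formal parameter $t$, inserting a weight $t^{2d}(uv)^{2d^2}$ on the charge-$2d$ component; the resulting extended boundary states produce correlation functions for the randomly shifted process $\mathfrak{S}_t(\vec\lambda)$. On these extended states one establishes a pfaffian Wick theorem: because both $\psi \psi$ and $\psi^* \psi^*$ pairs can be absorbed by the boundary states (which act as ``free fermion condensates''), all three types of two-point contractions contribute, yielding pfaffian rather than determinantal statistics. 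This fixes the general form \eqref{eq:Kint}.

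To identify the kernel entries explicitly, use the standard commutation relations $\Gamma_+(\rho)\psi(z) = H(\rho;z)\psi(z)\Gamma_+(\rho)$ and their variants to move the vertex operators $\Gamma_\pm(\rho_k^\pm)$ past the fermion operators; this produces the Cauchy-type factors $H(\rho_\ell^+; z)$ and $H(\rho_\ell^-; z^{-1})$ appearing in the ``finite'' part of $F(i,z)$ in \eqref{eq:Fkap2b}. The infinite product correction, with shifted arguments $u^{2n}v^{2n-2}\rho^-$, $u^{2n}v^{2n}\rho^+$, and so on, comes from the internal structure of $| B_u \rangle$ and $\langle B_v |$: each free boundary state can be written as an infinite product of $\Gamma$-operators applied to a simpler ``bare'' boundary state, and moving the fermions through this infinite tower produces the products over $n \geq 1$. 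What remains are three two-point functions computed on the bare states, which yield the $q$-Pochhammer and elliptic $\theta_{(uv)^2}$ factors; the ratios $\theta_3(\cdot ; (uv)^4)/\theta_3(t^2; (uv)^4)$ then arise directly as $\sum_d t^{2d}(uv)^{2d^2}(\cdot)^{2d}$ summed over charge sectors, the $d$-dependence of each two-point function being $(zw/v^2)^{2d}$, $(z/w)^{2d}$ and $(v^2/(zw))^{2d}$ for $\kappa_{1,1}, \kappa_{1,2}, \kappa_{2,2}$ respectively, which matches precisely the arguments of the theta functions in \eqref{eq:Fkap2b}.

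The main technical obstacle is the rigorous formulation and proof of the pfaffian Wick theorem on the extended free boundary states, since unlike the vacuum these states act nontrivially on both $\psi$ and $\psi^*$ modes; this step is essentially absent in the one-boundary setting of Theorem \ref{thm:onebound}, where the right boundary is the vacuum and only standard $\psi\psi^*$ pairings survive. A secondary difficulty is the analytic control of the infinite product structure of $| B_u \rangle$ and $\langle B_v |$ together with the ordering of contour deformations --- this is where the radii constraints $\max(v, R^{-1}) < r,r' < \min(R, u^{-1})$ emerge, together with the prescription $r' < r$ versus $r < r'$ governed by the sign of $i' - i$. Once this fermionic infrastructure is laid down, the explicit formulas \eqref{eq:Fkap2b} follow by a direct, if lengthy, computation.
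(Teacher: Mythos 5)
Your proposal matches the paper's proof strategy point for point: the same extended free boundary states $\vtfv=e^{X(v,t)}\vv$ carrying an even-charge weighting $t^{2d}(uv)^{2d^2}$, the same recognition that correlations computed on these twisted states are exactly those of the shifted process $\mathfrak{S}_t(\vec\lambda)$, the same appeal to a pfaffian Wick lemma on the extended states, and the same explicit propagator computation via the boson--fermion correspondence combined with iterated reflection relations that generate the infinite products in $F(i,z)$ and the theta-function ratios in the $\kappa$'s. The one step you leave open --- the pfaffian Wick lemma --- is indeed the technical heart, and for the record the paper proves it by a fermionic ping-pong: decompose $\varPsi=\varPsi^+\oplus\varPsi^-$, use $\utfcv\phi^-=\utfcv[X(u,t)^*,\phi^-]$ and $\phi^+\vtfv=[\phi^+,X(v,t)]\vtfv$ to bounce a single fermionic operator back and forth between the two boundary states, collecting scalar anticommutators with each pass, and observe that the series terminates in the limit because each bounce acquires an extra factor of $u$ or $v$.
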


Several remarks are now in order:
\begin{enumerate}
\item We recover of course Theorem~\ref{thm:onebound} for $u=0$ and
  $v=1$, as $D_t=0$ hence
  $\mathfrak{S}(\vec{\lambda})=\mathfrak{S}_t(\vec{\lambda})$.
\item Remark~\ref{rem:contint} still provides some ``intuition''
  regarding the choice of contours: they should encircle certain poles
  of the integrands and not others, in order to pick the appropriate
  expansions of $H(\cdot;z)$ and $H(\cdot;z^{-1})$.
  The main complication lies in the kernels $\kappa(z,w)$: they
  actually describe the free boundary Schur process of length $N=0$,
  for which $F \equiv 1$, and which is nothing but a single random
  partition drawn according to the $(uv)^{\mathrm{size}}$
  measure.
  See also~\cite[Corollary~2.6]{B2007cyl} for a related
  observation.

\item \label{rem:kappole} As in the case of one free boundary,
  $\kappa_{1,1}(z,w)$ and $\kappa_{2,2}(z,w)$ have a simple zero at
  $z=w$ while $\kappa_{1,2}(z,w)$ has a simple pole, due to the
  $\theta_{(uv)^2}(w/z)$ factor appearing in the numerator or
  denominator. Note that, because of the constraints on $r$ and $r'$,
  we cannot hit any other zero of the factors $\theta_{(uv)^2}(w/z)$
  and $\theta_{(uv)^2}(u^2 z w)$, hence no other pole of
  $\kappa(z,w)$.
    
\item The fact that we have an arbitrary parameter $t$ at our disposal
  allows in principle to return to the correlation functions for the unshifted
  point process $\mathfrak{S}(\vec{\lambda})$.
  Actually, it is possible to
  obtain an explicit expression for the $n$-point correlation functions
  of both $\mathfrak{S}_t(\vec{\lambda})$ and
  $\mathfrak{S}(\vec{\lambda})$ in the form of a $2n$-fold contour
  integral --- see Theorem~\ref{thm:rhonfold}.
\end{enumerate}

\subsection{Applications}
\label{sec:applications}

We now present some applications of Theorem~\ref{thm:onebound} to last
passage percolation, symmetric plane partitions and plane
overpartitions. We plan to present applications of
Theorem~\ref{thm:main_thm} in a subsequent paper.

In our applications, all $\rho_k^-$ are equal to the zero
specialization. The weight \eqref{eq:fbschur} is then nonzero only for
sequences \eqref{eq:seq} such that $\lambda^{(k)}=\mu^{(k)}$ for all
$k$, which can be seen more simply as \emph{ascending} sequences of
partitions
$\emptyset \subset \lambda^{(1)} \subset \cdots \subset
\lambda^{(N)}$.
Furthermore, each specialization $\rho_k^+$ will be either a
specialization in a single variable $x_k$ (i.e.\
$H(\rho_k^+;z)=(1-x_k z)^{-1}$) or its ``dual''
($H(\rho_k^+;z)=1+x_k z$). Recall that, for a single variable $x$, we
have
$s_{\lambda/\mu}(x)=x^{|\lambda|-|\mu|} \mathbbm{1}_{\lambda \succ
  \mu}$
where the notation $\lambda \succ \mu$ means that the skew shape
$\lambda/\mu$ is a \emph{horizontal strip} (i.e.\
$\lambda_1 \geq \mu_1 \geq \lambda_2 \geq \mu_2 \geq \cdots$).
Similarly, for the dual specialization $\bar{x}$, we have
$s_{\lambda/\mu}(\bar{x})=s_{\lambda'/\mu'}(x)=x^{|\lambda|-|\mu|}
\mathbbm{1}_{\lambda \succ' \mu}$
where the notation $\lambda \succ' \mu$ means that the skew shape
$\lambda/\mu$ is a \emph{vertical strip} (i.e.\
$\lambda_i-\mu_i \in \{0,1\}$ for all $i$).

The \emph{$H$-ascending Schur process} consists in taking only
specializations in single variables. In that case, we obtain a measure
over sequences of the form
\begin{equation}
  \es \prec \lambda^{(1)} \prec \lambda^{(2)} \prec \cdots \prec \lambda^{(N-1)} \prec \lambda^{(N)}.
\end{equation}
The \emph{$HV$-ascending Schur process} consists in taking
alternatively a specialization in a single variable or a dual variable, to
get a measure over sequences
\begin{equation}
  \es \prec \lambda^{(1)} \prec' \lambda^{(2)} \prec \cdots \prec \lambda^{(2M-1)} \prec' \lambda^{(2M)}, \qquad N=2M.
\end{equation}
In both cases, the unnormalized weight of a sequence will be
\begin{equation}
  x_1^{|\lambda^{(1)}|} x_2^{|\lambda^{(2)}|-|\lambda^{(1)}|} \cdots 
  x_N^{|\lambda^{(N)}|-|\lambda^{(N-1)}|},
\end{equation}
possibly with the extra weighting $\alpha^{oc}$ of
Remark~\ref{rem:oc}. For convenience, we state the following:
\begin{prop}
  \label{prop:H-HV}
  For the $H$- and $HV$-ascending Schur processes, the function $F$
  appearing in Theorem~\ref{thm:onebound} reads respectively
  \begin{equation}
    F_H(i,z) = \frac{\prod_{k=1}^N (1-x_k/z)}{\prod_{k=1}^i (1-x_k z)}, \qquad
    F_{HV}(i,z) = \frac{\prod_{k=1}^{\lfloor i/2 \rfloor} (1+x_{2k} z)}{
      \prod_{k=1}^{\lceil i/2 \rceil} (1-x_{2k-1} z)}
    \prod_{k=1}^M \frac{(1-x_{2k-1}/z)}{(1+x_{2k}/z)}.
  \end{equation}
\end{prop}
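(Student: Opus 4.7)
The proposition is essentially a direct unpacking of the definition of $F$ given in Theorem~\ref{thm:onebound}, so the plan is to substitute the relevant specializations and carefully rearrange the products. First, I observe that under the assumption that every $\rho_k^-$ is the zero specialization, we have $H(\rho_\ell^-;z^{-1})=1$ for all $\ell$, and the denominator of $F(i,z)$ reduces to $H(\rho^+;z^{-1})=\prod_{\ell=1}^N H(\rho_\ell^+;z^{-1})$. Thus
\begin{equation*}
F(i,z)=\frac{\prod_{1\le\ell\le i} H(\rho_\ell^+;z)}{\prod_{1\le\ell\le N} H(\rho_\ell^+;z^{-1})}.
\end{equation*}

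For the $H$-ascending case, each $\rho_k^+$ is the specialization in the single variable $x_k$, giving $H(\rho_k^+;t)=(1-x_k t)^{-1}$ from the parametrization~\eqref{eq:rhoposparam} with $\gamma=\beta_i=0$ and $\alpha_1=x_k$. Substituting $t=z$ for $1\le k\le i$ in the numerator and $t=z^{-1}$ for $1\le k\le N$ in the denominator, and inverting, immediately yields the stated formula for $F_H(i,z)$.

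For the $HV$-ascending case, I split the product over $\ell\in\{1,\ldots,i\}$ into odd indices $\ell=2k-1$ (contributing $1\le k\le\lceil i/2\rceil$) and even indices $\ell=2k$ (contributing $1\le k\le\lfloor i/2\rfloor$). Using $H(x_{2k-1};t)=(1-x_{2k-1}t)^{-1}$ for the odd factors and $H(\bar x_{2k};t)=1+x_{2k}t$ for the even ones (the latter coming from \eqref{eq:rhoposparam} with $\gamma=\alpha_i=0$, $\beta_1=x_{2k}$), I obtain
\begin{equation*}
\prod_{1\le\ell\le i} H(\rho_\ell^+;z)=\frac{\prod_{k=1}^{\lfloor i/2\rfloor}(1+x_{2k}z)}{\prod_{k=1}^{\lceil i/2\rceil}(1-x_{2k-1}z)}.
\end{equation*}
The denominator $H(\rho^+;z^{-1})$, split similarly with $i$ replaced by $N=2M$ (so ceiling and floor both equal $M$), becomes $\prod_{k=1}^M(1+x_{2k}/z)\big/\prod_{k=1}^M(1-x_{2k-1}/z)$. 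Dividing the two expressions and rearranging gives the stated formula for $F_{HV}(i,z)$.

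There is no real obstacle here beyond bookkeeping; the only delicate point is getting the ranges $\lceil i/2\rceil$ and $\lfloor i/2\rfloor$ correct when $i$ is odd, which is why I explicitly count how many odd and even integers lie in $\{1,\ldots,i\}$ before assembling the product. Everything else follows from the multiplicativity of $H(\cdot;t)$ under union of specializations together with the two elementary evaluations $H(x;t)=(1-xt)^{-1}$ and $H(\bar x;t)=1+xt$.
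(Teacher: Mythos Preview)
Your proof is correct and is precisely the intended argument: the paper does not give a separate proof of this proposition, treating it as an immediate specialization of the formula for $F(i,z)$ in Theorem~\ref{thm:onebound} once the $\rho_k^-$ are set to zero and the $\rho_k^+$ are taken to be single or dual single-variable specializations. Your bookkeeping of the odd/even split and the $\lceil i/2\rceil$, $\lfloor i/2\rfloor$ ranges is exactly what is needed.
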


In the next three subsections we describe the main results stated and proved in the applications parts of the paper: Sections~\ref{sec:slpp}, \ref{sec:symm_pp} and \ref{sec:overpartitions}.

\subsubsection{Symmetric last passage percolation}

The \emph{last passage percolation (LPP) time} through a symmetric $n \times n$ nonnegative (integer or real) valued matrix $\omega$ is the maximum sum one can collect over all up-right paths going through the matrix from the bottom left entry to the upper right entry. We note our matrices, if embedded in the plane, are symmetric around the $x=y$ diagonal. See Figure~\ref{fig:lpp_matrix} for an example.

\begin{SCfigure}%[!ht]
%\begin{center}
\begin{tikzpicture}[scale = 0.6]
\foreach \x in {0,...,6} {
  \draw[thick, smooth] (\x, 0)--(\x, 6);
  \draw[thick, smooth] (0, \x)--(6, \x);
} 
\node at (0.5,0.5) {$8$};
\node at (0.5,1.5) {$5$};
\node at (0.5,2.5) {$0$};
\node at (0.5,3.5) {$1$};
\node at (0.5,4.5) {$1$};
\node at (0.5,5.5) {$1$};
\node at (1.5,1.5) {$2$};
\node at (1.5,2.5) {$9$};
\node at (1.5,3.5) {$9$};
\node at (1.5,4.5) {$3$};
\node at (1.5,5.5) {$2$};
\node at (2.5,2.5) {$2$};
\node at (2.5,3.5) {$2$};
\node at (2.5,4.5) {$11$};
\node at (2.5,5.5) {$1$};
\node at (3.5,3.5) {$1$};
\node at (3.5,4.5) {$10$};
\node at (3.5,5.5) {$0$};
\node at (4.5,4.5) {$11$};
\node at (4.5,5.5) {$0$};
\node at (5.5,5.5) {$1$};
\draw[thick, smooth, red] (0.5, 0.5)--(0.5, 1.5)--(1.5, 1.5)--(1.5, 4.5)--(4.5, 4.5)--(4.5, 5.5)--(5.5, 5.5);
\end{tikzpicture}
%\end{center}
\caption{\fs{An example of a $6 \times 6$ symmetric matrix $\omega$ (only diagonal and above-diagonal elements shown) filled with nonnegative integers, and the last passage percolation time of 69 (the sum of the elements on the red path).}}
\label{fig:lpp_matrix}
\end{SCfigure}
  
In the present work we consider the LPP  time with symmetric, and (up to symmetry) independent geometric weights $\{\omega_{r,t}\}_{r,t \in \Z_{\geq 1}}.$  These weights (i.e. random variables) are given by  
\begin{equation}\label{symweightsintro}
  \omega_{r,t}=\omega_{t,r}\sim
\begin{cases}\vspace{0.3cm}
g(a_r a_t), \quad & \mathrm{if}\quad r\neq t, \\
g(\alpha a_r), \quad& \mathrm{if}\quad r=t
\end{cases}
\end{equation}
where $a_{n},\alpha a_{n}\in(0,1),n\geq 1,$ and $\Prob(g(q)=k)=q^{k}(1-q)$ for $k\in \Z_{\geq 0}$.
For $(n_1,n_2), (r,t) \in \Z^{2}_{\geq 1}$ with $n_1\leq r, n_2\leq t$, consider up-right paths $\pi$ from $(n_1,n_2)$ to $(r,t)$, i.e. $\pi =(\pi(0), \pi(1),\ldots, \pi(r-n_1+t-n_2))$ with $\pi(0)=(n_1,n_2), \pi(r-n_1+t-n_2)=(r,t)$ and $\pi(i)-\pi(i-1)\in \{(0,1),(1,0)\}$. The symmetric LPP time with geometric weights \eqref{symweightsintro} is then defined to be
\begin{equation}\label{symLPPintro}
L_{(n_1,n_2)\to (r,t)}:= \max_{\pi :(n_1,n_2)\to (r,t)}\sum_{(m,n)\in \pi} \omega_{m,n}.
\end{equation}

Under the RSK bijection, LPP times become the largest part of integer partitions --- see Figure~\ref{fig:lpp_partition} for a simulation. When considering $L_{(n_1,n_2)\to(r_l,t_l)},l=1,\ldots,k$ with $(r_l,t_l)$ lying on a down-right path, these partitions form a Schur process with one free boundary, which is H-ascending for  $(r_l,t_l)$ lying on a horizontal line --- see Section \ref{Proofs} for more details. Consequently, the event $\cap_{l=1}^{k}\{L_{(1,1)\to(r_l,t_l)}\leq s_l\}$ becomes the event that a point configuration \eqref{eq:pointconfig} has no points in a set $B$. Such gap probabilities are given by Fredholm pfaffians since we have a pfaffian point process by Theorem  \ref{thm:onebound} --- see Appendix \ref{sec:pfaffian} for the definition of Fredholm pfaffians. This leads to the following theorem, which will be proven as Theorem \ref{LPPmulti} in Section \ref{Proofs}.
\begin{thm} \label{LPPmultisec2} 
  Consider the LPP time \eqref{symLPPintro} with weights \eqref{symweightsintro}.
Let  $r_l,t_l\in \Z_{\geq 1},r_l \leq t_l,l=1,\ldots,k$ with  $r_1 \leq \ldots \leq r_k , t_1 \geq \ldots \geq t_k .$  Then
\begin{align}\label{labelneeded2}
\Prob\left(\bigcap_{l=1}^{k}\{L_{(1,1)\to (r_l,t_l) }\leq s_l \} \right)=\pf(J-K)_{B},
\end{align}
where $K,B$ are given in Theorem \ref{LPPmulti}.
\end{thm}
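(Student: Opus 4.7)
The plan is to interpret the joint law of the LPP times
$(L_{(1,1)\to(r_l,t_l)})_{l=1}^{k}$ as the marginal of an
$H$-ascending free boundary Schur process, and then to recognize the event on
the left-hand side of \eqref{labelneeded2} as a gap probability to which
Theorem~\ref{thm:onebound} applies directly.

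To do so, I would invoke symmetric RSK, which sends a symmetric
nonnegative-integer matrix $\omega$ to a single semistandard Young tableau
whose shape's largest part equals the LPP time across $\omega$ (symmetric
Greene's theorem). Tracking the RSK shapes obtained by truncating $\omega$
to successive corners $(r_l,t_l)$ along the down-right path produces a
sequence of partitions whose joint distribution, by the symmetric Cauchy
identity for off-diagonal weights and the Littlewood identity for diagonal
ones, is an $H$-ascending free boundary Schur process of length $k$ in the
sense of Proposition~\ref{prop:H-HV}, with $u=0$, $v=1$, $\rho_l^- = 0$,
and $\rho_l^+$ a single-variable specialization built from the parameters
$a_m$ traversed between consecutive corners. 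The diagonal weights
$\omega_{r,r} \sim g(\alpha a_r)$ contribute the extra $\alpha^{oc}$ factor
of Remark~\ref{rem:oc}, where $oc$ counts odd columns of the terminal
partition.

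Under this identification, the constraint
$L_{(1,1)\to(r_l,t_l)} \leq s_l$ becomes $\lambda^{(l)}_1 \leq s_l$, which
via the definition \eqref{eq:pointconfig} is the same as saying that
$\mathfrak{S}(\vec{\lambda})$ has no point on the ray
$\{l\} \times \{s_l+\tfrac12, s_l+\tfrac32, \ldots\}$. The joint event on
the left of \eqref{labelneeded2} is therefore the gap probability
$\Prob(\mathfrak{S}(\vec{\lambda}) \cap B = \es)$ for
$B := \bigcup_{l=1}^{k} \{l\} \times \{s_l+\tfrac12, s_l+\tfrac32, \ldots\}$.
By Theorem~\ref{thm:onebound} (applied with the $\alpha$-dependent kernel
\eqref{eq:kapoc}), this process is pfaffian with an explicit kernel $K$
assembled from the specializations above, and the standard fact that gap
probabilities of pfaffian point processes are Fredholm pfaffians
(Appendix~\ref{sec:pfaffian}) immediately yields
$\Prob(\mathfrak{S}(\vec{\lambda}) \cap B = \es) = \pf(J-K)_B$, as claimed.

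The main obstacle is really the bookkeeping of the identification step: one
must verify that the joint distribution of the RSK shapes at the corners
$(r_l,t_l)$, for a general down-right path and not just a horizontal line as
in the line preceding Proposition~\ref{prop:H-HV}, matches exactly an
$H$-ascending free boundary Schur process with the claimed specializations,
so that the off-diagonal Cauchy factor $\prod_{r<t}(1-a_r a_t)^{-1}$, the
diagonal Littlewood factor $\prod_{r}(1-\alpha a_r)^{-1}$, and the
$\alpha^{oc}$ weighting line up with the partition function formula
\eqref{eq:fbz} and the conventions of Section~\ref{sec:fbcorr}. This is
classical in spirit (going back to Baik--Rains and Borodin--Rains) but its
careful execution, including the precise form of the set $B$ and the
kernel~$K$, is really the heart of the argument; once done, the pfaffian
structure and its Fredholm pfaffian consequence follow from
Theorem~\ref{thm:onebound} in a standard fashion.
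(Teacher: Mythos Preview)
Your proposal is correct and follows essentially the same route as the paper: identify the LPP times along the down-right path with the first parts of the partitions in a free boundary ($H$-ascending, $u=0$, $v=1$, with the $\alpha^{oc}$ weighting) Schur process, rewrite the joint event as a gap probability for $\mathfrak{S}(\vec{\lambda})$, and invoke Theorem~\ref{thm:onebound} with~\eqref{eq:kapoc} together with the Fredholm pfaffian formula~\eqref{gap}. The only cosmetic difference is that the paper packages the RSK identification via Fomin's local growth rules (Proposition~\ref{Fomin}) rather than stating symmetric RSK and Greene's theorem directly.
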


The identity \eqref{labelneeded2} now allows us to extract asymptotics of the LPP time $L_{(1,1)\to(r,t)}$ as $r,t\to \infty.$ The limiting fluctuations will, of course, depend on the end point $(r,t)$ and the choice of parameters $a_n, \alpha$. Here, we do not aim to exploit Theorem \ref{LPPmultisec2} in all possible directions. In the following theorem we fix  $a_n =\sqrt{q}$ and choose $\alpha$ and the endpoint such that we are in a crossover regime, from which different limit laws can be recovered. The following Theorem will be proven as Theorem \ref{LPPThm2} in Section \ref{Proofs}.

\begin{thm}\label{LPPThm2intro}
Consider the weights \eqref{symweightsintro} with $a_j=\sqrt{q},q\in(0,1),j\geq 1$ and $\alpha=1-2vc_{q}N^{-1/3}, $ where  $c_q=\frac{1-\sqrt{q}}{q^{1/6}(1+\sqrt{q})^{1/3}},v\in\R$. Let $u_1 >\cdots >u_k \geq 0$.
Then 
\begin{equation}
  \begin{split}
\lim_{N \to \infty}\Prob\left(\bigcap_{i=1}^{k}\big\{L_{(1,1)\to (N-\lfloor u_i N^{2/3}\rfloor,N)}\leq \frac{2\sqrt{q}N}{1-\sqrt{q}}-u_{i}\frac{\sqrt{q}N^{2/3}}{1-\sqrt{q}}+c_{q}^{-1}s_i N^{1/3}\big\}\right)  \\
 = \pf( J - \chi_{s} K^{v} \chi_{s})_{\{u_1,\ldots,u_k\}\times \R}
  \end{split}
\end{equation}
where $\chi_{s}(u_i,x)=\mathbf{1}_{x>s_i}$ and $K^{v}$ is defined in \eqref{Kv}.
\end{thm}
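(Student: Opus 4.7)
The plan is to derive this limit from the exact formula \eqref{labelneeded2} of Theorem~\ref{LPPmultisec2} via saddle-point asymptotics. First I would make the Fredholm pfaffian explicit: through the RSK-based map from symmetric LPP to the $H$-ascending Schur process with one free boundary (with the $\alpha^{oc}$ weighting of Remark~\ref{rem:oc} encoding the diagonal weights $g(\alpha a_r)$), the event $\bigcap_l \{L_{(1,1)\to(r_l,t_l)}\le s_l\}$ translates into the absence of points of $\mathfrak{S}(\vec\lambda)$ in a half-line $\{r_l\}\times\{y>s_l-1/2\}$ for each $l$. The relevant correlation kernel is then given by Theorem~\ref{thm:onebound} with $F=F_H$ of Proposition~\ref{prop:H-HV} specialized at $x_k=\sqrt q$, and with the modified kernels $\kappa_{i,j}$ of \eqref{eq:kapoc}.

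Next I would carry out a saddle-point analysis of the three double contour integrals $K_{1,1},K_{1,2},K_{2,2}$. With $r_l=N-\lfloor u_l N^{2/3}\rfloor$ and $t_l=N$, $F_H(r_l,z)$ has the schematic form $(1-\sqrt q/z)^{N}(1-\sqrt q\,z)^{-N+\lfloor u_l N^{2/3}\rfloor}$, and the action $\log F_H$ has a double critical point at $z_c=1$ with coefficient $c_q$ matching the scale in the statement. Introducing the change of variables $z=1+c_q^{-1}\zeta N^{-1/3}$, $w=1+c_q^{-1}\omega N^{-1/3}$ together with the height shift $k=\tfrac{2\sqrt q N}{1-\sqrt q}-u_l\tfrac{\sqrt q N^{2/3}}{1-\sqrt q}+c_q^{-1}s_l N^{1/3}$, the cubic Taylor expansion of $\log F_H$ around $z_c=1$ produces the standard Airy-type exponent $\tfrac{1}{3}\zeta^3-u_l\zeta^2+s_l\zeta$ and its conjugate in $\omega$. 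The factors of $\kappa$ in \eqref{eq:kapoc} are analytic near $(1,1)$, except for the simple poles at $z=\alpha$ and $w=\alpha$; with $\alpha=1-2vc_q N^{-1/3}$ these scale as $z-\alpha\sim c_q^{-1}N^{-1/3}(\zeta+2v)$ and similarly for $w$, so they survive in the limit and produce a $v$-dependent deformation of the Airy-type kernel. Collecting everything, the rescaled matrix kernel converges pointwise, on compact sets, to the double-contour integral kernel $K^v$ of \eqref{Kv}.

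Finally, to upgrade pointwise convergence of the kernel to convergence of the Fredholm pfaffian on $\{u_1,\dots,u_k\}\times\mathbb R$, I would establish a uniform Gaussian-type bound on the rescaled kernel for $(\zeta,\omega)$ in the steepest-descent tails. This is achieved by deforming $|z|=r,|w|=r'$ into contours that pass through $z_c=1$ at the appropriate angles and stay a macroscopic distance from the moving pole $z=\alpha$, so that the cubic decay of $\mathrm{Re}(\zeta^3/3)$ away from the origin gives the required exponential tail. I expect the last step — the uniform control of the tails in the presence of the moving pole at $z=\alpha$ — to be the main obstacle, since the contours must simultaneously be steepest-descent, respect the radial ordering $r'<r$ vs.\ $r<r'$ of Theorem~\ref{thm:onebound}, and keep $|z-\alpha|\gtrsim N^{-1/3}$ uniformly; this can be arranged by a careful local-versus-global split, in the spirit of the contour choices used in \cite{BBCS17} for the analogous half-space problem.
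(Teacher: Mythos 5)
Your proposal follows essentially the same route as the paper's proof: you start from the exact Fredholm-pfaffian identity of Theorem~\ref{LPPmulti}, perform steepest-descent asymptotics around the double critical point at $z=1$ with the moving pole $z=\alpha$ producing the $v$-deformation (carried out in the paper as Proposition~\ref{pointconvv}), and upgrade pointwise kernel convergence to Fredholm-pfaffian convergence via uniform tail bounds and dominated convergence (Proposition~\ref{expprop} together with Lemma~\ref{lemexp}). Two cosmetic points: since $f_1'''(1)=2c_q^{-3}$, the rescaling should be $z=1+c_q\zeta N^{-1/3}$ (multiplication, not division, by $c_q$) to normalize the cubic term to $\zeta^3/3$, and the dominating bounds the paper establishes are exponential in the height variables rather than Gaussian---neither changes the structure of the argument.
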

Specializing to $k=1$ we obtain in particular
\begin{equation}
\lim_{N \to \infty}\Prob\left(L_{(1,1)\to (N-\lfloor u N^{2/3}\rfloor,N)}\leq \frac{2\sqrt{q}N}{1-\sqrt{q}}-u\frac{\sqrt{q}N^{2/3}}{1-\sqrt{q}}+c_{q}^{-1}sN^{1/3}\right)=\pf( J - K^{v} )_{(s,\infty)}=:F_{u,v}(s).
\end{equation}
$F_{u,v}(s)$ performs a crossover between the classical distributions from random matrix theory. Namely, one has $F_{0,0}=F_{\GOE},$ $\lim_{v\to +\infty}F_{0,v}(s)=F_{\GSE}(s)$ and $\lim_{u\to +\infty}F_{u,v}(s-u^{2}d_{q}^{2})=F_{\GUE}(s),$ where $d_q=\frac{q^{1/6}}{2(1+\sqrt{q})^{2/3}}$ --- see Section \ref{sec:defdist}.

For $k=1$ and $u=0$, Theorem   \ref{LPPThm2intro} recovers results already obtained by Baik and Rains \cite{br99, br1, br2}. Furthermore, \cite{si} considered off-diagonal fluctuations in half-space PNG, equivalent to symmetric LPP. For symmetric LPP with exponential weights, the same kind of crossover  between $F_{\GSE},F_{\GUE},F_{\GOE}$ was obtained, by different methods,  recently in \cite{BBCS17}.

\begin{SCfigure}%[!ht]
%\centering
\includegraphics[scale=0.18]{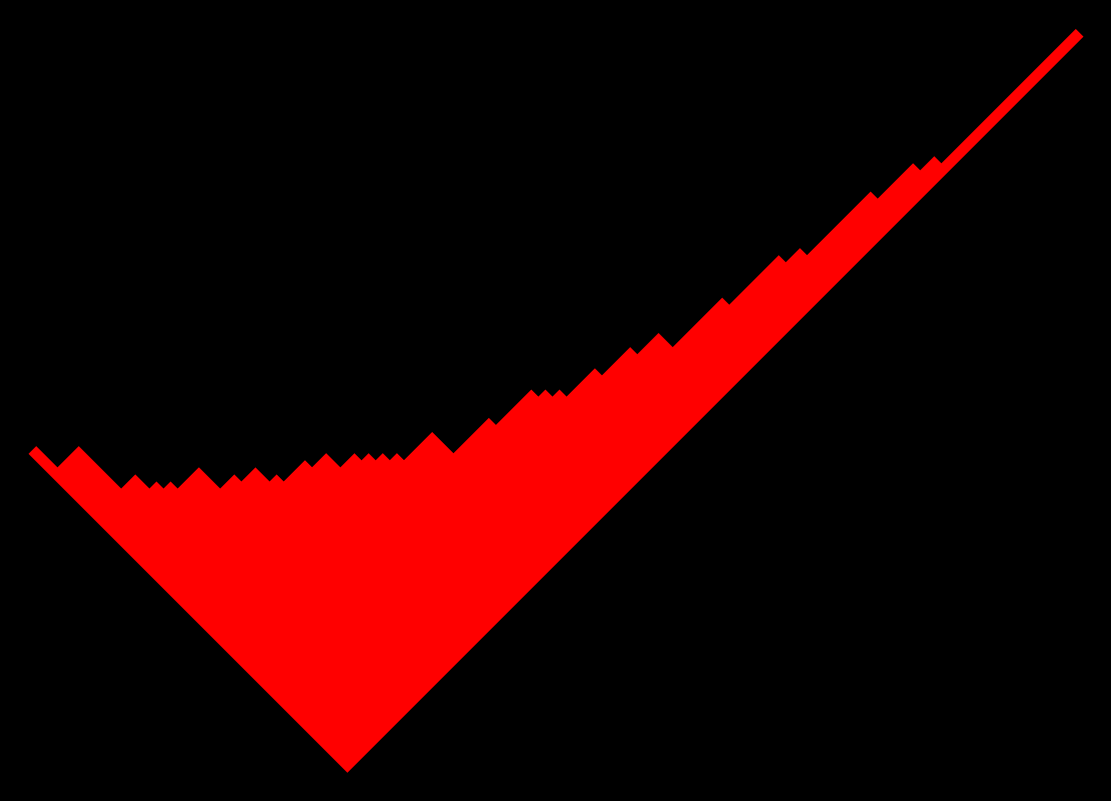}
\caption{\fs{A random partition $\lambda$, in Russian notation, whose weight is proportional to $s_{\lambda} (q, \dots, q)$ (100 $q$'s) for $q=1/3$. The right arm (its first part) is, via the RSK correspondence, the last passage percolation time in a $100 \times 100$ symmetric matrix filled with otherwise independent  geometric random numbers of parameter $q^2$ (off diagonal) and $q$ (on the diagonal). }}
\label{fig:lpp_partition}
\end{SCfigure}

\subsubsection{Symmetric plane partitions}

A \emph{symmetric plane partition of length $N$} is a plane partition --- i.e. an array of numbers $(\pi_{i,j})_{1 \leq i, j \leq N}$ such that $\pi_{i, j} \geq \pi_{i, j+1}, \pi_{i, j} \geq \pi_{i+1, j}$, satisfying the symmetry condition $\pi_{i, j} = \pi_{j, i}$. It can be viewed as a symmetric pile of cubes stacked into the corner of a room or a lozenge tiling of the plane. See Figure~\ref{fig:pp_example} below for pictorial descriptions. A symmetric plane partition (more precisely, the half of it that determines the whole --- Figure~\ref{fig:pp_example} on the left) can be sliced into ordinary partitions $\vec{\lambda} = (\emptyset \prec \lambda^{(1)} \prec \dots \prec \lambda^{(N)})$ with $\ell(\lambda^{(i)}) \leq i$ using the simple formula $\lambda^{(i)}_k = \pi_{N-i+k,k}$ for $1 \leq k \leq i$. We study the $q^{\text{Volume}}$ measure, for $q \in (0,1)$, which can be treated as an $H$-ascending Schur process for appropriately chosen (single variable) specializations.

\begin{SCfigure}%[!ht]
%\begin{center}
  \begin{tikzpicture}[scale=0.3]
    \draw[thick, smooth] (-1,0)--(4,5);
      \draw[thick, smooth] (-1,0)--(4,-5);
    \foreach \x in {0,...,4} {
        \draw[thick, smooth] (\x, {-\x-1})--(5, {-2*\x+4});
        \draw[thick, smooth] (\x, {\x+1})--(5, {2*\x-4});
    }
    \node at (0,0) {$6$};
    \node at (1,1) {$7$};
    \node at (1,-1) {$3$};
    \node at (2,2) {$9$};
    \node at (2,0) {$5$};
    \node at (2,-2) {$2$};
    \node at (3,3) {$9$};
    \node at (3,1) {$7$};
    \node at (3,-1) {$3$};
    \node at (3,-3) {$1$};
    \node at (4,4) {$10$};
    \node at (4,2) {$8$};
    \node at (4,0) {$6$};
    \node at (4,-2) {$2$};
    \node at (4,-4) {$1$};
  \end{tikzpicture}
  \qquad
\includegraphics[scale=0.2]{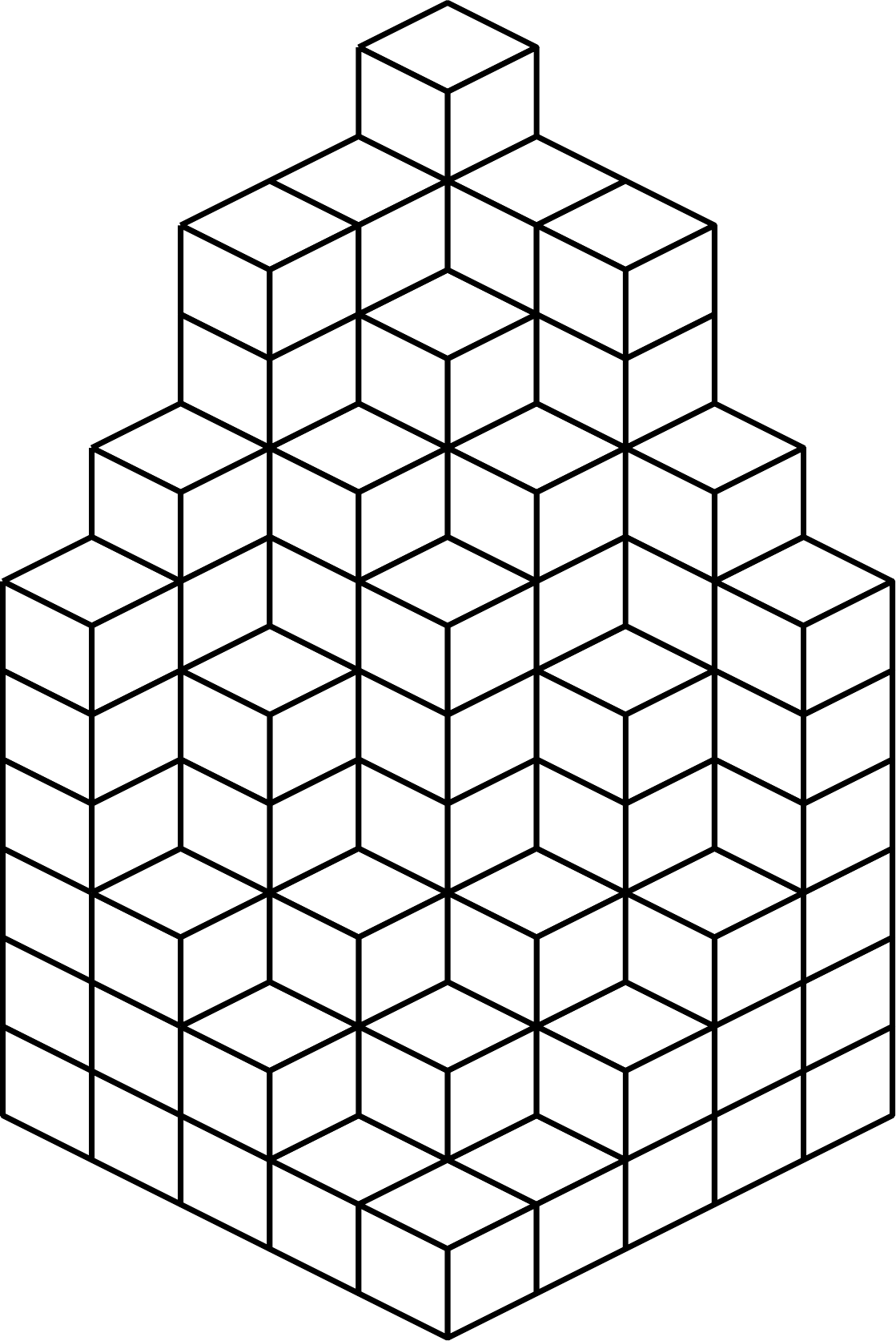}
%\end{center}
\caption{\fs{A symmetric plane partition: the numbers on the left determine the heights of cubes on the right.}}
\label{fig:pp_example}
\end{SCfigure}

In Section~\ref{sec:symm_pp} we consider the large volume limit of symmetric plane partitions: $q =e^{-r}\to 1$ for scaling $ri \to \x$ and $r k \to \y$ where $(\x, \y) \in \mathbb{R}_+ \times \mathbb{R}$. A sample of a symmetric random plane for $q$ close to 1 is given in Figure~\ref{fig:pp_large_intro} (left for $r N \to \infty$, right for $rN \to \a < \infty$). In both cases there are two distinct regions: the liquid region $\mathcal{L}$ where behavior is random, and the frozen region where behavior is deterministic. One can visualize this as in the figure using different colors for the three types of lozenges in the plane tiling. 

The arctic curve, dividing the liquid and frozen regions is the zero locus of 
\begin{equation}
  \begin{split}
  D(\X, \Y) &= -4 (1 - \A \Y) (\X - \A \Y) + (-1 - \X + \Y + \A^2 \Y)^2
  \end{split}
\end{equation}
where $\X = \exp(-\x)$, $\Y = \exp(-\y)$, and $\A = \exp(-\a)$.

Case $\A=0$ corresponds to symmetric plane partitions with no bound on the length, and the liquid region can be written in appropriate $(u, v)$ coordinates as the amoeba of the polynomial $1+u+v$ --- see Section~\ref{sec:symm_pp} for the definition of amoeba. As expected, one obtains the same liquid region as for non-symmetric plane partitions. The arctic shape for plane partitions was first obtained by Bl\"ote, Hilhorst and Nienhuis \cite{bnh} in the physics literature and Cerf and Kenyon \cite{ck} in the mathematics literature. It was later rederived by Okounkov and Reshetikhin \cite{or} using the Schur process. 

The limit shape result can be derived from the following explicit formula for the density of particles.

\begin{prop} \label{Densityspp} For  $(\x,\y) \in \mathcal{L}$ the density of particles is
\begin{equation}
\rho(\x,\y)=\frac{\theta(\x,\y)}{\pi}
\end{equation}
where $\theta(\x,\y)=\arg(z_{+}(\x,\y))$ and
\begin{align} 
     z_{\pm} (\x, \y) = \frac{1+\X-(1+\A^2)\Y \pm \sqrt{D(\X, \Y)}} {2(\X-\A\Y)}.
\end{align}
\end{prop}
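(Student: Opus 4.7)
The plan is to perform a saddle-point analysis on the contour integral formula for the density of the pfaffian point process from Theorem~\ref{thm:onebound}, applied to the $H$-ascending Schur process realization of the $q^{\mathrm{Vol}}$ measure on symmetric plane partitions set up in Section~\ref{sec:symm_pp}. Because $\kappa_{1,1}$ and $\kappa_{2,2}$ are antisymmetric in $(z,w)$, and the two integration radii $r,r'$ can be freely exchanged without crossing any pole of the integrand, the diagonal entries $K_{1,1}(i,k;i,k)$ and $K_{2,2}(i,k;i,k)$ vanish, so the one-point density reduces to $\rho(i,k)=K_{1,2}(i,k;i,k)$. Under the scaling $q=e^{-r}$, $r i\to \x$, $r k\to \y$, $r N\to \a$, $r\to 0$, the logarithm of $F(i,z)$ from Proposition~\ref{prop:H-HV} becomes a Riemann sum converging to an integral, so the integrand takes the form $\exp\!\big((S(z)-S(w))/r\big)\,\kappa_{1,2}(z,w)/(zw)$ up to subleading corrections, with action
\[
S(z) = -\y\log z + \int_0^{\a}\log(1-e^{-s}/z)\,\dx s - \int_0^{\x}\log(1-e^{-s}z)\,\dx s.
\]

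Next I would locate the critical points. A direct computation (substituting $u=e^{-s}$) gives
\[
S'(z) = \frac{1}{z}\log\frac{(z-\A)(1-\X z)}{(z-1)(1-z)} - \frac{\y}{z};
\]
setting $S'(z)=0$ and clearing denominators produces a quadratic in $z$ whose discriminant is exactly $D(\X,\Y)$ and whose roots are the $z_\pm(\x,\y)$ of the statement. In the liquid region $\mathcal L=\{D<0\}$ these form a complex-conjugate pair $z_+=\bar z_-$ with $z_+$ in the upper half plane. The standard steepest-descent move is then to push the $z$- and $w$-contours through $z_+$ and $z_-$ respectively; because originally $|w|<|z|$ but the deformed contours must visit the same conjugate pair, they cross each other along an arc joining $z_-$ to $z_+$ across the positive real axis, and one picks up the residue at the simple pole $z=w$ of $\kappa_{1,2}(z,w)$. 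A short computation yields $\mathrm{Res}_{w=z}\kappa_{1,2}(z,w)=-z$, so that, after tracking signs and orientation of the deformation, the residue contribution gives the leading-order density
\[
\rho(\x,\y) = \frac{1}{2\pi\im}\int_{z_-}^{z_+}\frac{\dx z}{z} = \frac{\log z_+-\log z_-}{2\pi\im} = \frac{\arg z_+}{\pi} = \frac{\theta(\x,\y)}{\pi},
\]
the remaining Gaussian saddle contributions being $O(\sqrt r)$ and subleading.

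The main obstacle is verifying that the contour rearrangement in the bulk of $\mathcal L$ does not sweep across any of the other singularities of the integrand --- notably the pfaffian-specific poles $z=-1$, $w=1$, $zw=1$ of $\kappa_{1,2}$, as well as the branch structure inherited from the Riemann-sum approximation of $\log F$. This reduces to showing that the steepest-descent paths through $z_\pm$ can be chosen to remain in an annular region clear of these loci, which is standard for amoeba-based limit-shape arguments but becomes delicate near the arctic boundary $D=0$ (where the two saddles coalesce and the Gaussian approximation degenerates) and near the reflection axis $\x=0$ (where the free boundary sits and can deform the steepest-descent topology). Both of those regimes are excluded by the bulk hypothesis $(\x,\y)\in\mathcal L$ of the proposition, so the analysis above applies throughout the interior of the liquid region and gives the claimed density formula.
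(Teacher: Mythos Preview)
Your overall strategy is exactly the paper's: reduce the one-point function to $K_{1,2}(i,k;i,k)$, write the integrand as $\exp\bigl((S(z)-S(w))/r\bigr)$ times a rational kernel, deform both contours through the complex-conjugate saddles $z_\pm$, pick up the residue at $z=w$ along the arc $\gamma_+$, and evaluate $\frac{1}{2\pi\im}\int_{\gamma_+}\frac{\dx z}{z}=\theta/\pi$. The identification of the residue and the $O(\sqrt r)$ estimate for the crossing contribution are fine.

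There is, however, a genuine computational error in your action. In the $q^{\mathrm{Vol}}$ realization of Section~\ref{sec:symm_pp} the specializations are $x_k=q^{N+1-k}$, not $q^k$, so after the time reversal $i\mapsto N-i$ used there (free boundary at $\x=0$) the denominator of $F_H$ contributes $(q^{i+1}z;q)_{N-i}$, and the Riemann sum gives
\[
S(z)\;=\;-\y\log z+\int_0^{\a}\log\!\bigl(1-e^{-s}/z\bigr)\,\dx s\;-\;\int_{\x}^{\a}\log\!\bigl(1-e^{-s}z\bigr)\,\dx s,
\]
i.e.\ $\dilog(\A/z)-\dilog(1/z)+\dilog(\X z)-\dilog(\A z)-\y\log z$, not your version with $-\int_0^{\x}$. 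Your displayed $S'(z)$ then has $(z-\A)(1-\X z)$ over $(z-1)(1-z)$, whereas the correct derivative is
\[
zS'(z)=\log\frac{(z-\A)(1-\A z)}{(z-1)(1-\X z)}-\y,
\]
and it is only this equation whose quadratic has discriminant $D(\X,\Y)$ and roots $z_\pm$. Your quadratic $(1-\X\Y)z^2+(\Y(1+\A\X)-2)z+(1-\A\Y)=0$ does not --- e.g.\ at $\A=0$ the discriminants differ --- so your claim that ``clearing denominators produces a quadratic \dots\ whose roots are the $z_\pm$'' is unjustified as written. Once $S$ is corrected, the saddle analysis and residue extraction go through exactly as in the paper (Theorem~\ref{thm:k12_pp} and its corollary), including the care about the poles $z=-1$, $w=1$, $zw=1$, which the paper handles by keeping the deformed contours on the appropriate side of $\pm 1$.

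A minor point: the hypothesis $(\x,\y)\in\mathcal L$ does not exclude $\x=0$; the paper treats that slice separately (Theorem~\ref{thm:boundary_corr_pp}), though the density formula is the same there.
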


To derive this result we start from the finite correlations given by Theorem~\ref{thm:onebound}, changing $i \to N-i$ for convenience, and perform steepest descent analysis to obtain the limiting behavior of the correlation kernel. In the limit we obtain a (incomplete beta) determinantal process for $\x > 0$ and a pfaffian process for $\x=0$.

 \begin{thm} \label{MainTheoremSPP}
   Let $(\x,\y) \in \mathcal{L}$ and rescale the coordinates as
\begin{align}\label{scaling}
   i = \left\lfloor \frac{\x}{r} \right\rfloor + \i, \qquad k = \left\lfloor \frac{\y}{r} \right\rfloor + \k
   \end{align}
   where $r \to 0+$ and $\i,\k$ are fixed. Then, the rescaled process
    \begin{itemize}
    \item converges for $\x = 0$ to a pfaffian process with kernel 
    \begin{equation} \label{Kspp}
    \begin{split}
    \mathsf{K}_{1,1}(\i, \k; \i', \k') &= 
    \int_{\gamma_{+}} (1-z)^{\i} \left(1-\frac{1}{z}\right)^{\i'} z^{\k - \k' - 1} \frac{1-z}{1+z} \frac{\dx z}{2 \pi \im}, \\
        \mathsf{K}_{1,2}(\i, \k; \i', \k') &= \int_{\gamma_{\pm}} (1-z)^{\i - \i'} z^{\k' - \k - 1} \frac{\dx z}{2 \pi \im},\\
    \mathsf{K}_{2,2}(\i, \k; \i', \k') &= \int_{\gamma_{-}} (1-z)^{-\i} \left(1-\frac{1}{z}\right)^{-\i'} z^{\k' - \k - 1} \frac{1+z}{1-z} \frac{\dx z}{2 \pi \im}
    \end{split}
  \end{equation}
  \noindent where $\gamma_+$ is taken if and only if $\i \geq \i'$. $\gamma_{\pm}$ is a contour from $z_{-}(\x,\y)$ to $z_+(\x,\y)$, $\gamma_+$ passing to the right of 0 and $\gamma_-$ to left of 0;
  \item converges for $\x \neq 0$ to a determinantal process with kernel $\mathsf{K}_{1,2}$ as in \eqref{Kspp}.
 \end{itemize}
\end{thm}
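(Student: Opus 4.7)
The plan is to run steepest descent on the exact finite-$N$ pfaffian kernel of Theorem~\ref{thm:onebound}. For $q^{\mathrm{Volume}}$ on symmetric plane partitions (of length $N = \lfloor \a/r \rfloor$, possibly infinite), the appropriate $H$-ascending specialization is $\rho_k^+ = \{q^{N-k+1/2}\}$ and $\rho_k^- = 0$, and one performs the index change $i \to N-i$. Proposition~\ref{prop:H-HV} then gives
\begin{equation*}
F(N-i, z) \;=\; \frac{\prod_{k=1}^{N}(1 - q^{k-1/2}/z)}{\prod_{k=i+1}^{N}(1 - q^{k-1/2} z)}.
\end{equation*}
Writing $q = e^{-r}$, $ri \to \x$, $rk \to \y$ and turning $\sum_k \log(1 - q^{k-1/2} z^{\pm 1})$ into a Riemann sum produces $F(N-i,z) = \exp\bigl(S(z;\x,\y)/r + O(1)\bigr)$ for an explicit action expressible in terms of dilogarithms. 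Substituting into \eqref{eq:Kint} reduces the analysis to double contour integrals of the form $\iint e^{(S(z)+S(w))/r} G(z,w) \dx z \dx w$ times the kernel factors $\kappa_{\alpha,\beta}(z,w)$ of \eqref{eq:kappasimp} and the rescaled monomial factors $z^{\k - \k' - 1}$ coming from the half-integer indexing.

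The critical points of $S$ are the zeros of $S'(z)=0$, which is a quadratic equation whose roots are exactly $z_\pm(\x,\y)$ from Proposition~\ref{Densityspp}; in the liquid region $\mathcal{L}$ one has $D(\X,\Y) < 0$ so the saddles are complex conjugates lying off the real axis. I would then deform the $z$-contour and the $w$-contour to steepest descent paths through their respective saddles, using the usual exponential tail estimates to discard the contributions away from a neighborhood of the saddle. At leading order the rapidly oscillating factor collapses, and the limit is driven by the prefactor $(1-z)^{\i}(1-1/z)^{\i'}z^{\k - \k' - 1}$ (and its reciprocal variants), together with the $\kappa$-factors evaluated on the steepest descent arc connecting $z_-$ to $z_+$.

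Two regimes emerge. For $\x > 0$ (the bulk), the steepest descent contour for the $K_{1,2}$ integral becomes the open arc $\gamma_\pm$ from $z_-$ to $z_+$, producing the incomplete beta kernel $\mathsf{K}_{1,2}$ of \eqref{Kspp}; the contours for $K_{1,1}$ and $K_{2,2}$ can be deformed so that the action grows strictly in the relevant direction, and the kernels $\kappa_{1,1},\kappa_{2,2}$ being analytic there, the resulting integrals are exponentially small and vanish in the limit, yielding a purely determinantal process. For $\x = 0$ the saddles $z_\pm$ collapse onto the unit circle, the factors $F(i,z)$ become trivial ($\prod_{k=1}^N$ cancels $\prod_{k=i+1}^N$ when $i=N$, i.e.\ after the change $i \to N-i$ and $\x = 0$), and all three blocks survive: the contours for $K_{1,1}, K_{2,2}$ are closed arcs $\gamma_+, \gamma_-$ (on opposite sides of $0$, passing through $z_\pm$), while $K_{1,2}$ remains on $\gamma_\pm$. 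The poles of $\kappa_{1,1}, \kappa_{2,2}$ at $z = \pm 1$ sit on the saddle circle and produce the $(1-z)/(1+z)$ and $(1+z)/(1-z)$ factors in \eqref{Kspp}.

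The main technical obstacle is the choice of descent contours: they must pass through both complex conjugate saddles, respect the nesting $1 < r' < r < R$ (or its reverse according to $i \leq i'$) inherited from Theorem~\ref{thm:onebound}, and in the $\x = 0$ limit collapse onto arcs that meet the poles $z = \pm 1$ in the way dictated by \eqref{Kspp}. One must also verify that the half-integer $\sqrt{zw}$ combines with the shift $k = \lfloor \y/r\rfloor + \k$ to leave no residual phase, and that the Riemann-sum approximation of $\log F$ is uniform on compact neighborhoods of the saddles --- standard but somewhat delicate because the product in $F$ has up to $N \sim 1/r$ factors and the poles $q^{k-1/2} z = 1$ accumulate on the contour. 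Once these are under control, the two cases of the theorem follow by identifying the resulting integrals with the kernels in \eqref{Kspp}.
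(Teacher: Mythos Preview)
Your overall strategy---steepest descent on the exact pfaffian kernel---is correct, but you miss the central mechanism by which the \emph{double} contour integrals become \emph{single} integrals over an arc. That mechanism is a residue, and the two diagonal blocks use a different residue from the off-diagonal one.

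For $K_{1,2}$: the integrand has a simple pole at $z=w$ (the $1/(z-w)$ in $\kappa_{1,2}$). When you deform $C_z$ and $C_w$ so that both pass through the complex conjugate saddles $z_\pm$, the two contours must cross each other along an arc between $z_-$ and $z_+$. The residue at $z=w$ picked up along that arc is precisely $\frac{1}{2\pi\im}\int_{\gamma_\pm}\frac{F(N-i,w)}{F(N-i',w)}\,w^{k'-k-1}\,\dx w$, which in the limit is the incomplete beta kernel $\mathsf{K}_{1,2}$. The remaining double integral over the deformed contours then vanishes exponentially. Your phrasing (``the steepest descent contour \ldots\ becomes the open arc'') suggests some direct collapse of the double integral, which is not what happens.

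For $K_{1,1}$ and $K_{2,2}$ at $\x=0$: the relevant pole is \emph{not} $z=w$ but $zw=1$ (from the $1/(zw-1)$ in $\kappa_{1,1},\kappa_{2,2}$). When both contours are pushed through $z_\pm$ on the unit circle, you cross the hyperbola $zw=1$, and the residue at $z=1/w$ produces a single integral over $\gamma_\pm$. The factors $(1-z)/(1+z)$ and $(1+z)/(1-z)$ in \eqref{Kspp} come out of this residue computation (substitute $z=1/w$ into $\frac{z-w}{(z\pm 1)(w\pm 1)}$); they are not produced by ``poles of $\kappa_{1,1},\kappa_{2,2}$ at $z=\pm 1$ sitting on the saddle circle'' as you write.

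For $K_{1,1}$ and $K_{2,2}$ at $\x>0$: these do not vanish outright. One must first conjugate the $2\times 2$ kernel by $\mathrm{diag}(e^{-\Re S(z_+)/r},\,e^{\Re S(z_+)/r})$, which leaves pfaffians unchanged (Proposition~\ref{ConjPfaff}); only the conjugated diagonal blocks $\tilde K_{1,1},\tilde K_{2,2}$ tend to zero. Without conjugation one block blows up and the other decays, and ``$\kappa_{1,1},\kappa_{2,2}$ being analytic there'' is not the reason anything vanishes.

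Finally, at $\x=0$ the function $F(N-i,z)$ is not trivial: one keeps $i=\i$ \emph{finite} (this is the whole point of the rescaling \eqref{scaling} at $\x=0$), so there is no cancellation of the two products in $F$. The factors $(1-z)^{\i}(1-1/z)^{\i'}$ in \eqref{Kspp} come from the limit of $F(N-\i,w)F(N-\i',1/w)$ evaluated at the $zw=1$ residue, using $(q;q)_{\i}\to (1-z)^{\i}$-type estimates.
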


\begin{rem}
  \label{rem:corrdecay}
  It is natural to consider the asymptotics of the kernel entries
  \eqref{Kspp} as $\i$, $\i'$, $\i-\i'$ or $\k-\k'$ tend to
  $\pm \infty$. This may be done using Laplace's method: the integrand
  has the generic form $f(z)^{\mathsf{n}} g(z)$ where $\mathsf{n}$
  denotes a parameter tending to $+\infty$, and it is always possible
  to choose an integration contour such that $|f(z)|$ is maximal at
  the endpoints $z_\pm(\x,\y)$, with a nonvanishing derivative. As $g$
  does not vanish at the endpoints, we deduce that the integral
  behaves at leading order as
  $\frac{ \alpha f(z_+)^{\mathsf{n}} + \bar{\alpha}
    f(z_-)^{\mathsf{n}}}{\mathsf{n}}$ for some $\alpha \in \C$. Using
  Proposition~\ref{ConjPfaff}, we may perform a rescaling to suppress
  the exponential blowup/decay, so that the rescaled entry behaves as
  $\mathsf{n}^{-1}$ times some oscillating factor. The bottom line is
  that, up to oscillations, the properly rescaled correlation kernel
  decays as the inverse of the distance between points, and its
  diagonal entries (which make the process nondeterminantal) decay as
  the inverse of the distance to the free boundary.
\end{rem}

\begin{rem} \label{rem:pfaffin} In the case $\x = 0$ it is crucial
  that we keep $i=\i$ finite as $r \to 0+$ to obtain a pfaffian
  process. If instead we rescale $i=f(r)+\i$ with
  $1 \ll f(r) \ll r^{-1}$, then we obtain a determinantal process with
  the kernel $\mathsf{K}_{1,2}$ of \eqref{Kspp} at $\x=0$. Intuitively
  speaking, the convergence to $\mathsf{K}_{1,2}$ is more robust as it
  only depends on the difference $\i - \i'$.  Together with the
  previous remark, this shows that the free boundary affects the
  nature of correlations only at a finite range in the bulk. In
  contrast, at the edge, as illustrated by Theorem~\ref{LPPThm2intro},
  correlations remain pfaffian on a larger range (namely $N^{2/3}$ in
  the LPP setting).
\end{rem}

\begin{figure}[ht]
  \begin{center}
  \includegraphics[scale=0.11]{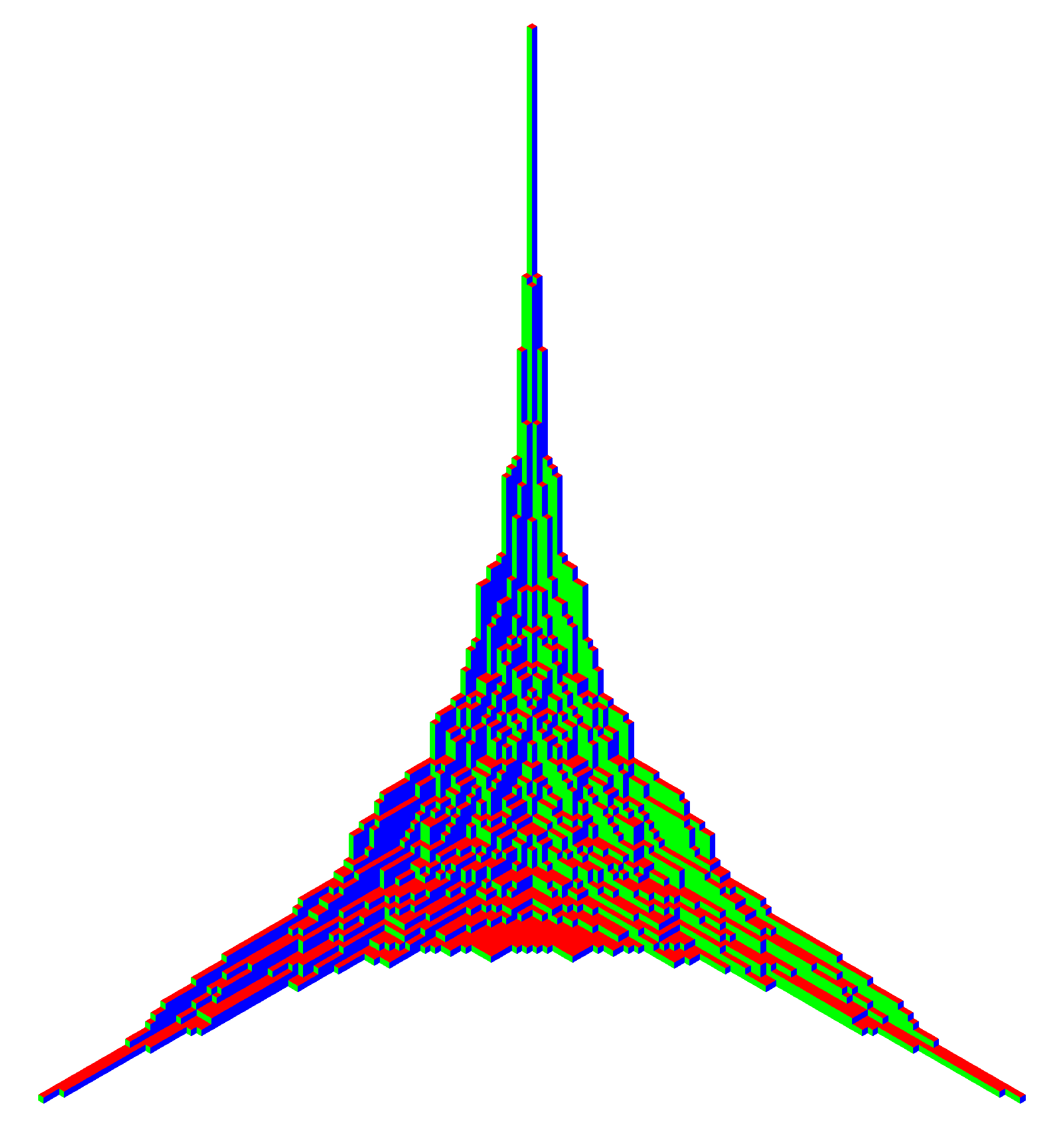}
  \quad \qquad
  \includegraphics[scale=0.04]{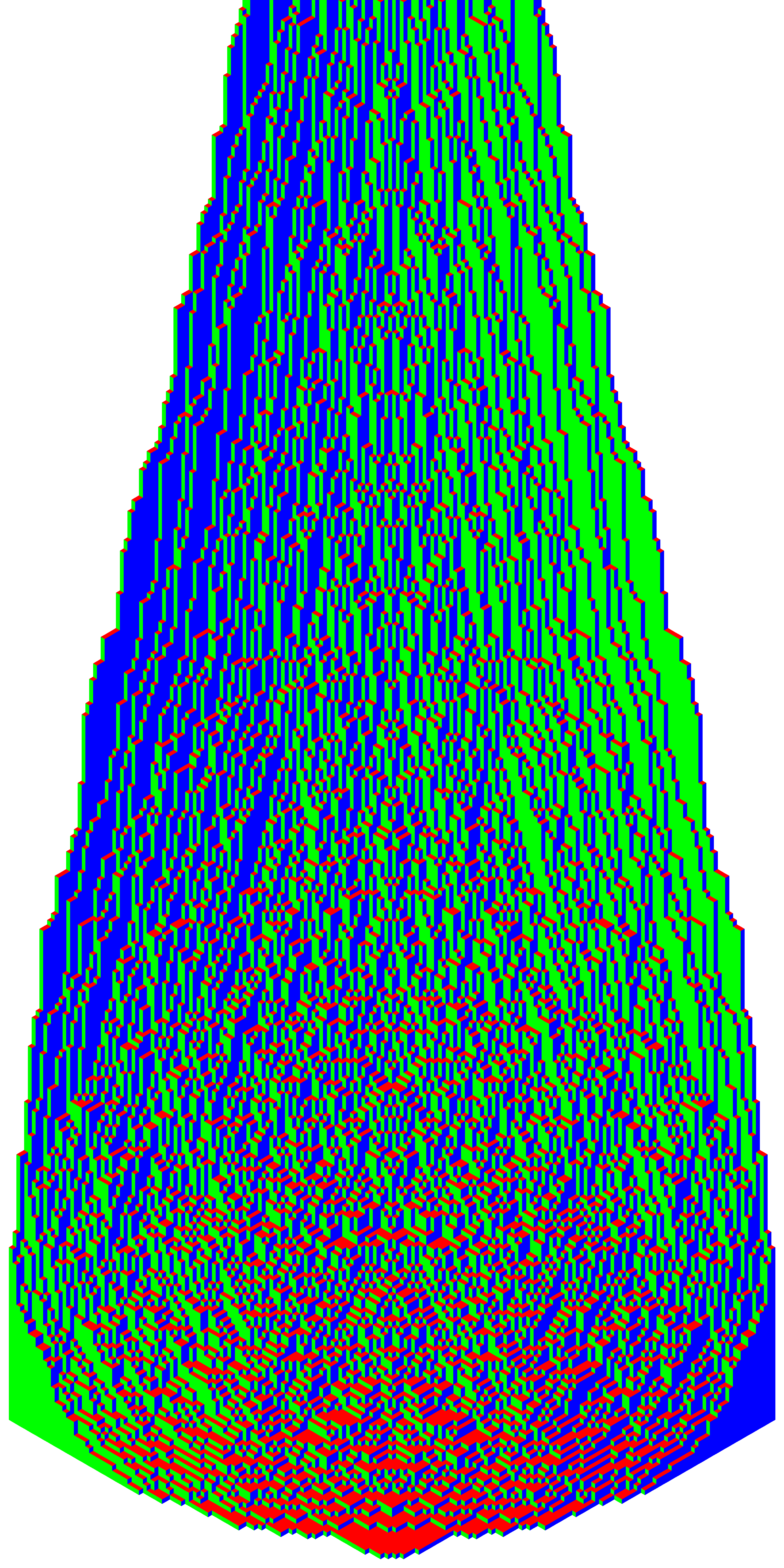}
  \end{center}
  \caption{\fs{Large unbounded (left) and bounded (right) random symmetric $q^{\text{Volume}}$-weighted plane partitions.}}
  \label{fig:pp_large_intro}
  \end{figure}

\subsubsection{Plane overpartitions}

A \emph{plane overpartition} is a plane partition where in each row the last occurrence of an integer can be overlined or not and all the other occurrences of this integer are not overlined, while in each column the first occurrence of an integer can be overlined or not and all the other occurrences of this integer are overlined. An example is given in Figure~\ref{fig:po_example}. There is a natural measure one can study on plane overpartitions: the $q^{\text{Volume}}$ measure, where the volume is given by the sum of all its entries.

A plane overpartition with the largest entry at most $N$ and shape $\lambda$ can be recorded as a sequence of partitions $\vec{\lambda} = ( \emptyset \prec \lambda^{(1)} \prec'\lambda^{(2)}\prec\cdots \prec \lambda^{(2n-1)} \prec' \lambda^{(2N)}=\lambda )$ where $\lambda^{(i)}$ is the partition whose shape is formed by all fillings greater than ${N-i/2}$ with the convention that $\overline{k}=k-1/2$. In this context, with the $q^{\text{Volume}}$ measure considered, the sequence $\vec{\lambda}$ becomes an $HV$-ascending Schur process for appropriately chosen (single or dual variable) specializations.

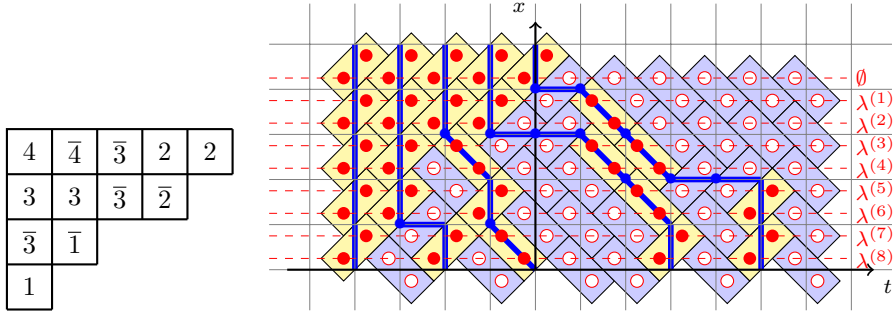
\begin{figure}%[][ht]
\begin{center}
\begin{tikzpicture}[scale=0.6]
\draw[thick, smooth] (0,0)--(0,4)--(5,4);
\draw[thick, smooth] (0,0)--(1,0);
\draw[thick, smooth] (0,1)--(2,1);
\draw[thick, smooth] (0,2)--(4,2);
\draw[thick, smooth] (0,3)--(5,3);
\draw[thick, smooth] (1,0)--(1,4);
\draw[thick, smooth] (2,1)--(2,4);
\draw[thick, smooth] (3,2)--(3,4);
\draw[thick, smooth] (4,2)--(4,4);
\draw[thick, smooth] (5,3)--(5,4);
\node at (0.5,0.5) { 1};
\node at (0.5,1.5) {$\overline{3}$};
\node at (0.5,2.5) { 3};
\node at (0.5,3.5) { 4};
\node at (1.5,1.5) { $\overline{1}$};
\node at (1.5,2.5) { 3};
\node at (1.5,3.5) { $\overline{4}$};
\node at (2.5,2.5) { $\overline{3}$};
\node at (2.5,3.5) { $\overline{3}$};
\node at (3.5,2.5) {$\overline{2}$};
\node at (3.5,3.5) { 2};
\node at (4.5,3.5) {2};
\end{tikzpicture}
\quad
\begin{tikzpicture}[scale=0.6]
  \foreach \x in {0}{ \foreach \y in {3,4}
   {\filldraw [fill=blue!20!white, draw=black]  (\x-0.25,\y-0.25) -- (\x+0.25,\y-0.75) --  (\x+1.25,\y+0.25) -- (\x+0.75,\y+0.75)-- cycle;
   \draw[line width=0.75mm,blue] (\x,\y) -- (\x+1,\y);
   \filldraw[fill=white, draw=red] (\x+0.25,\y-0.25) circle[radius=4pt] ;
    \filldraw[fill=white, draw=red] (\x+0.75,\y+0.25) circle[radius=4pt] ;}};
  
   \foreach \x in {-1}{ \foreach \y in {3}
   {\filldraw [fill=blue!20!white, draw=black]  (\x-0.25,\y-0.25) -- (\x+0.25,\y-0.75) --  (\x+1.25,\y+0.25) -- (\x+0.75,\y+0.75)-- cycle;
   \draw[line width=0.75mm,blue] (\x,\y) -- (\x+1,\y);
   \filldraw[fill=white, draw=red] (\x+0.25,\y-0.25) circle[radius=4pt] ;
   \filldraw[fill=white, draw=red] (\x+0.75,\y+0.25) circle[radius=4pt] ;}};
  
   \foreach \x in {-3}{ \foreach \y in {1}
   {\filldraw [fill=blue!20!white, draw=black]  (\x-0.25,\y-0.25) -- (\x+0.25,\y-0.75) --  (\x+1.25,\y+0.25) -- (\x+0.75,\y+0.75)-- cycle;
   \draw[line width=0.75mm,blue] (\x,\y) -- (\x+1,\y);
   \filldraw[fill=white, draw=red] (\x+0.25,\y-0.25) circle[radius=4pt] ;
   \filldraw[fill=white, draw=red] (\x+0.75,\y+0.25) circle[radius=4pt] ;}};
  
   \foreach \x in {3}{ \foreach \y in {2}
   {\filldraw [fill=blue!20!white, draw=black]  (\x-0.25,\y-0.25) -- (\x+0.25,\y-0.75) --  (\x+1.25,\y+0.25) -- (\x+0.75,\y+0.75)-- cycle;
   \draw[line width=0.75mm,blue] (\x,\y) -- (\x+1,\y);
   \filldraw[fill=white, draw=red] (\x+0.25,\y-0.25) circle[radius=4pt] ;
   \filldraw[fill=white, draw=red] (\x+0.75,\y+0.25) circle[radius=4pt] ;}};
  
   \foreach \x in {4}{ \foreach \y in {2}
   {\filldraw [fill=blue!20!white, draw=black]  (\x-0.25,\y-0.25) -- (\x+0.25,\y-0.75) --  (\x+1.25,\y+0.25) -- (\x+0.75,\y+0.75)-- cycle;
   \draw[line width=0.75mm,blue] (\x,\y) -- (\x+1,\y);
   \filldraw[fill=white, draw=red] (\x+0.25,\y-0.25) circle[radius=4pt] ;
   \filldraw[fill=white, draw=red] (\x+0.75,\y+0.25) circle[radius=4pt] ;}};
  
    \foreach \x in {2,3,4,5,6}{ \foreach \y in {4}
   {\filldraw [fill=blue!20!white, draw=black]  (\x-0.75,\y+0.25) -- (\x+0.25,\y-0.75) --  (\x+0.75,\y-0.25) -- (\x-0.25,\y+0.75)-- cycle;
   \filldraw[fill=white, draw=red] (\x+0.25,\y-0.25) circle[radius=4pt] ;
   \filldraw[fill=white, draw=red] (\x-0.25,\y+0.25) circle[radius=4pt] ;}};
  
    \foreach \x in {3,4,5,6}{ \foreach \y in {3}
   {\filldraw [fill=blue!20!white, draw=black]  (\x-0.75,\y+0.25) -- (\x+0.25,\y-0.75) --  (\x+0.75,\y-0.25) -- (\x-0.25,\y+0.75)-- cycle;
   \filldraw[fill=white, draw=red] (\x+0.25,\y-0.25) circle[radius=4pt] ;
   \filldraw[fill=white, draw=red] (\x-0.25,\y+0.25) circle[radius=4pt] ;}};
  
     \foreach \x in {-2,0,1,6}{ \foreach \y in {2}
   {\filldraw [fill=blue!20!white, draw=black]  (\x-0.75,\y+0.25) -- (\x+0.25,\y-0.75) --  (\x+0.75,\y-0.25) -- (\x-0.25,\y+0.75)-- cycle;
   \filldraw[fill=white, draw=red] (\x+0.25,\y-0.25) circle[radius=4pt] ;
   \filldraw[fill=white, draw=red] (\x-0.25,\y+0.25) circle[radius=4pt] ;}};
  
    \foreach \x in {0,1,2,4,6}{ \foreach \y in {1}
   {\filldraw [fill=blue!20!white, draw=black]  (\x-0.75,\y+0.25) -- (\x+0.25,\y-0.75) --  (\x+0.75,\y-0.25) -- (\x-0.25,\y+0.75)-- cycle;
   \filldraw[fill=white, draw=red] (\x+0.25,\y-0.25) circle[radius=4pt] ;
   \filldraw[fill=white, draw=red] (\x-0.25,\y+0.25) circle[radius=4pt] ;}};
  
    \foreach \x in {-3,-1,1,2,4,6}{ \foreach \y in {0}
   {\filldraw [fill=blue!20!white, draw=black]  (\x-0.75,\y+0.25) -- (\x+0.25,\y-0.75) --  (\x+0.75,\y-0.25) -- (\x-0.25,\y+0.75)-- cycle;
   \filldraw[fill=white, draw=red] (\x+0.25,\y-0.25) circle[radius=4pt] ;
   \filldraw[fill=white, draw=red] (\x-0.25,\y+0.25) circle[radius=4pt] ;}};
  
   \foreach \x in {1}{ \foreach \y in {3,4}
   {\filldraw [fill=yellow!40!white, draw=black]  (\x-0.25,\y-0.25) -- (\x+0.75,\y-1.25) --  (\x+1.25,\y-0.75) -- (\x+0.25,\y+0.25)-- cycle;
   \draw[line width=0.75mm,blue] (\x,\y) -- (\x+1,\y-1);
   \fill[red] (\x+0.25,\y-0.25) circle[radius=4pt];
  \fill[red] (\x+0.75,\y-0.75) circle[radius=4pt];}};
  
    \foreach \x in {2}{ \foreach \y in {2,3}
   {\filldraw [fill=yellow!40!white, draw=black]  (\x-0.25,\y-0.25) -- (\x+0.75,\y-1.25) --  (\x+1.25,\y-0.75) -- (\x+0.25,\y+0.25)-- cycle;
   \draw[line width=0.75mm,blue] (\x,\y) -- (\x+1,\y-1);
   \fill[red] (\x+0.25,\y-0.25) circle[radius=4pt];
  \fill[red] (\x+0.75,\y-0.75) circle[radius=4pt];}};
  
   \foreach \x in {-1}{ \foreach \y in {1}
   {\filldraw [fill=yellow!40!white, draw=black]  (\x-0.25,\y-0.25) -- (\x+0.75,\y-1.25) --  (\x+1.25,\y-0.75) -- (\x+0.25,\y+0.25)-- cycle;
   \draw[line width=0.75mm,blue] (\x,\y) -- (\x+1,\y-1);
   \fill[red] (\x+0.25,\y-0.25) circle[radius=4pt];
  \fill[red] (\x+0.75,\y-0.75) circle[radius=4pt];}};
  
   \foreach \x in {-2}{ \foreach \y in {3}
   {\filldraw [fill=yellow!40!white, draw=black]  (\x-0.25,\y-0.25) -- (\x+0.75,\y-1.25) --  (\x+1.25,\y-0.75) -- (\x+0.25,\y+0.25)-- cycle;
   \draw[line width=0.75mm,blue] (\x,\y) -- (\x+1,\y-1);
  \fill[red] (\x+0.25,\y-0.25) circle[radius=4pt];
  \fill[red] (\x+0.75,\y-0.75) circle[radius=4pt];}};
  
  \foreach \x in {5}{ \foreach \y in {2,1}
   {\filldraw [fill=yellow!40!white, draw=black]  (\x-0.75,\y-0.75) -- (\x-0.25,\y-1.25) --  (\x+0.75,\y-0.25) -- (\x+0.25,\y+0.25)-- cycle;
   \draw[line width=0.75mm,blue] (\x,\y) -- (\x,\y-1);
  \fill[red] (\x+0.25,\y-0.25) circle[radius=4pt];
  \fill[red] (\x-0.25,\y-0.75) circle[radius=4pt];}};
  
  \foreach \x in {3}{ \foreach \y in {1}
   {\filldraw [fill=yellow!40!white, draw=black]  (\x-0.75,\y-0.75) -- (\x-0.25,\y-1.25) --  (\x+0.75,\y-0.25) -- (\x+0.25,\y+0.25)-- cycle;
   \draw[line width=0.75mm,blue] (\x,\y) -- (\x,\y-1);
  \fill[red] (\x+0.25,\y-0.25) circle[radius=4pt];
  \fill[red] (\x-0.25,\y-0.75) circle[radius=4pt];}};
  
  \foreach \x in {-1,-2,-3,-4}{ \foreach \y in {4,5}
   {\filldraw [fill=yellow!40!white, draw=black]  (\x-0.75,\y-0.75) -- (\x-0.25,\y-1.25) --  (\x+0.75,\y-0.25) -- (\x+0.25,\y+0.25)-- cycle;
   \draw[line width=0.75mm,blue] (\x,\y) -- (\x,\y-1);
  \fill[red] (\x+0.25,\y-0.25) circle[radius=4pt];
  \fill[red] (\x-0.25,\y-0.75) circle[radius=4pt];}};
  
  \foreach \x in {-3,-4}{ \foreach \y in {2,3}
   {\filldraw [fill=yellow!40!white, draw=black]  (\x-0.75,\y-0.75) -- (\x-0.25,\y-1.25) --  (\x+0.75,\y-0.25) -- (\x+0.25,\y+0.25)-- cycle;
   \draw[line width=0.75mm,blue] (\x,\y) -- (\x,\y-1);
  \fill[red] (\x+0.25,\y-0.25) circle[radius=4pt];
  \fill[red] (\x-0.25,\y-0.75) circle[radius=4pt];}};
  
  \foreach \x in {-1}{ \foreach \y in {2}
   {\filldraw [fill=yellow!40!white, draw=black]  (\x-0.75,\y-0.75) -- (\x-0.25,\y-1.25) --  (\x+0.75,\y-0.25) -- (\x+0.25,\y+0.25)-- cycle;
   \draw[line width=0.75mm,blue] (\x,\y) -- (\x,\y-1);
  \fill[red] (\x+0.25,\y-0.25) circle[radius=4pt];
  \fill[red] (\x-0.25,\y-0.75) circle[radius=4pt];}};
  
  \foreach \x in {-2,-4}{ \foreach \y in {1}
   {\filldraw [fill=yellow!40!white, draw=black]  (\x-0.75,\y-0.75) -- (\x-0.25,\y-1.25) --  (\x+0.75,\y-0.25) -- (\x+0.25,\y+0.25)-- cycle;
   \draw[line width=0.75mm,blue] (\x,\y) -- (\x,\y-1);
  \fill[red] (\x+0.25,\y-0.25) circle[radius=4pt];
  \fill[red] (\x-0.25,\y-0.75) circle[radius=4pt];}};
  
  \foreach \x in {0}{ \foreach \y in {5}
   {\filldraw [fill=yellow!40!white, draw=black]  (\x-0.75,\y-0.75) -- (\x-0.25,\y-1.25) --  (\x+0.75,\y-0.25) -- (\x+0.25,\y+0.25)-- cycle;
   \draw[line width=0.75mm,blue] (\x,\y) -- (\x,\y-1);
  \fill[red] (\x+0.25,\y-0.25) circle[radius=4pt];
  \fill[red] (\x-0.25,\y-0.75) circle[radius=4pt];}};
  
  \foreach \x in {0} \draw[dashed,red,very thin] (-5.9,4.25-\x/2)--(6.9,4.25-\x/2)node[anchor=west] {\fs{$\emptyset$}};
  \foreach \x in {2,4,6,8} \draw[dashed,red,very thin] (-5.9,4.25-\x/2)--(6.9,4.25-\x/2)node[anchor=west] {\fs{$\lambda^{(\x)}$}};
  \foreach \x in {1,3,5,7} \draw[dashed,red,very thin] (-5.9,4.25-\x/2)--(6.9,4.25-\x/2)node[anchor=west] {\fs{$\lambda^{(\x)}$}};
  \draw[step=1cm,gray,very thin] (-5.9,-0.9) grid (7.9,5.9);
   \draw[thick,->] (-5.5,0) -- (7.5,0) node[anchor=north west] {\fs{$t$}};
  \draw[thick,->] (0,0) -- (0,5.5) node[anchor=south east] {\fs{$x$}};
  \foreach \xy in {(-3,1),(-2,3),(-1,3),(-1,1),(0,4), (0,3),(1,4),(1,3),(2,3),(2,2),(3,2),(4,2)} {\node at \xy {$\color{blue} \bullet$};};
  \end{tikzpicture}
\end{center}
\caption{\fs{A plane overpartition (left) and its associated point configuration (right).}}
\label{fig:po_example}
\end{figure}

Plane overpartitions are in bijection with domino tilings --- see Figure~\ref{fig:po_example} (right). We hope that the ``picture is worth a thousand words'', but for more details see Section~\ref{sec:overpartitions}. In Section~\ref{sec:overpartitions} we consider the large volume limit of plane overpartitions: $q=e^{-r} \to 1$ for scaling $ri\to \x$ and $rk \to \y$. We consider the case $r N \to \infty$ only and leave $rN \to \a < \infty$ for subsequent work. A sample for $q$ close to 1 is given Figure~\ref{fig:po_large}.

The liquid region $\mathcal{L}$ is half of the amoeba of the polynomial $-1+u+v+uv$ for the right choice of coordinates $(u,v)$. The density in the liquid region is as in Proposition~\ref{Densityspp} for different $z_{\pm}$, with the explicit expression given in Section~\ref{sec:overpartitions}. This was originally obtained in \cite{vul2}, see also~\cite{vul3} for results on the convergence of height fluctuations to the Gaussian free field in the equivalent language of strict plane partitions.

We analyze the pfaffian local correlations given by Theorem~\ref{thm:onebound} in the limit and obtain an analogue of Theorem~\ref{MainTheoremSPP}.
(Remarks~\ref{rem:corrdecay} and \ref{rem:pfaffin} still hold mutatis mutandis.)

\begin{thm}
  \label{MainTheoremPO} 
    Let $(\x,\y)\in \mathcal{L}$. As $r \to 0+$ the rescaled process, where the rescaling is given precisely in Section~\ref{sec:overpartitions}, 
    \begin{itemize} 
   \item converges for $\x = 0$ to a pfaffian process with kernel 
   \begin{equation}\label{Kpop}
    \begin{split}
    \mathsf{K}_{1,1}(\i, \k; \i', \k') &= 
    \int_{\gamma_{+}} \left(\frac{1-z} {1+{z}}\right)^{\i+\i'+1} (-1)^{\i} z^{\k - \k' - 1}  \frac{\dx z}{2 \pi \im}, \\
        \mathsf{K}_{1,2}(\i, \k; \i', \k') &= \int_{\gamma_{\pm}}\left(\frac{1-z} {1+{z}}\right)^{\i-\i'}  z^{\k' - \k - 1} \frac{\dx z}{2 \pi \im},\\
    \mathsf{K}_{2,2}(\i, \k; \i', \k') &= \int_{\gamma_{-}} \left(\frac{1-z} {1+{z}}\right)^{-\i-\i'-1}  (-1)^{\i}z^{\k' - \k - 1} \frac{\dx z}{2 \pi \im}
    \end{split}
  \end{equation}
   \noindent where $\gamma_+$ is taken if and only if $\i \geq \i'$ and $\gamma_{\pm}$ are defined as in Theorem~\ref{MainTheoremSPP};
  \item converges for $\x \neq 0$ to  a determinantal process with kernel $\mathsf{K}_{1,2}$ as in \eqref{Kpop}.
     \end{itemize}
   \end{thm}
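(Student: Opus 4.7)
The plan is to combine Theorem~\ref{thm:onebound} with Proposition~\ref{prop:H-HV}: for the $HV$-ascending Schur process, the pfaffian correlation kernel of \eqref{eq:Kint} becomes fully explicit, with $F = F_{HV}$. Choosing the single-variable specializations $x_k$ appropriately so that the weight $x_1^{|\lambda^{(1)}|} x_2^{|\lambda^{(2)}|-|\lambda^{(1)}|} \cdots$ reproduces the $q^{\mathrm{Volume}}$ measure on plane overpartitions (essentially $x_k = q^{1/2}$), the large-$N$ limit is obtained by a saddle point analysis of the three double contour integrals as $r = -\log q \to 0+$. The key observation is that $r \log F_{HV}(\lfloor \x / r\rfloor, z)$ converges under the rescaling analogous to \eqref{scaling} to an explicit action $S(z;\x,\y)$, whose saddle point equation $\partial_z S = 0$ is algebraic and has the conjugate pair of roots $z_\pm(\x,\y)$ displayed in the statement whenever $(\x,\y) \in \mathcal{L}$.

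First I would deform the contours $|z|=r$ and $|w|=r'$ in \eqref{eq:Kint}, while preserving the constraint $r \lessgtr r'$ forced by the ordering of $i,i'$, to steepest descent paths through $z_\pm(\x,\y)$; these limiting contours are precisely the $\gamma_\pm$ of the statement, the sign being dictated by whether the path passes to the left or to the right of the origin. The $\kappa$-factors in \eqref{eq:kappasimp}, being $r$-independent, contribute only a polynomial prefactor evaluated at the saddles. For $\x \neq 0$ the entries $K_{1,1}$ and $K_{2,2}$ carry the products $F(i,z)F(i',w)$ and $(F(i,z)F(i',w))^{-1}$ respectively, whose moduli at the saddle points are exponentially large in $1/r$; after absorbing the resulting blow-up into a gauge (as in Proposition~\ref{ConjPfaff}), these entries go to zero, and only $K_{1,2}$, in which the exponential factors cancel inside the ratio $F(i,z)/F(i',w)$, survives. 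This yields precisely the determinantal kernel $\mathsf{K}_{1,2}$ of \eqref{Kpop}, where the characteristic $((1-z)/(1+z))^{\i-\i'}$ factor arises as the scaling limit of the alternating $(1\pm x_k z)$ products inherent to $F_{HV}$.

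For $\x = 0$ the index $i = \i$ is kept finite, so $F_{HV}(\i,z)$ has no exponentially large factor; the double integrals then decouple into products of single contour integrals times the $\kappa$-factors, and all three entries survive to give \eqref{Kpop}, with the diagonal entries acquiring the extra $(1-z)/(1+z)$ or $(1+z)/(1-z)$ multiplier inherited from $\kappa_{1,1}, \kappa_{2,2}$ evaluated in the limit. The main obstacle, as already in the symmetric plane partitions case, is the contour deformation at $\x = 0$: the original contours of Theorem~\ref{thm:onebound} lie in $|z|,|w|>1$, while the limiting $\gamma_\pm$ may cross the unit circle near the poles of the $\kappa$-factors at $z,w=\pm 1$ and at $zw=1$. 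Carefully tracking the residues picked up during the deformation, one checks that they combine with the bulk saddle point contribution to produce exactly the free boundary factors in \eqref{Kpop}. With Remarks~\ref{rem:corrdecay} and \ref{rem:pfaffin} ported verbatim from the symmetric plane partition setting, this completes the derivation.
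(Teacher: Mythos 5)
Your proposal follows essentially the same route as the paper's proof (Theorems~\ref{limitofK12}, \ref{limitofK11andK22}, \ref{PODetPP}, \ref{POPfaffPP}): plug $F_{HV}$ from Proposition~\ref{prop:H-HV} into the kernel of Theorem~\ref{thm:onebound}, conjugate away the exponential growth as in Proposition~\ref{ConjPfaff}, perform a dilogarithm saddle-point analysis as $r \to 0+$, and read off the limit kernel from the residues picked up while deforming the contours. Two parenthetical claims you make, however, are wrong in a way that would derail the computation if taken at face value.

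First, the specialization. Taking all $x_k = q^{1/2}$ does \emph{not} give the $q^{\mathrm{Volume}}$ measure: the weight $\prod_k x_k^{|\lambda^{(k)}|-|\lambda^{(k-1)}|}$ then telescopes to $q^{|\lambda^{(2N)}|/2}$, which depends only on the final partition. The correct choice, used in the paper, is $x_{2j-1} = x_{2j} = q^{N+1-j}$; Abel summation then gives weight $q^{\sum_j |\lambda^{(2j)}|}$, and that exponent equals the sum of the integer entries of the plane overpartition.

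Second, the mechanism at $\x = 0$. The double contour integrals for $\mathsf{K}_{1,1}$ and $\mathsf{K}_{2,2}$ do not ``decouple into products of single contour integrals times the $\kappa$-factors''; $\kappa_{1,1}(z,w)$ does not factorize, so nothing decouples. After conjugation, $\Re S$ vanishes on the saddle circle, and the bulk double integral over the deformed steepest-descent contours still tends to zero as $r \to 0+$ at $\x = 0$ exactly as it does for $\x > 0$. What makes $\x=0$ special is that both saddle points lie on $|z|=1$, so pushing the $z$-contour to its steepest-descent position forces it across the locus $zw = 1$; the \emph{entire} surviving contribution to $\mathsf{K}_{1,1}$ (resp.\ $\mathsf{K}_{2,2}$) is the residue of $\kappa_{1,1}$ (resp.\ $\kappa_{2,2}$) at $z = 1/w$, integrated over $\gamma_+$ (resp.\ $\gamma_-$). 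There is no ``bulk saddle point contribution'' left to combine with. Concretely, the exponent $\i+\i'+1$ in \eqref{Kpop} arises because at the residue locus the product $F_{HV}(\i, z)\,F_{HV}(\i', w)$ collapses to $F_{HV}(\i,1/w)F_{HV}(\i',w)$, contributing $((1-w)/(1+w))^{\i+\i'}$, and the residue of $\kappa$ itself supplies one further $(1-w)/(1+w)$ together with the $(-1)^{\i}$ sign; this is built directly into the residue, not an extra multiplier tacked onto two decoupled integrals.
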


\begin{figure} 
\begin{center}
\includegraphics[scale = 0.10]{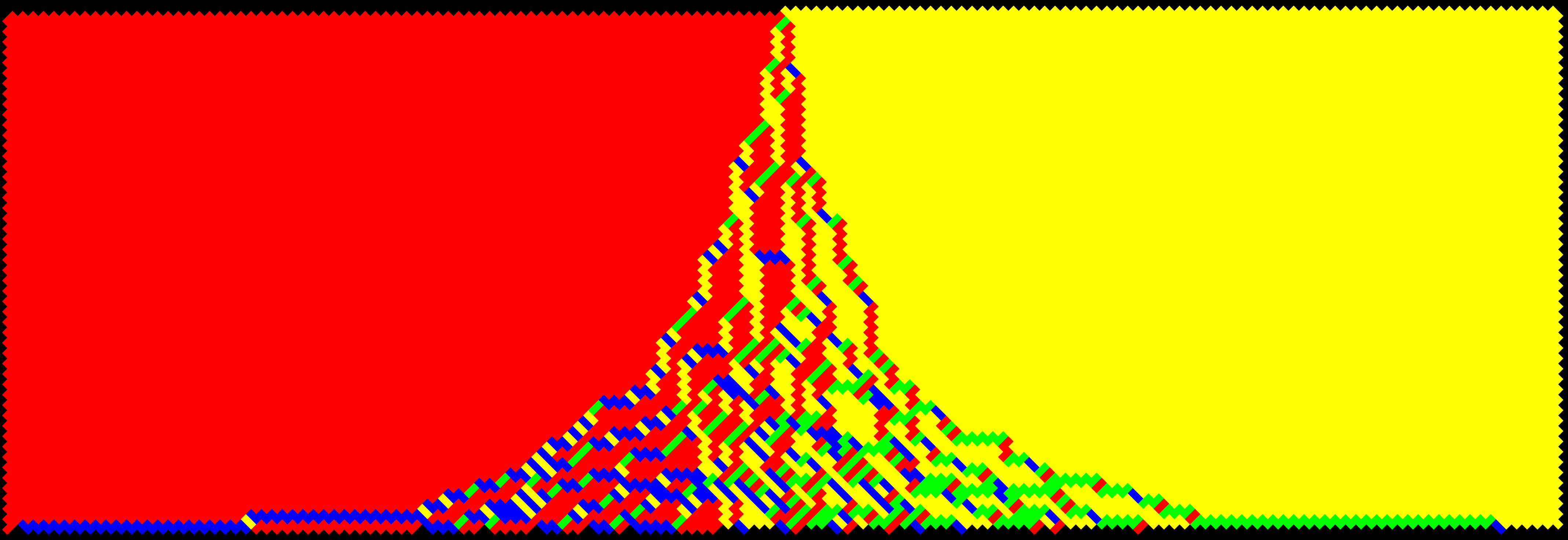} 
\end{center}
\caption{\fs{A large random plane overpartition shown as a domino tiling.}}
\label{fig:po_large}
\end{figure}

\section{Free fermions}
\label{sec:fermions}

This section is devoted to the proof of the results presented in
Section~\ref{sec:fbcorr} via the free fermion formalism.

\subsection{Preliminaries}

\subsubsection{Notations and reminders}
\label{sec:fermrem}

Here we recall the standard material which is useful for the study of
the usual Schur process \cite{or}, following the notation conventions
of \cite[Appendix A]{oko}. See also \cite{jm,djm,az} and \cite[Chapter
14]{kac}.

\paragraph{Admissible sets and partitions.}

Let us denote by $\Z':=\Z+1/2$ the set of half-integers. We say that a
subset $S$ of $\Z'$ is \emph{admissible} if it has a greatest element
and its complement has a least element. Equivalently, we require
$S_+ := S \setminus \Z'_{< 0}$ and $S_- := \Z'_{< 0} \setminus S$ to
be both finite. We denote by $\s$ the set of admissible subsets. To
each $S \in \s$ we may associate its \emph{charge} $C(S)$ and its
\emph{energy} $H(S)$ defined by
\begin{equation}
  \label{eq:chdef}
  C(S) := |S_+| - |S_-|, \qquad
  H(S) := \sum_{\substack{k \in S_+}} k - \sum_{\substack{k \in S_-}} k.
\end{equation}
Clearly the energy is nonnegative and vanishes if and only if $S=\Z'_{< 0}$.
The set of partitions being denoted $\Par$, there is a well-known
bijection between $\s$ and $\Par \times \Z$ (the ``combinatorial
boson--fermion correspondence''): to each partition $\lambda \in \Par$
and integer $c \in \Z$, we associate the admissible set
\begin{equation}
  \mathfrak{S}(\lambda,c):=\{ \lambda_i - i + 1/2 + c ,\ i \geq 1\}.
\end{equation}
It is not difficult to see that $\mathfrak{S}(\lambda,c)$ has charge
$c$ and energy $|\lambda|+c^2/2$.

\paragraph{Fock space and fermionic operators.}

The \emph{fermionic Fock space}, denoted $\F$, is the infinite-dimensional
Hilbert space spanned by the orthonormal basis $\ket{S}, S
\in \s$. Here we use the bra--ket notation and will denote by
$\bra{\cdot}$ dual vectors. We may think of a basis vector $\ket{S}$
as the semi-infinite wedge product
\begin{equation}
  \ket{S} = \underline{s_1} \wedge \underline{s_2} \wedge \underline{s_3} \wedge \cdots
\end{equation}
where $s_1 > s_2 > s_3 > \cdots$ are the elements of $S$, and
$\{\underline{k},\ k \in \Z'\}$ is an orthonormal basis of some
smaller ``one-particle'' vector space. For $\lambda$ a partition and
$c$ an integer we introduce the shorthand notations
\begin{equation}
  \ket{\lambda,c} := \ket{\mathfrak{S}(\lambda,c)}, \qquad
  \ket{\lambda} := \ket{\lambda,0}, \qquad
  \ket{c} := \ket{\emptyset,c}.
\end{equation}
The vector $\vv$ is called the \emph{vacuum}. The charge and energy
naturally become diagonal operators acting on $\F$, which we still
denote by $C$ and $H$ respectively.
We also denote by $R$ the
\emph{shift operator} such that $R \ket{S} = \ket{S+1}$ (i.e.\ all
elements of $S$ are incremented by $1$).

We now define the \emph{fermionic operators}: for $k \in \Z'$, let us
define the operators $\psi_k$ and $\psi_k^*$ by
\begin{equation}
  \label{eq:psidef}
  \psi_k \ket{S} :=
  \begin{cases}
    0, & \text{if $k \in S$} \\
    (-1)^j \ket{S \cup \{k\}}, & \text{if $k \notin S$} \\
  \end{cases}, \qquad
  \psi_k^* \ket{S} :=
  \begin{cases}
    (-1)^j \ket{S \setminus \{k\}},  & \text{if $k \in S$} \\
    0, & \text{if $k \notin S$} \\
  \end{cases}
\end{equation}
where $j = |S \cap \Z'_{> k}|$. In the semi-infinite wedge picture,
$\psi_k$ corresponds to the exterior multiplication by $\underline{k}$
on the left, and $\psi_k^*$ to its adjoint operator. They satisfy the
canonical anticommutation relations
\begin{equation}
  \label{eq:car}
  \{ \psi_k, \psi_\ell^* \} = \delta_{k,\ell}, \qquad
  \{ \psi_k, \psi_\ell \} = \{ \psi_k^*, \psi_\ell^* \} = 0, \qquad
  k,\ell \in \Z'
\end{equation}
where $\{a,b\}:=ab+ba$. We also define the generating series
\begin{equation}
 \label{eq:psigendef}
 \psi(z) := \sum_{k \in \Z'} \psi_k z^k, \qquad
 \psi^*(w) := \sum_{k \in \Z'} \psi^*_k w^{-k}.
\end{equation}
Observe that $\psi_k \vv = \psi_{-k}^* \vv = 0$ for $k<0$. We now
recall Wick's lemma in a form suitable for future generalizations --- see
for instance \cite[Appendix B]{bbccr} for a proof.
\begin{lem}[Wick's lemma]
  \label{lem:wickorig}
  Let $\varPsi$ be the vector space spanned by (possibly infinite linear combinations of) the $\psi_k$ and $\psi_k^*$,
  $k \in \Z'$.  For $\phi_1,\ldots,\phi_{2n} \in \varPsi$, we have
  \begin{equation}
    \label{eq:wickorig}
    \vcv \phi_1 \cdots \phi_{2n} \vv = \pf A
  \end{equation}
  where $A$ is the antisymmetric matrix defined by $A_{ij}:=\vcv \phi_i
  \phi_j \vv$ for $i<j$.
\end{lem}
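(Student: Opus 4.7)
The plan is to proceed by induction on $n$, modelled on the Laplace-type expansion of the pfaffian along its first row,
\begin{equation*}
\pf A = \sum_{j=2}^{2n} (-1)^{j} A_{1j} \, \pf A^{[\hat{1},\hat{j}]},
\end{equation*}
where $A^{[\hat{1},\hat{j}]}$ denotes $A$ with rows and columns $1$ and $j$ removed. The base case $n=1$ is tautological, as $\vcv \phi_1 \phi_2 \vv = A_{12}$ equals the pfaffian of a $2 \times 2$ antisymmetric matrix with top-right entry $A_{12}$. The inductive step reduces to establishing
\begin{equation*}
\vcv \phi_1 \phi_2 \cdots \phi_{2n} \vv = \sum_{j=2}^{2n} (-1)^{j} A_{1j} \, \vcv \phi_2 \cdots \hat{\phi_j} \cdots \phi_{2n} \vv,
\end{equation*}
after which the induction hypothesis applied to each smaller vacuum expectation closes the argument.

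To prove this recursion, I would first decompose each operator as $\phi_i = \phi_i^+ + \phi_i^-$, where $\phi_i^+$ collects the contributions of $\psi_k$ with $k<0$ and $\psi_k^*$ with $k>0$ (the ``annihilation part'', satisfying $\phi_i^+ \vv = 0$) and $\phi_i^-$ collects the rest (the ``creation part'', satisfying $\vcv \phi_i^- = 0$). Since $\vcv \phi_1^- = 0$, one has $\vcv \phi_1 \cdots \phi_{2n} \vv = \vcv \phi_1^+ \phi_2 \cdots \phi_{2n} \vv$. I would then commute $\phi_1^+$ rightward through the chain using the relation $\phi_1^+ \phi_j = \{\phi_1^+, \phi_j\} - \phi_j \phi_1^+$. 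The crucial point is that every anticommutator $\{\phi_1^+, \phi_j\}$ is a \emph{scalar} (a $c$-number), because the only nontrivial anticommutation in~\eqref{eq:car} is $\{\psi_k,\psi_\ell^*\}=\delta_{k,\ell}$. Iterating the move yields
\begin{equation*}
\phi_1^+ \phi_2 \cdots \phi_{2n} = \sum_{j=2}^{2n} (-1)^{j} \{\phi_1^+, \phi_j\} \, \phi_2 \cdots \hat{\phi_j} \cdots \phi_{2n} + (-1)^{2n-1} \phi_2 \cdots \phi_{2n} \, \phi_1^+,
\end{equation*}
where the trailing term contributes $0$ to the vacuum expectation thanks to $\phi_1^+ \vv = 0$.

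It remains to identify the scalar $\{\phi_1^+, \phi_j\}$ with the matrix entry $A_{1j} = \vcv \phi_1 \phi_j \vv$. Expanding $\phi_j = \phi_j^+ + \phi_j^-$, one checks that $\{\phi_1^+, \phi_j^+\} = 0$ (the indices of any $\psi_k$ and $\psi_\ell^*$ inside $\phi^+$ have strictly opposite signs, so $\delta_{k,\ell}$ vanishes), hence $\{\phi_1^+, \phi_j\} = \{\phi_1^+, \phi_j^-\}$. On the other hand, the annihilation properties of $\phi^\pm$ force $\vcv \phi_1 \phi_j \vv = \vcv \phi_1^+ \phi_j^- \vv$, which in turn equals $\{\phi_1^+,\phi_j^-\}$ because $\vcv \phi_j^- \phi_1^+ \vv = 0$. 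Putting everything together gives the claimed row expansion, and the induction concludes.

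The main technical subtleties are twofold. The sign bookkeeping in iterating $\phi_1^+\phi_j=\{\phi_1^+,\phi_j\}-\phi_j\phi_1^+$ must land exactly on the $(-1)^j$ prescribed by the pfaffian expansion, which I would verify by a direct induction on the position $j$. More importantly, since $\varPsi$ allows infinite linear combinations, one must check that every matrix element $A_{ij} = \vcv \phi_i \phi_j \vv$ converges and that the rearrangements used above are valid; this is automatic here because for each ordered pair $(k,\ell)$ only the unique term $\{\psi_k,\psi_\ell^*\}$ contributes, so the double sum defining $\{\phi_1^+,\phi_j\}$ collapses to a single convergent series.
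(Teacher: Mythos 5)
Your argument is correct, and it is exactly the classical proof of Wick's lemma: split each operator into annihilation and creation parts $\phi_i=\phi_i^++\phi_i^-$ with $\phi_i^+\vv=0$ and $\vcv\phi_i^-=0$, replace $\phi_1$ by $\phi_1^+$, push it through using the fact that $\{\phi_1^+,\phi_j\}$ is a $c$-number, and match the resulting recursion with the Laplace expansion of the pfaffian along its first row. The paper does not spell out a proof of Lemma~\ref{lem:wickorig} itself (it cites an external reference), but the decomposition $\varPsi=\varPsi^+\oplus\varPsi^-$ you introduce is identical to the one the paper uses to prove the generalized statement for two free boundaries, Lemma~\ref{lem:wickgen}, where the only extra complication is that $\phi^+$ no longer annihilates the (extended free) boundary state, forcing the additional ``ping-pong'' iterations; your proof is precisely the degenerate ($u=v=0$) case where the ping-pong terminates after one step, so you are on the same route as the paper.
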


\paragraph{Bosonic and vertex operators.}

The \emph{bosonic operators} $\alpha_n$ are defined by
\begin{equation}
  \alpha_n := \sum_{k \in \Z'} \psi_{k-n} \psi_k^*, \qquad
  n = \pm 1, \pm 2, \ldots
\end{equation}
and $\alpha_0$ is the charge operator. We have
$\alpha_n^*=\alpha_{-n}$, $\alpha_n \vv=0$ for $n>0$, and the
commutation relations
\begin{equation} \label{eq:alphacommut}
  [\alpha_n,\alpha_m] = n \delta_{n,-m},  \qquad
  [\alpha_n,\psi(z)] = z^n \psi(z), \qquad
  [\alpha_n,\psi^*(w)] = - w^n \psi^*(w).
\end{equation}
For $\rho$ a specialization of the algebra of symmetric functions, we
define the \emph{vertex operators} $\Gamma_\pm(\rho)$ by
\begin{equation}
  \Gamma_\pm(\rho) := \exp \left( \sum_{n \geq 1} \frac{p_n(\rho) \alpha_{\pm n}}{n} \right).
\end{equation}
When
$x$ is a variable, we denote by $\Gamma_\pm(x)$ (resp.\
$\Gamma'_\pm(x)$) the vertex operators for the specialization in the
single variable $x$ (resp.\ its dual $\bar{x}$), for which
$p_n(x)=x^n$ (resp.\ $p_n(\bar{x})=(-1)^{n-1} x^n$). Clearly,
$\Gamma_-(\rho)$ is the adjoint of $\Gamma_+(\rho)$ for any real
$\rho$, and
\begin{equation}
  \label{eq:gamcancel}
  \Gamma_+(\rho) \vv = \vv, \qquad \vcv \Gamma_-(\rho) = \vcv.
\end{equation}
Given two specializations $\rho,\rho'$, as
$p_n(\rho \cup \rho')=p_n(\rho) + p_n(\rho')$, we have
\begin{equation}
  \label{eq:gambranch}
  \Gamma_+(\rho) \Gamma_+(\rho') = \Gamma_+(\rho \cup \rho') =
  \Gamma_+(\rho') \Gamma_+(\rho).
\end{equation}
The commutation relations \eqref{eq:alphacommut} and the Cauchy
identity \eqref{eq:hdef} imply that
\begin{equation}
  \label{eq:gamcomm}
  \Gamma_+(\rho) \Gamma_-(\rho') = H(\rho;\rho')
  \Gamma_-(\rho') \Gamma_+(\rho)
\end{equation}
while
\begin{equation}
  \label{eq:gampsi}
  \Gamma_\pm(\rho) \psi(z) = H(\rho;z^{\pm 1}) \psi(z) \Gamma_\pm(\rho), \qquad
  \Gamma_\pm(\rho) \psi^*(w) = H(\rho;w^{\pm 1})^{-1} \psi^*(w) \Gamma_\pm(\rho).
\end{equation}
These latter relations always make sense at a formal level; at an
analytic level they require that the parameter of $H(\rho;\cdot)$ be
within its disk of convergence. The crucial property of vertex
operators is that skew Schur functions arise as their matrix elements,
namely
\begin{equation}
  \label{eq:schurelem}
  \bra{\lambda,c} \Gamma_+(\rho) \ket{\mu,c'} =
  \bra{\mu,c'} \Gamma_-(\rho) \ket{\lambda,c} =
  \begin{cases}
    s_{\mu/\lambda}(\rho), & \text{if $c=c'$,} \\
    0, & \text{otherwise.}
  \end{cases}
\end{equation}
This results from \eqref{eq:gampsi}, Wick's lemma and the Jacobi--Trudi
identity. 

Finally, we will use the fact that the fermionic operators
can be reconstructed from the vertex operators and the charge and
shift operators $C$ and $R$ --- see e.g.\ \cite[Theorem~14.10]{kac} --- a fact we will refer to as the
\emph{boson--fermion correspondence}.
\begin{prop} \label{prop:boson_fermion}
  We have:
  \begin{equation} \label{eq:boson_fermion}
      \psi(z) = z^{C-\frac{1}{2}} R\, \Gami(z) \Gatpl\left(-z^{-1}\right), \qquad \psi^*(w) = R^{-1} w^{-C+\frac{1}{2}} \Gatmi(-w) \Gapl\left(w^{-1}\right).
  \end{equation}
\end{prop}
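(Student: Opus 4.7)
The two identities in \eqref{eq:boson_fermion} constitute the standard vertex-operator construction of free fermions, and the plan is to verify that the right-hand side of the first identity satisfies the defining properties of $\psi(z)$; the second identity then follows either by the symmetric argument with creation and annihilation swapped, or by taking the adjoint using $\Gami(z)^*=\Gapl(z)$, $R^*=R^{-1}$, $C^*=C$, and $\Gatpl(-z^{-1})^*=\Gatmi(-z^{-1})$, together with a change of dummy variable.

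Set $X(z) := z^{C-\frac{1}{2}} R\, \Gami(z) \Gatpl(-z^{-1})$. Three properties have to be checked. First, $X(z)$ shifts the charge by $+1$: this is immediate from $[C,R] = R$ and the fact that $\Gami$, $\Gatpl$, and $z^{C-1/2}$ all preserve the charge. Second, $[\alpha_n, X(z)] = z^n X(z)$ for every $n \neq 0$, matching the commutator of $\alpha_n$ with $\psi(z)$ recorded in \eqref{eq:alphacommut}. For $n \geq 1$, the factor $\Gatpl(-z^{-1})$ involves only positive-mode bosons commuting with $\alpha_n$, while $[\alpha_n, \Gami(z)] = z^n \Gami(z)$ follows from $[\alpha_n, \alpha_{-m}] = n \delta_{n,m}$ applied inside the exponential. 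For $n \leq -1$ one first uses $p_m(\overline{-z^{-1}}) = -z^{-m}$ to rewrite $\Gatpl(-z^{-1}) = \exp(-\sum_{m \geq 1} z^{-m}\alpha_m/m)$ and the symmetric computation applies. Third, $X(z)\vv = \psi(z)\vv$: by \eqref{eq:gamcancel}, $\Gatpl(-z^{-1})\vv = \vv$; by \eqref{eq:schurelem}, $\Gami(z)\vv = \sum_{m \geq 0} z^m \ket{(m)}$; applying $R$ and then $z^{C-1/2}$ produces $z^{1/2}\sum_{m \geq 0} z^m \ket{(m),1}$, which coincides with $\psi(z)\vv = \sum_{k \in \Z'_{>0}} z^k \ket{(k-1/2),1}$ read off directly from \eqref{eq:psidef} (the fermionic sign $(-1)^j$ is trivial since $j=0$ in each term).

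To deduce $X(z) = \psi(z)$, let $D(z) := X(z) - \psi(z)$. The commutation identities above give $D(z)\alpha_{-n} = (\alpha_{-n} - z^{-n}) D(z)$ for each $n \geq 1$, and an analogous relation for $R^{\pm 1}$; iterating, $D(z)$ annihilates every vector of the form $\alpha_{-n_1} \cdots \alpha_{-n_k} R^c \vv$. Such vectors span $\F$, and since $D(z)\vv = 0$ by the third step, we conclude $D(z) = 0$. The main technical obstacle is the bookkeeping: verifying the sign conventions in the dual-specialization identity $\Gatpl(-z^{-1}) = \exp(-\sum_{m \geq 1} z^{-m}\alpha_m/m)$, and making sure that the charge-shifting factor $R$ (resp.\ $R^{-1}$) sits on the correct side of the vertex-operator product in each of the two formulas, which is what ultimately fixes their asymmetric final form.
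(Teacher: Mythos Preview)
Your proof is correct. The paper does not supply its own proof of this proposition: it simply states the identities and refers to \cite[Theorem~14.10]{kac} for the boson--fermion correspondence. Your argument---checking that the right-hand side has the correct charge shift, the correct commutators with all $\alpha_n$, and the correct action on the vacuum, and then using that the vectors $\alpha_{-n_1}\cdots\alpha_{-n_k}R^c\vv$ span $\F$---is the standard way to establish such vertex-operator identities and is essentially what one finds in Kac's treatment. One small point worth making explicit: when you invoke ``an analogous relation for $R^{\pm 1}$'', the concrete fact needed is that both $\psi(z)$ and $X(z)$ satisfy $R\,(\cdot)\,R^{-1}=z^{-1}(\cdot)$ (for $\psi(z)$ this follows from $R^{-1}\psi_k R=\psi_{k-1}$, quoted later in the paper; for $X(z)$ it follows from $Rz^{C-1/2}=z^{C-3/2}R$ and $[R,\Gamma_\pm]=0$), which gives $D(z)R^c\vv=z^cR^cD(z)\vv=0$. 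Also, the spanning statement for $\alpha_{-n_1}\cdots\alpha_{-n_k}R^c\vv$ is itself a nontrivial input (irreducibility of each charge sector as a Heisenberg module), so if you want the proof to be fully self-contained you should either cite it or note the standard dimension-count argument.
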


\subsubsection{Free boundary states and connection with the Schur process}
\label{sec:fermfree}

Following \cite[Section 5.3]{bcc}, given two parameters $u,v$, we
introduce the \emph{free boundary states}
\begin{equation} \label{def:free_boundary}
  \vfv := \sum_{\lambda \in \Par} v^{|\lambda|} \ket{\lambda}, \qquad
  \ufcv := \sum_{\lambda \in \Par} u^{|\lambda|} \bra{\lambda}.
\end{equation}
Both are (respectively left and right) eigenvectors of the charge
operator $C$ with eigenvalue $0$. For $u=v=0$, we recover respectively
the vacuum $\vv$ and its dual $\vcv$. The following proposition
generalizes \eqref{eq:gamcancel} to arbitrary $u,v$, and is
essentially a reformulation of~\cite[Proposition 10]{bcc}.

\begin{prop}[Reflection relations]
  We have
  \begin{equation}
    \label{eq:refl}
    \Gamma_+(\rho) \vfv = \tilde{H}(v \rho) \Gamma_-(v^2 \rho) \vfv, \qquad
    \ufcv \Gamma_-(\rho) = \tilde{H}(u \rho) \Gamma_+(u^2 \rho) \ufcv
  \end{equation}
  with $\tilde{H}$ defined as in \eqref{eq:hdef}.
\end{prop}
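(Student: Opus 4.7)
The plan is to verify the first identity by testing both sides against the orthonormal basis $\{\bra{\mu}\}_{\mu\in\Par}$ of the charge-zero sector of $\F$. Since $\vfv$ is a charge-zero vector and the vertex operators $\Gamma_\pm(\rho)$ commute with the charge operator, both sides of the claimed equality lie in the charge-zero sector, so checking that their matrix elements against every $\bra{\mu}$ agree is sufficient. The advantage is that, by \eqref{eq:schurelem}, each such matrix element is expressible directly in terms of skew Schur functions, which reduces the fermionic identity to a purely symmetric-function statement.

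Carrying out the computation, the left-hand side matrix element is
\begin{equation*}
  \bra{\mu}\Gamma_+(\rho)\vfv \;=\; \sum_{\lambda\in\Par} v^{|\lambda|}\,\bra{\mu}\Gamma_+(\rho)\ket{\lambda}
  \;=\; \sum_{\lambda\supseteq\mu} v^{|\lambda|}\, s_{\lambda/\mu}(\rho),
\end{equation*}
while on the right-hand side, using the homogeneity $s_{\mu/\lambda}(v^2\rho) = v^{2(|\mu|-|\lambda|)}\,s_{\mu/\lambda}(\rho)$, one finds
\begin{equation*}
  \tilde H(v\rho)\,\bra{\mu}\Gamma_-(v^2\rho)\vfv \;=\; v^{|\mu|}\,\tilde H(v\rho) \sum_{\lambda\subseteq\mu} v^{|\mu|-|\lambda|}\, s_{\mu/\lambda}(\rho).
\end{equation*}
Dividing both expressions by $v^{|\mu|}$ and absorbing the remaining factors of $v$ into the specialization (i.e.\ setting $\sigma:=v\rho$ and using homogeneity once more), the claim reduces to the classical skew Littlewood identity
\begin{equation*}
  \sum_{\lambda\supseteq\mu} s_{\lambda/\mu}(\sigma) \;=\; \tilde H(\sigma) \sum_{\lambda\subseteq\mu} s_{\mu/\lambda}(\sigma),
\end{equation*}
which is a standard consequence of the Cauchy and dual Cauchy identities \cite{mac} and which is, in essence, the content of \cite[Proposition~10]{bcc}.

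The second reflection relation follows by adjunction: applying the $*$-involution to the first identity and relabeling $v\to u$, one uses $\vfv^{*}=\ufcv$ together with $\Gamma_\pm(\rho)^{*}=\Gamma_\mp(\rho)$ (valid because the specializations are real) and the reality of $\tilde H(u\rho)$ to recover the formula for $\ufcv\Gamma_-(\rho)$. The only nontrivial point is the bookkeeping of homogeneity factors and the reversal of the containment $\lambda\supseteq\mu$ vs.\ $\lambda\subseteq\mu$; once these are tracked, the reduction to the skew Littlewood identity is immediate, and the factor $\tilde H(v\rho)$ on the right-hand side of \eqref{eq:refl} is seen to originate precisely from the Littlewood prefactor.
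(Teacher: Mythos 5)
Your proof is correct and follows essentially the same route as the paper's: project both sides onto the standard basis $\bra{\mu}$, use \eqref{eq:schurelem} to express the matrix elements via skew Schur functions, rescale by homogeneity to absorb all powers of $v$ into the specialization, and thereby reduce the claim to the skew Littlewood identity $\sum_{\lambda\supseteq\mu} s_{\lambda/\mu}=\tilde H\cdot\sum_{\lambda\subseteq\mu} s_{\mu/\lambda}$, which is exactly \cite[I.5, Ex.~27(a)]{mac} cited in the paper (the second relation then follows by adjunction, as you say). The only trivial imprecision is the abbreviated statement $\vfv^*=\ufcv$, which of course holds only after the relabeling $v\mapsto u$, as the surrounding text makes clear.
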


\begin{proof}
  When projected on the standard basis, these relations amount to the
  identity \cite[I.5, Ex.\ 27(a), (3), p.93]{mac} (specialized at
  $\rho$), which itself amounts to the Littlewood identity. See
  also~\cite{bcc} for a combinatorial proof when $\rho$ is the
  specialization in a single variable; by iteration it then holds for
  an arbitrary number of variables, hence holds for any
  specialization.
\end{proof}

Armed with all these definitions, we are now in position to make the
connection with the free boundary Schur process. The remainder of this
section is basically an adaptation of the arguments in \cite{or} (see
also \cite{bbccr}) to the case of free boundaries.

\begin{prop}
  \label{prop:fermschur}
  The partition of the free boundary Schur process is given by
  \begin{equation}
    \label{eq:Zgam}
    Z = \ufcv \Gamma_+(\rho_1^+) \Gamma_-(\rho_1^-) \cdots
    \Gamma_+(\rho_N^+) \Gamma_-(\rho_N^-) \vfv.
  \end{equation}
  For $U$ a finite subset of $\{1,\ldots,N\} \times \Z'$, the
  probability $\varrho(U)$ that the point process
  $\mathfrak{S}(\vec{\lambda})$ contains $U$ reads
  \begin{equation}
    \label{eq:ZU}
    \varrho(U) =
    \frac{Z_U}{Z}
  \end{equation}
  where $Z_U$ is obtained from the product in the right hand side of
  \eqref{eq:Zgam} by inserting, for each $(i,k) \in U$, the operator
  $\psi_k \psi_k^*$ between $\Gamma_+(\rho_i^+)$ and
  $\Gamma_-(\rho_i^-)$. (If several points of $U$ have the same
  abscissa, the $\psi_k \psi_k^*$ can be inserted in any order since
  they commute.)
\end{prop}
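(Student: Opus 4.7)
The plan is to expand the operator product between the two free boundary states in the basis $\{\ket{\lambda,c}\}$ and identify each term with a weighted sequence in \eqref{eq:seq}. First I would insert a resolution of the identity $\sum_{S \in \s} \ket{S}\bra{S}$ between every pair of adjacent vertex operators in the right-hand side of \eqref{eq:Zgam}. Since $\ufcv$ and $\vfv$ are supported on charge-zero states by \eqref{def:free_boundary}, and since each $\Gamma_\pm(\rho)$ preserves charge (because $\alpha_{\pm n}$ do), only admissible sets of the form $\mathfrak{S}(\lambda,0)$ for $\lambda \in \Par$ contribute; I can thus replace the insertions by $\sum_\lambda \ket{\lambda}\bra{\lambda}$.

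Applying the matrix element identity \eqref{eq:schurelem} to each resulting factor produces, for every sequence $\mu^{(0)} \subset \lambda^{(1)} \supset \mu^{(1)} \subset \cdots \supset \mu^{(N)}$, a product $\prod_{k=1}^N s_{\lambda^{(k)}/\mu^{(k-1)}}(\rho_k^+)\, s_{\lambda^{(k)}/\mu^{(k)}}(\rho_k^-)$. The outermost factors $\ufcv \cdots \ket{\mu^{(0)}}$ and $\bra{\mu^{(N)}} \cdots \vfv$ contribute precisely $u^{|\mu^{(0)}|}$ and $v^{|\mu^{(N)}|}$ by \eqref{def:free_boundary}. This recovers \eqref{eq:fbschur} summed over all sequences \eqref{eq:seq}, which is \eqref{eq:Zgam}.

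For the second assertion, the key observation is that the operator $\psi_k \psi_k^*$ is the orthogonal projector onto the subspace spanned by $\{\ket{S} : k \in S\}$; indeed $\psi_k \psi_k^* \ket{S} = \ket{S}$ if $k \in S$ and $0$ otherwise, directly from \eqref{eq:psidef}. Inserting $\psi_k \psi_k^*$ between $\Gamma_+(\rho_i^+)$ and $\Gamma_-(\rho_i^-)$ therefore restricts the sum over the intermediate state $\ket{\lambda^{(i)}}$ to those $\lambda^{(i)}$ whose associated admissible set $\mathfrak{S}(\lambda^{(i)},0)$ contains $k$, i.e.\ to those $\lambda^{(i)}$ such that $k = \lambda^{(i)}_j - j + 1/2$ for some $j\geq 1$, which by \eqref{eq:pointconfig} is exactly the event $(i,k) \in \mathfrak{S}(\vec{\lambda})$. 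Multiple insertions at the same abscissa $i$ commute because the operators $\psi_k \psi_k^*$ and $\psi_{k'} \psi_{k'}^*$ commute (a direct consequence of \eqref{eq:car}), so the order of insertions at a given $i$ is irrelevant. Summing over $U$, this yields exactly $Z \cdot \varrho(U)$, proving \eqref{eq:ZU}.

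The routine point to be careful about is the justification of the exchange of sums and operator products, i.e.\ the convergence of the inserted expansions; under the simplifying assumptions of Section~\ref{sec:fbcorr} (nonnegative specializations with $H(\rho_k^\pm;\cdot)$ analytic on a disk of radius $R>1$, $u,v\leq 1$ and $uv<1$), all sums are absolutely convergent, so this presents no real obstacle. The main conceptual step — and the only one that truly uses the structure of free boundary states — is the identification of the boundary contributions as $u^{|\mu^{(0)}|}$ and $v^{|\mu^{(N)}|}$, which is immediate from \eqref{def:free_boundary}. The reflection relations \eqref{eq:refl} are not needed here; they will only be invoked later to evaluate $Z$ explicitly as in Proposition~\ref{prop:fbz}.
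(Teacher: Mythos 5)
Your argument is correct and matches the paper's own (very brief) proof: the paper likewise appeals to $\Gamma_\pm$ as transfer matrices via \eqref{eq:schurelem} and to $\psi_k\psi_k^*$ being the projector onto $\{\ket{S}:k\in S\}$. You simply spell out the basis expansion, the charge bookkeeping, and the boundary contributions in more detail, which is a welcome expansion of a terse original.
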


\begin{proof}
  This is a basic application of the transfer-matrix method: by
  \eqref{eq:schurelem}, the vertex operators $\Gamma_\pm$ can be seen
  as transfer matrices for the Schur process, and the operator
  $\psi_k \psi_k^*$ ``measures'' whether there is a point at ordinate
  $k$ (we have $\psi_k \psi_k^* \ket{S}=\ket{S}$ if $k \in S$ and $0$
  otherwise).
\end{proof}

As a useful warm-up, we may compute the partition function of the free
boundary Schur process.

\begin{proof}[Proof of Proposition~\ref{prop:fbz}]
  We apply the method introduced in \cite[Section~5.3]{bcc}, which we
  colloquially call \emph{ping-pong}.  The reader might find useful to
  consult this reference for more details.
  
  In order to evaluate the product~\eqref{eq:Zgam}, the first step
  consists in ``commuting'' the $\Gamma_+$ to the right and the
  $\Gamma_-$ to the left, using \eqref{eq:gamcomm} and
  \eqref{eq:gambranch}, to yield
  \begin{equation}
    Z = \prod_{1 \leq k \leq \ell \leq N} H(\rho_k^+;\rho_\ell^-) \times
    \ufcv \Gamma_-(\rho^-) \Gamma_+(\rho^+) \vfv
  \end{equation}
  with $\rho^\pm$ as in \eqref{eq:rhopmdef}. For $u=v=0$, i.e.\ for
  the original Schur process with ``vacuum'' boundary conditions, the
  rightmost factor is equal to $1$ by \eqref{eq:gamcancel}. For
  general $u$, the reflection relations allow to write first
  \begin{equation}
    \ufcv \Gamma_-(\rho^-) \Gamma_+(\rho^+) \vfv = \tilde{H}(u \rho^-)
    \ufcv \Gamma_+(\hat{\rho}) \vfv
  \end{equation}
  with $\hat{\rho}=\rho^+ \cup u^2 \rho^-$. For $v=0$, i.e.\ for the
  pfaffian Schur process, the rightmost factor equals $1$ and we are
  done. For $uv>0$, we need to use again the reflection relations
  infinitely many times to ``bounce'' the $\Gamma$ back and forth:
  \begin{equation}
    \ufcv \Gamma_+(\hat{\rho}) \vfv = \prod_{n \geq 1} \tilde{H}(u^{n-1}v^n
    \hat{\rho}) \times \uv.
  \end{equation}
  Here we assume that $uv<1$ so that the argument of the ``bouncing''
  $\Gamma$ tends to the zero specialization as the number of
  reflections tends to infinity. From the definition of the free
  boundary states we have
  \begin{equation}
    \label{eq:uvprod}
    \uv = \sum_{\lambda \in \Par} (uv)^{|\lambda|}
    = \prod_{n \geq 1} \frac{1}{1-(uv)^n}.
  \end{equation}
  Collecting all factors and rearranging them in a more symmetric
  manner (using the relation
  $\tilde{H}(\rho \cup \rho') =
  \tilde{H}(\rho)\tilde{H}(\rho')H(\rho;\rho')$
  and other easy properties), we end up with the desired expression
  \eqref{eq:fbz} for the partition function.
\end{proof}

We may perform a similar manipulation to rewrite $Z_U$ in
\eqref{eq:ZU}, by playing ping-pong with the $\Gamma$'s. The factors
arising from commutations between $\Gamma$'s or from reflection
relations are the same as in $Z$, and thus cancel when we normalize to
get the correlation function $\varrho(U)$. The fermionic operators
$\psi_k$ and $\psi^*_k$ get ``conjugated'' by the $\Gamma$'s crossing
them. If we list the elements of $U$ by increasing abscissa as
$(i_1,k_1),\ldots,(i_n,k_n)$, then we end up with
\begin{equation}
  \label{eq:rhopsi}
  \varrho(U) = \frac{\ufcv \Psi_{k_1}(i_1) \Psi^*_{k_1}(i_1) \cdots 
  \Psi_{k_n}(i_n) \Psi^*_{k_n}(i_n) \vfv}{\uv}
\end{equation}
where
\begin{equation}
  \Psi_k(i) := \Ad \left( \Gamma_+(\rho_i^\rightarrow) \Gamma_-(\rho_i^\leftarrow)^{-1} \right) \cdot \psi_k, \qquad \Psi^*_k(i) := \Ad \left( \Gamma_+(\rho_i^\rightarrow) \Gamma_-(\rho_i^\leftarrow)^{-1} \right) \cdot \psi^*_k.
\end{equation}
Here $\Ad$ denotes the adjoint action
\begin{equation}
  \Ad(A) \cdot B := A B A^{-1}
\end{equation}
and the specializations $\rho_i^\rightarrow$ and $\rho_i^\leftarrow$ are given by
\begin{equation}
  \label{eq:rhoarrowdef}
  \rho_i^\rightarrow := \bigcup_{\ell=1}^i \rho_i^+ \cup \bigcup_{n \geq 1} \left(u^{2n} v^{2n-2} \rho^- \cup u^{2n} v^{2n} \rho^+ \right), \qquad
  \rho_i^\leftarrow := \bigcup_{\ell=i}^N \rho_i^- \cup \bigcup_{n \geq 1} \left(u^{2n-2} v^{2n} \rho^+ \cup u^{2n} v^{2n} \rho^- \right).
\end{equation}
The intuitive meaning of all this is the following: given a fermionic
operator $\psi_k$ inserted at position $i$ in $Z_U$, the operator
$\Gamma_+(\rho_i^\rightarrow)$ corresponds to the product of all
$\Gamma_+$'s that will cross it from left to right, similarly
$\Gamma_-(\rho_i^\leftarrow)$ corresponds to all $\Gamma_-$'s that
will cross it from right to left. The operator $\Psi_k(i)$ is the
operator resulting after all these commutations have been made. Note
that the ordering of $\Gamma$'s in $\Ad$ is irrelevant since they
commute up to a scalar factor and that, by \eqref{eq:gampsi},
$\Psi_k(i)$ (resp.\ $\Psi^*_k(i)$) is a linear combination of $\psi$'s
(resp.\ $\psi^*$'s).

In the case $u=v=0$ \cite{or}, Okounkov and Reshetikhin were able to
rewrite \eqref{eq:rhopsi} as a determinant using Wick's lemma (we
obtain a pfaffian from Lemma~\ref{lem:wickorig}, but the matrix $A$
has a specific block structure so its pfaffian reduces to a
determinant of size $n$).  From there, they could conclude that the
point process $\mathfrak{S}(\vec{\lambda})$ is determinantal. This
does not extend straightforwardly in the case of free boundaries
(``naive'' generalizations of Wick's lemma are false), and we will
explain how to circumvent this problem in the next section.

\subsection{Pfaffian correlations in the presence of free boundaries}
\label{sec:fermgenwick}

\subsubsection{Extended free boundary states}

The starting point is to observe that a general basis vector
$\ket{\lambda}$ of charge $0$ can be written in the form
\begin{equation}
  \ket{\lambda} = (-1)^{j_1+\cdots+j_r+r/2} \psi_{i_1} \cdots \psi_{i_r} \psi_{j_1}^*
  \cdots \psi_{j_r}^* \vv
\end{equation}
where $i_1>\cdots>i_r$ are arbitrary positive half-integers, namely
the elements of $\mathfrak{S}(\lambda,0)_+$, and $j_1>\cdots>j_r$ are
arbitrary negative half-integers, namely the elements of
$\mathfrak{S}(\lambda,0)_-$. These numbers are closely related to the
Frobenius coordinates of $\lambda$, and $r$ is the size of the Durfee
square of $\lambda$. Multiplying by
$v^{|\lambda|}=v^{i_1+\cdots+i_s-j_1-\cdots-j_s}$ and summing over
all possible pairs of sequences, we get
\begin{equation}
  \label{eq:vfsum}
  \vfv = \left( \sum_{r \geq 0} \sum_{\substack{i_1>\cdots>i_r>0 \\ 0>j_1>\cdots>j_r}}
  \tilde{\psi}_{i_1}(v,t) \cdots \tilde{\psi}_{i_r}(v,t) \tilde{\psi}_{j_1}(v,t) \cdots \tilde{\psi}_{j_r}(v,t) \right) \vv
\end{equation}
with
\begin{equation}
  \label{eq:tpsidef}
  \tilde{\psi}_i(v,t) =
  \begin{cases}
    t^{1/2} v^i \psi_i & \text{for $i \in \Z'_{>0}$,} \\
    (-1)^{i+1/2} t^{-1/2} v^{-i}  \psi_i^* & \text{for $i \in \Z'_{<0}$.} \\
  \end{cases}
\end{equation}
Here $t$ is a parameter which has no effect in \eqref{eq:vfsum} but
will be useful in the following.  The big sum looks nasty
but it turns out that it can be generated in a rather elegant manner.

\begin{prop}
  \label{prop:eleg}
  We have
  \begin{equation}
    \vfv = \Pi_0 e^{X(v,t)} \vv
  \end{equation}
  where $\Pi_0$ denotes the projector onto the fermionic subspace of
  charge $0$, and where
  \begin{equation}
    \label{eq:Xdef}
    X(v,t) :=
    \sum_{\substack{(k,\ell) \in \Z'^2\\k>\ell}} \tilde{\psi}_k(v,t) \tilde{\psi}_\ell(v,t).
  \end{equation}
\end{prop}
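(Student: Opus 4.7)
The plan is to exploit the fact that the operators $\{\tilde{\psi}_k(v,t)\}_{k\in\Z'}$ behave as generators of a topological Grassmann algebra. Indeed, for $k>0$ they are scalar multiples of $\psi_k$ and for $k<0$ scalar multiples of $\psi_k^*$; since a positive and a negative half-integer can never coincide, the canonical anticommutation relations~\eqref{eq:car} immediately give $\{\tilde{\psi}_k,\tilde{\psi}_\ell\}=0$ for all $k,\ell\in\Z'$ (and in particular $\tilde{\psi}_k^2=0$). This is the setting in which the quadratic operator $X(v,t)$ admits a clean Wick-type exponential expansion.

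More concretely, for any finite even-sized $T=\{p_1<p_2<\cdots<p_{2m}\}\subset\Z'$, a standard Grassmann calculation (equivalently, repeated application of Wick's Lemma~\ref{lem:wickorig}) shows that the coefficient of the increasingly ordered product $\tilde{\psi}_{p_1}\tilde{\psi}_{p_2}\cdots\tilde{\psi}_{p_{2m}}$ in $e^{X(v,t)}$ equals $\pf(A|_T)$, where $A|_T$ is the antisymmetric matrix whose $(i,j)$-entry for $i<j$ is the coefficient of $\tilde{\psi}_{p_i}\tilde{\psi}_{p_j}$ in $X(v,t)$. Because $X(v,t)=\sum_{k>\ell}\tilde{\psi}_k\tilde{\psi}_\ell$ becomes $-\sum_{p<q}\tilde{\psi}_p\tilde{\psi}_q$ after anticommuting each factor, every such entry equals $-1$. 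I would then check by induction on $m$ that $\pf(A|_T)=(-1)^m$: expanding along the first row gives $\pf(A|_T)=(-1)^{m-1}\cdot(-1)\cdot\sum_{j=2}^{2m}(-1)^j=(-1)^m\cdot 1$.

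It remains to handle the projector $\Pi_0$. Since $\tilde{\psi}_k$ raises the charge by $+1$ for $k>0$ and lowers it by $1$ for $k<0$, only those $T$ containing exactly $m$ positive and $m$ negative elements contribute after projection. For such a $T=J\sqcup I$ with $I=\{i_1>\cdots>i_m>0\}$ and $J=\{0>j_1>\cdots>j_m\}$, the increasing enumeration is $(j_m,\ldots,j_1,i_m,\ldots,i_1)$, and the reversal permutation rearranges $\tilde{\psi}_{p_1}\cdots\tilde{\psi}_{p_{2m}}$ into $\tilde{\psi}_{i_1}\cdots\tilde{\psi}_{i_m}\tilde{\psi}_{j_1}\cdots\tilde{\psi}_{j_m}$, producing the anticommutation sign $(-1)^{m(2m-1)}=(-1)^m$. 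This cancels exactly the $(-1)^m$ of the pfaffian, so summing over $m\ge 0$ and over all admissible $I,J$ reproduces~\eqref{eq:vfsum}.

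The mildly delicate point, and the one to be most careful about, is the simultaneous sign bookkeeping: a pfaffian of the ``all $-1$'s above the diagonal'' antisymmetric matrix on one side, and a fermionic reordering from increasing to Frobenius-type order on the other; a miscount by $(-1)^m$ anywhere would spoil the identity. Everything else reduces to routine formal manipulations in the free fermion algebra.
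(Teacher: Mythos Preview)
Your proof is correct and follows essentially the same approach as the paper: both exploit the pairwise anticommutativity of the $\tilde{\psi}_k$'s to expand $e^{X(v,t)}$ as a sum over even subsets of $\Z'$, each contributing a single decreasingly ordered product with coefficient $1$. The paper phrases this via the product form $\prod_{k>\ell}(1+\tilde{\psi}_k\tilde{\psi}_\ell)$ and asserts that the $(2m-1)!!$ perfect matchings of a fixed support cancel down to one survivor, whereas you make this cancellation explicit by identifying the coefficient as the pfaffian of the all-$(-1)$ matrix and computing it as $(-1)^m$; the two packagings are equivalent, and your version has the advantage of making the sign bookkeeping (which the paper leaves to the reader) fully transparent.
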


\begin{rem}
  For $|v|<1$, the sum in \eqref{eq:Xdef} is convergent in the space
  of bounded operators on $\F$.
\end{rem}

Define the \emph{extended free boundary state} $\vtfv$ as
\begin{equation}
  \label{eq:vtfvdef}
  \vtfv := e^{X(v,t)} \vv.
\end{equation}
Proposition~\ref{prop:eleg} is then an immediate consequence of the
following lemma.

\begin{lem}
  \label{lem:vfexp}
  The extended free boundary state decomposes in the canonical basis
  of $\F$ as
  \begin{equation}
    \label{eq:vfexp}
    \vtfv = \sum_{\substack{S \in \s \\C(S)\text{ is even}}} t^{C(S)/2} v^{H(S)} \ket{S}.
  \end{equation}
\end{lem}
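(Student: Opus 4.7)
The plan is to expand $e^{X(v,t)}\vv$ directly as a Taylor series and match term-by-term with the claimed sum over admissible sets.

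The starting observation is that all $\tilde\psi_k$ anticommute pairwise. Indeed, for $k\neq\ell$ in $\Z'$, case analysis on the signs of $k$ and $\ell$ reduces $\{\tilde\psi_k,\tilde\psi_\ell\}$ to a scalar times one of $\{\psi_k,\psi_\ell\}$, $\{\psi_k,\psi_\ell^*\}$, or $\{\psi_k^*,\psi_\ell^*\}$, all of which vanish by the CAR \eqref{eq:car} (using $k\neq\ell$). In particular $\tilde\psi_k^2=0$, and disjoint-index quadratic pieces $\tilde\psi_k\tilde\psi_\ell$ commute with each other.

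Expanding $e^{X(v,t)}\vv = \sum_{n\geq 0} X(v,t)^n\vv/n!$ and using the commutativity of disjoint pieces, each $X^n\vv/n!$ collapses to a sum over \emph{unordered} matchings $M=\{(k_1,\ell_1),\dots,(k_n,\ell_n)\}$ with $k_i>\ell_i$, of $\tilde\psi_{k_1}\tilde\psi_{\ell_1}\cdots\tilde\psi_{k_n}\tilde\psi_{\ell_n}\vv$ (the $n!$ orderings of the pairs all agree, and terms with repeated indices vanish). Grouping by vertex set $T=\{t_1>\cdots>t_{2n}\}\subset\Z'$ and anticommuting each product into decreasing order yields
\[
e^{X(v,t)}\vv \;=\; \sum_{\substack{T \subset \Z' \\ |T|\ \mathrm{even}}}\Big(\sum_{M\ \mathrm{matching\ of\ }T}\mathrm{sgn}(M)\Big)\,\tilde\psi_{t_1}\cdots\tilde\psi_{t_{|T|}}\vv,
\]
where $\mathrm{sgn}(M)$ is the sign of the permutation reordering $(k_1,\ell_1,\dots,k_n,\ell_n)$ into decreasing order.

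The crucial combinatorial identity is $\sum_{M}\mathrm{sgn}(M)=1$ for every $T$ with $|T|$ even. I would recognise this sum as the pfaffian of the antisymmetric matrix $J$ of size $|T|$ with $J_{ij}=1$ for $i<j$, and prove $\mathrm{pf}(J)=1$ by induction on $|T|/2$: the first-row pfaffian expansion gives $\mathrm{pf}(J) = \sum_{j=2}^{|T|}(-1)^j\,\mathrm{pf}(J^{\{1,j\}}) = \sum_{j=2}^{|T|}(-1)^j = 1$ using the inductive hypothesis (the alternating sum of odd length beginning with $+1$ telescopes to $+1$). Keeping track of signs in this expansion is the main, modest technical point.

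To finish, I match each $T$ of even size with the admissible set $S$ having $S_+=T\cap\Z'_{>0}$ and $S_-=T\cap\Z'_{<0}$; this is a bijection with $\{S\in\s:C(S)\text{ even}\}$. Unpacking \eqref{eq:tpsidef} together with the sign convention $\ket{S}=(-1)^{j_1+\cdots+j_q+q/2}\psi_{i_1}\cdots\psi_{i_p}\psi_{j_1}^*\cdots\psi_{j_q}^*\vv$ (the formula displayed in the paper in the charge-zero case, which extends to arbitrary charge since the positive-index creations $\psi_{i_k}$ introduce no extra sign on the vacuum) shows $\tilde\psi_{t_1}\cdots\tilde\psi_{t_{|T|}}\vv = t^{C(S)/2}v^{H(S)}\ket{S}$, yielding \eqref{eq:vfexp}.
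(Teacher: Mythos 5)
Your proof is correct and follows essentially the same route as the paper's: expanding $e^{X(v,t)}\vv$ (your Taylor-series form is equivalent to the paper's factorization $\prod_{k>\ell}(1+\tilde\psi_k\tilde\psi_\ell)$), grouping contributions by vertex set, cancelling signs over perfect matchings, and reading off the scalar factors from \eqref{eq:tpsidef}. The one place you add genuine content is the matching-sign cancellation: where the paper writes ``it can be seen that all these contributions cancel with one another, except one,'' you prove the identity cleanly by recognizing the sum as $\pf(J)$ for the all-ones upper-triangular antisymmetric matrix and running the first-row pfaffian expansion by induction, and you also explicitly verify that the charge-zero sign formula for $\ket{\lambda}$ extends to arbitrary charge (which is correct, since the positive-index $\psi_{i_k}$ contribute no sign on the vacuum).
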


\begin{proof}
  We first observe that the $\tilde{\psi}_i$ all anticommute with one
  another, hence all the terms in $X(v,t)$ commute. Furthermore the
  square of each of these terms vanishes, hence
  \begin{equation}
    e^{X(v,t)} = \prod_{k>\ell} \left( 1 + \tilde{\psi}_k(v,t) \tilde{\psi}_\ell(v,t) \right).
  \end{equation}
  By expanding this product, we obtain a sum over all possible finite
  sets $\{(k_1,\ell_1),\ldots,(k_r,\ell_r)\}$ of pairs of elements of
  $\Z'$ such that $k_s>\ell_s$ for all $s$. Clearly, the contribution
  from sets containing twice the same element of $\Z'$ in different
  pairs vanishes, hence the sets with nonzero contribution can be
  identified with finite partial matchings
  $\{\{k_1,\ell_1\},\ldots,\{k_r,\ell_r\}\}$ of $\Z'$. Let
  $M=\{m_1,\ldots,m_{2r}\}$ be a subset of $\Z'$ with an even number
  $2r$ of elements: each one of the $(2r-1)!!$ perfect matchings of
  $M$ will then arise exactly once in the sum. Using again the
  anticommutativity of the $\tilde{\psi}_i$, we see that all these
  perfect matchings have the same contribution up to a sign. It can be
  seen that all these contributions cancel with one another, except one,
  and we arrive at
  \begin{equation}
    \begin{split}
      e^{X(v,t)} &= \sum_{r \geq 0} \sum_{
        \substack{(m_1,\ldots,m_{2r}) \in \Z'^{2r} \\
          m_1>\cdots>m_{2r}}}
      \tilde{\psi}_{m_1}(v,t) \cdots \tilde{\psi}_{m_{2r}}(v,t) \\
      &= \sum_{\substack{r',r'' \geq 0\\r'+r'' \text{even}}}
      \sum_{\substack{i_1>\cdots>i_{r'}>0 \\
          0>j_1>\cdots>j_{r''}}} \tilde{\psi}_{i_1}(v,t) \cdots
      \tilde{\psi}_{i_{r'}}(v,t) \tilde{\psi}_{j_1}(v,t) \cdots
      \tilde{\psi}_{j_{r''}}(v,t)
    \end{split}
  \end{equation}
  (the second line is a simple relabelling of the sum, where we split
  the positive and negative indices).  This expression can be compared
  with \eqref{eq:vfsum}: we now have a sum over all admissible subsets
  $S$ of $\Z'$ with an even charge (with $S_+=\{i_1,\ldots,i_{r'}\}$
  and $S_-=\{j_1,\ldots,j_{r''}\}$). Multiplying by $\vv$ and plugging
  back the definition of $\tilde{\psi}$, we obtain the wanted
  expression \eqref{eq:vfexp}.
\end{proof}

  By reindexing the sum in \eqref{eq:vfexp} as a sum over all
  partitions and all even charges, we can express $\vtfv$ in terms of
  $\vfv$ and of the shift operator $R$, namely
  \begin{equation}
    \label{eq:vtRv}
    \vtfv = \sum_{c \in 2\Z} t^{c/2} v^{c^2/2} R^{c} \vfv.
  \end{equation}
Of course we may define a dual extended free boundary state
\begin{equation}
  \label{eq:utfcvdef}
  \utfcv = \vcv e^{X(u,t)^*} = \sum_{\substack{S\text{ admissible}\\C(S)\text{ even}}} t^{C(S)/2} u^{H(S)} \bra{S} = \sum_{c \in 2\Z} t^{c/2} u^{c^2/2} \ufcv R^{-c}
\end{equation}
and its scalar product with $\vtfv$ reads
\begin{equation}
  \label{eq:utfvtf}
  \utfvtf = \left( \sum_{c \in 2\Z} t^c (uv)^{c^2/2} \right) \uv =
  \theta_3(t^2;(uv)^4) (uv;uv)^{-1}_\infty
\end{equation}
which is finite for $uv<1$ (see Appendix~\ref{sec:theta} for
reminders on theta functions and $q$-Pochhammer symbols).

\begin{rem}
  The reflection relations \eqref{eq:refl} still hold when we replace
  $\vfv$ by $\vtfv$ and $\ufcv$ by $\utfcv$ (this is immediate from
  \eqref{eq:vtRv}, \eqref{eq:utfvtf} and the commutation between $R$
  and the $\Gamma$'s). The extended free boundary states also satisfy
  the remarkable \emph{fermionic reflection relations}
  \begin{equation}
    \label{eq:fermrefl}
    \psi(z) \vtfv = t^{-1} \frac{v-z}{v+z} \psi^*\left(\frac{v^2}{z}\right) \vtfv, \qquad
    \utfcv \psi^*(w) = t^{-1} \frac{u-w^{-1}}{u-w^{-1}} \utfcv \psi\left(\frac{1}{u^2 w}\right).
  \end{equation}
  which can be checked using the boson--fermion correspondence
  \eqref{eq:boson_fermion}.
\end{rem}

\subsubsection{Wick's lemma for one free boundary}

The notion of extended free boundary state yields a new proof that,
for $u=0$, the point process $\mathfrak{S}(\vec{\lambda})$ is pfaffian
--- the original proof by Borodin and Rains \cite{br} relied instead on
a pfaffian analogue of the Eynard--Mehta theorem. Let us observe that,
for $u=0$, we have $\uv=1$ and the expression \eqref{eq:rhopsi} for
the correlation function $\varrho(U)$ can be rewritten as
\begin{equation}
  \varrho(U) = \vcv \Psi_{k_1}(i_1) \Psi^*_{k_1}(i_1) \cdots 
  \Psi_{k_n}(i_n) \Psi^*_{k_n}(i_n) \vtfv
  \label{eq:rhopsione}
\end{equation}
(use $\vfv=\Pi_0 \vtfv$, and drop the projector $\Pi_0$ since we
multiply on the left by a quantity of charge $0$).

\begin{lem}[Wick's lemma for one free boundary]
  \label{lem:wickone}
  Let $\varPsi$ be as before the vector space spanned by (possibly
  infinite linear combinations of) the $\psi_k$ and $\psi_k^*$,
  $k \in \Z'$.  For $\phi_1,\ldots,\phi_{2n} \in \varPsi$, we have
  \begin{equation}
    \label{eq:wickone}
    \vcv \phi_1 \cdots \phi_{2n} \vtfv = \pf A
  \end{equation}
  where $A$ is the antisymmetric matrix defined by $A_{ij}:=\vcv \phi_i
  \phi_j \vtfv$ for $i<j$.
\end{lem}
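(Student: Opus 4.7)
The plan is to reduce this statement to the ordinary Wick lemma (Lemma~\ref{lem:wickorig}) by exploiting the quadratic structure of $X(v,t)$: the operator $X(v,t)$ is a bilinear in the fermions, so conjugation by $e^{X(v,t)}$ is a ``Bogoliubov transformation'' that preserves the space $\varPsi$, while $e^{X(v,t)}$ itself is trivial against $\vcv$.

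First, I would check that $\vcv e^{X(v,t)}=\vcv$. From \eqref{eq:tpsidef}, each $\tilde{\psi}_k(v,t)$ is (up to a scalar) $\psi_k$ for $k>0$ and $\psi_k^*$ for $k<0$. Since $\vcv\psi_k=0$ for $k>0$ and $\vcv\psi_k^*=0$ for $k<0$, a quick case analysis on the signs of $k,\ell$ in $\tilde\psi_k(v,t)\tilde\psi_\ell(v,t)$ with $k>\ell$ shows that $\vcv \tilde\psi_k(v,t)\tilde\psi_\ell(v,t)=0$ in every case. Hence $\vcv X(v,t)=0$, and therefore $\vcv e^{X(v,t)}=\vcv$.

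Second, because $X(v,t)$ is quadratic in the fermions, the anticommutation relations \eqref{eq:car} guarantee that $[\phi,X(v,t)]\in\varPsi$ whenever $\phi\in\varPsi$. Iterating, the adjoint series
\begin{equation}
  \phi'_i := \mathrm{Ad}\bigl(e^{-X(v,t)}\bigr)\cdot\phi_i = \sum_{m\geq 0}\frac{1}{m!}\,\mathrm{ad}_{-X(v,t)}^{m}(\phi_i)
\end{equation}
defines an element of $\varPsi$ (convergence in the space of bounded operators on $\F$ being ensured for $|v|<1$, as noted after Proposition~\ref{prop:eleg}). This yields the commutation rule $\phi_i\,e^{X(v,t)}=e^{X(v,t)}\phi'_i$; applying it repeatedly and then using the first step gives
\begin{equation}
  \vcv \phi_1\cdots\phi_{2n}\vtfv = \vcv e^{X(v,t)}\phi'_1\cdots\phi'_{2n}\vv = \vcv \phi'_1\cdots\phi'_{2n}\vv.
\end{equation}
The ordinary Wick lemma then yields $\pf A'$ with $A'_{ij}:=\vcv\phi'_i\phi'_j\vv$ for $i<j$. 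Running the same identity backwards at the two-operator level,
\begin{equation}
  A'_{ij}=\vcv\phi'_i\phi'_j\vv=\vcv e^{X(v,t)}\phi'_i\phi'_j\vv=\vcv\phi_i\phi_j\,e^{X(v,t)}\vv=\vcv\phi_i\phi_j\vtfv=A_{ij},
\end{equation}
finishes the proof.

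The main obstacle is purely analytic: justifying the Bogoliubov conjugation $\phi\mapsto\phi'=e^{-X(v,t)}\phi\,e^{X(v,t)}$ as a well-defined linear endomorphism of $\varPsi$ when the $\phi_i$ are possibly infinite linear combinations (e.g.\ the generating series $\psi(z),\psi^*(w)$). One should verify that, for $|v|<1$, each $\mathrm{ad}_{-X(v,t)}(\psi_k)$ and $\mathrm{ad}_{-X(v,t)}(\psi^*_k)$ is a convergent linear combination of the fermionic modes, and that the resulting bilinear pairing $\vcv\phi\,\phi'\vtfv$ is continuous in $\phi,\phi'$, so that both sides of \eqref{eq:wickone} depend continuously on the $\phi_i$ and the identity extends from the dense family $\{\psi_k,\psi^*_k\}$ to all of $\varPsi$.
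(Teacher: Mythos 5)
Your proof is correct and follows essentially the same route as the paper: absorb $e^{X(v,t)}$ into the left vacuum using $\vcv X(v,t)=0$, note that conjugation by $e^{X(v,t)}$ preserves $\varPsi$ because $X$ is quadratic in the fermions, apply the ordinary Wick lemma, and then identify the resulting two-point functions with the entries $A_{ij}$ by reversing the same manipulation. The only differences are presentational: you spell out the verification of $\vcv X(v,t)=0$ and raise the convergence issue for infinite linear combinations in $\varPsi$, both of which the paper treats implicitly.
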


\begin{proof}
  Recall the definition \eqref{eq:vtfvdef} of $\vtfv$. We ``commute''
  the operator $e^{X(v,t)}$ to the left, where it is absorbed by the
  left vacuum $\vcv$ (since $\vcv X(v,t)=0$), and get
  \begin{equation}
    \varrho(U) = \vcv \tilde\phi_1 \cdots \tilde\phi_{2n} \vv
  \end{equation}
  where $\tilde\phi_i:=e^{-X(v,t)} \phi_i e^{X(v,t)}$.
 
  Now, the key observation is that, for any $\phi \in \varPsi$, the
  commutator $[X(v,t),\phi]$ is also in $\varPsi$ by the bilinearity
  of $X$ and the canonical anticommutation relations. Therefore,
  $\varPsi$ is stable under conjugation by $e^{-X(v,t)}$, and thus we
  may apply the usual Wick's lemma~\ref{lem:wickorig} to conclude that
  $\varrho(U)=\pf A$ with
  \begin{equation}
    A_{ij} = \vcv \tilde\phi_i \tilde\phi_j \vv = \vcv \phi_i
  \phi_j \vtfv, \qquad i<j.
  \end{equation}
\end{proof}

By applying Lemma~\ref{lem:wickone} to \eqref{eq:rhopsione}, and being
careful about the ordering between operators (note that
$\{\Psi_{k}(i),\Psi_{k'}(i')\}=0$ for all $i,i',k,k'$, while
$\{\Psi_{k}(i),\Psi^*_{k'}(i')\}=0$ for $i=i'$ and $k \neq k'$), we
find that $\mathfrak{S}(\vec{\lambda})$ is a pfaffian point process
whose correlation kernel entries are given by
\begin{equation}
  \label{eq:KSpsione}
  \begin{split}
    K_{1,1}(i, k; i', k') = \vcv \Psi_{k}(i) \Psi_{k'}(i') \vtfv,
    \quad & \qquad
    K_{2,2}(i, k; i', k') = \vcv \Psi^*_{k}(i) \Psi^*_{k'}(i') \vtfv,\\
    K_{1,2}(i, k; i', k') = - K_{2,1}(i', k'; i, k) &=
    \begin{cases}
      \vcv \Psi_{k}(i) \Psi^*_{k'}(i') \vtfv, & \text{if $i \leq i'$}, \\
      - \vcv \Psi^*_{k'}(i') \Psi_{k}(i) \vtfv, & \text{otherwise}.
    \end{cases}
  \end{split}
\end{equation}
Note that the dependency on $t$ is trivial ($K_{1,1}$, $K_{1,2}$ and
$K_{2,2}$ are respectively proportional to $t^{-1}$, $t^0$ and $t^1$),
and can be eliminated by row/column multiplications in the pfaffian,
so that the point process $\mathfrak{S}(\vec{\lambda})$ is independent
of $t$ as it should.

To complete the proof of Theorem~\ref{thm:onebound}, we need to
rewrite the entries of the correlation kernel in the form of double
contour integrals. This will be done in Section~\ref{sec:contourep},
but before we discuss the case of two free boundaries.

\subsubsection{Wick's lemma for two free boundaries}

In the case $uv>0$, when we rewrite \eqref{eq:rhopsi} in terms of the
extended free boundary states, it is no longer possible to ``drop''
the projectors $\Pi_0$ as was done in \eqref{eq:rhopsione}. But, from
\eqref{eq:vtRv} and \eqref{eq:utfcvdef}, we see that $\varrho(U)$ is
proportional to the $t^0$ term in
$\utfcv \Psi_{k_1}(i_1) \Psi^*_{k_1}(i_1) \cdots \Psi_{k_n}(i_n)
\Psi^*_{k_n}(i_n) \vtfv$. This quantity turns out to be pfaffian.

\begin{lem}[Wick's lemma for two free boundaries]
  \label{lem:wickgen}
  Let $\varPsi$ be again the vector space spanned by (possibly infinite linear combinations of) the $\psi_k$
  and $\psi_k^*$, $k \in \Z'$.  For $\phi_1,\ldots,\phi_{2n} \in
  \varPsi$ and $uv<1$, we have
  \begin{equation}
    \label{eq:wickgen}
    \frac{\utfcv \phi_1 \cdots \phi_{2n} \vtfv}{\utfvtf} = \pf A
  \end{equation}
  where $A$ is the antisymmetric matrix defined by
  $A_{ij}=\utfcv \phi_i \phi_j \vtfv/\utfvtf$ for $i<j$.
\end{lem}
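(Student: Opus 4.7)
The plan is to mirror the argument of Lemma~\ref{lem:wickone} while now accommodating both exponentials $e^{X(u,t)^*}$ and $e^{X(v,t)}$; the new complication is that neither of them can be absorbed directly into its adjacent bare vacuum, since $\vcv X(u,t)^* \neq 0$ and $X(v,t) \vv \neq 0$ in general (the cancellations that made Lemma~\ref{lem:wickone} work happen only on one side at a time).

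As a first step I would perform the conjugation trick of Lemma~\ref{lem:wickone} on the right-hand exponential: set $\tilde\phi_i := e^{-X(v,t)} \phi_i e^{X(v,t)}$. Bilinearity of $X(v,t)$ together with the canonical anticommutation relations gives $\tilde\phi_i \in \varPsi$, and moving $e^{X(v,t)}$ to the left through the $\phi_i$'s produces
\[
\utfcv \phi_1 \cdots \phi_{2n} \vtfv \;=\; \vcv \, e^{X(u,t)^*} e^{X(v,t)} \, \tilde\phi_1 \cdots \tilde\phi_{2n} \vv.
\]
Expanding both exponentials as power series (convergent thanks to $uv<1$), each resulting term is a vacuum expectation of a finite product of operators from $\varPsi$, to which the ordinary Wick lemma (Lemma~\ref{lem:wickorig}) applies and returns a pfaffian.

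The decisive step is the resummation of this double sum of pfaffians. Each perfect matching contributing to a given term either (i)~matches $X$-operators only among themselves, or (ii)~links some $\tilde\phi_i$ to some $\tilde\phi_j$ via a chain of $X$-operators. Summing over the expansion orders $m,m'$, contributions of type (i) factorize and resum to $\vcv e^{X(u,t)^*} e^{X(v,t)} \vv = \utfvtf$ (the $n=0$ instance of the statement being proved). For each pair $(i,j)$, the chains of type (ii) resum to the dressed two-point function $\vcv e^{X(u,t)^*} e^{X(v,t)} \tilde\phi_i \tilde\phi_j \vv / \utfvtf$, which by undoing the conjugation equals $\utfcv \phi_i \phi_j \vtfv / \utfvtf = A_{ij}$. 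Putting everything together, the double sum factorizes as $\utfvtf \cdot \pf A$, as wanted.

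I expect the main obstacle to be the combinatorial bookkeeping of this resummation, which is the standard linked-cluster/quasi-free factorization in fermionic quantum field theory. A slicker equivalent sidesteps the diagrammatics by exploiting that the span of bilinears $\{\psi_k\psi_\ell,\,\psi_k\psi_\ell^*,\,\psi_k^*\psi_\ell^*\}$ is a Lie algebra closed under commutators up to scalar central terms. Baker--Campbell--Hausdorff then rewrites $e^{X(u,t)^*} e^{X(v,t)}$ as $\utfvtf \cdot e^{Z} / \vcv e^Z \vv$ for a single bilinear $Z$ in $\varPsi$; applying the argument of Lemma~\ref{lem:wickone} to $e^{Z}$ (with the scalar $\utfvtf$ pulled out) then produces the pfaffian directly, with two-point entries identified via the $n=1$ case exactly as above.
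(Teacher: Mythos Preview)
Your approach is genuinely different from the paper's. The paper explicitly \emph{rejects} the route you take: in the first paragraph of its proof it notes that one is tempted to commute $e^{X(v,t)}$ to the left and $e^{X(u,t)^*}$ to the right, but ``those two quantities do not commute and it is unclear whether they have a nice quasi-commutation relation.'' Instead, the paper proceeds by an elementary induction on $n$. It splits $\varPsi=\varPsi^+\oplus\varPsi^-$ (annihilation versus creation parts relative to $\vv$), observes that $X(v,t)$ is built only from $\varPsi^-$ and $X(u,t)^*$ only from $\varPsi^+$, and uses this to ``ping-pong'' $\phi_1$ back and forth between the two boundary states: each reflection replaces $\phi_1$ by a commutator with $X$ or $X^*$ (picking up a factor of $u$ or $v$), and shuttling it across the other $\phi_i$'s generates scalar anticommutators. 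Since $uv<1$, the iterates die off geometrically, and summing the anticommutators produces exactly the Laplace expansion of $\pf A$ along the first row. This argument is self-contained and never combines the two exponentials.

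Your proposal is morally the ``quasi-free state'' argument, and indeed the Remark immediately following the paper's proof says it would be interesting to exploit the $D'_\infty$ Lie-algebra structure (your span of bilinears) to obtain a shorter proof. So you are on a plausible track, but what you have written is not yet a proof. Two concrete gaps: (i)~Your BCH alternative asserts that $e^{X(u,t)^*}e^{X(v,t)}$ equals a scalar times $e^Z$ for a single fermion bilinear $Z$; in infinite dimensions this requires a convergence argument for the BCH series (or an explicit factorization), which you do not supply and which the paper flags as unclear. (ii)~Your linked-cluster alternative sketches the factorization into ``chains'' but, as you yourself note, the bookkeeping is the whole point; the sketch does not establish that the resummation of type-(ii) chains produces exactly $A_{ij}$ with the correct signs, nor does it address the interchange of infinite sums needed along the way. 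Either route can likely be completed, but the paper's ping-pong induction sidesteps both issues entirely at the cost of a slightly longer but fully elementary argument.
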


\begin{proof}
  It is tempting to proceed as in the proof of
  Lemma~\ref{lem:wickone}, by commuting $e^{X(v,t)}$ to the left and
  similarly commuting $e^{X(u,t)^*}$ to the right, but those two
  quantities do not commute and it is unclear whether they have a nice
  quasi-commutation relation.

  Instead, we again play ping-pong, but this time with fermionic
  operators. Let $\varPsi^+$ (resp.\ $\varPsi^-$) be the vector space
  spanned by the $\psi_k$ and $\psi_{-k}^*$ with $k<0$ (resp.\
  $k>0$). We have $\varPsi=\varPsi^+ \oplus \varPsi^-$ and, writing
  $\phi=\phi^++\phi^-$ for the associated decomposition of
  $\phi \in \varPhi$, we have $\phi^+\vv=0$ and $\vcv \phi^-=0$.  Note
  that $X(v,t)$ is a bilinear combination of operators in $\varPsi^-$
  only (which all anticommute with one another), and it follows that,
  for any $\phi \in \varPsi$, we have
  \begin{equation}
    [\phi^-,X(v,t)]=0, \qquad [\phi^+,X(v,t)] \in \varPsi^-, \qquad
    [[\phi^+,X(v,t)],X(v,t)]=0.
  \end{equation}
  As a consequence we have
  \begin{equation}
    [\phi^+, e^{X(v,t)}] = [\phi^+,X(v,t)] e^{X(v,t)}
  \end{equation}
  and hence
  \begin{equation}
    \label{eq:phivtfv}
    \phi^+ \vtfv = [\phi^+,X(v,t)] \vtfv.
  \end{equation}
  Similarly, we have the dual relations
  \begin{equation}
    \label{eq:utfcvphi}
    \utfcv \phi^- = \utfcv [X(u,t)^*,\phi^-], \qquad [X(u,t)^*,\phi^-] \in \varPsi^+.
  \end{equation}

  We now establish \eqref{eq:wickgen} by induction on $n$. It is a
  tautology for $n=1$. Let us assume it holds up to rank $n-1$. Let
  $\phi_1,\phi_2 \cdots,\phi_{2n}$ be elements of $\varPsi$. We start by
  writing
  \begin{equation}
    \utfcv \phi_1 \phi_2 \cdots \phi_{2n} \vtfv = \utfcv \chi^{(1)} \phi_2 \cdots \phi_{2n} \vtfv
  \end{equation}
  with $\chi^{(1)}=\phi_1^+ + [X(u,t)^*,\phi_1^-] \in \varPsi^+$,
  using \eqref{eq:utfcvphi}. We then move $\chi^{(1)}$ to the right,
  using the fact that the anticommutators $\{\chi^{(1)},\phi_i\}$ are
  all scalars, to get
  \begin{equation}
    \utfcv \chi^{(1)} \phi_2 \cdots \phi_{2n} \vtfv = \sum_{i=2}^{2n} (-1)^i
    \{\chi^{(1)},\phi_i\} \utfcv \phi_2 \cdots \phi_{i-1} \phi_{i+1} \cdots \phi_{2n} \vtfv - \utfcv \phi_2 \cdots \phi_{2n} \chi^{(2)} \vtfv
  \end{equation}
  with $\chi^{(2)}=[\chi^{(1)},X(v,t)] \in \varPsi^-$, using
  \eqref{eq:phivtfv}. Now we move $\chi^{(2)}$ in the rightmost term
  to the left, picking anticommutators on the way, until it hits
  $\utfcv$ and can be transformed into
  $\chi^{(3)}=[X(u,t)^*,\chi^{(2)}] \in \varPsi^+$, which we then move
  to the right, and so on. The $\chi$'s tend to zero as we iterate,
  since we pick at least a factor $u$ or $v$ on each iteration from
  the definition of $X$. Hence we arrive at
  \begin{equation}
    \label{eq:plutfin}
    \utfcv \phi_1 \phi_2 \cdots \phi_{2n} \vtfv = \sum_{i=2}^{2n} (-1)^i
    \{\chi,\phi_i\} \utfcv \phi_2 \cdots \phi_{i-1} \phi_{i+1} \cdots \phi_{2n} \vtfv
  \end{equation}
  where $\chi=\chi^{(1)}-\chi^{(2)}+\chi^{(3)}-\cdots$. Applying this
  equality for $n=2$ and $\phi_2 \to \phi_i$ we get that
    $\utfcv \phi_1 \phi_i \vtfv = \{\chi,\phi_i\} \utfvtf$,
    and hence, by applying the induction hypothesis,
    \eqref{eq:plutfin} can be rewritten as
  \begin{equation}
    \label{eq:lutfin}
    \frac{\utfcv \phi_1 \phi_2 \cdots \phi_{2n} \vtfv}{\utfvtf} =
    \sum_{i=2}^{2n} (-1)^i A_{1i} \pf A^{(1i)}
  \end{equation}
  where $A$ is defined as in the proposition and $A^{(1i)}$ is its
  submatrix with the first and $i$-th rows and columns removed. We
  conclude by recognizing the right hand side of \eqref{eq:lutfin} is the expansion
  of the pfaffian $\pf A$ with respect to the first row/column.
\end{proof}

\begin{rem}
  Our proof relies on the fact that $X(v,t)$ and $X(u,t)^*$
  are bilinear combinations of fermionic operators. The space of such
  (not necessarily charge-preserving) bilinear combinations,
  supplemented with the identity operator $1$, forms a Lie algebra
  denoted $D'_\infty$ which is an infinite-dimensional analogue of the
  even-dimensional orthogonal Lie algebra \cite[\S 7]{jm}. It acts on
  the space $\varPsi$ of fermionic operators as the Lie algebra of the
  group of linear transformations preserving the canonical
  anticommutation relations, also known as fermionic Bogoliubov transformations.
  It would be interesting to exploit this fact to obtain a shorter
  proof of Proposition~\ref{lem:wickgen} for general $u,v$. Let us
  also mention that the generalized Wick theorem mentioned in
  \cite[Section 2.7]{az} does not apply to our situation since it
  requires the preservation of charge, and as such implies
  determinantal (as opposed to pfaffian) correlations.
\end{rem}

We now make the connection with the shifted process
$\mathfrak{S}_t(\vec{\lambda})$ of Theorem~\ref{thm:main_thm} explicit. Set
\begin{equation}
  \label{eq:rhotdef}
  \varrho_t(U) := \frac{\utfcv \Psi_{k_1}(i_1) \Psi^*_{k_1}(i_1) \cdots \Psi_{k_n}(i_n)
\Psi^*_{k_n}(i_n) \vtfv}{\utfvtf}.
\end{equation}
By \eqref{eq:vtRv} and \eqref{eq:utfcvdef}, we have
\begin{equation}
  \varrho_t(U) = \frac{1}{\utfvtf} \sum_{c \in 2\Z} t^c (uv)^{c^2/2} \ufcv \Psi_{k_1-c}(i_1) \Psi^*_{k_1-c}(i_1) \cdots \Psi_{k_n-c}(i_n) \Psi^*_{k_n-c}(i_n) \vfv
\end{equation}
where we use the fact that $R^{-1} \psi_k R = \psi_{k-1}$ and hence
$R^{-1} \Psi_k(i) R = \Psi_{k-1}(i)$ ($R$ commutes with vertex
operators). By \eqref{eq:rhopsi} and \eqref{eq:utfvtf}, we get
\begin{equation}
  \varrho_t(U) = \frac{1}{\theta_3(t^2; (uv)^4)} \sum_{c \in 2\Z} t^c (uv)^{c^2/2} \varrho\left(U - (0, c) \right) = \Prob\left( U \subset \mathfrak{S}_t(\vec{\lambda})\right). 
\end{equation}
In other words, $\varrho_t(U)$ is nothing but the correlation function
for the point process $\mathfrak{S}_t(\vec{\lambda})$. By applying
Lemma~\ref{lem:wickgen} to \eqref{eq:rhotdef}, and being again careful
about the ordering between operators, we conclude that
$\mathfrak{S}_t(\vec{\lambda})$ is indeed a pfaffian point process,
and the entries of its correlation kernel read
\begin{equation}
  \label{eq:KStpsi}
  \begin{split}
    K_{1,1}(i, k; i', k') = \frac{\utfcv \Psi_{k}(i) \Psi_{k'}(i') \vtfv}{\utfvtf},
    \quad & \qquad
    K_{2,2}(i, k; i', k') = \frac{\utfcv \Psi^*_{k}(i) \Psi^*_{k'}(i') \vtfv}{\utfvtf},\\
    K_{1,2}(i, k; i', k') = - K_{2,1}(i', k'; i, k) &=
    \begin{cases}
      \frac{\utfcv \Psi_{k}(i) \Psi^*_{k'}(i') \vtfv}{\utfvtf}, & \text{if $i \leq i'$}, \\
      - \frac{\utfcv \Psi^*_{k'}(i') \Psi_{k}(i) \vtfv}{\utfvtf}, & \text{otherwise}.
    \end{cases}
  \end{split}
\end{equation}
Note that we recover \eqref{eq:KSpsione} in the case $u=0$.

\subsection{Contour integral representations of the correlation functions}
\label{sec:contourep}

\subsubsection{Correlation kernels}
\label{sec:corrker}

Having proved the pfaffian nature of the point process
$\mathfrak{S}_t(\vec{\lambda})$ (which coincides with
$\mathfrak{S}(\vec{\lambda})$ in the case of one free boundary), the last
step to establish Theorem~\ref{thm:main_thm} (and
Theorem~\ref{thm:onebound}) is to show that the entries of the
correlation kernel~\eqref{eq:KStpsi} match their announced
expressions.

\paragraph{Integral representation of $\Psi_k(i)$ and $\Psi^*_k(i)$.}

Following~\cite{or}, we pass to the fermion generating functions
$\psi(z)$ and $\psi^*(w)$ introduced in \eqref{eq:psigendef}. Using
\eqref{eq:gampsi}, we get that
\begin{equation} \label{eq:psikcont}
  \begin{split}
    \Psi_k(i) &= [z^k] \Ad \left( \Gamma_+(\rho_i^\rightarrow)
      \Gamma_-(\rho_i^\leftarrow)^{-1} \right) \cdot \psi(z) =
    \frac{1}{2\im\pi} \oint_{R^{-1}<|z|<R} \frac{\dx z}{z^{k+1/2}}
    \frac{H(\rho_i^\rightarrow;z)}{H(\rho_i^\leftarrow;z^{-1})}
    \psi(z),\\
    \Psi^*_k(i) &= [w^{-k}] \Ad \left( \Gamma_+(\rho_i^\rightarrow) \Gamma_-(\rho_i^\leftarrow)^{-1} \right) \cdot \psi^*(w) =
    \frac{1}{2\im\pi} \oint_{R^{-1}<|w|<R} \frac{\dx w}{w^{-k+1/2}}
    \frac{H(\rho_i^\leftarrow;w^{-1})}{H(\rho_i^\rightarrow;w)} \psi^*(w)
  \end{split}
\end{equation}
where $[z^k]$ and $[w^{-k}]$ denote coefficient extractions in the
Laurent series to the right, which we may represent as contour integrals by our analyticity assumptions.
Noting that
$H(\rho_i^\rightarrow;z)/H(\rho_i^\leftarrow;z^{-1})$ is nothing but
$F(i,z)$ as defined in Theorem~\ref{thm:main_thm},
we may plug these expressions into~\eqref{eq:KStpsi}, and get
the desired contour integral representation \eqref{eq:Kint} with
\begin{equation}
  \label{eq:kappapsi}
  \begin{array}{rl}
    \kappa_{1,1}(z,w) &=  \frac{\utfcv \psi(z) \psi(w) \vtfv}{\utfvtf}, \\
    \kappa_{2,2}(z,w)& =  \frac{\utfcv \psi^*(z) \psi^*(w) \vtfv}{\utfvtf},\\
  \end{array}
  \qquad
  \kappa_{1,2}(z,w) =
  \begin{cases}
    \frac{\utfcv \psi(z) \psi^*(w) \vtfv}{\utfvtf}, & \text{if $|z|>|w|$}, \\
    - \frac{\utfcv \psi^*(w) \psi(z) \vtfv}{\utfvtf}, & \text{if
      $|z|<|w|$}.
  \end{cases}
\end{equation}
However, there are two possible convergence issues to consider:
\begin{itemize}
\item For $\kappa_{1,2}$ we should be careful that the product
  $\psi(z) \psi^*(w)$ (resp.\ $\psi^*(w) \psi(z)$) makes sense as an
  operator on $\F$ only for $|z|>|w|$ (resp.\ $|z|<|w|$), as otherwise
  its diagonal entries are infinite. Thus, to obtain a correct double
  contour integral representation for $K_{1,2}(i,k;i',k')$, we should
  integrate $z$ over a circle of radius $r$, and $w$ over a circle of
  radius $r'$, with $r>r'$ if $i \leq i'$ and $r<r'$ otherwise. Of
  course this choice of contours also works for $\kappa_{1,1}$ and
  $\kappa_{2,2}$ where the nesting condition is not necessary.
\item The second issue is specific to the case of free boundaries: we
  should make sure that the action of $\psi(z)$ or $\psi^*(z)$ on the
  extended free boundary states is well-defined. It can be seen that
  this requires $v < |z| < u^{-1}$. Intuitively speaking, the
  probability that $\vfv$ and $\vtfv$ (resp.\ $\ufcv$ and $\utfcv$)
  have an ``excitation'' at level $k$ decays as $v^{|k|}$ (resp.\
  $u^{|k|}$), and the action of $\psi(z)$ and $\psi^*(z)$ does not
  blow up if and only if $|z|^{-1}<v$ (resp.\ $|z|<u$). Thus, for the
  double contour integrals to make sense, we should take the radii
  $r,r'$ between $v$ and $u^{-1}$.
\end{itemize}
This explains the constraints on the integration radii $r,r'$ in
Theorem~\ref{thm:main_thm}.

\begin{rem}
  Using the canonical anticommutation relations, it is possible to see
  that there is a single meromorphic function $\kappa_{1,2}(z,w)$ with
  a pole at $z=w$, and that its two expressions in \eqref{eq:kappapsi}
  correspond to Laurent expansions in different annuli. Furthermore,
  the fermionic reflection relations~\eqref{eq:fermrefl} imply that
  \begin{equation}
    \label{eq:kapparel}
    \kappa_{1,2}(z,w) = t \frac{v+w}{v-w} \kappa_{1,1}\left(z,\frac{v^2}{w}\right)
    = t^{-1} \frac{z-v}{z+v} \kappa_{2,2}\left(\frac{v^2}{z},w\right).
  \end{equation}
\end{rem}

\paragraph{Evaluation of fermionic propagators.} We now turn to the
evaluation of the $\kappa$'s. In the case of one free boundary
($u=0$), the computations are rather easy, for instance to compute
$\kappa_{1,2}(z,w) = \vcv \psi(z) \psi^*(w) \vtfv$ for $|z|>|w|$ we
use \eqref{eq:vfexp} and notice that the only $\ket{S}$ that
contribute have charge $0$ and correspond to ``hook''
partitions. Taking into account signs and the special contribution from
$\vv$, we get
\begin{equation}
  \label{eq:eval_prop_1fb}
  \kappa_{1,2}(z,w) = \sum_{k \in \in \Z'_{<0}} (-1)^{k+1/2} \frac{z^k}{v^k} \sum_{\ell \in \Z'_{>0}} \frac{w^\ell}{v^\ell} + \sum_{k \in \Z'_{<0}} \frac{z^k}{w^k} = \frac{(zw-v^2)\sqrt{zw}}{(z+v)(w-v)(z-w)}.
\end{equation}
For $v=1$, we obtain the expression announced in \eqref{eq:kappasimp},
and we leave the reader derive similarly the expressions for the other
propagators, given in Appendix~\ref{sec:fermionic_expectations}, to
conclude the proof of Theorem~\ref{thm:onebound}.

We may adapt this approach to the case of two free boundaries, but the
computations become involved. A simpler approach is to use
Proposition~\ref{prop:boson_fermion} to rewrite the fermions in terms
of vertex operators. For instance, to evaluate $\kappa_{1,2}(z,w)$ for
$|z|>|w|$, we write
\begin{equation}
  \begin{split}
    \utfcv \psi(z) \psi^*(w) \vtfv &= \utfcv (z/w)^{C-1/2} \Gami(z)
    \Gatpl\left(-z^{-1}\right) \Gatmi(-w) \Gapl\left(w^{-1}\right)
    \vtfv \\
    &= \sqrt{\frac{w}{z}} \left( \sum_{c \in 2\Z} (tz/w)^c (uv)^{c^2/2} \right)
    \ufcv \Gami(z)
    \Gatpl\left(-z^{-1}\right) \Gatmi(-w) \Gapl\left(w^{-1}\right) \vfv
  \end{split}
\end{equation}
where we use \eqref{eq:vtRv}, \eqref{eq:utfcvdef} and the commutation
of vertex operators with $R$ and $C$. The last factor on the second
line can be identified with the partition function of a certain free
boundary Schur process of length $2$, compare with
Proposition~\ref{prop:fermschur}. Using Proposition~\ref{prop:fbz} to
evaluate this partition function, and recognizing several Pochhammer
symbols and theta functions, we arrive at
\begin{equation}
  \label{eq:eval_prop_2fb}
  \utfcv \psi(z) \psi^*(w) \vtfv = \sqrt{\frac{w}{z}}
  \theta_3 \left( (t z/w)^2 ; (uv)^4 \right) 
  \frac{((uv)^2;(uv)^2)_\infty^2 \theta_{(uv)^2}(u^2 z w) \uv}{(uv,uz,-uw,-v z^{-1},v w^{-1};uv)_\infty \theta_{(uv)^2}(w/z)}.
\end{equation}
Upon dividing by the normalization \eqref{eq:utfvtf}, we obtain the
expression of $\kappa_{1,2}(z,w)$ announced in \eqref{eq:Fkap2b}. The
expressions for the other propagators, given in
Appendix~\ref{sec:fermionic_expectations}, can be checked using
\eqref{eq:kapparel} and simple manipulations of Pochhammer symbols and
theta functions. This concludes the proof of
Theorem~\ref{thm:main_thm}.

\subsubsection{Variations on free boundary states} \label{sec:variations}

In this section, we explain how to handle the extra weighting
$\alpha^{oc}$ mentioned in Remark~\ref{rem:oc}.  Recall that
$\ket{\underline{v}} = \sum_{\lambda} v^{|\lambda|} \ket{\lambda}$. We
define the following companion boundary vectors:
\begin{equation} \label{def:variation_free_boundary}
\begin{aligned}
  \ket{\underline{v^{ec}}} &= \sum_{\lambda:\ \lambda \text{ has even columns}} v^{|\lambda|} \ket{\lambda}, &
  \ket{\underline{v^{\alpha, \ oc}}} &= \sum_{\lambda} \alpha^{\# \text{ of odd columns of }\lambda} v^{|\lambda|} \ket{\lambda}, \\
  \ket{\underline{v^{er}}} &= \sum_{\lambda:\ \lambda \text{ has even rows}} v^{|\lambda|} \ket{\lambda}, &
  \ket{\underline{v^{\beta, \ or}}} &= \sum_{\lambda} \beta^{\# \text{ of odd rows of }\lambda} v^{|\lambda|} \ket{\lambda}
\end{aligned}
\end{equation}
where $ec, er, oc, or$ stand for respectively \emph{even columns, even
  rows, odd columns, odd rows}, and where $\alpha, \beta$ are
parameters.  Note that
$\ket{\underline{v^{1, \ oc}}} = \ket{\underline{v^{1, \ or}}} =
\ket{\underline{v}}$ while
$\ket{\underline{v^{0, \ oc}}} = \ket{\underline{v^{ec}}}$ and
$\ket{\underline{v^{0, \ or}}} =
\ket{\underline{v^{er}}}$. Analogously we may define covectors
$\bra{\underline{u^{ec}}}$, etc.

\begin{prop}
  We have
  \begin{equation}
    \ket{\underline{v^{\alpha, \ oc}}} = \Gami(\alpha v) \ket{\underline{v^{ec}}},\qquad
    \ket{\underline{v^{\beta, \ or}}} = \Gatmi(\beta v) \ket{\underline{v^{er}}}.    
  \end{equation}
\end{prop}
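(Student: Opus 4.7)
The plan is to expand $\Gami(\alpha v)\ket{\underline{v^{ec}}}$ on the canonical basis $\{\ket{\lambda}\}_{\lambda \in \Par}$, compare coefficients with those of $\ket{\underline{v^{\alpha,\,oc}}}$, and reduce the equality to a short combinatorial identity. Using the matrix element formula~\eqref{eq:schurelem} together with the single-variable evaluation $s_{\lambda/\mu}(x) = x^{|\lambda|-|\mu|}\mathbbm{1}_{\lambda \succ \mu}$, one gets
\begin{equation}
  \Gami(\alpha v)\ket{\underline{v^{ec}}}
  = \sum_{\mu:\,\mu \text{ has even columns}} v^{|\mu|} \sum_{\lambda \succ \mu} (\alpha v)^{|\lambda|-|\mu|}\ket{\lambda}
  = \sum_{\lambda \in \Par} v^{|\lambda|}\Bigl(\sum_{\mu} \alpha^{|\lambda|-|\mu|}\Bigr)\ket{\lambda},
\end{equation}
where the inner sum ranges over $\mu \prec \lambda$ with $\mu$ having even columns. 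Matching with the definition of $\ket{\underline{v^{\alpha,\,oc}}}$ reduces the statement to the combinatorial claim
\begin{equation}
  \sum_{\substack{\mu \prec \lambda\\ \mu \text{ has even columns}}} \alpha^{|\lambda|-|\mu|} \;=\; \alpha^{oc(\lambda)}.
\end{equation}

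The key step is to show that this sum reduces to a single term. Passing to conjugate partitions, the horizontal strip condition $\lambda \succ \mu$ becomes $\lambda'_j - \mu'_j \in \{0,1\}$ for all $j$, so $\mu'_j \in \{\lambda'_j,\lambda'_j-1\}$; requiring each $\mu'_j$ to be even then forces $\mu'_j = \lambda'_j$ when $\lambda'_j$ is even and $\mu'_j = \lambda'_j - 1$ when $\lambda'_j$ is odd, i.e.\ $\mu'_j = 2\lfloor \lambda'_j/2\rfloor$. The resulting sequence is nonincreasing because $\lambda'$ is, so it defines a valid partition $\mu$. For this unique $\mu$ one has $|\lambda|-|\mu| = \sum_j (\lambda'_j - \mu'_j) = \#\{j:\lambda'_j \text{ odd}\} = oc(\lambda)$, which is exactly what is required.

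The second identity follows by the same argument with the roles of rows and columns swapped. Now $\Gatmi(\beta v)$ corresponds to the dual single-variable specialization, for which the matrix element $s_{\lambda/\mu}(\overline{\beta v}) = (\beta v)^{|\lambda|-|\mu|}\mathbbm{1}_{\lambda \succ' \mu}$ picks out vertical strips, so the reduction yields $\sum_{\mu \prec' \lambda,\,\mu \text{ has even rows}} \beta^{|\lambda|-|\mu|} = \beta^{or(\lambda)}$. This is established exactly as above, but this time directly on the parts (no conjugation needed): the forced values are $\mu_i = 2\lfloor \lambda_i/2\rfloor$. I do not foresee any serious obstacle; the only point requiring a moment's care is checking that the forced values assemble into a valid partition, which is immediate from the monotonicity of $\lambda'$ (respectively of $\lambda$) being preserved by the floor-of-half operation.
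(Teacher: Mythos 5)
Your proof is correct and follows essentially the same route as the paper: reduce both sides to a combinatorial statement about unique decomposition of an arbitrary $\lambda$ into an even-column partition plus a horizontal strip (resp.\ even-row partition plus a vertical strip). The paper states this decomposition fact without proof; you supply the explicit construction $\mu'_j = 2\lfloor\lambda'_j/2\rfloor$, check monotonicity, and note that the strip size equals $oc(\lambda)$, which is exactly the detail needed to match the $\alpha^{oc}$ weight.
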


\begin{proof}
  The first (resp.\ second) identity results from the fact that any
  partition can be decomposed uniquely into a partition with even
  columns (resp.\ rows), and a horizontal (resp.\ vertical)
  strip.
\end{proof}

Noting that $\Gami(x) \Gatmi(-x)=1$ for any single variable $x$, we
deduce that the modified free boundary states can be expressed in
terms of the original one as
\begin{equation}
  \label{eq:variation_normal_free_boundary}
  \ket{\underline{v^{\alpha, \ oc}}} = \Gami(\alpha v) \Gatmi(-v) \vfv,\qquad  
  \ket{\underline{v^{\beta, \ or}}} = \Gatmi(\beta v) \Gami(-v) \vfv.
\end{equation}

Replacing $\vfv$ by $\ket{\underline{v^{\alpha, \ oc}}}$ in
Proposition~\ref{prop:fermschur}, we obtain the partition function and
correlation functions for the free boundary Schur process with the
extra weight $\alpha^{oc}$ counting the number of odd columns of
$\mu^{(N)}$. By \eqref{eq:variation_normal_free_boundary}, we readily
see that such modification amounts to replacing the boundary
specialization $\rho_N^-$ by the specialization $\rho_N^{-}[\alpha,v]$
such that
\begin{equation}
  H(\rho_N^{-}[\alpha,v];t) = \frac{1 - t v}{1 - \alpha t v} H(\rho_N^{-};t).
\end{equation}
In other words, we are ``adding'' a specialization in the single
variable $\alpha v$, and ``subtracting'' (in the sense of plethystic
negation) a variable $v$.  Note that $\rho_N^{-}[\alpha,v]$ is a
priori not nonnegative. Still, this allows to deduce easily the
partition function and correlations functions of the modified process
from Proposition~\ref{prop:fbz} and
Theorem~\ref{thm:main_thm}.
In \eqref{eq:Fkap2b}, changing
$\rho_N^-$ into $\rho_N^{-}[\alpha,v]$ produces some extra factors in
$F(i,z)$. These factors will appear in the contour integral
representation \eqref{eq:Kint}, once for $F(i,z)$ and once for
$F(i',w)$. We may conventionally choose to absorb them in a
redefinition of the $\kappa$'s, and keep $F$ unchanged: this yields
the announced expression \eqref{eq:kapoc} for $u=0$ and $v=1$. It is
not difficult to see that such a redefinition of the $\kappa$'s amounts
to replacing in \eqref{eq:kappapsi} the extended free boundary state $\vtfv$ by
\begin{equation}
  \label{eq:variation_normal_free_boundaryt}
  \ket{\underline{v^{\alpha, \ oc}, t}} := \Gami(\alpha v) \Gatmi(-v) \vtfv,
\end{equation}
i.e. we modify the fermionic propagators rather than the boundary
specialization $\rho_N^-$.

Of course, we could perform a similar trick to introduce instead an
extra weight $\beta^{or}$ counting the number of odd rows of
$\mu^{(N)}$. Let us record the corresponding redefinition of the
$\kappa$'s for $u=0$ and $v=1$:
\begin{equation}
  \label{eq:kapor}
  \begin{aligned}
    \kappa_{1,1}(z,w) &= \frac{\sqrt{zw} (z-w) }{ (zw-1) (z+\beta)
      (w+\beta)}, \qquad &
    \kappa_{1,2}(z,w) &= \frac{\sqrt{zw} (zw-1) (w + \beta)}{(z-w)
      (w^2-1) (z+\beta)}, \\
    \kappa_{2,2}(z,w) &= \frac{\sqrt{zw} (z-w) (z+\beta)
      (w+\beta)}{(z^2-1) (w^2-1) (zw-1)}. & &
  \end{aligned}
\end{equation}
For bookkeeping purposes, we gather in
Appendix~\ref{sec:fermionic_expectations} the expressions for all
modified fermionic propagators with one free boundary state.  The case
of two free boundaries is left as an exercise to the reader.

\subsubsection{General correlation functions}

It turns out that the free fermion formalism used in this section
allows to derive an explicit $2n$-fold contour integral representation
for the general $n$-point correlation function of both
$\mathfrak{S}(\vec{\lambda})$ and $\mathfrak{S}_t(\vec{\lambda})$.

\begin{thm}
  \label{thm:rhonfold}
  Let $U=\{(i_1,k_1),\ldots,(i_n,k_n)\}$ be a finite subset of
  $\{1,\ldots,N\} \times \Z'$, with $i_1 \leq \cdots \leq i_n$.  The
  probability $\varrho(U)$ that the point process
  $\mathfrak{S}(\vec{\lambda})$ contains $U$ reads
  \begin{equation}
    \label{eq:rhonfold}
    \varrho(U) = \frac{1}{(2\im\pi)^{2n}} \oint \cdots \oint
    \left( \prod_{j=1}^n \frac{\dx z_j \dx w_j}{z_j^{k_j+1} w_j^{-k_j+1}} \cdot
      \frac{F(i_j,z_j)}{F(i_j,w_j)} \right) \Phi(z_1,\ldots,z_n;w_1,\ldots,w_n)
  \end{equation}
  where the contour integrals are taken over $2n$ nested circles
  $\min(R,u^{-1})>|z_1|>|w_1|>\cdots>|z_n|>|w_n|>\max(v,R^{-1})$,
  and where $F$ is as in
  Theorem~\ref{thm:main_thm} while
  \begin{multline}
    \label{eq:Phiexp}
    \Phi(z_1,\ldots,z_n;w_1,\ldots,w_n) =
    \sqrt{\frac{w_1\cdots w_n}{z_1 \cdots z_n}}
    \frac{((uv)^2;(uv)^2)_\infty^{2n}}{\prod_{i=1}^n (u z_i,-u w_i,-v z_i^{-1},v w_i^{-1};uv)_\infty} \\ \times
    \frac{\prod_{i,j=1}^n \theta_{(uv)^2}(u^2 z_i w_j)}{ \prod_{1 \leq
        i \leq j \leq n} \theta_{(uv)^2}(w_j/z_i) \prod_{1 \leq i < j
        \leq n} \theta_{(uv)^2}(z_j/w_i)} \prod_{1 \leq i < j \leq n}
    \frac{\theta_{(uv)^2}(z_j/z_i) \theta_{(uv)^2}(w_j/w_i)}{
      \theta_{(uv)^2}(u^2z_iz_j) \theta_{(uv)^2}(u^2w_iw_j)}.
  \end{multline}
  The probability $\varrho_t(U)$ that the point process
  $\mathfrak{S}_t(\vec{\lambda})$ contains $U$ admits the same
  expression, upon replacing $\Phi(z_1,\ldots,z_n;w_1,\ldots,w_n)$
  by
  \begin{equation}
    \label{eq:Phitexp}
    \Phi_t(z_1,\ldots,z_n;w_1,\ldots,w_n) = \theta_3 \left( \left(t \frac{z_1 \cdots z_n}{w_1 \cdots w_n} \right)^2 ; (uv)^4 \right) \frac{\Phi(z_1,\ldots,z_n;w_1,\ldots,w_n)}{\theta_3(t^2;(uv)^4)}.
  \end{equation}
\end{thm}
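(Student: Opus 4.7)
The approach combines the integral representations \eqref{eq:psikcont} of the conjugated fermions $\Psi_k(i), \Psi^*_k(i)$ with the boson--fermion correspondence (Proposition~\ref{prop:boson_fermion}) and the partition function formula of Proposition~\ref{prop:fbz}. I treat $\varrho_t(U)$ first; the unshifted $\varrho(U)$ follows by replacing the extended free boundary states $\vtfv, \utfcv$ by $\vfv, \ufcv$ throughout, which removes all charge--mixing and replaces the theta ratio in $\Phi_t$ by $1$ to yield $\Phi$.

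Substituting \eqref{eq:psikcont} into \eqref{eq:rhotdef} immediately yields a $2n$-fold contour integral whose integrand carries exactly the ratios $F(i_j,z_j)/F(i_j,w_j)$ and the Laurent weights of \eqref{eq:rhonfold}, modulo half-integer power conventions absorbed into $\Phi_t$. The nested radii $\min(R,u^{-1})>|z_1|>|w_1|>\cdots>|z_n|>|w_n|>\max(v,R^{-1})$ are forced by the two convergence criteria of Section~\ref{sec:corrker}: the outer bounds ensure that the actions of $\psi(z_j)$ and $\psi^*(w_j)$ on $\vtfv$ and $\utfcv$ are well-defined, while the strict nesting ensures the successive fermionic products $\psi(z_j)\psi^*(w_j)\psi(z_{j+1})\cdots$ converge as operators on Fock space. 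What remains is to evaluate the bare $2n$-point propagator
\begin{equation*}
  \mathcal{P}_t := \frac{\utfcv \psi(z_1)\psi^*(w_1) \cdots \psi(z_n)\psi^*(w_n) \vtfv}{\utfvtf}
\end{equation*}
and identify it with $\Phi_t$ of \eqref{eq:Phitexp}.

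I would evaluate $\mathcal{P}_t$ by using the boson--fermion correspondence \eqref{eq:boson_fermion} to rewrite every fermion as a product of a vertex operator, a power of $R^{\pm 1}$, and an exponential of the charge $C$. Since $R$ and $R^{-1}$ appear in equal numbers and commute with all vertex operators, they cancel, and the charge exponentials collapse into a single left factor $\prod_{j=1}^n (z_j/w_j)^{C-1/2}$. Expanding $\utfcv$ and $\vtfv$ via \eqref{eq:utfcvdef} and \eqref{eq:vtRv}, charge conservation kills all off-diagonal $(c,c')$ terms, and summing the surviving diagonal $c=c'$ terms against the multiplicative shift $\prod_j (z_j/w_j)^c$ produces exactly the theta ratio $\theta_3\!\left((t z_1\cdots z_n/(w_1\cdots w_n))^2;(uv)^4\right)/\theta_3(t^2;(uv)^4)$ of \eqref{eq:Phitexp}, together with the prefactor $\sqrt{(w_1\cdots w_n)/(z_1\cdots z_n)}$ coming from evaluating the charge factor on the charge-$0$ sector. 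The residual quantity $\uv^{-1} \ufcv Z_{\mathrm{str}} \vfv$, where $Z_{\mathrm{str}}:=\prod_{j=1}^n \Gami(z_j)\Gatpl(-z_j^{-1})\Gatmi(-w_j)\Gapl(w_j^{-1})$, is (after a harmless regrouping of consecutive vertex operators into alternating $\Gamma_+\Gamma_-$ pairs) the normalized partition function of a free boundary Schur process of length $2n$ with single and dual-single-variable specializations, and is therefore evaluated in closed form by Proposition~\ref{prop:fbz}.

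The main obstacle is the bookkeeping in this last step: the commutation factors $\prod_{k\leq \ell} H(\rho_k^+;\rho_\ell^-)$ from \eqref{eq:fbz} must reorganize into the theta product $\prod_{i,j}\theta_{(uv)^2}(u^2 z_i w_j)/[\prod_{i\leq j}\theta_{(uv)^2}(w_j/z_i)\prod_{i<j}\theta_{(uv)^2}(z_j/w_i)]$, while the infinite $\tilde H$ and $H$ reflection products must regroup into the Pochhammer factors $1/(uz_i,-uw_i,-v/z_i,v/w_i;uv)_\infty$ and into the theta product $\prod_{i<j}\theta_{(uv)^2}(z_j/z_i)\theta_{(uv)^2}(w_j/w_i)/[\theta_{(uv)^2}(u^2 z_i z_j)\theta_{(uv)^2}(u^2 w_i w_j)]$ visible in $\Phi_t$. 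The cleanest strategy is to verify the base case $n=1$ directly against the already-computed propagator \eqref{eq:eval_prop_2fb} --- precisely the computation used in the proof of Theorem~\ref{thm:main_thm} --- and then proceed by induction on $n$, peeling off one pair $\psi(z_n)\psi^*(w_n)$ at a time and tracking only the new commutation and reflection factors that arise. Once the identification $\mathcal{P}_t = \Phi_t$ is established, \eqref{eq:rhonfold} follows for $\varrho_t(U)$; the unshifted statement with $\Phi$ is obtained by the identical argument with $\vtfv, \utfcv$ replaced by $\vfv, \ufcv$, where the charge sums collapse trivially to the $c=0$ term and the theta ratio is absent.
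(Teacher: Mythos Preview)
Your proposal is correct and follows essentially the same approach as the paper: substitute the contour integral representations \eqref{eq:psikcont} into the fermionic expressions \eqref{eq:rhopsi} and \eqref{eq:rhotdef}, then evaluate the bare $2n$-point propagator via the boson--fermion correspondence and Proposition~\ref{prop:fbz}. The only differences are expository: the paper treats $\varrho(U)$ first and obtains $\Phi_t$ from $\Phi$ by the charge-related factor, whereas you go in the opposite order; and where you propose an inductive bookkeeping on $n$, the paper simply writes ``after some massaging''.
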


\begin{proof}
  We start from the fermionic representation \eqref{eq:rhopsi} of
  $\varrho(U)$, and plug in the contour integral representation
  \eqref{eq:psikcont} of $\Psi_k(i)$ and $\Psi^*_k(i)$: we obtain the
  $2n$-fold contour integral \eqref{eq:rhonfold} with
  \begin{equation}
    \Phi(z_1,\ldots,z_n;w_1,\ldots,w_n) =
    \frac{\ufcv \psi(z_1) \psi^*(w_1) \cdots \psi(z_n) \psi^*(w_n) \vfv}{\uv}.
  \end{equation}
  This quantity may be evaluated by the same strategy as for the
  fermionic propagators in Section~\ref{sec:corrker}, by using the
  boson--fermion correspondence (Proposition~\ref{prop:boson_fermion})
  to rewrite the $\psi/\psi^*$ in terms of vertex operators (and $R$
  and $C$ operators that are immediately factored out). We recognize
  the partition function of a certain free boundary Schur process of
  length $2N$, which after some massaging yields \eqref{eq:Phiexp}.
  The discussion of integration contours is easily adapted from that
  in Section~\ref{sec:corrker}.
  
  For $\varrho_t(U)$, we proceed in the same way starting from the
  fermionic representation \eqref{eq:rhotdef}, which amounts to
  replacing $\Phi$ by
  \begin{equation}
    \Phi_t(z_1,\ldots,z_n;w_1,\ldots,w_n) =
    \frac{\utfcv \psi(z_1) \psi^*(w_1) \cdots \psi(z_n) \psi^*(w_n) \vtfv}{\utfvtf}.
  \end{equation}
  By \eqref{eq:vtRv} and \eqref{eq:utfcvdef}, we see that $\Phi_t$
  differs from $\Phi$ by a simple charge-related factor, leading to
  \eqref{eq:Phitexp}.
\end{proof}

\begin{rem}
  By Wick's lemma for two free boundaries (Lemma~\ref{lem:wickgen}),
  we have
  \begin{equation}
    \label{eq:Phitpfaff}
    \Phi_t(z_1,\ldots,z_n;w_1,\ldots,w_n) = \pf
    \left[
      \begin{array}{cc}
        \kappa_{1,1}(z_i,z_j) & \kappa_{1,2}(z_i,w_j) \\
        -\kappa_{1,2}(z_j,w_i) & \kappa_{2,2}(w_i,w_j) \\
      \end{array}
    \right]_{1 \leq i,j \leq n}
  \end{equation}
  where the $\kappa$'s are as in Theorem~\ref{thm:main_thm}. Plugging
  in the explicit expression for $\Phi_t$ given in
  Theorem~\ref{thm:rhonfold}, we obtain a remarkable pfaffian identity
  which amounts to a particular case of an identity due to Okada
  \cite{oka}.  In the case $u=v=0$, this identity reduces to the
  well-known Cauchy determinant. In the case $u=0$, $v=1$ of
  Theorem~\ref{thm:onebound}, we obtain an identity equivalent to
  Schur's pfaffian
  $\pf_{1 \leq i < j \leq 2n} \frac{x_i-x_j}{x_i+x_j} = \prod_{1 \leq
    i < j \leq 2n} \frac{x_i-x_j}{x_i+x_j}$ (the equivalence goes as
  follows: substitute the expression \eqref{eq:kappasimp} for the
  $\kappa$'s, pull out the trivial row/column factors and take
  $x_{2i-1}=\frac{z_i-1}{z_i+1}$, $x_{2i}=\frac{1-w_i}{w_i+1}$ for
  $i=1,\ldots,n$).
\end{rem}

\section{Symmetric Last Passage Percolation}
\label{sec:slpp}
In this section we consider the last passage percolation (LPP) time with symmetric and up to symmetry independent geometric weights. For $(a_n)_{n\geq 1} \in (0,1)^{\mathbb{N}},$  $\alpha\in(0,\inf\{\frac{1}{ a_n},n\geq 1\})$ and $r,t \in \Z_{\geq 1}$ these weights are given by
\begin{equation}\label{symweights}\omega_{r,t}=\omega_{t,r}\sim
\begin{cases}\vspace{0.3cm}
g(a_r a_t), \quad & \mathrm{if}\quad r\neq t, \\
g(\alpha a_r), \quad& \mathrm{if}\quad r=t
\end{cases}
\end{equation}
where $\Prob(g(q)=k)=q^{k}(1-q)$ for $k\in \Z_{\geq 0}$.

For $(k,l), (m,n) \in \Z^{2}_{\geq 1}$ with $k\leq m, l\leq n$, consider up-right paths $\pi$ from $(k,l)$ to $(m,n)$, i.e. $\pi =(\pi(0), \pi(1),\ldots, \pi(m-k+n-l))$ with $\pi(0)=(k,l), \pi(m-k+n-l)=(m,n)$ and
$\pi(i)-\pi(i-1)\in \{(0,1),(1,0)\}.$ The symmetric LPP time with geometric weights \eqref{symweights} is then defined to be
\begin{equation}\label{symLPP}
L_{(k,l)\to (m,n)}:= \max_{\pi :(k,l)\to (m,n)}\sum_{(r,t)\in \pi} \omega_{r,t}
\end{equation}
where the maximum is taken over all up-right paths from $(k,l) $ to $(m,n)$. Note that we have the recursion
\begin{equation}\label{LPPrec}
L_{(k,l)\to (m,n)}=\max\{L_{(k,l)\to (m-1,n)}, L_{(k,l)\to (m,n-1)}\}+\omega_{m,n}.
\end{equation}

\subsection{Definition of distribution functions}\label{sec:defdist}

We start by defining the distribution functions which will appear later. The distributions defined below are mostly given in terms of contour integrals, which is why we make the following definition.
% \begin{definition} 
  For $\varphi \in [0, 2\pi]$ and $z\in \R$
 denote by $\mathfrak{G}_{z}^{\varphi}=\{  z+ |s| e^{\sgn(s) \im \varphi} , s\in \R\}$ the infinite curve oriented from $z+ \infty e^{- \im  \varphi}$ to $z+ \infty e^{ \im  \varphi}$. If $f$ is a function and $V \subset \C$, we denote by $\gamma_V$  any counterclockwise oriented simple closed curve containing all elements of $V$ in its interior and excluding all poles of $f$ that are not elements of $V$.
% \end{definition}

%  \begin{definition}
 Let $u_1>u_2 > \cdots >u_k \geq 0$ and $a,b\in \{1,\ldots,k\}$. For $v\in \R$ we define
 \begin{equation}
 K_{1,1}^{v}(u_a,\xi ; u_b,\xi')=\frac{1}{(2 \pi \im)^{2}}\int_{\mathfrak{G}_{1}^{\pi/3}}\dx Z\int_{\mathfrak{G}_{1}^{\pi/3}} \dx W \frac{(Z-W)(W+2v)(Z+2v)}{4ZW(Z+W)}\frac{e^{Z^{3}/3-Z^{2}u_{a}-Z\xi}}{e^{-W^{3}/3+W^{2}u_b+W\xi'}}
 \end{equation}
 and $K_{1,2}^{v}=K_{1,2}^{v,1}+K_{1,2}^{v,2},$ where
 \begin{align}\label{K12v}
 K_{1,2}^{v,1}(u_a,\xi; u_b,\xi')=\frac{1}{(2 \pi \im)^{2}}\int_{\mathfrak{G}_{A_Z}^{\pi/3}}\dx Z
 \int_{\mathfrak{G}_{A_W}^{2\pi/3}}\dx W\frac{(Z+W)(Z+2v)}{2(W+2v)Z(Z-W)}
 \frac{e^{Z^{3}/3-Z^{2}u_{a}-\xi Z}}{e^{W^{3}/3-W^{2}u_b-W\xi'}},
 \end{align}
 with $A_{Z}>A_{W}>-2v,A_{Z}>0$. For $a \leq b,$ we have $ K_{1,2}^{v,2}(u_a, \xi; u_b,\xi')=0,$ and if $a >b$, then 
 \begin{align}
  K_{1,2}^{v,2}(u_{a},\xi; u_{b},\xi')=\frac{-1}{2 \pi \im}\int_{\im\R}\dx Z e^{Z^{2}(u_b -u_{a})+Z(\xi' -\xi)}
 \end{align} 
 \noindent with $\im\R$ oriented with increasing imaginary part.
 Finally, we define $K_{2,2}^{v}=K_{2,2}^{v,1}+K_{2,2}^{v,2}$ through
 \begin{align}
 K_{2,2}^{v,1}(u_{a},\xi;u_b,\xi')=\frac{1}{(2 \pi \im)^{2}}\int_{\mathfrak{G}_{B_3}^{2\pi/3}}\dx Z\int_{\mathfrak{G}_{B_4}^{2\pi/3}}\dx W\frac{Z-W}{(W+2v)(Z+2v)(Z+W)}\frac{e^{-Z^{3}/3+u_{a}Z^{2}+Z\xi}}{e^{W^{3}/3-u_bW^{2}-W\xi' }},
 \end{align}
 \noindent with $B_3>-2v>B_4, B_3 <-B_4$.
  
 We define $K_{2,2}^{v,2}$  for $u_a=u_{b}=0$ to be 
 \begin{equation}
K_{2,2}^{v,2}(0,\xi;0,\xi')= \frac{e^{8v^{3}/3-2v\xi' }}{2\pi \im}\int_{\mathfrak{G}_{C_{1}}^{2 \pi/3}}\dx Z \frac{e^{-Z^{3}/3+Z\xi}}{Z-2v}+\sgn(\xi'-\xi)e^{-2v|\xi-\xi' |}
 \end{equation} 
 with $C_1 <2v,$ whereas if $u_a+u_{b}>0$ 
 \begin{align}
  \begin{split}
 K_{2,2}^{v,2}(u_a,\xi;u_b,\xi') &= \frac{e^{8 v^{3}/3+4v^{2}u_b -2v \xi' }}{2 \pi \im}\int_{\mathfrak{G}_{B_2}^{2\pi/3}}\dx Z\frac{1}{Z-2v}e^{-Z^{3}/3+Z^{2}u_{a}+\xi Z } \\
 &-\frac{1}{2 \pi \im}\int_{\mathfrak{G}^{2\pi/3}_{B_1}}\dx Z\frac{2Z}{(Z+2v)(Z-2v)}e^{Z^{2}(u_a+u_b)+Z(\xi-\xi')}
  \end{split}
\end{align}
with $B_1> 2|v|, $ $B_2>2v$.
We can now define the following   antisymmetric kernel, note that we introduce the prefactor $d_q=\frac{q^{1/6}}{2(1+\sqrt{q})^{2/3}}$ in its definition so we do not have to insert it later:
\begin{align}\label{Kv} K^{v}(u_a, \xi; u_b,\xi')=  \begin{pmatrix*}[l]
  K_{1,1}^{v}( d_q  u_a, \xi;  d_q  u_b,\xi' )        &   K_{1,2}^{v}( d_q u_a, \xi;  d_q u_b,\xi')  \\
-K_{1,2}^{v}( d_q u_b,\xi' ; d_q u_a, \xi)  &  K_{2,2}^{v}( d_q u_a, \xi;  d_q u_b,\xi')
  \end{pmatrix*}.
  \end{align}
  % \end{definition}

  % \begin{definition}
  The Tracy--Widom GUE distribution is given by
  \begin{align}
  F_{\GUE}(s)=\pf(J-\hat{K}_{\mathcal{A}_2})_{(s,\infty)}
  \end{align}
with $\hat{K}_{\mathcal{A}_2}(x,y)=  \begin{pmatrix*}[l]
  0       &   K_{\mathcal{A}_2}(x,y)  \\
-K_{\mathcal{A}_2}(y,x) &  0
  \end{pmatrix*}
$
and  
\begin{align}
  K_{\mathcal{A}_2}(x,y)=\frac{1}{(2\pi \im)^{2}}\int_{\mathfrak{G}^{2 \pi/3}_{-1}}\dx z \int_{\mathfrak{G}^{ \pi/3}_{1}}\dx w
  \frac{e^{w^{3}-wx}}{e^{z^{3}-zy}}\frac{1}{w-z} 
\end{align}
\noindent is the Airy kernel.
  % \end{definition}
  % \begin{definition}
  We define $F_{u,v}$ through
  \begin{align}
  F_{u,v}(s)=\pf(J-K^{v}( u,\xi; u,\xi'))_{(s,\infty)}.
  \end{align}
  % \end{definition} 
  The $F_{\GOE}$ and $F_{\GSE}$ distributions which appear in the following can be defined through Fredholm pfaffians --- see e.g. Lemmas 2.6, 2.7 in \cite{BBCS17}, but their explicit form will not be needed later and hence we omit giving it. $F_{u,v}$ interpolates between various distribution functions. First, we have $ F_{0,0}(s)=F_{\GOE}(s)$; the equivalence of $F_{0,0}$ with existing definitions of $F_{\GOE}$ was checked in e.g. Lemma 2.6 of \cite{BBCS17}. It follows from our Theorem \ref{LPPThm2} and (4.26)   of \cite{br99} that $F_{0,v}(s)=F^{\boxslash}(s;v)$,  where $F^{\boxslash}$ is defined in Definition 4 of \cite{br2}. This and (2.33) in \cite{br2} imply  $\lim_{v\to +\infty}F_{0,v}(s)=F_{\GSE}(s).$ Finally, if $v=v(u)$ is such that $u+2v(u)\to+\infty$ for $u \to +\infty,$ 
 then $\lim_{u \to +\infty}F_{u,v}(s-u^{2}d_{q}^{2})=F_{\GUE}(s)$ (with $ d_q=\frac{q^{1/6}}{2(1+\sqrt{q})^{2/3}}$). This follows from the convergence of $K^{v}$ (under conjugation) to $\hat{K}_{\mathcal{A}_2}$ and dominated convergence.

\subsection{Results and proofs}
The first result we present is a formula for the multipoint distribution of LPP times along down-right paths.
\begin{thm} \label{LPPmulti} Consider the LPP time \eqref{symLPP} with weights \eqref{symweights}.
Let  $r_l,t_l\in \Z_{\geq 1},r_l \leq t_l,l=1,\ldots,k$ with  $r_1 \leq \ldots \leq r_k , t_1 \geq \ldots \geq t_k .$  Then
\begin{align}\label{labelneeded}
\Prob\left(\bigcap_{l=1}^{k}\{L_{(1,1)\to (r_l,t_l) }\leq s_l \} \right)=\pf(J-K)_{B}
\end{align}
where $B=\{(i,x)\in \{1,\ldots,k\}\times \Z^{\prime}: x>s_i -1/2\}$ is equipped with the counting measure. The kernel $K$ is given by 
\begin{equation}
\begin{split}
K_{1,1}(a,x_a ; b,x_b) =& \frac{1}{(2\pi \im)^{2}}\oint \dx z \oint \dx w\frac{z-w}{(z^{2}-1)(w^{2}-1)(zw-1)}z^{-x_a +1/2}w^{-x_b +1/2}(1-\alpha/z)(1-\alpha/w) \\
& \times \prod_{i=1}^{r_a} \frac{1}{1-a_i z} \prod_{i=1}^{r_b} \frac{1}{1-a_i w} \prod_{i=1}^{t_a} (1-a_i /z) \prod_{i=1}^{t_b} (1-a_i/w)
\end{split}
\end{equation}
for counterclockwise oriented circle contours around $0$ satisfying $\alpha,1<|z|,|w|<\min_{i=1,\ldots, t_1} \frac{1}{a_i} $;
\begin{equation}
\begin{split}
K_{1,2}(a,x_a;b,x_b) =& \frac{1}{(2\pi \im)^{2}}\oint \dx z \oint \dx w\frac{zw-1}{(z-w)(z^{2}-1)}z^{-x_a +1/2} w^{x_b -3/2}\frac{1-\alpha/z}{1-\alpha/w} \\
& \times \prod_{i=1}^{r_a}\frac{1}{1-a_i z} \prod_{i=1}^{r_b} (1-a_i w) \prod_{i=1}^{t_a} (1-a_i /z) \prod_{i=1}^{t_b}\frac{1}{1-a_i /w}
\end{split}
\end{equation}
for counterclockwise oriented circle contours around $0$ with $\max_{i=1,\ldots,t_1} a_i ,\alpha < |z|,|w|< \min_{i=1,\ldots,t_1}\frac{1}{a_i}$ and, if $a\leq b$, $|w|<|z| >1,$
and, if $b<a$,  $1<|z| <|w|,$; and finally
\begin{equation}
\begin{split}
K_{2,2}(a,x_a;b,x_b) =& \frac{1}{(2\pi \im)^{2}}\oint \dx z \oint \dx w z^{x_a -3/2}w^{x_b -3/2}\frac{z-w}{zw-1}\frac{1}{(1-\alpha/z)(1-\alpha/w)} \\
&\times \prod_{i=1}^{r_a} (1-a_i z) \prod_{i=1}^{r_b} (1-a_i w) \prod_{i=1}^{t_a} \frac{1}{1-a_i /z} \prod_{i=1}^{t_b}\frac{1}{1-a_i /w}
\end{split}
\end{equation}
for counterclockwise oriented circle contours around $0$ with $\alpha,\max_{i=1,\ldots,t_1} a_i < |z|,|w| <  \min_{i=1,\ldots,t_1} \frac{1}{a_i}$ and $1<|zw|.$
\end{thm}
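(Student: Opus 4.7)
My plan is to (i) identify the joint law of $(L_{(1,1)\to(r_l,t_l)})_{l=1}^k$ with that of $(\lambda^{(l)}_1)_{l=1}^k$ for a suitable free boundary Schur process with one empty and one free boundary, (ii) translate the multipoint distribution function into a gap probability of the pfaffian point process $\mathfrak{S}(\vec{\lambda})$, (iii) invoke Theorem~\ref{thm:onebound} in its $\alpha^{oc}$-weighted form (Remark~\ref{rem:oc}) to obtain a Fredholm pfaffian, and (iv) identify the kernel explicitly.

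For step (i), I would apply symmetrized RSK to the weight matrix $\omega$ --- classical in the single-point case (see e.g.\ \cite{br}) and extendable to multipoint statistics along a down-right path in the spirit of \cite{BBCS17} --- to show that $(L_{(1,1)\to(r_l,t_l)})_{l=1}^k$ has the joint law of $(\lambda^{(l)}_1)_{l=1}^k$ under the Schur process of length $k$ with $u=0$, $v=1$, the $\alpha^{oc}$-weighting of Remark~\ref{rem:oc}, and single-variable specializations
\begin{equation*}
\rho_l^+ = (a_{r_{l-1}+1},\ldots,a_{r_l}), \qquad \rho_l^- = (a_{t_{l+1}+1},\ldots,a_{t_l}),
\end{equation*}
under the conventions $r_0:=0$ and $t_{k+1}:=r_k$. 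The assumptions $r_l\leq t_l$ and the down-right monotonicity ensure that these specializations are well-defined and nonnegative, and Proposition~\ref{prop:fbz} then produces the correct normalization $\prod_i(1-\alpha a_i)^{-1}\prod_{i<j}(1-a_ia_j)^{-1}$ inherited from the geometric weights~\eqref{symweights}.

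Steps (ii)--(iv) are then essentially mechanical. The event $\{\lambda^{(l)}_1\leq s_l\}$ is exactly $\mathfrak{S}(\vec{\lambda})\cap\bigl(\{l\}\times\{x\in\Z':x>s_l-1/2\}\bigr)=\emptyset$, so the left hand side of~\eqref{labelneeded} equals the gap probability of $\mathfrak{S}(\vec{\lambda})$ over the set $B$ in the statement. By Theorem~\ref{thm:onebound} with the $\kappa$'s of~\eqref{eq:kapoc} and the Fredholm pfaffian formula for gap probabilities of pfaffian point processes (Appendix~\ref{sec:pfaffian}), this equals $\pf(J-K)_B$ with $K$ given by~\eqref{eq:Kint}. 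A direct telescoping of the $H(\rho_\ell^\pm;z^{\pm1})$'s arising from the above specializations produces
\begin{equation*}
F(l,z)=\frac{\prod_{i=1}^{t_l}(1-a_i/z)}{\prod_{i=1}^{r_l}(1-a_i z)},
\end{equation*}
and substituting this together with~\eqref{eq:kapoc} into~\eqref{eq:Kint} --- using the identification $k=x_a$ for the point process coordinate, rewriting $(z-\alpha)(w-\alpha)=zw(1-\alpha/z)(1-\alpha/w)$, and combining the $\sqrt{zw}$ factors of $\kappa$ with the $z^{-k-1}w^{-k'-1}$ normalizations --- reproduces the three kernel entries in the statement. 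The contour constraints --- $1,\alpha<|z|,|w|<\min_i 1/a_i$ with the nesting $|w|<|z|$ for $a\leq b$ and $|z|<|w|$ otherwise --- are the translation of those in Theorem~\ref{thm:onebound} to the present parameters, ensuring convergence of the Laurent expansions of $H(\rho^\pm;z^{\pm1})$, enclosure of the $\alpha$-poles of the $\alpha^{oc}$-modified $\kappa$'s, and the correct expansion of $1/(z-w)$ in $\kappa_{1,2}$ on the two sides of the diagonal (cf.\ Remark~\ref{rem:contint}).

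The main obstacle I expect is step (i). While the correspondence between single-point symmetric LPP and a free boundary Schur measure is classical, extending it to multipoint statistics along a general down-right path with $r_l\leq t_l$ requires a careful check that the sequences of RSK shapes produced as the rectangles $(r_l,t_l)$ sweep along the path interlace according to the $\rho_l^\pm$ prescription above. Once this Schur process identification is secured, the remaining steps amount to a mechanical, if somewhat tedious, translation of Theorem~\ref{thm:onebound} into LPP variables.
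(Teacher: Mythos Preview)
Your proposal is correct and follows essentially the same route as the paper: the paper establishes step~(i) via Fomin's local growth rules and growth diagrams (Proposition~\ref{Fomin}), which is precisely a reformulation of the symmetrized RSK correspondence you invoke, and then, exactly as you outline, observes that $L_{(1,1)\to(m,n)}=\lambda^{m+1,n+1}_1$, rewrites the multipoint event as a gap probability, and reads off the kernel from Theorem~\ref{thm:onebound} with the $\alpha^{oc}$-modified $\kappa$'s of~\eqref{eq:kapoc}. You are also right that step~(i) is the only substantive content; the paper supplies it by setting up the growth-diagram machinery explicitly rather than citing RSK as a black box.
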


The following theorem will be obtained from the previous one by asymptotic analysis.
\begin{thm}\label{LPPThm2}
Consider the weights \eqref{symweights} with $a_j=\sqrt{q},q\in(0,1),j\geq 1$ and $\alpha=1-2vc_{q}N^{-1/3}, $ where  $c_q=\frac{1-\sqrt{q}}{q^{1/6}(1+\sqrt{q})^{1/3}},v\in\R$. Let $u_1 >\cdots >u_k \geq 0$.
Then 
\begin{equation}
  \begin{split}
\lim_{N \to \infty}\Prob\left(\bigcap_{i=1}^{k}\big\{L_{(1,1)\to (N-\lfloor u_i N^{2/3}\rfloor,N)}\leq \frac{2\sqrt{q}N}{1-\sqrt{q}}-u_{i}\frac{\sqrt{q}N^{2/3}}{1-\sqrt{q}}+c_{q}^{-1}s_i N^{1/3}\big\}\right)= \\
\qquad \qquad \qquad  = \pf( J - \chi_{s} K^{v} \chi_{s})_{\{u_1,\ldots,u_k\}\times \R}
  \end{split}
\end{equation}
where $\chi_{s}(u_i,x)=\mathbf{1}_{x>s_i}$ and $K^{v}$ is defined in \eqref{Kv}.
\end{thm}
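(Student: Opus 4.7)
\medskip

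\noindent\textbf{Proof sketch for Theorem \ref{LPPThm2}.}

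The plan is to start from the finite-$N$ Fredholm pfaffian formula provided by Theorem \ref{LPPmulti}, specialize all parameters as in the statement ($a_j=\sqrt{q}$, $\alpha=1-2vc_qN^{-1/3}$, $r_a=N-\lfloor u_aN^{2/3}\rfloor$, $t_a=N$, and $x_a$ on the half-integer lattice shifted by $\tfrac{2\sqrt{q}N}{1-\sqrt{q}}-u_a\tfrac{\sqrt{q}N^{2/3}}{1-\sqrt{q}}+c_q^{-1}\xi_aN^{1/3}$), and carry out a classical steepest-descent analysis of the double-contour integrals defining $K_{1,1}$, $K_{1,2}$, $K_{2,2}$. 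After this rescaling, the pfaffian point process has, as usual in the KPZ window, all three matrix entries governed by the same action
\begin{equation*}
S(z) \;=\; -\log(1-\sqrt{q}\,z)+\log(1-\sqrt{q}/z),
\end{equation*}
which admits a \emph{double} critical point at $z=1$ with $S'(1)=S''(1)=0$ and $S'''(1)\neq 0$. The constant $c_q$ is precisely the one that normalises the cubic coefficient so that, under the change of variables $z=1+Z/(c_qN^{1/3})$, the expression $N\,S(z)+\text{(lower-order pieces)}$ tends to $Z^3/3$, while the sub-leading pieces coming from $(1-\sqrt{q}z)^{u_aN^{2/3}}$ and from $z^{-x_a+1/2}$ supply respectively the quadratic term $-u_aZ^2$ and the linear term $-\xi_a Z$ in the cubic exponent.

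Next, I would track the remaining rational factors through the same substitution. The pole at $z=\alpha=1-2vc_qN^{-1/3}$ is at distance $O(N^{-1/3})$ from the saddle, so after rescaling it becomes a \emph{fixed} pole at $Z=-2v$, accounting for the $(Z+2v)$ factors appearing in $K_{1,1}^v$, $K_{1,2}^v$ and $K_{2,2}^v$; similarly the factor $(1-\alpha/w)$ produces $(W+2v)$. The kernels $(z-w)/(zw-1)$, $(zw-1)/(z-w)$ and $(z-w)/(zw-1)$ appearing in $K_{1,1},K_{1,2},K_{2,2}$ translate, after rescaling both $z$ and $w$, into the prefactors $(Z-W)/(Z+W)$, $(Z+W)/(Z-W)$ and $(Z-W)/(Z+W)$ visible in $K^v_{1,1}$, $K^v_{1,2,1}$ and $K^v_{2,2,1}$. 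The extra global rescaling by $d_q$ on the macroscopic parameter $u$ is absorbed in the definition \eqref{Kv} of $K^v$ (since $c_q$ appears twice at the quadratic level while the $\xi$ variable is at the linear level).

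The main subtlety lies in the contour topology. In Theorem \ref{LPPmulti}, the circle radii for $K_{1,2}$ and $K_{2,2}$ satisfy nested or anti-nested constraints depending on the ordering $a\le b$ or $a>b$; deforming these circles to the local steepest-descent contours $\mathfrak{G}_{\bullet}^{\pi/3}$ and $\mathfrak{G}_{\bullet}^{2\pi/3}$ through the saddle forces one to cross certain poles. For $K_{1,2}$ and $a>b$, the $z=w$ pole of $1/(z-w)$ is crossed, producing (by residue calculus and a second saddle-point argument on the resulting single integral) exactly the Gaussian piece $K_{1,2}^{v,2}$. For $K_{2,2}$, the pole at $zw=1$ (which lies near the saddle since both variables sit close to $1$), together with the pole at $z=\alpha$ that is drifting toward $1$, conspires to yield the decomposition $K_{2,2}^{v,1}+K_{2,2}^{v,2}$; the latter summand is the residue contribution at $Z=2v$ (image of $z=1/\alpha$) plus, for $u_a=u_b=0$, a boundary term recovering the $\operatorname{sgn}$ piece. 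Tracking these residues carefully, and using the symmetry $w\mapsto 1/w$ to convert $\psi^*$-type integrals on large radii to integrals near the saddle on contours oriented to the left, matches the decomposition in the definition of $K^v$.

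The main technical obstacle, as is standard, is to upgrade pointwise convergence of the rescaled kernel to convergence of the Fredholm pfaffian on the unbounded domain $\{u_1,\ldots,u_k\}\times\mathbb{R}$. This requires locally uniform control of the kernel entries together with exponential tail bounds of the form $|K_{i,j}^{\text{rescaled}}(u_a,\xi;u_b,\xi')|\le C e^{-c(\xi+\xi')}$ uniformly in $N$ for $\xi,\xi'$ large, so that the Hadamard-type bound on the Fredholm pfaffian (see Appendix \ref{sec:pfaffian}) is integrable and dominated convergence applies. I would obtain such bounds by choosing global steepest-descent contours on which $\operatorname{Re}S(z)$ is bounded above by its saddle-point value with strict cubic decay at infinity, exactly as is done in the exponential half-space LPP analysis in \cite{BBCS17}; the adaptation to the geometric weights and to the additional pole at $z=\alpha$ is routine but requires a separate treatment near $z=1$ to ensure the $(Z+2v)^{-1}$ factor in $K_{2,2}^v$ does not spoil integrability (here the condition $r,r'>\alpha$ from Theorem \ref{thm:onebound} becomes $\operatorname{Re}(Z+2v)>0$ on the chosen contour). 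Once these bounds are in place, the limit of the Fredholm pfaffian is the Fredholm pfaffian of the limit kernel $\chi_sK^v\chi_s$, which completes the proof.
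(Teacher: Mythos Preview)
Your proposal follows essentially the same route as the paper: steepest descent on the double-contour integrals of Theorem~\ref{LPPmulti} around the double critical point $z=1$, with the same identification of residue contributions (at $z=w$ for $K_{1,2}^{v,2}$, at $zw=1$ and $w=\alpha$ for $K_{2,2}^{v,2}$), followed by exponential tail bounds and dominated convergence on the Fredholm pfaffian.

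One technical point to correct: the uniform bound you claim, $|K_{i,j}^{\text{rescaled}}|\le Ce^{-c(\xi+\xi')}$, does \emph{not} hold for all entries. In the paper (Proposition~\ref{expprop}) the bounds are asymmetric: $|K_{1,1}|\le Ce^{-c\xi-c\xi'}$, $|K_{1,2}|\le Ce^{-c\xi+d\xi'}$, and $|K_{2,2}|\le Ce^{d\xi+d\xi'}$ with $c>d>0$, so $K_{2,2}$ in fact grows. The point is that after conjugating the $2\times2$ kernel by $\mathrm{diag}(e^{c\xi},e^{-d\xi})$ as in Lemma~\ref{lemexp}, the pfaffian itself decays like $e^{-(c-d)\sum\xi_i}$, which is what is needed for the Hadamard bound and dominated convergence. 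Also, your action $S(z)=-\log(1-\sqrt{q}z)+\log(1-\sqrt{q}/z)$ does not by itself have a critical point at $z=1$; the order-$N$ part of $z^{-x_a}$ contributes the missing $-\tfrac{2\sqrt{q}}{1-\sqrt{q}}\log z$ term (this is the paper's $f_1$), so the decomposition into ``leading'' and ``sub-leading'' pieces should be adjusted accordingly.
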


\begin{rem}\label{LPPrem}As was already mentioned, further asymptotic regimes can be considered. The geometric LPP time $L_{(1,1)\to (\kappa N,N)}$ with weights \eqref{symweights} and $a_n =\sqrt{q}$  converges to $F_{\GUE}$   as long as $\alpha< \frac{1+\sqrt{\kappa q}}{\sqrt{\kappa}+\sqrt{q}}$. Also, as was already obtained in \cite{br99},  $L_{(1,1)\to ( N,N)}$ (rescaled)  converges to $F_{\GSE}$ for $\alpha<1$. We note that this case is more involved than the others, see Section 5 of \cite{BBCS17} for a discussion and solution of the arising difficulties in the exponential case. Finally, the law of large numbers limit of $L_{(1,1)\to ( N,N)}$ changes for $\alpha>1$ and  $L_{(1,1)\to ( N,N)}$ converges to a Gaussian random variable under $N^{1/2}$ scaling.
\end{rem}
\subsubsection{Proofs of Theorems~\ref{LPPmulti} and \ref{LPPThm2} }\label{Proofs}

We start by proving Theorem~\ref{LPPmulti}. The symmetric LPP time \eqref{symLPP} is a marginal of a Schur process with an \emph{even columns} free boundary partition. This can be seen using the framework developed in \cite{bbbccv} which we mostly follow and refer to for further references. For a word $w=(w_1 , \ldots, w_n)\in \{\prec, \succ\}^{n}$ and a sequence of partitions $\vec{\lambda}=(\emptyset=\lambda^{(0)},\ldots, \lambda^{(n)})$ we say that $\vec{\lambda}$ is $w-$interlaced if $\lambda^{(i-1)} w_i \lambda^{(i)}, i=1,\ldots,n$ and we define 
$w^{\star}=(w^{\star}_{n},\ldots, w^{\star}_1)\in \{\prec, \succ\}^{n}$ by imposing $w^{\star}_i \neq w_i$. Furthermore, given $w$, we set $\Gamma_{i}=\Gamma_{+}$ (resp. $\Gamma_{i}=\Gamma_{-}$) if $w_i $ equals $\prec$ (resp. $\succ$), and we define $w^{\mathrm{sym}}=w \cdot w^{\star}$ where $\cdot$ means concatenation. For $w\in \{\prec,\succ\}^{n},$ we label the elements of $\{i:\Gamma_i =\Gamma_+\}$
  as  $i_1 \leq \ldots \leq i_m$ and those of  $\{i:\Gamma_i =\Gamma_-\}$ as $j_{n-m} \leq \ldots\leq j_{1}.$ We now define $\mathfrak{s}_1,\ldots, \mathfrak{s}_n$  by setting
\begin{equation}\label{deutschess}
\mathfrak{s}_{i_k}=a_{k},\quad k=1,\ldots,m; \quad \mathfrak{s}_{j_k}= a_{k},\quad k=1,\ldots,n-m 
\end{equation} 
with the $(a_n)_{n \geq 1}$  from \eqref{symweights}.

To $w$ we associate an encoded shape: we construct a down-right path $\hat{\pi}=(\hat{\pi}(0),\ldots,\hat{\pi}(n)), \hat{\pi}(0)=(1,\#\{i:w_i =\succ\}+1), \hat{\pi}(n)=(\#\{i:w_i=\prec\}+1,1)$ of unit steps by setting $\hat{\pi}(i+1)-\hat{\pi}(i)=(1,0)$ if $w_{i+1}$ equals $\prec$, and $\hat{\pi}(i+1)-\hat{\pi}(i)=(0,-1)$ otherwise. This path can be seen as the boundary of a Young diagram drawn in French convention, denoted by $sh(w)$; the bottom left corner of this Young diagram is located at $(1,1)$ --- see Figure~\ref{GrowthDiag} left. For fixed $\lambda, \mu$ we have the bijective local growth rule
\begin{equation}\label{growthrule}
\mathcal{T}_{\mathrm{loc}}: \{\kappa: \kappa \prec \lambda, \kappa \prec \mu\} \times \mathbb{\Z}_{\geq 0 } \to \{\nu: \nu \succ \lambda, \nu \succ \mu\}
\end{equation}
where $\nu:=\mathcal{T}_{\mathrm{loc}}((\kappa,k))$ is given by
\begin{equation}\label{growthrules}
  \begin{split}
\nu_1 &=\max\{\lambda_1, \mu_1\}+k, \\
\nu_i &=\max\{\lambda_{i}, \mu_{i}\}+\min\{\lambda_{i-1}, \mu_{i-1}\}- \kappa_{i-1}, \quad i \geq 2 
  \end{split}
\end{equation}
and one has
\begin{equation}\label{volume}
|\nu|+ |\kappa|=|\mu|+|\lambda|+k.
\end{equation}

Let $\{G_{r,t}, r,t \geq 1\}$ be nonnegative integers with $G_{r,t}=G_{t,r}$ ($G_{r,t}$ is a possible realization of the $\omega_{r,t}$ from \eqref{symweights}). We now recursively construct partitions $\lambda^{r,t}, (r,t)\in \Z_{\geq 1}^{2}$ as follows: for $(r,t)\in \{(1,k):k\geq 1\}\cup\{(k,1):k \geq 1\}$ we set $\lambda^{r,t}=\emptyset$. Given $\lambda^{r,t}=:\kappa, \lambda^{r+1,t}=:\mu, \lambda^{r,t+1}=:\lambda$ we define 
\begin{align}\label{constructedpart}
\lambda^{r+1,t+1}:=\mathcal{T}_\mathrm{loc} ((\kappa,G_{r,t})).
\end{align} 

Note that, since $G_{r,t}=G_{t,r}$ and $\mathcal{T}_{\mathrm{loc}}$ is symmetric in $\lambda$ and $\mu$, we have that $\lambda^{r,t}=\lambda^{t,r}$. 

Given $w^{\mathrm{sym}}\in \{\prec,\succ\}^{2n}$ and the corresponding down-right path $\hat{\pi}=(\hat{\pi}(0),\ldots,\hat{\pi}(2n))$,  we denote
\begin{align}\label{lambdam}
\lambda^{(m)}=\lambda^{\hat{\pi}(m)}, m=0, \ldots,2n
\end{align}

\noindent and note that we have $\lambda^{(n+k)}=\lambda^{(n-k)},k=0,\ldots,n$ --- see Figure~\ref{GrowthDiag} right. 

\begin{SCfigure}%\begin{center}
\begin{tikzpicture}[scale=0.8]
\begin{scope}[xshift=-200]
\draw[very thick] (0,0) -- (1,0) -- (1,-1) -- (1,-1) -- (2,-1) --(2,-2) -- (3,-2) --(3,-3);
\draw(0,0)-- (0,-1) -- (1,-1) -- (1,-2) -- (1,-2) -- (2,-2) --(2,-3) -- (3,-3);
\draw (0,-1) --(0,-2) --(1,-2) --(1,-3)--(2,-3);
\draw (0,-2)--(0,-3) --(1,-3);

\draw (0,0) node[anchor=east]{$\hat{\pi}$};

\draw (0.5, 0) node[anchor=south]{\mbox{\footnotesize$\prec$}};
\draw (1, -0.5) node[anchor=west]{\mbox{\footnotesize$\succ$}};
\draw (1.5, -1) node[anchor=south]{\mbox{\footnotesize$\prec$}};
\draw (2, -1.5) node[anchor=west]{\mbox{\footnotesize$\succ$}};
\draw (2.5, -2) node[anchor=south]{\mbox{\footnotesize$\prec$}};
\draw (3, -2.5) node[anchor=west]{\mbox{\footnotesize$\succ$}};
\draw(0,-3) node[anchor=north]{(1,1)};
\end{scope}

\draw (0,0) -- (1,0) -- (1,-1) -- (1,-1) -- (2,-1) --(2,-2) -- (3,-2) --(3,-3);
\draw(0,0)-- (0,-1) -- (1,-1) -- (1,-2) -- (1,-2) -- (2,-2) --(2,-3) -- (3,-3);
\draw (0,-1) --(0,-2) --(1,-2) --(1,-3)--(2,-3);
\draw (0,-2)--(0,-3) --(1,-3);
\foreach \x in {0.5}
\foreach \y in {-2.5}
{
\draw (\x,\y) node{\mbox{\footnotesize$G_{1,1}$}};
}
\foreach \x in {1.5}
\foreach \y in {-2.5}
{
\draw (\x,\y) node{\mbox{\footnotesize$G_{1,2}$}};
}
\foreach \x in {2.5}
\foreach \y in {-2.5}
{
\draw (\x,\y) node{\mbox{\footnotesize$G_{1,3}$}};
}
\draw(1.5, -1.5) node{\mbox{\footnotesize$G_{2,2}$}};
\draw(0.5, -1.5) node{\mbox{\footnotesize$G_{1,2}$}};
\draw(0.5, -0.5) node{\mbox{\footnotesize$G_{1,3}$}};
\foreach \x in {0}
\foreach \y in {-1,-2}
\draw (\x, \y) node[anchor=east]{\mbox{\footnotesize$\emptyset$}};
\draw (0, 0) node[anchor=east]{\mbox{\footnotesize$\lambda^{(0)}=\emptyset$}};
\draw(0,-3) node[anchor= north east]{\mbox{\footnotesize$\emptyset$}};
\foreach \y in {-3}
\foreach \x in {1,2}
\draw (\x, \y) node[anchor=north]{\mbox{\footnotesize$\emptyset$}};
\draw(3.6,-3) node[anchor=north]{\mbox{\footnotesize$\emptyset=\lambda^{(0)}$}};
\draw(1,0) node[anchor=south]{\mbox{\footnotesize$\lambda^{(1)}$}};
\draw(1,-1) node[anchor=south west]{\mbox{\footnotesize$\lambda^{(2)}$}};
\draw(2,-2) node[anchor=south west]{\mbox{\footnotesize$\lambda^{(2)}$}};
\draw(2.3,-1.1) node[anchor=south ]{\mbox{\footnotesize$\lambda^{(3)}$}};
\draw(3.3,-2.1) node[anchor= south]{\mbox{\footnotesize$\lambda^{(1)}$}};
\end{tikzpicture}
%\end{center}
\caption{\fs{Left: the encoded shape $sh(w^{\mathrm{sym}})=(3,2,1)$ for $w=(\prec, \succ, \prec)$ together with the down-right path $\hat{\pi}$ in bold; right:
%=((1,4),(2,4),(2,3),(3,3),(3,2),(4,2),(4,1))$. 
the partitions $\lambda^{(0)},\ldots,\lambda^{(3)},\ldots,\lambda^{(0)}$ associated to the points of $\hat{\pi}$ are constructed recursively using the empty partitions on the boundary, the $G_{r,t}$ and $\mathcal{T}_{\mathrm{loc}}$. }}
\label{GrowthDiag}
\end{SCfigure}
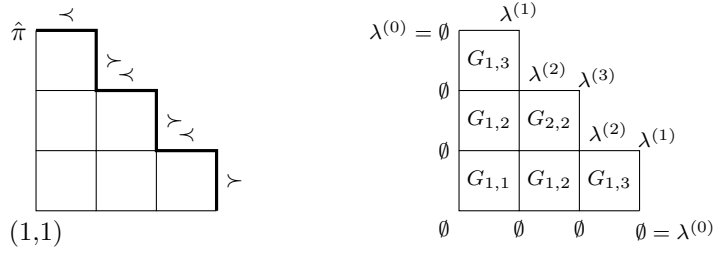
The following proposition is an elementary induction on $|sh(w^{\mathrm{sym}})|$ (see Theorem 3.2 of \cite{bbbccv} for a similar proof), which we omit carrying out (the partition function $Z_w$ appearing in \eqref{symmeasure} was computed in \eqref{eq:fbz}).
\begin{prop}\label{Fomin}
Let $w\in \{\prec, \succ\}^{n}, w^{\mathrm{sym}}=w\cdot w^{\star}$ and  $\mathfrak{s}_{i},i=1,\ldots,2n,$ be the variables \eqref{deutschess} for $w^{\mathrm{sym}}$. With the symmetric weights \eqref{symweights} we obtain by \eqref{lambdam} a probability distribution on
\begin{equation}\label{setLambda}
\{\vec{\lambda}: \vec{\lambda}=(\emptyset=\lambda^{(0)},\ldots,\lambda^{(n-1)},\lambda^{(n)},\lambda^{(n-1)},\ldots,\lambda^{(0)}), \vec{\lambda} \text{\,\,is\,\,} w^{\mathrm{sym} }\text{-interlaced\,} \}
\end{equation}
which is given by
\begin{align}\label{symmeasure}
\Prob(\{\vec{\lambda}\})=\frac{1}{Z_{w}} \bra{\lambda^{(n)}}\Gamma_{-}(\{\alpha\})\oneerv \cdot \prod_{i=1}^{n}\bra{\lambda^{(i-1)}}\Gamma_{i}(\{  \mathfrak{s}_i   \})\ket{\lambda^{(i)}}.
\end{align}
\end{prop}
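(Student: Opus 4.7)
The plan is to proceed by induction on the size $|sh(w^{\mathrm{sym}})|$ of the encoded self-conjugate Young diagram, adapting the argument of \cite[Theorem~3.2]{bbbccv} to the presence of a reflection boundary. The base case $|sh(w^{\mathrm{sym}})|=0$ is immediate: the only $w^{\mathrm{sym}}$-interlaced symmetric sequence is the all-empty one, and both sides of \eqref{symmeasure} reduce to $1$ after normalization, with $Z_w=\vcv\Gamma_-(\{\alpha\})\oneerv$ a finite positive constant under our nonnegativity assumptions.

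For the inductive step, the first move is to pick an inner corner $c$ of $sh(w^{\mathrm{sym}})$ and shorten the word accordingly. Because $w^{\mathrm{sym}}=w\cdot w^{\star}$ is palindromic once $\prec$ and $\succ$ are swapped --- so that $sh(w^{\mathrm{sym}})$ is self-conjugate --- every inner corner is either a single diagonal cell or one of a symmetric pair of off-diagonal cells. Deleting the corresponding $\prec\succ$ substring of $w^{\mathrm{sym}}$ (and its mirror, when off-diagonal) yields a shorter palindromic word $\tilde{w}^{\mathrm{sym}}$ to which the induction hypothesis applies. On the growth-diagram side, this deletion is effected by marginalizing over the value of $\lambda^{r+1,t+1}$ (and its transpose $\lambda^{t+1,r+1}$, off-diagonal) sitting at the removed corner(s); on the algebraic side, by a single vertex-operator move inside the product in \eqref{symmeasure}.

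The heart of the induction is to check that these two moves produce the same scalar factor, so that the ratio of the two sides of \eqref{symmeasure} is preserved. By bijectivity of the local growth rule $\mathcal{T}_{\mathrm{loc}}$ together with the volume identity \eqref{volume}, marginalizing over $\lambda^{r+1,t+1}$ against the symmetric geometric weight $(a_r a_t)^{G_{r,t}}$ contributes the factor $1/(1-a_r a_t)$, i.e.\ the partition function of $g(a_r a_t)$; on the algebraic side, the same factor $1/(1-a_r a_t)$ is produced when a $\Gamma_+(a_r)$ crosses a $\Gamma_-(a_t)$ via \eqref{eq:gamcomm}, applied symmetrically at the two mirror positions to kill the off-diagonal pair. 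A diagonal corner is handled by combining the commutation $\Gamma_+(a_r)\Gamma_-(\{\alpha\}) = (1-\alpha a_r)^{-1}\Gamma_-(\{\alpha\})\Gamma_+(a_r)$ with the single-variable specialization of the reflection relation \eqref{eq:refl} applied to $\oneerv$ via \eqref{eq:variation_normal_free_boundary}, which absorbs the bounced $\Gamma_+(a_r)$ back into the boundary; the resulting scalar is $1/(1-\alpha a_r)$, matching the partition function of $g(\alpha a_r)$. The matrix elements $\bra{\lambda^{(i-1)}}\Gamma_i(\{\mathfrak{s}_i\})\ket{\lambda^{(i)}}$ themselves reduce by \eqref{eq:schurelem} to single-variable monomial skew Schur functions, which encode both the $w^{\mathrm{sym}}$-interlacing constraint and the monomial weight tracked on the LPP side by \eqref{volume}.

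The main obstacle will be the bookkeeping at diagonal corners: since $\oneerv$ is not a $\Gamma_\pm$-eigenvector, the reflection relation modifies the boundary specialization when $\Gamma_+(a_r)$ is bounced off it, and one must verify that this modification combines consistently with the adjacent $\Gamma_-(\{\alpha\})$ and with the alternating $\mathfrak{s}_i$ pattern dictated by \eqref{deutschess}, so that the factor $(1-\alpha a_r)^{-1}$ --- and no parasitic scalar --- emerges, while the even-columns structure of the boundary is preserved so that the next induction step can be carried out. Once this single reflection computation is checked, the off-diagonal commutation and on-diagonal reflection mechanisms fit together uniformly and the induction closes routinely, as in the empty-boundary case of \cite{bbbccv}.
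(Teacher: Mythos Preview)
Your proposal is correct and follows exactly the approach the paper indicates: an induction on $|sh(w^{\mathrm{sym}})|$ modelled on \cite[Theorem~3.2]{bbbccv}, which the paper explicitly declines to carry out. Your identification of the two mechanisms---off-diagonal corners handled by the $\Gamma_+\Gamma_-$ commutation \eqref{eq:gamcomm}, diagonal corners by commutation past $\Gamma_-(\alpha)$ combined with reflection off the even-columns boundary via \eqref{eq:variation_normal_free_boundary} and \eqref{eq:refl}---is the right decomposition, and you correctly flag the diagonal reflection as the place needing care. Two small points of phrasing: first, the product in \eqref{symmeasure} only runs over $i=1,\ldots,n$, so an off-diagonal corner affects a single factor there (the mirror is implicit in the palindromic symmetry of $\vec\lambda$), not ``two mirror positions''; second, at the diagonal corner the bounced $\Gamma_+(a_r)$ does not get absorbed into the boundary but rather re-emerges as the new $\Gamma_-(a_r)$ playing the role of $\Gamma_n(\tilde{\mathfrak s}_n)$ in the shortened word $\tilde w$ with $\tilde w_n=\succ$---the identity you actually need is $\Gamma_+(a_r)\Gamma_-(\alpha)\oneerv=(1-\alpha a_r)^{-1}\Gamma_-(a_r)\Gamma_-(\alpha)\oneerv$, where the parasitic factors $(1-a_r)$ and $\tilde H(a_r)=(1-a_r)^{-1}$ cancel.
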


Now we can prove Theorem~\ref{LPPmulti}.

\begin{proof}[Proof of Theorem~\ref{LPPmulti}] 
  Note that, by construction, $L_{(1,1)\to (m,n)}=\lambda^{m+1,n+1}_{1}$, where $\lambda^{m+1,n+1}_{1}$ is from \eqref{constructedpart} and  $G_{r,t}=\omega_{r,t}$. Consequently, the LPP times become a gap probability for the point process $\mathfrak{S}(\vec{\lambda})$ where $ \vec{\lambda}$ is distributed as  \eqref{symmeasure}. By \eqref{gap}, the left hand side of \eqref{labelneeded} is given as a Fredholm pfaffian, and the corresponding correlation functions were computed in Theorem~\ref{thm:onebound} and \eqref{eq:kapoc}, leading to the identity \eqref{labelneeded}.
\end{proof}

\begin{proof}[Proof of Theorem~\ref{LPPThm2}]
We have to show the convergence, as $N \to \infty$, of the  Fredholm pfaffian provided by Theorem \ref{LPPmulti}. By Proposition \ref{pointconvv}, the kernel $K$ from Theorem \ref{LPPmulti} converges pointwise to $K^{v},$  and by Proposition \ref{expprop} we can apply Lemma \ref{lemexp} which yields an integrable upper bound, allowing us to apply dominated convergence to show the convergence of the Fredholm pfaffian. 
\end{proof}

\subsubsection{Asymptotics}

In the following asymptotics, we will ignore the integer parts in \eqref{x1} when it leads to no confusion.
\begin{prop}\label{pointconvv}
Set $a_j=\sqrt{q}$ for a $q\in (0,1)$ and let $u_1>\cdots >u_k\geq  0$ and $c_q=\frac{1-\sqrt{q}}{q^{1/6}(1+\sqrt{q})^{1/3}},d_q=\frac{q^{1/6}}{2(1+\sqrt{q})^{2/3}}$. Set $\alpha=1-2v c_q N^{-1/3}, v \in \R.$ We take $t_l=N,  r_l=N-\lfloor u_l N^{2/3} \rfloor,l=1,\ldots,k$ and 
\begin{equation}\label{x1}
  \begin{split}
 x_a &= \lfloor \frac{2\sqrt{q}}{1-\sqrt{q}}N \rfloor-\lfloor  u_a N^{2/3}\frac{\sqrt{q}}{1-\sqrt{q}}\rfloor      +  \lfloor \xi N^{1/3}\rfloor -1/2, \\
 x_b &= \lfloor \frac{2\sqrt{q}}{1-\sqrt{q}}N \rfloor-\lfloor  u_b N^{2/3}\frac{\sqrt{q}}{1-\sqrt{q}}\rfloor      +  \lfloor \xi'  N^{1/3}\rfloor -1/2.
  \end{split}
\end{equation}

We then have for the kernels $K$ from Theorem~\ref{LPPmulti} and $K^{v}$ from \eqref{Kv}:
\begin{equation}
  \begin{split}
&\lim_{N\to \infty}N^{2/3}e^{-N^{2/3}\ln(1-\sqrt{q})(u_a+u_b)}c_{q}^{-1}K_{1,1}(a,x_a ; b,x_b)=c_q K_{1,1}^{v}(d_{q}u_a, c_q \xi  ; d_{q} u_b,c_q \xi'), \\
&\lim_{N\to \infty}N^{1/3}e^{N^{2/3}\ln(1-\sqrt{q})(u_b-u_a)}K_{1,2}(a,x_a ; b,x_b)=c_q K_{1,2}^{v}(d_{q}u_a, c_q \xi ; d_{q} u_b,c_q \xi'), \\
&\lim_{N\to \infty}e^{N^{2/3}\ln(1-\sqrt{q})(u_b+u_a)}c_q K_{2,2}(a,x_a ; b,x_b)=c_q K_{2,2}^{v}(d_{q}u_a, c_q \xi ; d_{q} u_b,c_q \xi').
  \end{split}
\end{equation}
\end{prop}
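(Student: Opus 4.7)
The plan is to perform classical saddle-point analysis on each of the three double contour integrals of Theorem~\ref{LPPmulti} specialized at $a_j = \sqrt q$. First, I would isolate the exponential growth by writing each $z$-dependent integrand (and symmetrically for $w$) as $\exp(N f_0(z) + u_a N^{2/3} f_1(z) + \xi N^{1/3} f_2(z))$ up to an overall sign, where $f_0(z) = -\log(1-\sqrt q z) + \log(1-\sqrt q /z) - \tfrac{2 \sqrt q}{1-\sqrt q} \log z$, $f_1(z) = \log(1-\sqrt q z) + \tfrac{\sqrt q}{1-\sqrt q} \log z$ and $f_2(z) = -\log z$. A direct computation yields $f_0'(1) = f_0''(1) = 0$ and $f_0'''(1) = 2\sqrt q (1+\sqrt q)/(1-\sqrt q)^3 = 2/c_q^3$, so $z=1$ is a double saddle of $f_0$ and the substitution $z = 1 + c_q Z N^{-1/3}$ gives $N f_0(z) \to Z^3/3$. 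In addition, $f_1(1) = \log(1-\sqrt q)$, $f_1'(1) = 0$, $f_1''(1) = -\sqrt q/(1-\sqrt q)^2$ produce $u_a N^{2/3} f_1(z) \to u_a N^{2/3}\log(1-\sqrt q) - d_q u_a Z^2$, while $f_2'(1) = -1$ gives $\xi N^{1/3} f_2(z) \to -c_q \xi Z$. The divergent terms $u_a N^{2/3}\log(1-\sqrt q)$ (and their $w$-counterparts) are exactly what the conjugating factors $e^{\pm N^{2/3}\log(1-\sqrt q)(u_a \pm u_b)}$ in the three stated limits are designed to annihilate.

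Next I would analyze the algebraic prefactor of each kernel. Taylor expanding as $z,w \to 1$ and $\alpha = 1-2vc_qN^{-1/3}$ gives $(z-w) \sim c_q N^{-1/3}(Z-W)$, $(z^2-1) \sim 2 c_q N^{-1/3} Z$, $(zw-1) \sim c_q N^{-1/3}(Z+W)$, $(1-\alpha/z) \sim c_q N^{-1/3}(Z+2v)$, and similarly for $w$; combining these with the Jacobian $dz\,dw = c_q^2 N^{-2/3} dZ\,dW$ reproduces the rational part of $K_{1,1}^v$ with an overall power $c_q^2 N^{-2/3}$ that, multiplied by the announced $N^{2/3} c_q^{-1}$, leaves exactly $c_q$. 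The same bookkeeping recovers the rational parts of $K_{1,2}^{v,1}$ and $K_{2,2}^{v,1}$ with matching $N$-powers. After deforming each circle to the steepest-descent contour $\mathfrak G_\bullet^{\pi/3}$ or $\mathfrak G_\bullet^{2\pi/3}$ through $z=1$ at the angle making $\Re f_0$ decrease away from the saddle, the integrand converges locally uniformly to its announced limit, and the Gaussian-type tail bounds supplied by Proposition~\ref{expprop} (invoked anyway in the proof of Theorem~\ref{LPPThm2}) justify exchanging the limit with the integral.

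The main obstacle is tracking the residue contributions created by the contour deformations, which produce the auxiliary pieces $K_{1,2}^{v,2}$ and $K_{2,2}^{v,2}$. For $K_{1,2}$ with $a > b$, the original nesting $|z| < |w|$ is opposite to the limiting one $\Re Z > \Re W$ (encoded by $A_Z > A_W$), so the $z$-contour must sweep through the pole of $1/(z-w)$; the corresponding residue, after rescaling, converges precisely to the single Gaussian integral $K_{1,2}^{v,2}$ along $\im \R$. For $K_{2,2}$, the original condition $|zw| > 1$ translates in the rescaled variables to $\Re(Z+W) > 0$, which is opposite to $B_3 + B_4 < 0$ required by the definition of $K_{2,2}^{v,1}$; the deformation therefore crosses the pole of $1/(zw-1)$ (and, in an intermediate step, the pole $w = \alpha$), and the residue collapses one of the integrals via $W = -Z$ or $W = -2v$, producing the factor $\frac{1}{Z-2v}$ together with the prefactor $e^{8v^3/3 + 4v^2 u_b - 2v \xi'}$ that is precisely the value of the $w$-exponential at $W = -2v$. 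The case split $u_a = u_b = 0$ versus $u_a + u_b > 0$ reflects a further boundary residue that, at the degenerate saddle, contributes the extra term $\sgn(\xi'-\xi) e^{-2v|\xi-\xi'|}$. Carrying out this residue bookkeeping while ensuring that the final contours match the specified positions and orientations ($A_Z > A_W > -2v$, $B_3 > -2v > B_4$, $B_3 < -B_4$, $C_1 < 2v$, $B_2 > 2v$, $B_1 > 2|v|$) is the most delicate portion of the argument.
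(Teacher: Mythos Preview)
Your approach matches the paper's: the same functions (the paper calls them $f_1,f_2,f_3$ where your $f_0,f_1,f_2$ differ only by a sign on the last), the same double critical point at $z=1$ with $f_0'''(1)=2/c_q^3$, the same rescaling $z=1+c_qZN^{-1/3}$, the same residue bookkeeping ($z=w$ for $K_{1,2}$ when $a>b$; $w=1/z$ and $w=\alpha$ for $K_{2,2}$, with the $u_a+u_b=0$ case handled separately because the $w=1/z$ residue then has no $N^{2/3}$ exponential decay and must be computed exactly as a sum of residues at $0,\alpha,1/\alpha$).

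Two points deserve care. First, the paper spends substantial effort constructing explicit steep-descent contours: it shows $\Re f_0$ is constant on $|z|=1$, computes $L(z)=zf_0'(z)$ on the unit circle to determine the sign of the radial derivative, and builds contours of the form $(1+\varepsilon_1 L(e^{is}))e^{is}$ (with a local modification near $s=0$ so that they leave $1$ at the correct angle). Your phrase ``deforming each circle to the steepest-descent contour through $z=1$'' hides this work; in particular, because $z=1$ is a \emph{double} saddle, the local angles are $\pm\pi/3$ and $\pm2\pi/3$, and one must verify that the global contour can be closed while staying on the correct side of the poles at $\sqrt q$, $1/\sqrt q$, $\alpha$, $\pm1$. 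Second, your appeal to Proposition~\ref{expprop} is misplaced: that proposition bounds the kernel values $|K_{i,j}(a,x_a;b,x_b)|$ as functions of $\xi,\xi'$ (and is used for dominated convergence of the Fredholm series), not the integrand along the contour. The justification for passing the limit inside the contour integral comes instead from the cubic decay $|e^{Z^3/3}|$ along the chosen contours together with the explicit bound $\Re f_0\le -c_{0,0}<0$ on the portion of the contour with $|z-1|>\delta$, which the paper establishes directly from the contour construction.
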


\begin{proof}
We start with $K_{1,2}$. We get for $a \leq b$ 
\begin{align}
K_{1,2}(a,x_a;b, x_b)&=\frac{1}{(2\pi \im)^{2}}\oint_{1/\sqrt{q}>|z|>\alpha,1}\dx z \oint_{1/\sqrt{q},|z|>|w|>\alpha, \sqrt{q}}\dx w \frac{z-\alpha}{w-\alpha}\frac{zw-1}{w(z-w)(z-1)(z+1)}\\
&\nonumber \times \frac{e^{N f_{1}(z)+N^{2/3}u_a f_{2}(z)-N^{1/3}f_{3}(z)\xi}}{e^{N f_{1}(w)+N^{2/3}u_{b} f_{2}(w)-N^{1/3}f_{3}(w)\xi'}}\\ 
&\label{K12v2}=\frac{1}{(2\pi \im)^{2}} \oint_{\gamma_{-1,0,1,w}}\dx z \oint_{\gamma_{\sqrt{q},\alpha}}\dx w \frac{z-\alpha}{w-\alpha}\frac{zw-1}{w(z-w)(z-1)(z+1)}\\
&\nonumber \times \frac{e^{N f_{1}(z)+N^{2/3}u_a f_{2}(z)-N^{1/3}f_{3}(z)\xi}}{e^{N f_{1}(w)+N^{2/3}u_{b} f_{2}(w)-N^{1/3}f_{3}(w)\xi'}}
\end{align}
where we have set
 \begin{equation}
 \begin{aligned}
 f_{1}(z) &= \ln(1-\sqrt{q}/z)-\ln(1-\sqrt{q}z)-\ln(z)\frac{2\sqrt{q}}{1-\sqrt{q}}, \\
 f_{2}(z) &= \ln(z)\frac{\sqrt{q}}{1-\sqrt{q}}+\ln(1-\sqrt{q}z),\\ 
 f_{3}(z) &= \ln(z).
 \end{aligned}
 \end{equation}
 
 One readily computes $f_{1}(1)=f_{1}^{\prime}(1)=f_{1}^{\prime \prime}(1)=0, f_{1}^{\prime \prime \prime}(1)=2 c_{q}^{-3}$ and $f_{2}(1)=\ln(1-\sqrt{q}), f_{2}^{\prime}(1)=0,f_{2}^{\prime\prime}(1)=-\frac{\sqrt{q}}{(1-\sqrt{q})^{2}}$, also $d_q=-f^{\prime\prime}_{2}(1)c_{q}^{2}/2$.

 Let us briefly outline the strategy for the asymptotics. We choose contours $\gamma_{-1, 0,1,w}$ for $z$ and $\gamma_{\sqrt{q},\alpha}$ for $w$ such that they pass (almost) through the critical point $1$. Furthermore, for $\delta>0$ small (but independent of $N$) we want $\Re(f_{1}(z))\leq - c_{0,0}$ and $-\Re(f_{1}(w))\leq -c_{0,0}$ for some $c_{0,0} >0$ (depending on $\delta$) for $z \in \gamma_{-1,0,1,w}^{\delta}, w \in  \gamma_{\sqrt{q}, \alpha}^{\delta}$ where we define $\gamma_{-1,0,1,w}^{\delta}=\{z\in \gamma_{-1,0,1,w}:|z-1|\leq \delta\}, \gamma_{\sqrt{q},\alpha}^{\delta}=\{w\in \gamma_{\sqrt{q},\alpha}: |w-1|\leq \delta\}$.

 Given this, the integral over $ (\gamma_{-1,0,1,w}^{\delta}\cup \gamma_{\sqrt{q}, \alpha}^{\delta})^{c}$ will vanish as $N\to \infty$. On $ \gamma_{-1,0,1,w}^{\delta}\cup  \gamma_{\sqrt{q}, \alpha}^{\delta}$ we use Taylor to obtain $Nf_{1}(1+Zc_{q}N^{-1/3})=Z^{3}/3+\mathcal{O}(Z^{4}N^{-1/3})$. For this reason we want $(\gamma_{-1,0,1,w}^{\delta}-1)N^{1/3} $ respectively $(\gamma_{\sqrt{q},\alpha}^{\delta}-1)N^{1/3}$ to lie (up to a part of length $\mathcal{O}(1)$) in $\{z \in \C: \Re(z^{3})<0\}$ respectively  $\{w \in \C: \Re(w^{3})>0\}.$ 

 We first note that $\Re (f_{1}(e^{\im s}))=0$ for all $s$. Furthermore, we compute
 \begin{equation}
 L(z)=z f_{1}^{\prime}(z)=\frac{\sqrt{q}(1+\sqrt{q})}{1-\sqrt{q}}\frac{(1-z)^{2}}{(\sqrt{q}-z)(-1+\sqrt{q}z)}.
 \end{equation}

An elementary computation shows 
\begin{equation}
\Im(L(e^{\im s}))=0.
\end{equation}

Next we treat $\Re(L(e^{\im s}))=L(e^{\im s})$. We have  
\begin{equation}\label{REL}
\Re(L(z))=\frac{\sqrt{q}(1+\sqrt{q})}{1-\sqrt{q}}\frac{1}{|(\sqrt{q}-z)(-1+\sqrt{q}z)|^{2}}\Re((1-z)^{2}(\sqrt{q}-\bar{z})(-1+\sqrt{q}\bar{z})).
\end{equation}
This implies by a simple computation
\begin{equation}\label{L}
L(e^{\im s})=\frac{\sqrt{q}(1+\sqrt{q})}{1-\sqrt{q}}\frac{2(-1+\cos(s))}{|\sqrt{q}-e^{\im s}|^{2}}.
\end{equation}

Let now $\varepsilon>0,$  and $\phi \in [0, 2 \pi]$. Then,  by Taylor approximation for some $t_s \in [0,1]$, we have
\begin{equation}
\begin{aligned}\label{Taylor}
\Re(f_{1}(e^{\im s}+\varepsilon e^{\im \phi}))&=\Re(f_{1}(e^{\im s}))+ \Re(f_{1}^{\prime}(e^{\im s})\varepsilon e^{\im \phi})+\frac{1}{2}\Re(f_{1}^{\prime \prime}(e^{\im s}+t_{s}\varepsilon e^{\i\phi})\varepsilon^{2} e^{2\im \phi})\\
&=L(e^{\im s})\varepsilon \cos(\phi-s)+\frac{\varepsilon^{2}}{2}\Re(f_{1}^{\prime \prime}(e^{\im s}+t_{s}\varepsilon e^{\im \phi}) e^{2\im \phi}).
\end{aligned}
\end{equation}

Take $\varepsilon=-\varepsilon_1 L(e^{\im s})$ for $\varepsilon_1 >0$ small
and   $\phi=\pi+s$  respectively $\phi=s$ to get
\begin{equation}
\begin{aligned}\label{crucial}
&\Re(f_{1}((1+\varepsilon_1 L(e^{\im s}))e^{\im s}))\geq L(e^{\im s})^{2} \varepsilon_1  / 2, \\
&\Re(f_{1}((1-\varepsilon_1 L(e^{\im s}) )e^{\im s}))\leq -L(e^{\im s})^{2} \varepsilon_1  / 2.
\end{aligned}
\end{equation}

Choose now the contour (Figure~\ref{contourlpp})
\begin{equation}\label{gammaq}
\gamma_{\sqrt{q},\alpha}(s)=(1+N^{-1/3}(2|v|c_q +1/2))(1+\varepsilon_1 L(e^{\im s}))e^{\im s}, \quad s \in [0, 2\pi].
\end{equation}
The prefactor $(1+N^{-1/3}(2|v|c_q +1/2))$ makes sure that $\alpha$ lies inside the contour. Furthermore, by \eqref{crucial} and \eqref{L}, for any $\delta>0$ independent of $N$ there is a $c_{0,0}>0$ for which we  have $-\Re (f_{1}(w))\leq - c_{0,0}$ for $w\in  \gamma_{\sqrt{q},\alpha}\setminus \gamma_{\sqrt{q},\alpha}^{\delta}=\{w\in \gamma_{\sqrt{q},\alpha}:|w-1|>\delta\}.$ To choose $\gamma_{-1,0,1,w}$ consider first
  \begin{equation}
 \tilde{\gamma}_{-1,0,1,w}(s)=(1+N^{-1/3}(2|v|c_q +1))(1-\varepsilon_1 L(e^{\im s}))e^{\im s}, \quad s \in [0, 2\pi].
 \end{equation}
By \eqref{crucial} and \eqref{L}, $\Re(f_{1}(\tilde{\gamma}_{-1,0,1,w}(s)))$ behaves as desired, but  $\Re(\tilde{\gamma}_{-1,0,1,w}(s))< 1+N^{-1/3}(2|v|c_q +1))$ if $\varepsilon_1 L(e^{\im s})(-1+\cos(s))^{-1} $ is too small, i.e. the contour may not cross the positive real axis with the right angles. We thus do a local modification. Choose  $\eta_1 >0$ small and let $\eta_2 >0$ be the number such that  $e^{\im  \arcsin(\eta_1)}+\eta_2 \in\{(1-\varepsilon_1 L(e^{\im s}))e^{\im s},s\in [0, 2 \pi]\}$, and let $ e^{\im  \arcsin(\eta_1)}+\eta_2  =(1-\varepsilon_1 L(e^{\im s_{\eta_{2}}}))e^{\im s_{\eta_2}}$. Let $M>1$ and define
\begin{equation}
  \begin{split}
& \gamma_{-1,0,1,w}^{loc,1}(s)= 1+\im s+s/M, \, s\in[0,\eta_1]\mathrm{\quad and } \\
& \gamma_{-1,0,1,w}^{loc,2}(s)=e^{\im \arcsin(\eta_1)}+s, \, s\in [\eta_2, 1+\eta_{1}/M- \cos(\arcsin(\eta_1 ))].
  \end{split}
\end{equation} 

Choosing $M$ sufficiently large and $\eta_1$ sufficiently small we get by Taylor approximation around $1$ and $e^{\arcsin(\eta_1)\im}$ that for some constants $d_1, d_2 >0$
\begin{align}\label{Taylor2} 
\Re(f_{1}(\gamma_{-1,0,1,w}^{loc,1}(s)))\leq -s^{3} d_1 \mathrm{\quad and \quad} \Re(f_{1}(\gamma_{-1,0,1,w}^{loc,2}(s)))\leq  - \eta_{1}^{2}s d_2.
\end{align}

Define finally $\gamma_{-1,0,1,w}^{loc,3}= \{(1-\varepsilon_1 L(e^{\im s}))e^{\im s},s\in [s_{\eta_2},  \pi]\}$.

Since $\Re(f_{1}(z))=\Re(f_{1}(\bar{z}))$ it suffices to define $\gamma_{-1,0,1,w}$ on the upper half plane. We set
\begin{align}\label{gamma-101}
\gamma_{-1,0,1,w}=(1+N^{-1/3}(2|v|c_q +1))(\gamma_{-1,0,1,w}^{loc,1}+   \gamma_{-1,0,1,w}^{loc,2}       +  \gamma_{-1,0,1,w}^{loc,3} )\end{align}
where the $+$ means we concatenate the curves such that  $\gamma_{-1,0,1,w}$ is counterclockwise oriented. On the lower half plane, we simply take the image of \eqref{gamma-101} under complex conjugation.  We now choose $\delta < \eta_1$ such that $\gamma_{-1,0,1,w}^{\delta} \subset \{1+\im s+|s|/M, s \in [-\eta_1, \eta_1]\} $.

\begin{SCfigure} %\begin{center}
   \begin{tikzpicture}[scale=0.7]
  \draw [very thick](-2,-0.1) -- (-2,0.1) node[below=4pt] {\small{$1$}};
    \draw [very thick](0.3,-0.1) -- (0.3,0.1) ;
    \draw (-0.1,0) node[below=2pt] {\small{$w_c$}};
     \draw [very thick](2.3,-0.1) -- (2.3,0.1) ;
     \draw (2.1,0) node[below=2pt ] {\small{$\tilde{w}_c $}};
     \draw (2.9,1.2) -- (3.1,1.56);
          \draw(2.9,-1.2) -- (3.1,-1.56);
     \draw(3.1,-1.56) --(2.117,-1.56);
         \draw (3.1,1.56) --(2.117,1.56);
         \begin{scope}[scale=5]
\foreach \x in {14,15,16,17,18,19,20,21,22,23,24,25,26,27,28,29,30}
\draw[] ({(1+(-1+cos(\x))/5)*cos(\x)-0.94},{(1+(-1+cos(\x+1))/5)*sin(\x)}) -- ({(1+(-1+cos(\x+1))/5)*cos(\x+1)-0.94},{ (1+(-1+cos(\x+1))/5)*sin(\x+1)});
\foreach \x in {14,15,16,17,18,19,20,21,22,23,24,25,26,27,28,29,30}
\draw[] ({(1+(-1+cos(\x))/5)*cos(\x)-0.94},{-((1+(-1+cos(\x+1))/5)*sin(\x))}) -- ({(1+(-1+cos(\x+1))/5)*cos(\x+1)-0.94},{- (1+(-1+cos(\x+1))/5)*sin(\x+1)});
\foreach \x in {30}
\draw[->, very thick] ({(1+(-1+cos(\x))/5)*cos(\x)-0.94},{(1+(-1+cos(\x+1))/5)*sin(\x)}) -- ({(1+(-1+cos(\x+1))/5)*cos(\x+1)-0.94},{ (1+(-1+cos(\x+1))/5)*sin(\x+1)});
\end{scope}

         \begin{scope}[scale=5]
\foreach \x in {0,1,2,3,4,5,6,7,8,9,10,11,12,13,14}
\draw[dotted, thick] ({(1+(-1+cos(\x))/5)*cos(\x)-0.94},{(1+(-1+cos(\x+1))/5)*sin(\x)}) -- ({(1+(-1+cos(\x+1))/5)*cos(\x+1)-0.94},{ (1+(-1+cos(\x+1))/5)*sin(\x+1)});
\foreach \x in {0,1,2,3,4,5,6,7,8,9,10,11,12,13,14}
\draw[ dotted, thick ] ({(1+(-1+cos(\x))/5)*cos(\x)-0.94},{-((1+(-1+cos(\x+1))/5)*sin(\x))}) -- ({(1+(-1+cos(\x+1))/5)*cos(\x+1)-0.94},{- (1+(-1+cos(\x+1))/5)*sin(\x+1)});
\end{scope}

  \draw[dotted, thick] (2.3,0) -- (2.9,1.2);
  \draw[dotted, thick] (2.3,0) -- (2.9,-1.2);

     \begin{scope}[scale=5]
\foreach \x in {18,19,20,21,22,23,24,25,26,27,28,29,30}
\draw[ smooth] ({(1+(1-cos(\x))/5)*cos(\x)-0.536},{(1+(1-cos(\x))/5)*sin(\x)}) -- ({(1+(1-cos(\x+1))/5)*cos(\x+1)-0.536},{ (1+(1-cos(\x+1))/5)*sin(\x+1)});
\foreach \x in {30}
\draw[->, very thick] ({(1+(1-cos(\x))/5)*cos(\x)-0.536},{(1+(1-cos(\x))/5)*sin(\x)}) -- ({(1+(1-cos(\x+1))/5)*cos(\x+1)-0.536},{ (1+(1-cos(\x+1))/5)*sin(\x+1)});
\foreach \x in {18,19,20,21,22,23,24,25,26,27,28,29,30}
\draw[smooth] ({(1+(1-cos(\x))/5)*cos(\x)-0.536},{-((1+(1-cos(\x))/5)*sin(\x))}) -- ({(1+(1-cos(\x+1))/5)*cos(\x+1)-0.536},{- (1+(1-cos(\x+1))/5)*sin(\x+1)});
\end{scope}
  \node[label = above left: $\gamma_{\sqrt{q},\alpha}^{\delta}$] at (1.9, 0.3) {};
    \node[label = above left: $\gamma_{-1,0,1,w}^{\delta}$] at (5.2, 0.3) {};
          \draw [very thick, ->] (-5,0) -- (5,0) node[below=8pt] {\small{$\R$}};
   \end{tikzpicture}    
  %\end{center} 
  \caption{\fs{Parts of the contours $\gamma_{\sqrt{q},\alpha}$ and $\gamma_{-1,0,1,w}$  from \eqref{gammaq} and \eqref{gamma-101} close to where they cross the positive real axis in
    $w_c=1+N^{-1/3}(2|v|c_q +1/2)$, $\tilde{w}_c=w_c+N^{-1/3}/2 $. The curve segments $\gamma_{\sqrt{q},\alpha}^{\delta},\gamma_{-1,0,1,w}^{\delta}$ are shown in dotted lines.}}
\label{contourlpp} 
\end{SCfigure}
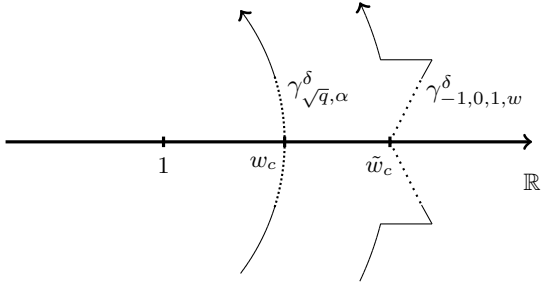

By virtue of \eqref{Taylor2}, \eqref{crucial} and \eqref{L}, for $z\in \gamma_{-1,0,1,w}\setminus \gamma_{-1,0,1,w}^{\delta}=\{z\in \gamma_{-1,0,1,w}: |z-1|>\delta\}$ we have $\Re(f_{1}(z))\leq - c_{0,1}$ for a $c_{0,1}>0$  and we can take $c_{0,0}=c_{0,1}$. We choose $\Sigma$ to be the part of the contours where $ z \notin \gamma^{\delta}_{-1,0,1,w}$ and/or $w\notin \gamma_{\sqrt{q},\alpha}^{\delta}$. Then the integral \eqref{K12v2} is on
\begin{align}
\Sigma \cup ( \gamma^{\delta}_{-1,0,1,w} \cup \gamma_{\sqrt{q},\alpha}^{\delta})=  \gamma_{-1,0,1,w} \cup \gamma_{\sqrt{q},\alpha}.
\end{align}
On $\Sigma$, $\exists\,c_{0,0}>0$ such that $\Re(f_{1}(z))<-c_{0,0}$ and/or $-\Re(f_{1}(w))<-c_{0,0}$, and also $\Re(f_{1}(z)), -\Re(f_{1}(w))<c_{0,0}/4$. Furthermore, we can bound $\left| \frac{z-\alpha}{w(w-\alpha)}\frac{zw-1}{(z-w)(z^{2}-1)}\right| < C(\delta) N^{1/3}$ where $C(\delta)$ is a constant depending on $\delta$. So overall we may bound
\begin{equation}
N^{1/3}e^{N^{2/3}f_{2}(1)(u_b -u_a)}\left|\int_{\Sigma}\dx z\dx w   \frac{zw-1}{(z^{2}-1)(z-w)}\frac{z-\alpha}{w(w-\alpha)}
\frac{e^{N f_{1}(z)+N^{2/3}u_a f_{2}(z)-N^{1/3}f_{3}(z)\xi}}{e^{N f_{1}(w)+N^{2/3}u_b f_{2}(w)-N^{1/3}f_{3}(w)\xi'}}    \right|\leq C e^{-N c_{0,0}/2}
\end{equation}
for a $C>0$.

For the integral on $ \gamma^{\delta}_{-1,0,1,w} \cup \gamma_{\sqrt{q},\alpha}^{\delta},$ we do the change of variable
\begin{equation}\label{ZW}
z=1+Zc_{q}N^{-1/3} \quad w=1+Wc_{q}N^{-1/3}.
\end{equation}

Use now Taylor and define $F(Z,W)$ via
\begin{equation}
\begin{aligned}
\label{errorterms}
&e^{N(f_{1}(z)-f_{1}(w))}e^{N^{2/3}(u_a f_{2}(z)-u_b f_{2}(w))}e^{N^{1/3}(\xi' f_{3}(w)- \xi f_{3}(z))}\\&=e^{N^{2/3}(u_a f_{2}(1)-u_b f_{2}(1))}e^{Z^{3}/3
+Z^{2}u_ac_{q}^{2}f^{\prime\prime}_{2}(1)/2-\xi Z c_q     -W^{3}/3 -W^{2}u_b c_{q}^{2}f^{\prime\prime}_{2}(1)/2+ W c_q \xi'} 
\\&\times e^{\mathcal{O}(Z^{4}N^{-1/3})+\mathcal{O}(Z^{3}N^{-1/3})+ \mathcal{O}(Z^{2}N^{-1/3}) +\mathcal{O}(W^{4}N^{-1/3})+\mathcal{O}(W^{3}N^{-1/3})+ \mathcal{O}(W^{2}N^{-1/3})}
\\& = F(Z,W)e^{N^{2/3}(u_a f_{2}(1)-u_b f_{2}(1))}
\end{aligned}
\end{equation}
and denote $\tilde{F}(Z,W)=e^{Z^{3}/3
+Z^{2}u_ac_{q}^{2}f^{\prime\prime}_{2}(1)/2-\xi Z c_q     -W^{3}/3 -W^{2}u_b c_{q}^{2}f^{\prime\prime}_{2}(1)/2+ W c_q \xi'}  $.
To control the contribution from the error terms in \eqref{errorterms}, we use the inequality $|e^{x}-1|\leq e^{|x|}|x|$.

With the change of variable \eqref{ZW}, we have to control
\begin{multline}
A:= \Big|\frac{c_q}{(2\pi \im )^{2}}\int_{(\gamma_{-1,0,1,w}^{\delta}-1)\frac{N^{1/3}}{c_q}}\dx Z \int_{(\gamma_{\sqrt{q},\alpha}^{\delta}-1)\frac{N^{1/3}}{c_q}}\dx W \frac{Z+2v}{W+2v}\frac{Z+W+ZWN^{-1/3}c_{q}}{(1+W c_{q}N^{-1/3})(Z-W)Z(2+Zc_{q}N^{-1/3})} \\ 
\times (F(Z,W)-\tilde{F}(Z,W))\Big|.
\end{multline}
We have 
\begin{equation}
  \begin{split}
  A \leq 
&  \frac{c_q}{(2\pi )^{2}}\int_{(\gamma_{-1,0,1,w}^{\delta}-1)\frac{N^{1/3}}{c_q}}\left|\dx Z \right|\int_{(\gamma_{\sqrt{q},\alpha}^{\delta}-1)\frac{N^{1/3}}{c_q}}\left|\dx W \right| \left| \frac{Z+2v}{W+2v}\frac{Z+W +ZWN^{-1/3}c_{q}}{(Z-W)Z}   \tilde{F}(Z,W) \right| \\
&  \times e^{|\mathcal{O}(Z^{4}N^{-1/3})+\mathcal{O}(Z^{3}N^{-1/3})+ \mathcal{O}(Z^{2}N^{-1/3}) +\mathcal{O}(W^{4}N^{-1/3})+\mathcal{O}(W^{3}N^{-1/3})+ \mathcal{O}(W^{2}N^{-1/3})|} \\
&  \times |\mathcal{O}(Z^{4}N^{-1/3})+\mathcal{O}(Z^{3}N^{-1/3})+ \mathcal{O}(Z^{2}N^{-1/3}) +\mathcal{O}(W^{4}N^{-1/3})+\mathcal{O}(W^{3}N^{-1/3})+ \mathcal{O}(W^{2}N^{-1/3})| .
  \end{split}
\end{equation}
This implies 
\begin{equation}
  \begin{split}
A \leq 
&  \frac{c_q}{(2\pi )^{2}}\int_{(\gamma_{-1,0,1,w}^{\delta}-1)\frac{N^{\frac{1}{3}}}{c_q}}\left|\dx Z \right|\int_{(\gamma_{\sqrt{q},\alpha}^{\delta}-1)\frac{N^{\frac{1}{3}}}{c_q}}\left|\dx W \right| \left| \frac{Z+2v}{W+2v}\frac{Z+W+ZWN^{-\frac{1}{3}}c_{q}}{(Z-W)Z}   \frac{e^{Z^{3}(1+\chi_1)/3-\xi Z(1+\chi_2) c_q}}{ e^{W^{3}(1+\chi_3)/3- W(1+\chi_4) c_q\xi'} } \right| \\ 
&  \times \left| \frac{e^{Z^{2}(1+\chi_5) u_a c_{q}^{2}f^{\prime\prime}_{2}(1)/2    }}{ e^{W^{2}(1+\chi_6)u_b c_{q}^{2}f^{\prime\prime}_{2}(1)/2      } } \right| N^{-1/3}|\mathcal{O}(Z^{4})+\mathcal{O}(Z^{3})+ \mathcal{O}(Z^{2}) +\mathcal{O}(W^{4})+ \mathcal{O}(W^{3})+\mathcal{O}(W^{2})|
  \end{split}
\end{equation}
where the $\chi_i \in \R, i=1, \ldots,6$ can be taken as small in absolute value as desired by taking $\delta $ small. Now for large $N$, the $|e^{Z^{3}(1+\chi_1)/3}e^{-W^{3}(1+\chi_3)/3}|$ term dominates the integral. At the integration boundary, it is of order $e^{-\mathcal{O}(\delta^{3}N)}$. This easily implies that $N^{1/3}A $ remains bounded as $N \to \infty$. Consequently, $ A$ vanishes like $N^{-1/3}$. We can thus take $\tilde{F}(Z,W)$ instead of $F(Z,W)$ and only make an error of $\mathcal{O}(N^{-1/3})$. By doing so, we are left with
\begin{equation}\label{almostint}
\frac{c_q}{(2\pi \i)^{2}}\int_{(\gamma_{-1,0,1,w}^{\delta}-1)\frac{N^{1/3}}{c_q}}\dx Z \int_{(\gamma_{\sqrt{q},\alpha}^{\delta}-1)\frac{N^{1/3}}{c_q}}\dx W \frac{Z+2v}{W+2v}\frac{Z+W+ZWc_q N^{-1/3}}{(Z-W)Z}\frac{(2+Zc_q N^{-1/3})^{-1}}{1+WN^{-1/3}c_q}\tilde{F}(Z,W) .
\end{equation}
Finally, in \eqref{almostint} we can extend the curves to infinity (inside $\{z\in\C: \Re(z^{3})<0\}$ and $\{w\in\C: \Re(w^{3})>0\}$) and thus only make an error $e^{-\mathcal{O}(N)}.$ 
We can then deform the contours to be as in \eqref{K12v} without errors. 

To summarize, we have shown that
\begin{multline}
N^{1/3}e^{f_{2}(1) N^{2/3}(u_b  -   u_a )}K_{1,2}(a,x_a;b, x_b)=\frac{c_q}{(2\pi \i)^{2}}\int_{\mathfrak{G}_{A_Z}^{\pi/3}}\dx Z \int_{\mathfrak{G}_{A_W}^{2\pi/3}}\dx W \frac{(Z+W+ZWc_q N^{-1/3})(Z+2v)}{(W+2v)Z(Z-W)}
\\
\times \frac{e^{Z^{3}/3
+Z^{2}u_ac_{q}^{2}f^{\prime\prime}_{2}(1)/2-\xi Z c_q     -W^{3}/3 -W^{2}u_b c_{q}^{2}f^{\prime\prime}_{2}(1)/2+ W c_q \xi'} }{(1+WN^{-1/3}c_q)(2+Zc_q N^{-1/3})} %\\
+ \label{errors}\mathcal{O}(N^{-1/3}+ e^{-N c_{0,0}/2}+e^{-\mathcal{O}(N)}),
\end{multline}
implying
\begin{equation}
  \begin{split}
\lim_{N \to \infty}N^{1/3}e^{f_{2}(1) N^{2/3}(u_b  -   u_a )}K_{1,2}(a,x_a;b, x_b)=&\frac{c_q}{(2\pi \i)^{2}}\int_{\mathfrak{G}_{A_Z}^{\pi/3}}\dx Z \int_{\mathfrak{G}_{A_W}^{2\pi/3}}\dx W \frac{(Z+W)(Z+2v)}{2(W+2v)Z(Z-W)} \\
& \times e^{Z^{3}/3
+Z^{2}u_ac_{q}^{2}f^{\prime\prime}_{2}(1)/2-\xi Z c_q -W^{3}/3 -W^{2}u_b c_{q}^{2}f^{\prime\prime}_{2}(1)/2+ W c_q \xi'} 
  \end{split}
\end{equation}
for $A_Z>A_W>-2v, A_Z>0$.

If now $a>b,$ we have the condition $|z|<|w|$ on our contours. Deforming them so as to equal 
\eqref{gamma-101}, \eqref{gammaq}, we pick up an extra residue, which equals
\begin{equation}\label{reszw}
N^{1/3}e^{f_{2}(1) N^{2/3}(u_b  -   u_a )}\frac{-1}{2 \pi \im}\oint_{\gamma_{0}}\dx z e^{N^{2/3}f_{2}(z)(u_a-u_b)+N^{1/3}f_{3}(z)(\xi'-\xi)}z^{-1}.
\end{equation} 
For $\gamma_{0}(s)=e^{\im s}$ the contribution of the integral over $\gamma_{0}^{\delta}$ in \eqref{reszw} clearly vanishes. Note $u_a-u_b<0$. On $\gamma_{0}^{\delta}$ we do the same change of variable $z=1+Zc_q N^{-1/3}$. Next we Taylor-expand $f_{2}(1+Zc_q N^{-1/3}),f_{3}(1+Zc_q N^{-1/3})$ as before, and control the contribution from the remainder terms as before. Sending then $N \to \infty$ shows that \eqref{reszw} converges to
\begin{equation}
\frac{- c_q }{2 \pi \im}\int_{\im \R}\dx Z e^{Z^{2}c_{q}^{2}f_{2}^{\prime\prime}(1)(u_a-u_b)/2}e^{Z c_q (\xi'-\xi)},
\end{equation}
with $\im \R$ oriented with increasing imaginary part.

Next we come to $K_{1,1}(a,x_a;b, x_b)$. We have
\begin{equation}\label{K11}
  \begin{split}
K_{1,1}(a,x_a;b, x_b) = &\frac{1}{(2 \pi \i)^{2}}\oint_{\gamma_{-1,0,1}}\dx z \oint_{\gamma_{-1,0,1}} \dx w e^{N(f_1(z)+f_1 (w))+N^{2/3}(u_a   f_{2}(z)+u_b f_{2}(w))} \\
& \times \frac{e^{-N^{1/3}(f_{3}(z)\xi+f_{3}(w)\xi')}(z-w)}{(z^{2}-1)(w^{2}-1)(zw-1)}(z-\alpha)(w-\alpha).
  \end{split}
\end{equation}

We can choose the contour \eqref{gamma-101} for both $z$ and $w$ in \eqref{K11}. Redoing all the steps made for $K_{1,2}$ we obtain
\begin{equation}
  \begin{split}
\lim_{N \to \infty}N^{2/3}e^{-N^{2/3}f_{2}(1)(u_a +u_b)}K_{1,1}(a,x_a;b, x_b)
=& \frac{c_{q}^{2}}{(2 \pi \i)^{2}}\int_{\mathfrak{G}^{\pi/3}_1}\dx Z
\int_{\mathfrak{G}^{\pi/3}_1}\dx W\frac{(Z-W)(W+2v)(Z+2v)}{4ZW(Z+W)}\\
& \times e^{Z^{3}/3 +Z^{2}u_ac_{q}^{2}f^{\prime\prime}_{2}(1)/2-\xi Z c_q +W^{3}/3 + W^{2}u_b c_{q}^{2}f^{\prime\prime}_{2}(1)/2- W c_q \xi'}.
  \end{split}
\end{equation}

Finally, we come to $K_{2,2}(a,x_a;b, x_b)$. We have
\begin{equation}
  \begin{split}
K_{2,2}(a,x_a;b, x_b) &=\frac{1}{(2 \pi \im)^{2}}\oint_{\gamma_{\sqrt{q},\alpha}}\dx z \oint_{\gamma_{\sqrt{q},\alpha,1/z}}\dx w e^{-N(f_{1}(z)+f_{1}(w))-N^{2/3}(u_a f_{2}(z)+u_{b}f_{2}(w))+N^{1/3}(f_{3}(z)\xi+f_{3}(w)\xi')} \\
& \times \frac{z-w}{zw(z-\alpha)(w-\alpha)(zw-1)}.
  \end{split}
\end{equation}

We can choose the contour for $z$ to also contain $1/\alpha$, e.g. $\gamma_{\sqrt{q},\alpha}(s)=(1+N^{-1/3}(2|v| c_q +1/2))e^{\im s}.$ Note that on the unit circle, $\Re(-f_{2}(z)),\Re(-f_{2}(1/z))$ are maximal at $z=1$ and decrease until they reach $z=-1$.

We consider first the case $u_a+u_b>0$. We start with the residue of $w$ at $1/z$. It equals 
\begin{equation}\label{res1/z}
\frac{1}{2 \pi \im}\oint_{(1+N^{-1/3}(2|v| c_q +1/2))e^{\im s}}\dx z \frac{z^{2}-1}{z(z-\alpha)(-\alpha)(z-1/\alpha)}e^{-N^{2/3}(f_{2}(z)u_a +f_{2}(1/z)u_b)}e^{N^{1/3}(f_{3}(z)\xi-f_{3}(z)\xi')}.
\end{equation}
Repeating the steps of the asymptotics for $K_{1,2},$ one then obtains  
\begin{equation}
\lim_{N \to \infty}e^{N^{2/3}f_{2}(1)(u_a+u_b)}\eqref{res1/z}=\frac{-1}{2 \pi \im}\int_{\mathfrak{G}^{2\pi/3}_{B_1}}\dx Z\frac{2Z}{(Z+2v)(Z-2v)}e^{-Z^{2}f_{2}^{\prime\prime}(1)c_{q}^{2}(u_a+u_b)/2+Zc_{q}(\xi-\xi')}
\end{equation}
with $B_1> 2|v|$.

Next we consider the contribution from the pole at $w=\alpha$, given by 
\begin{equation}\label{polewa}
\frac{e^{N^{2/3}f_{2}(1)(u_a+u_b)}}{2 \pi \im}\oint_{(1+N^{-1/3}(2|v| c_q +1/2))e^{\im s}}\dx z \frac{e^{-N(f_{1}(z)+f_{1}(\alpha))-N^{2/3}(f_{2}(z)u_a +f_{2}(\alpha)u_b)+N^{1/3}(f_{3}(z)\xi+f_{3}(\alpha)\xi') }}{z \alpha(z\alpha-1)}
\end{equation}
We may deform the contour  $(1+N^{-1/3}(2|v| c_q +1/2))e^{\im s}$ in \eqref{polewa} to 
\begin{equation}\label{gammaqal}
(1+N^{-1/3}(2|v| c_q +1/2))(1+\varepsilon_{1}L(e^{\im s}))e^{\im s}, s\in[0, 2 \pi].
\end{equation}
without errors. By the same asymptotic analysis performed for $K_{1,2}$ we then see that 
\begin{equation}
\lim_{N \to \infty}   \eqref{polewa}= \frac{e^{8 v^{3}/3-2v^{2}c_{q}^{2}f_{2}^{\prime\prime}(1)u_b-2vc_q \xi'}}{2 \pi \im}\int_{\mathfrak{G}_{B_2}^{2\pi/3}}\dx Z\frac{1}{Z-2v}e^{-Z^{3}/3-Z^{2}u_{a}c_{q}^{2}f_{2}^{\prime \prime}(1)/2+\xi Z c_{q}}, \qquad B_2 > 2v.
\end{equation}
We compute the remaining term 
\begin{equation}\label{limq}
  \begin{split}
\lim_{N \to \infty} \frac{e^{N^{2/3}f_{2}(1)(u_a+u_b)}}{(2 \pi \im)^{2}}\oint_{\gamma_{\sqrt{q},\alpha}}\dx z \oint_{\gamma_{\sqrt{q}}}\dx w & e^{-N(f_{1}(z)+f_{1}(w))-N^{2/3}(u_a f_{2}(z)+u_{b}f_{2}(w))+N^{1/3}(f_{3}(z)\xi+f_{3}(w)\xi')} \\
& \times \frac{z-w}{zw(z-\alpha)(w-\alpha)(zw-1)}.
  \end{split}
\end{equation}

As contours, we can choose \eqref{gammaqal} for $\gamma_{\sqrt{q},\alpha}$, and $\frac{1-N^{-1/3}(4|v| c_q +1)}{1+N^{-1/3}(2|v| c_q +1/2)}\times \eqref{gammaqal}$ for $\gamma_{\sqrt{q}}$ (note that with this choice, $\gamma_{\sqrt{q},\alpha}$ has no point of $\gamma_{\sqrt{q}}$ in its interior, and vice versa).

With this choice, we can now proceed exactly as before. This then yields that 
\begin{align}\label{doubleintk22}
\eqref{limq}=\frac{1}{(2 \pi \im)^{2}}\int_{\mathfrak{G}^{2 \pi/3}_{B_3}} \dx Z \int_{\mathfrak{G}^{2 \pi/3}_{B_4}} \dx W
&\frac{e^{-Z^{3}/3-u_{a}Z^{2}c_{q}^{2}f_{2}^{\prime \prime}(1)/2+Zc_{q}\xi}}{e^{W^{3}/3+u_{b}W^{2}c_{q}^{2}f_{2}^{\prime\prime}(1)/2-Wc_q \xi'}} \frac{Z-W}{(W+2v)(Z+2v)(Z+W)}
\end{align}
for $B_3 >-2v, B_4 < -2v$ and $B_3<-B_4$.

For $u_a=u_b=0,$ we proceed as follows. Writing the integer parts, the residue  of $w=1/z$ is given by 
\begin{equation}\label{res(z,1/z)}
\frac{1}{2 \pi \im}\oint_{(1+N^{-1/3}(2|v|c_q+1/2))e^{\im s}   }\dx z \frac{z^{2}-1}{(-\alpha)(z-1/\alpha)(z-\alpha)}z^{\lfloor N^{1/3}\xi\rfloor -\lfloor N^{1/3}\xi' \rfloor-1}.
\end{equation}
Now, if $\lfloor N^{1/3}\xi\rfloor -\lfloor N^{1/3}\xi' \rfloor\leq -1,$ we can send $z \to \infty$ in \eqref{res(z,1/z)} which shows $ \eqref{res(z,1/z)} =0.$ If $\lfloor N^{1/3}\xi\rfloor -\lfloor N^{1/3}\xi' \rfloor > 0$, \eqref{res(z,1/z)} equals the sum of the residues at $z=\alpha,1/\alpha.$ and if $
\lfloor N^{1/3}\xi\rfloor -\lfloor N^{1/3}\xi' \rfloor =0,$ \eqref{res(z,1/z)} equals the sum of the residues at $z=0,\alpha,1/\alpha$. Thus 
 \begin{equation}
 \begin{aligned}
 \lim_{N \to \infty} \eqref{res(z,1/z)}&=\lim_{N\to\infty}\big(-\mathbf{1}_{\{\frac{\lfloor N^{1/3}\xi\rfloor-\lfloor N^{1/3}\xi'\rfloor}{N^{1/3}}>0\}}\left(e^{-2v c_q\frac{\lfloor N^{1/3} \xi\rfloor-\lfloor N^{1/3} \xi'\rfloor}{N^{1/3}}}+e^{2v c_q\frac{\lfloor N^{1/3} \xi\rfloor-\lfloor N^{1/3} \xi'\rfloor}{N^{1/3}}}\right)
 \\&-\mathbf{1}_{\{\lfloor N^{1/3}\xi\rfloor-\lfloor N^{1/3}\xi'\rfloor=0\}}\alpha^{-1}\big)
 \\&=-\mathbf{1}_{\{\xi-\xi'>0\}}(e^{-2v c_q(\xi-\xi')}+e^{2v c_q(\xi-\xi')})-\mathbf{1}_{\{\xi=\xi'\}}.
 \end{aligned}
 \end{equation}
 
 The contribution \eqref{polewa} from the pole at $w=\alpha$ is as before, but here we compute separately  the residue $z=1/\alpha$ in \eqref{polewa}, which equals
 \begin{equation}\label{reswalpha1}
 \frac{1}{\alpha}e^{f_{3}(\alpha)N^{1/3}(\xi'  - \xi )}\to_{ N \to \infty}e^{2v c_q(\xi-\xi')}.
 \end{equation} 
  What thus remains to compute from \eqref{polewa} is the  limit of 
 \begin{equation}\label{reswalpha2} 
  \frac{1}{2\pi \im}\oint_{\gamma_{\sqrt{q}}}\dx z\frac{1}{z\alpha(z\alpha-1)}e^{-N  (f (\alpha) +f (z)) }e^{ N^{1/3} \xi' f_{3}(\alpha) }
 e^{ N^{1/3} \xi  f_{3}(z) },
\end{equation}
 which equals 
 \begin{equation}\label{BBB}
\frac{1}{2\pi \im}\int_{\mathfrak{G}_{C_1}^{2\pi/3}}\dx Z e^{-Z^{3}/3+Z \xi c_q}\frac{e^{8v^{3}/3-2v c_q \xi'}}{Z-2v}, \qquad C_1 < 2v. 
\end{equation}
 So in total, the sum of the residues of $w $ in $1/z$ and $\alpha$ converges to 
\begin{equation}\label{midterm}
\eqref{BBB}+\sgn(\xi' -\xi)e^{-2v c_q|\xi-\xi'|}.
\end{equation}
The remaining term is exactly $\eqref{doubleintk22}$ for $u_a=u_b=0$.
\end{proof}

The following proposition will provide the required integrable upper bound.
\begin{prop}\label{expprop}
Let $c_q, K,x_a, x_b, \alpha $ be as in Proposition~\ref{pointconvv}. Let $L >0$ and $\xi,\xi'>-L$ and $a\leq b$. Then there is an $N_0$ and a $C>0$ (which may depend on $L,u_a, u_b$, but not on $\xi, \xi'$) such that for $N >N_0$ 
\begin{align}\label{K12exp}
& N^{1/3}e^{N^{2/3}\ln(1-\sqrt{q})(u_b-u_a)} |K_{1,2}(a,x_a ;b,x_b)|\leq C e^{-c\xi+d\xi'}, \\
& \label{K11exp}N^{2/3}e^{-N^{2/3}\ln(1-\sqrt{q})(u_b+u_a)} |K_{1,1}(a,x_a ;b,x_b)|\leq C e^{-c\xi-c\xi'}, \\
& e^{N^{2/3}\ln(1-\sqrt{q})(u_b+u_a)} |K_{2,2}(a,x_a ;b,x_b)|\leq C e^{d\xi+d\xi'},
\end{align}
with  $c=2|v|c_q+3/4, d=2|v|c_q+2/3.$
\end{prop}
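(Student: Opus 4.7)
The approach adapts the steepest-descent contour deformation already carried out in the proof of Proposition~\ref{pointconvv}, but with the saddle-point contours for $z$ and $w$ shifted slightly outward so that the factors $|z|^{-N^{1/3}\xi}$ and $|w|^{\pm N^{1/3}\xi'}$ in the integrands generate the required exponential decay in $\xi,\xi'$. The parameters $c=2|v|c_q+3/4$ and $d=2|v|c_q+2/3$ are chosen precisely so that a contour crossing the positive real axis at $1+cN^{-1/3}$ (resp.\ $1+dN^{-1/3}$) yields $|z|^{-N^{1/3}\xi}\approx e^{-c\xi}$ (resp.\ $|w|^{N^{1/3}\xi'}\approx e^{d\xi'}$) at the saddle.

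Concretely, for \eqref{K12exp} I would take the $z$-contour to be a copy of $\gamma_{-1,0,1,w}$ from \eqref{gamma-101} with the prefactor $1+(2|v|c_q+1)N^{-1/3}$ replaced by $1+cN^{-1/3}$, and the $w$-contour to be a copy of $\gamma_{\sqrt{q},\alpha}$ from \eqref{gammaq} with $1+(2|v|c_q+\tfrac12)N^{-1/3}$ replaced by $1+dN^{-1/3}$. Since $c>d>\max(-2vc_q,0)$, the nesting condition $|w|<|z|$ for $a\leq b$, the encirclement of $\alpha=1-2vc_q N^{-1/3}$, and all analyticity constraints are preserved, and the local descent estimates \eqref{crucial}--\eqref{Taylor2} continue to hold verbatim. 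For \eqref{K11exp} both contours use the scaling $1+cN^{-1/3}$ (of $\gamma_{-1,0,1,w}$-type), while for the $K_{2,2}$-bound both use $1+dN^{-1/3}$ (of $\gamma_{\sqrt{q},\alpha}$-type; $d>0$ ensures $|zw|>1$ on the contour). On the complement $\gamma\setminus\gamma^\delta$ of a fixed neighborhood of the saddle $z=1$, one has $|e^{\pm Nf_1}|\leq e^{-c_{0,0}N}$ as in the pointwise proof; since the $\xi$-dependent factor $|z|^{\pm N^{1/3}\xi}$ on this set only grows as $e^{O(N^{1/3})}$, the complement contribution is at most $Ce^{-c_{0,0}N/2}$, which is absorbed into $C\,e^{-c\xi+d\xi'}\geq C\,e^{-(c+d)L}$ using $\xi,\xi'>-L$.

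The main step is on the near-saddle arc $\gamma^\delta$. Substituting $z=1+Zc_q N^{-1/3}$, $w=1+Wc_q N^{-1/3}$ as in \eqref{errorterms}, the leading exponent $e^{N^{2/3}f_2(1)(u_a-u_b)}$ (or its analog for $K_{1,1}$, $K_{2,2}$) exactly cancels the outer prefactor in the statement, so the problem reduces to uniformly bounding $|e^{Z^3/3-c_q\xi Z}|$ in $\xi>-L$ along $Z=c/c_q+te^{\pm i\pi/3}$ (and symmetrically $|e^{-W^3/3+c_q\xi'W}|$ along $W=d/c_q+te^{\pm 2i\pi/3}$). An explicit computation gives
\[
\Re\Bigl(\tfrac{Z^3}{3}-c_q\xi Z\Bigr)=-c\xi+\tfrac{(c/c_q)^3}{3}+\tfrac{(c/c_q)^2-c_q\xi}{2}\,t-\tfrac{(c/c_q)t^2}{2}-\tfrac{t^3}{3},
\]
whose maximum over $t\geq 0$ is attained at $t^{\ast}=\bigl(-c/c_q+\sqrt{3(c/c_q)^2-2c_q\xi}\bigr)/2$ (and at $t=0$ for $\xi\geq (c/c_q)^2/c_q$), with value $-c\xi+(c/c_q)^3/3+\tfrac{2t^{\ast 3}}{3}+\tfrac{(c/c_q)t^{\ast 2}}{2}$. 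As $\xi\to-\infty$ the excess over $-c\xi$ grows as $|\xi|^{3/2}$, and the main technical obstacle is precisely that this $|\xi|^{3/2}$-term would destroy the desired linear bound if $\xi$ were unbounded below; however, the hypothesis $\xi>-L$ caps the excess by a constant $C(L,u_a,v)$, yielding $|e^{Z^3/3-c_q\xi Z}|\leq C\,e^{-c\xi}$ uniformly. (The $u_a c_q^2 f_2''(1)Z^2/2$ correction from the Taylor expansion only shifts the saddle by $O(1)$ and is absorbed.) Combining the symmetric $W$-bound $\leq C'\,e^{d\xi'}$, the rational prefactor (order $N^{1/3}$ near the saddle, integrable against the $e^{-t^3/3}$-decay), the $N^{-2/3}$ Jacobian and the outer $N^{1/3}$ normalization, all $N$-dependencies balance and \eqref{K12exp} follows. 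The bounds \eqref{K11exp} and the $K_{2,2}$-bound are obtained by running the same uniform-in-$\xi$ estimate with both variables on the radius-$1+cN^{-1/3}$ contour (resp.\ $1+dN^{-1/3}$), producing $e^{-c\xi-c\xi'}$ (resp.\ $e^{d\xi+d\xi'}$) decay after the same manipulations.
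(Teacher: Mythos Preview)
Your approach is more involved than needed, and there is a real gap in the treatment of the $\xi$-dependent Taylor remainder. The paper's argument is much simpler: it neither shifts the contours nor Taylor-expands the $\xi$-dependent factor. On the \emph{unchanged} contours \eqref{gammaq} and \eqref{gamma-101} one already has $\inf_{z\in\gamma_{-1,0,1,w}}|z|= 1+(2|v|c_q+1)N^{-1/3}$ and $\sup_{w\in\gamma_{\sqrt{q},\alpha}}|w|= 1+(2|v|c_q+\tfrac12)N^{-1/3}$. For $\xi,\xi'\ge 0$ this gives directly $|z|^{-N^{1/3}\xi}\le e^{-c\xi}$ and $|w|^{N^{1/3}\xi'}\le e^{d\xi'}$ pointwise on the contours, because for $N$ large $N^{1/3}\ln(1+(2|v|c_q+1)N^{-1/3})\ge c$ and $N^{1/3}\ln(1+(2|v|c_q+\tfrac12)N^{-1/3})\le d$. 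The remaining ($\xi=\xi'=0$) integrand is then bounded by a constant via the pointwise-convergence proof, the cases $\xi\in[-L,0]$ or $\xi'\in[-L,0]$ use the same pointwise argument with an $L$-dependent constant, and mixed signs are obtained by multiplying the two one-variable bounds. For $K_{2,2}$ the residue contributions at $w=1/z$ and $w=\alpha$ (which you do not mention) are handled individually by the same radius estimate.

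Your route, by contrast, Taylor-expands $-N^{1/3}\xi f_3(z)=-N^{1/3}\xi\ln z$, and the quadratic remainder $+\xi c_q^2 Z^2/(2N^{1/3})$ is itself $\xi$-dependent and not absorbed by a constant: at $Z=c/c_q$ it contributes $+\xi c^2/(2N^{1/3})$, which for $\xi\to+\infty$ at any fixed $N>N_0$ dominates and destroys the claimed $\le Ce^{-c\xi}$. Equivalently, your shifted contour has $\inf|z|=1+cN^{-1/3}<e^{cN^{-1/3}}$, so already the exact factor $(\inf|z|)^{-N^{1/3}\xi}>e^{-c\xi}$ and the stated constant $c$ is unattainable on that contour. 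The fix is exactly the paper's shortcut: keep the original radii (whose margins $1>3/4$ and $1/2<2/3$ are what the specific values of $c,d$ encode) and bound the $\xi$-factor through the radius estimate rather than through its Taylor expansion.
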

\begin{proof}
We assume first that $\xi,\xi'\geq 0$. To show \eqref{K11exp}, note that the proof of pointwise convergence easily implies that for $\xi=\xi'=0$  and the contour \eqref{gamma-101} for $\gamma_{-1,0,1}$ one has 
\begin{equation}
  \begin{split}
N^{2/3}\frac{|K_{1,1}(a,x_a;b, x_b)|}{e^{N^{2/3}f_{2}(1)(u_a +u_b)}}\leq 
 \frac{N^{2/3}}{e^{N^{2/3}f_{2}(1)(u_a +u_b)}}\oint_{\gamma_{-1,0,1}}|\dx z |\oint_{\gamma_{-1,0,1}}&|\dx w |\big|e^{N(f_1(z)+f_1 (w))+N^{2/3}(u_a   f_{2}(z)+u_b f_{2}(w))}\big| \\
 & \times \left|\frac{(z-\alpha)(w-\alpha)(z-w)}{(z^{2}-1)(w^{2}-1)(zw-1)}\right|\leq C
  \end{split}
\end{equation}
where $f_1,f_2$ are as in the proof of Proposition ~\ref{pointconvv}. If now $\xi,\xi' \geq 0$, this creates an extra factor  
\begin{equation}
|e^{-N^{1/3}(\ln(z)\xi+\ln(w)\xi')}|.
\end{equation}
For the contour \eqref{gamma-101}, we have that for $z,w \in \gamma_{-1,0,1}$ $|z|,|w|\geq 1+N^{-1/3}(2|v|c_q+1)$ . Hence we can bound for $N$ large enough $|e^{-N^{1/3}(\ln(z)\xi+\ln(w)\xi')}|\leq e^{-c(\xi+\xi')}.$
Next we come to \eqref{K12exp}. As contours we choose again $\gamma_{\sqrt{q},\alpha}=\eqref{gammaq},\gamma_{-1,0,1,w}=\eqref{gamma-101}.$ The proof of pointwise convergence easily implies that  for $\xi=\xi'=0$ we have a constant upper bound.
If now $\xi, \xi' \geq 0,$ then we get an extra factor which equals
\begin{equation}\label{factor1}
|e^{N^{1/3}\ln(w)\xi'-N^{1/3}\ln(z)\xi}|.
\end{equation}
Now for $w \in \gamma_{\sqrt{q},\alpha}$, we have $|w|\leq 1+N^{-1/3}(2|v|c_q+1/2)$. Consequently, for $N$ large enough, we have $N^{1/3}\ln(|w|)\leq 2|v|c_q +2/3$. Since for $z\in \gamma_{-1,0,1,w}$ we have $|z|\geq 1+N^{-1/3}(2|v|c_q+1)$ we get $-N^{1/3} \ln(|z|)\leq -(2|v|c_q+3/4)$. This implies that $\eqref{factor1}\leq e^{-c\xi+d\xi'}$. As for $K_{2,2}$, consider the case $u_a+u_b>0$ first. In \eqref{polewa}, by definition of $\alpha$ and choice of contour, we obtain $|\eqref{polewa}|\leq Ce^{d(\xi+\xi')}$. Equally, one obtains $|\eqref{res1/z}|e^{N^{2/3}\ln(1-\sqrt{q})(u_b+u_a)}\leq e^{d(\xi+\xi')}$. Finally, \eqref{limq} can be bounded by $e^{d(\xi+\xi')}$ as well, simply by the choice of contours. The case $u_a+u_b=0$ is treated similarly.
Finally, if $\xi,\xi'\in [-L,0],$ the proof(s) of pointwise convergence give an upper bound $C$, where the constant $C$ may depend on $L$.   If e.g. $\xi \in [-L,0],\xi'>0$ we multiply the bound obtained in the case $\xi\leq 0,\xi'=0$ by the bound obtained for $\xi'\geq 0, \xi=0,$ finishing the proof.
\end{proof}

\section{Symmetric plane partitions} \label{sec:symm_pp}

A \emph{free boundary plane partition of length $N$} is an array $(\pi_{i,j})_{ 1 \leq j \leq i \leq N}$ of nonnegative integers satisfying the properties $\pi_{i,j} \geq \pi_{i+1, j}, \pi_{i,j} \geq \pi_{i, j+1}$
for all meaningful $i,j$. Its \emph{volume} is the sum of its entries: $|\pi| = \sum_{1 \leq j \leq i \leq N} \pi_{i,j}.$

Clearly a free boundary plane partition of length $N$ is half of a \emph{symmetric plane partition} with base in the square $N \times N$, which by definition is an array $(\pi_{i,j})_{ 1 \leq i, j\leq N}$ satisfying the above constraints plus the symmetry constraint $\pi_{i,j} = \pi_{j, i}$. An example of length 5 and volume 79 is depicted in Figure~\ref{free_pp}.

\begin{SCfigure}[][ht]%[!ht]
%\begin{center}
\begin{tikzpicture}[scale=0.4]
  \draw[thick, smooth] (-1,0)--(4,5);
    \draw[thick, smooth] (-1,0)--(4,-5);
  \foreach \x in {0,...,4} {
      \draw[thick, smooth] (\x, {-\x-1})--(5, {-2*\x+4});
      \draw[thick, smooth] (\x, {\x+1})--(5, {2*\x-4});
  }
  \node at (0,0) {$6$};
  \node at (1,1) {$7$};
  \node at (1,-1) {$3$};
  \node at (2,2) {$9$};
  \node at (2,0) {$5$};
  \node at (2,-2) {$2$};
  \node at (3,3) {$9$};
  \node at (3,1) {$7$};
  \node at (3,-1) {$3$};
  \node at (3,-3) {$1$};
  \node at (4,4) {$10$};
  \node at (4,2) {$8$};
  \node at (4,0) {$6$};
  \node at (4,-2) {$2$};
  \node at (4,-4) {$1$};
\end{tikzpicture}
\qquad
\includegraphics[scale=0.3]{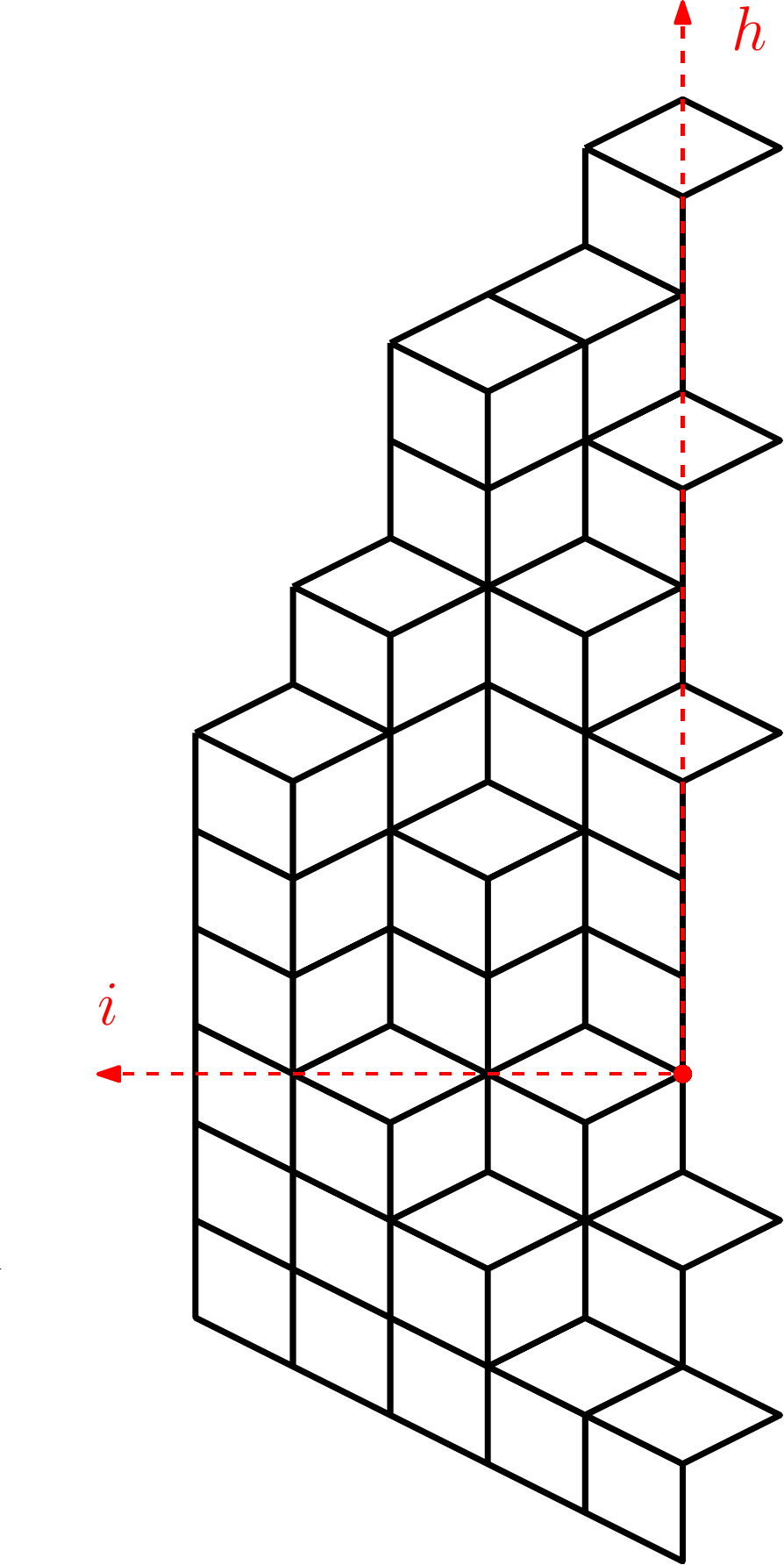}
\qquad
\includegraphics[scale=0.3]{symm_pp.pdf}
%\end{center}
\caption{\fs{Left: a free boundary plane partition $\pi$ of length 5, with $\pi_{1,1} = 10, \pi_{4,2} = 5, \dots$, corresponding to interlacing partitions $\es \prec (6) \prec (7,3) \prec (9,5,2) \prec (9,7,3,1) \prec (10, 8, 6, 2, 1)$; middle: a stack of cubes depicting $\pi$ via the heights of the horizontal lozenges; right: the corresponding symmetric plane partition with base contained in a $5 \times 5$ square.}}
\label{free_pp}
\end{SCfigure}

Let us fix a real parameter $0 < q < 1$. We will study the asymptotics of free boundary plane partitions weighted according to their volume; that is, distributed as
\begin{align}
 \Prob(\pi) \propto q^{|\pi|}
\end{align}

\noindent in the limit $q \to 1$ and $N \to \infty$.

Free boundary plane partitions $\pi$ of length $N$ weighted by their volume are in bijection with the H-ascending Schur process on
\begin{align}
 \vec{\lambda} = (\es \prec \lambda^{(1)} \prec \dots \prec \lambda^{(N)})
\end{align}

\noindent with parameters $x_i = q^{N+1-i}$ (the free boundary is $\lambda^{(N)}$) via the following identification:
\begin{align}
 \lambda^{(i)}_k = \pi_{N-i+k, k},\ 1 \leq k \leq i.
\end{align}

In this setting we have
\begin{align}
 \Prob(\vec{\lambda}) \propto q^{\sum_{1 \leq i \leq N} |\lambda^{(i)}|}.
\end{align}

The sequence of partitions $\vec{\lambda}$, itself induced by $\pi$, gives a point process via the identification $\lambda^{(i)} \mapsto \{ k^{(i)}_s = \lambda^{(i)}_s - s + \frac{1}{2} \}$. It turns out the computations and formulas are simpler if we reverse time. We introduce a new sequence of partitions $\mu^{(i)}$ defined by $\mu^{(i)} = \lambda^{(N-i)}$ so that our Schur process becomes $\es = \mu^{(N)} \prec \dots \prec \mu^{(0)}$. Proposition~\ref{prop:H-HV} becomes the following.

\begin{thm}\label{thm:pp_corr}
 The point process induced by $\pi$ via the sequence of partitions $\mu^{(i)}$ is pfaffian with $2 \times 2$ correlation kernel given by
 \begin{equation}
  K(i, k; i', k') = \left[ \begin{array}{cc} K_{1,1} (i, k; i', k') & K_{1,2} (i, k; i', k') \\ -K_{1,2} (i', k'; i, k) & K_{2,2} (i, k; i', k') \end{array} \right]
 \end{equation}
where
\begin{align}
   K_{1,1}(i, k; i', k') &= \frac{1}{(2 \pi \im)^2} \int_{C_z} \int_{C_w} F(N-i, z) F(N-i', w) \frac{1}{z^{k+1} w^{k'+1}} \frac{\sqrt{zw} (z-w)}{(z+1)(w+1)(zw-1)} \dx w \dx z, \notag \\
 K_{1,2}(i, k; i', k') &= \frac{1}{(2 \pi \im)^2} \int_{C_z} \int_{C_w} \frac{F(N-i, z)}{F(N-i', w)} \frac{w^{k'-1}}{z^{k+1}} \frac{\sqrt{zw} (zw-1)}{(z+1)(w-1)(z-w)} \dx w \dx z, \\
 K_{2,2}(i, k; i', k') &= \frac{1}{(2 \pi \im)^2} \int_{C_z} \int_{C_w} \frac{1}{F(N-i, z) F(N-i', w)} z^{k-1} w^{k'-1} \frac{\sqrt{zw} (z-w)}{(z-1)(w-1)(zw-1)} \dx w \dx z,\notag
\end{align}
\noindent the contours are counterclockwise oriented circles centered at the origin of radii slightly larger than 1 with the additional constraint that for $K_{1,2}$ $C_z$ surrounds $C_w$ if and only if $i'\leq i$ and finally where we have denoted
\begin{align}
 F(N-i,z) := \frac{(q/z;q)_N}{(q^{i+1}z; q)_{N-i}}.
\end{align}
\end{thm}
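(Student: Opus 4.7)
The approach is to read Theorem~\ref{thm:pp_corr} as a direct application of Theorem~\ref{thm:onebound} (with $u=0,v=1$) combined with Proposition~\ref{prop:H-HV}, followed by a combinatorial relabelling. The only real work is identifying the specialized $F_H$ as a ratio of $q$-Pochhammer symbols and tracking the index reversal $\mu^{(i)}=\lambda^{(N-i)}$.

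Step 1 (identification of the process). Via the slicing $\lambda^{(i)}_k=\pi_{N-i+k,k}$ already noted in the text, the $q^{|\pi|}$-weighted free boundary plane partition $\pi$ becomes an interlacing chain $\emptyset\prec\lambda^{(1)}\prec\cdots\prec\lambda^{(N)}$ with free boundary $\lambda^{(N)}$; the telescoping identity $|\pi|=\sum_i|\lambda^{(i)}|=\sum_k(N+1-k)\bigl(|\lambda^{(k)}|-|\lambda^{(k-1)}|\bigr)$ identifies the volume weight as the weight of the $H$-ascending Schur process with $x_k=q^{N+1-k}$. Theorem~\ref{thm:onebound} then yields that $\mathfrak{S}(\vec\lambda)$ is pfaffian with kernel of the form~\eqref{eq:Kint}, with $\kappa_{1,1},\kappa_{1,2},\kappa_{2,2}$ from~\eqref{eq:kappasimp} already matching those in the statement.

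Step 2 (specialization of $F$). Using Proposition~\ref{prop:H-HV} and substituting $x_k=q^{N+1-k}$, the numerator $\prod_{k=1}^N(1-q^{N+1-k}/z)$ reindexes (via $j=N+1-k$) to $(q/z;q)_N$, while the denominator $\prod_{k=1}^i(1-q^{N+1-k}z)$ reindexes to $(q^{N+1-i}z;q)_i$. Since $x_k\leq q<1$, the analyticity radius $R=1/q$ exceeds $1$, so the constraint $1<r,r'<R$ of Theorem~\ref{thm:onebound} is met by circles of radius slightly larger than the unit circle, as stated.

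Step 3 (time reversal). Setting $\mu^{(i)}=\lambda^{(N-i)}$, the point configuration $\mathfrak{S}(\vec\mu)$ is obtained from $\mathfrak{S}(\vec\lambda)$ by the abscissa map $i\mapsto N-i$, so the correlation kernel for $\mu$ is that of $\lambda$ evaluated at $(N-i,k;N-i',k')$. This accounts for the $F(N-i,z)$ and $F(N-i',w)$ in the announced formula after the denominator reindexing $\prod_{k=1}^{N-i}(1-q^{N+1-k}z)=(q^{i+1}z;q)_{N-i}$. The same reversal flips the $K_{1,2}$ nesting clause of Theorem~\ref{thm:onebound}: ``$i_\lambda\leq i'_\lambda$ iff $C_w$ lies inside $C_z$'' becomes ``$i'_\mu\leq i_\mu$ iff $C_z$ surrounds $C_w$'', matching the statement.

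No step is a genuine obstacle. The only place where an error is easy to make is the contour-nesting clause of $K_{1,2}$, whose inequality direction is genuinely reversed by the time reversal; everything else is a straightforward reindexing exercise.
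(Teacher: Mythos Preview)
Your proposal is correct and is exactly the route the paper takes: the theorem is presented as the specialization of Theorem~\ref{thm:onebound} via Proposition~\ref{prop:H-HV} at $x_k=q^{N+1-k}$, followed by the time reversal $\mu^{(i)}=\lambda^{(N-i)}$. Your three steps simply make explicit the reindexing that the paper leaves to the reader, including the verification that the $K_{1,2}$ nesting condition flips to ``$C_z$ surrounds $C_w$ iff $i'\leq i$'' under the reversal.
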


\begin{rem}
 Every particle in our process corresponds to a \emph{horizontal} lozenge in the middle picture from Figure~\ref{free_pp}, and we imagine tiling the floor of the room with infinitely many horizontal lozenges going down. Some authors (e.g., \cite{or}) prefer to write the kernel in terms of heights $h$ of these lozenges. For a given particle at position $(i,k)$, the coordinate of the corresponding lozenge (in the axes depicted, with the origin at the hidden corner of the box) is $(i, h) = (i, k-\frac{i}{2})$. We opt to work throughout with the ordinate $k$ of particle positions, but the reader interested in the lozenge picture has only to keep in mind $k = h + \frac{i}{2}.$
\end{rem}

We now pick a real number $\a > 0$. We zoom in around a point $(i,k)$ in the point process induced by $\mu^{(i)}$ for large $N, i, k$ as $q$ approaches 1. Precisely, we are interested in the asymptotic regime $r \to 0+$ with
\begin{equation} \label{eq:var_scaling_pp}
    q = e^{-r} \to 1, \qquad r N \to \a, \qquad r i \to \x, \qquad r k \to \y.
\end{equation}

The coordinate system $(\x, \y)$ represents our macroscopic coordinates, with $\y \in \R$ and $0 \leq \x \leq \a$. The positive real number $\a$ plays the role of a boundary parameter. We furthermore introduce macroscopic exponential coordinates (and exponential boundary parameter)
\begin{align}
  \X = \exp(-\x), \qquad \Y = \exp(-\y), \qquad \A = \exp(-\a).
\end{align}

Notice $0 < \A < 1, \A \leq \X \leq 1$. We first focus on the case $0<\x<\a$ and thus $\A < \X < 1$.

Analyzing the kernel asymptotically in the regime \eqref{eq:var_scaling_pp} (following \cite{or}) can be reduced (more details will be given below) to the analysis of the $z$ integrand $F(N-i, z)/z^{k+1/2}$.

We first note in the limit $r \to 0+$, the zeros of $F(N-i, z)$ accumulate in the set $\mathcal{Z} = [0,1]$ while the poles in the set $\mathcal{P} = [\frac{1}{\X}, \frac{1}{\A}]$. Moreover as $r \to 0+$ we have by \eqref{eq:zqnas}
\begin{align}
 \frac{F(N-i, z)}{z^{k + 1/2}} \sim \exp \frac{1}{r} S(z; \x, \y)
\end{align}

\noindent where $S(z; \x, \y) = \dilog(\A/z)-\dilog(1/z)+\dilog(\X z)-\dilog(\A z) - y \log z.$

To apply the method of steepest descent to $S$, we look for its critical points: that is, solutions of $z \frac{\partial}{\partial z} S(z; \x, \y) = 0$. There are two of them:
\begin{align} \label{eq:pp_crit_pts}
     z_{\pm} = z_{\pm} (\x, \y) = \frac{1+\X-(1+\A^2)\Y \pm \sqrt{D(\X, \Y)}} {2(\X-\A\Y)}
\end{align}

\noindent where
\begin{equation}
  \begin{split}
  D(\X, \Y) &= -4 (1 - \A \Y) (\X - \A \Y) + (-1 - \X + \Y + \A^2 \Y)^2 \\
            &= (\A^2-1)^2 \Y^2 -2 (\A-1)^2 \X \Y + \X^2 - 2 (\A-1)^2 \Y - 2 \X +1.
  \end{split}
\end{equation}

\begin{rem}
 The equation $D(\X, \Y)=0$, the locus where $S$ has a real double critical point, is a conic in the $(\X, \Y)$ plane. Its discriminant is $-16 \A (\A-1)^2 < 0$ and so it is the equation of an ellipse. When $\A=0$ (equivalently, $\a \to \infty$), it becomes a parabola.
\end{rem}

\begin{rem}
 The two critical points in equation \eqref{eq:pp_crit_pts} have a singularity on the line $\X = \A \Y$, where one of them becomes $\pm \infty$ (depending on the sign of $\X - \A \Y$) while the other stays finite at $\frac{\A \Y - 1}{(1-\A+\A^2) \Y - 1}$ as can be easily verified by an application of l'H\^opital's rule.
\end{rem}

Keeping $\x$ and hence $\X$ fixed, we solve for $\Y$ (and hence for $\y$) from $D(\X, \Y) = 0$ to obtain two solutions:
\begin{align}
  \Y_{\pm} = \frac{(1+\X)(1-\A)^2 \pm 2 (1-\A)\sqrt{-\A \X^2+\X(1+\A^2)-\A}}{(1-\A^2)^2}, \qquad \y_{\mp} = -\log(\Y_{\pm}).
\end{align}

 We note $\y_- \leq \y_+$ and both depend on $\x$; when $\x = \a$ we have $\y_- = \y_+ = \log(1+\A)$; when $\x = 0$ (hence $\X = 1$) we have $\y_+ = \infty$ (as $\Y_- = 0$) and $\y_- = -\log \frac{4}{(1+\A)^2}$.

 \begin{SCfigure}[][t]
  %\centering
    \includegraphics[scale=0.4]{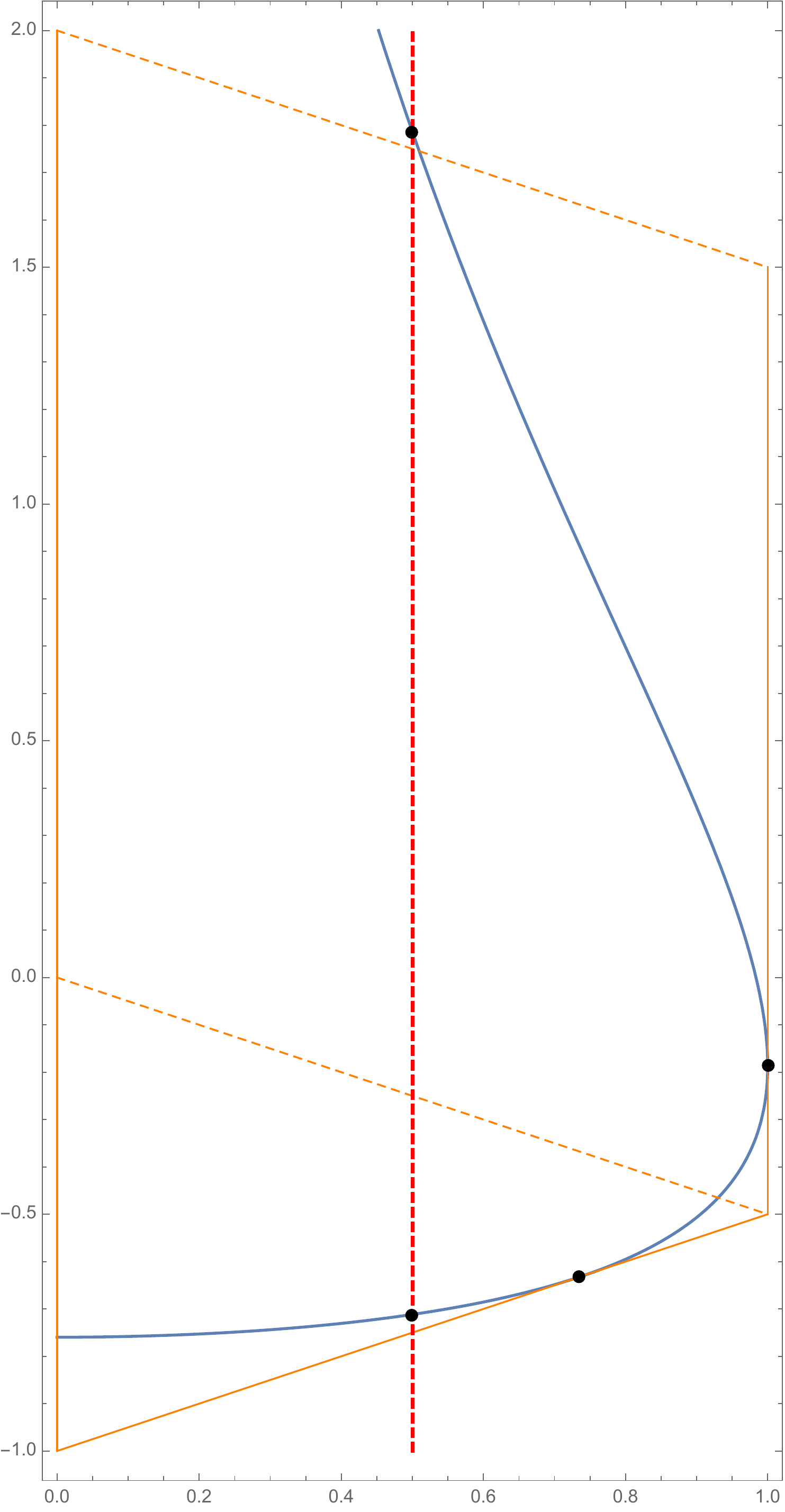}
    \includegraphics[scale=0.062]{pp_symm_3D_crop.png}
    \caption{\fs{Left: a portion of the arctic curve for $\a=1$ in the lozenge $(\x, \h) = (\x, \y - \x/2)$ coordinates; the vertical dashed line is of abscissa $\x=1/2$ and intersects the arctic curve in the two points $\h_{\pm} = \y_{\pm}-\x/2$; the points of tangency of the arctic curve to the boundary of the domain on the bottom and right are also pictured. Right: a simulation (after symmetrization) of a boxed plane partition with base of size $100 \times 100$ and $q \approx 0.98$ using the algorithm from \cite{bbbccv}.}}
    \label{fig:arctic_curve_pp}
  \end{SCfigure}
 
We call the curve
\begin{equation}
  \mathcal{C} = \{(\x, \y) \in \R^2: \a \geq \x \geq 0, D(\X, \Y) = 0\}
\end{equation}
the \emph{arctic curve}. It is depicted in Figure~\ref{fig:arctic_curve_pp}. We denote by $\mathcal{L}$ --- which we call \emph{the liquid region}, the inside of the arctic curve; that is, the domain
 \begin{align}
   \mathcal{L} = \{(\x, \y) \in \R^2: \a \geq \x \geq 0, \y_- \leq \y \leq \y_+ \}.
 \end{align}

 We now discuss what happens to the critical points at various $(\x, \y)$ positions in the plane. To do that, we first take care of the singularity on the line $\X = \A \Y$ or, equivalently, the line $\x = \a + \y$. The curve $\mathcal{C}$ is tangent to this line, at the tangency points $(\x_T, \y_T)$ given by:
 \begin{equation}
  (\X_T, \Y_T) = \left(\frac{\A}{1-\A+\A^2}, \frac{1}{1-\A+\A^2}\right), \qquad (\x_T, \y_T) = (-\log \X_T, -\log \Y_T)
\end{equation}
where by construction $0 < \x_T < \a, \y_T = \x_T - \a < 0$. We have three cases:

 \begin{itemize}
\item when $D(\X, \Y) = 0$ and thus $(\x, \y) \in \mathcal{C}$, $S$ has a double critical point: $z = \frac{1+\X-(1+\A^2) \Y}{2(\X-\A \Y)}$. This happens twice for fixed $\x$: when $\y = \y_-$ or $\y = \y_+$ (the two cases coinciding at $\x=\a, \y_- = \y_+ = \log(1+\A)$). First, the case $\y = \y_+ > 0$. We have $z \in (1, \frac{1}{\X})$. When $(\x, \y) \to (0, \infty)$ along $\mathcal{C}$, $z \to 1$ from above (so $z > 1$ always). When $(\x, \y) \to (\a, \log(1+\A))$ from above along $\mathcal{C}$, $z \to \frac{1}{\A}$ from below. $\mathcal{C}$ is tangent at $(\a, \log(1+\A))$ to the line $\x=\a$. Second, the case $\y = \y_- \leq \log(1+\A)$.  If $0 \leq \x < \x_T$, $z \in (-\infty, -1]$ with $z = -1$ at $\x = 0$ and $z \to -\infty$ when $\x \to \x_T$ from below.  When $\x \in (\x_T, \a]$, $z \in [\frac{1}{\A}, \infty)$ with $z \to \infty$ when $\x \to \x_T$ from above and $z \to \frac{1}{\A}$ from below as $\x \to \a$;

\item  when $D(\X, \Y) < 0$, which means $(\x, \y)$ is in the interior of $\mathcal{L}$, the two distinct critical points are complex conjugate with arguments $\pm \theta(\x, \y)$ where we take $\theta(\x, \y) \in (0, \pi)$. $\theta(\x, \y)$ varies from 0 to $\pi$ as $\y$ descends from $\y_+$ to $\y_-$ through the liquid region $\mathcal{L}$ for fixed $\x < \x_T$. In the case $\x > \x_T$ both double critical points $z^t$ corresponding to $(\x, \y_+)$ and $z^b$ corresponding to $(\x, \y_-)$ are positive, with $0 < z^t < z^b$, so if one recenters the complex plane at any real number between $z^t$ and $z^b$ and considers the arguments of the two complex conjugate critical points corresponding to $\y_-<\y<\y_+$, we have a similar situation as above. See Figure~\ref{fig:cp_plot}.

\item when $D(\X, \Y) > 0$ (and thus for $(\x, \y)$ not in $\mathcal{L}$) there are two distinct real critical points. We study what happens for fixed $\x$. If $\y > \y_+$, $1 < z_- < z_+ < \frac{1}{\X}$ with $z_- = 1, z_+ = \frac{1}{\X}$ in the limit $\y \to \infty$ and $z_\pm$ converging to the unique double critical point in the limit $y \to \y_+$. If $\y < \y_-$, the situation is more complicated due to the singularity at $\X = \A \Y$ (equivalently on the line $\x = \a+\y$). We again distinguish two cases. First, $\x < \x_T$ fixed. For $\y \in (\x-\a, \y_-)$, if $\y \to y_-$, the two roots $z_\pm $ converge to the corresponding (negative) real double critical point. When $\y \to \x - \a$ from above, $z_- \to -\infty$ and $z_+ \to \frac{\A \Y - 1}{(1-\A+\A^2) \Y-1} < 0$ from below. When $\y$ passes below the line $\x-\a$ and goes to $-\infty$, $z_-$ goes from $\infty$ (for $\y$ just below the line $\y = \x-\a$) to $\frac{1}{\A}$ (at $\y \to -\infty$) while $z_+$ goes from $\frac{\A \Y - 1}{(1-\A+\A^2) \Y-1}$ at $\y = \x-\a$ to $\A$ at $\y \to -\infty.$  Second, the case $\x > \x_T$. $z_-$ goes from the double critical point $z_c > \frac{1}{\A}$ to $\frac{1}{\A}$ as $\y$ goes from $\y_-$ to $-\infty$, passing through $z_- = \frac{\A \Y - 1}{(1-\A+\A^2) \Y-1}$ at the line $\y = \x-\a$. $z_+$ goes from $z_c$ to $\infty$ as $\y$ moves from $\y_-$ and approaches the line $\y = \x-\a$ from above, and then jumps and goes from $\infty$ to $\A$ as $\y$ descends from just below the line $\y=\x-\a$ to $-\infty$.
 \end{itemize}

 \begin{figure}[!ht]
  \centering
  \includegraphics[scale=0.3]{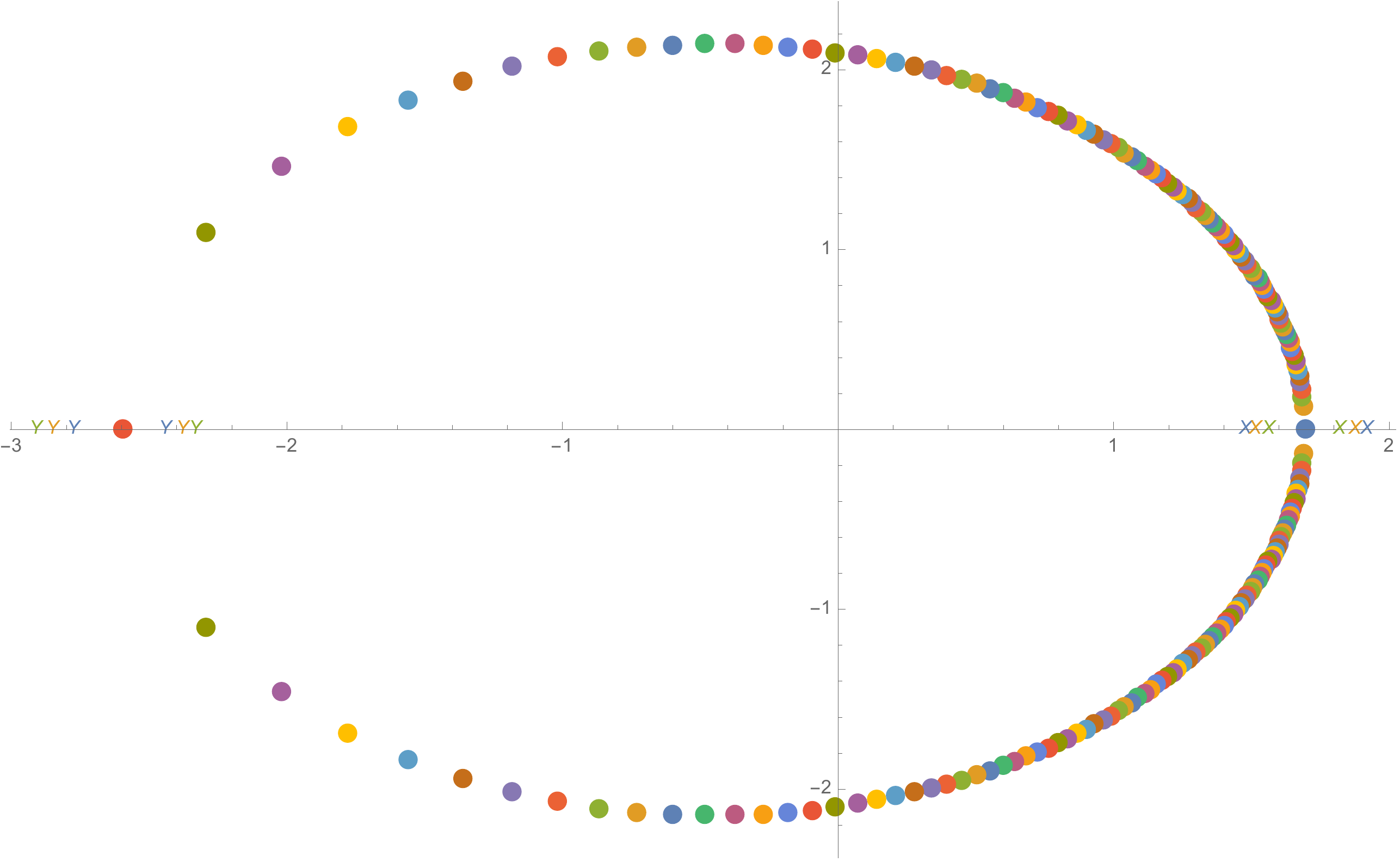}
  \includegraphics[scale=0.3]{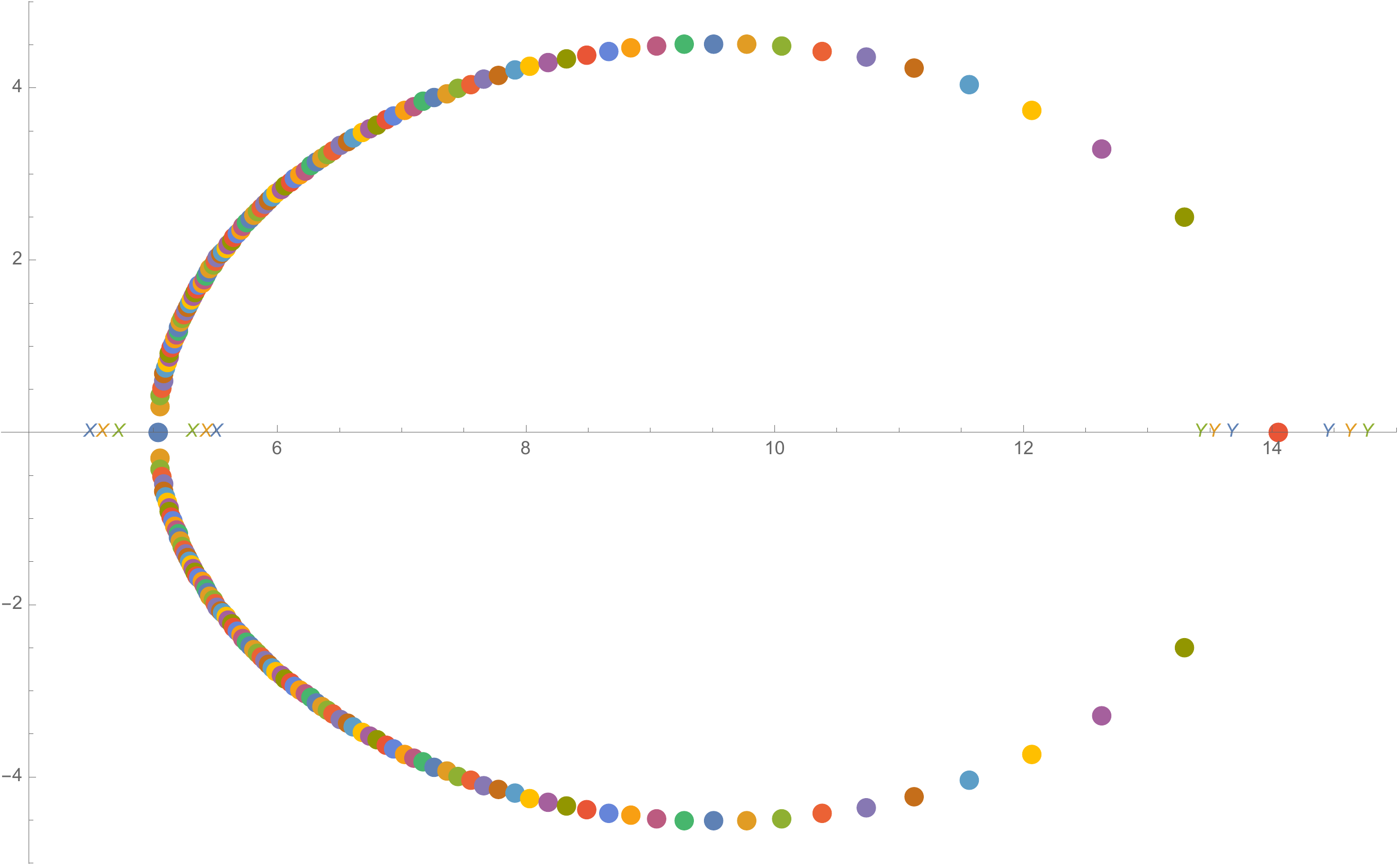}
  \caption{\fs{The critical points of $S$, for $\a=2$ and for $\x=1 < \x_T$ (left), $\x=1.97 > \x_T$ (right) fixed, as $\y$ varies from just above $\y_+$ to just below $\y_-$; they are distinct real for $\y>\y_+$ (denoted by an X) approaching the double critical point at $\y=\y_+$; they are complex conjugate for $\y_+ > \y > \y_-$ approaching the double critical point at $\y=\y_-$ and they become distinct real again (denoted by Y) diverging from said double critical point as $\y$ decreases below $\y_-$.}}
  \label{fig:cp_plot}
\end{figure}

 \begin{rem}
 In light of the above discussion, the curve $\mathcal{C}$ is the locus in the $(\x, \y)$ plane where $S$ has a double critical point. Given a point $(\x, \y) \in \mathcal{C}$, one computes the corresponding double critical point $z_c(\x, \y) = \frac{1+\X-(1+\A^2) \Y}{2(\X-\A \Y)}$. Importantly, one can go backwards as well and for each double critical point $z_c$, one can compute the corresponding $(\x, \y) = (\x(z_c), \y(z_c))$ by solving for $(\x, \y)$ in the following system of two equations
 \begin{align}
  \left( z \frac{\partial }{\partial z} \right) S (z_c; \x, \y) = 0, \qquad \left( z \frac{\partial }{\partial z} \right)^2 S (z_c; \x, \y) = 0
 \end{align}

 \noindent and in doing so one obtains a \emph{time/rational parametrization} of $\mathcal{C}$ as
 \begin{align}
  \mathcal{C} = \{(\x(z_c), \y(z_c)) : z_c \in \R \backslash (-1, 1] \}
 \end{align}
 \noindent with the time parameter played by the double critical point of $S$. In brief $z_c \mapsto (\x(z_c), \y(z_c))$ takes $\R \backslash (-1,1]$ to $\mathcal{C}$: as $z_c$ increases from $1+$ to $\infty$ we draw the upper part of $\mathcal{C}$ from $(0, \infty)$ to $(\x_T, \y_T)$, and then as $z_c$ increases from $-\infty$ to $-1$ we draw the lower part of $\mathcal{C}$ from $(\x_T, \y_T)$ to $(0, -\log \frac{4}{(1+\A)^2})$.
 \end{rem}

 Fix a point $(\x, \y)$ in the plane, with $0 \leq \x \leq \a$ and $\y \geq \x - \a$, and let $z_{\pm}$ be the corresponding critical points. In the asymptotic analysis that follows, the existence of a closed contour $C_0$ on which $\Re (S(z) - S(z_0)) = 0$ (where $z_0$ is one of the two critical points), and which passes to the right of $1$ (but the left of $\exp(\x) = \frac{1}{\X}$) and to the left of $0$ (thus encompassing the cut on the interval $[0, 1]$), will be the key ingredient in the proof. Such a contour, for $(\x, \y)$ in the critical region and thus for complex conjugate critical points ($z_0 = z_+$ in that case though the choice makes no difference as $S(\overline{z}) = \overline{S(z)}$), is depicted in Figure~\ref{fig:contours_pp} (top left). For its existence, we argue as follows. We have the following limits at 0 and $-\infty$ (along the real axis, say):
 \begin{align}
\lim_{z \to 0-} \Re S(z) = \infty, \qquad \lim_{z \to -\infty} \Re S(z) = -\infty
 \end{align}
 
 \noindent and so by the intermediate value theorem there will be a point $z$ on the negative real axis with $\Re (S(z) - S(z_0)) = 0$. Likewise on the interval $(1, \frac{1}{\X})$ $\Re (S(z) - S(z_0))$ changes sign and we thus have a point $z$ in this interval with $\Re (S(z) - S(z_0)) = 0$. Connecting the contours in the upper and lower half-planes will yield the desired $C_0$. For a precise technical description of this argument see Lemma 6.4 in \cite{B2007cyl} and note that the $f$ in that statement is exactly our $S$ without the log term. For $\x = 0$ the preceding discussion becomes simpler. In this case, for $z$ on the unit circle $\{|z|=1\}$, we have, by direct computation, $2 \Re(S(z)) = S(z) + S(1/z) = 0$. Moreover in the case of complex conjugate critical points $z_{\pm}$ we have $|z_{\pm}|=1$ and so the $C_0$ is just the unit circle. 
 
 We finally remark that in the case $a \to \infty$ the above argument simplifies considerably and $C_0$ is just the circle around the origin of radius $\exp(\x/2)$.
 
 Before stating our first asymptotic result, we fix a few useful notations. We concentrate on the case $0 < \x < \x_T$, but the case $\x_T < \x < \a$ can be treated similarly. We will study the case $\x = 0$ separately. 
 
 For $(\x, \y) \in \mathcal{L}$, we denote by $\gamma_+$ any simple counter-clockwise contour (path) joining the two corresponding critical points $z_+$ and $z_-$ just to the right of 1. It depends of course on $\theta(\x,\y)$ which is the argument of $z_+$. By $\gamma_-$ we denote a clockwise contour (path) joining the same two points but which passes to the left of 0. We state the result for $i' \leq i$. For $i > i'$ the only difference is one replaces $\gamma_+$ by $\gamma_-$.
 
 Finally, one only needs to look in the half-space $\y \geq \x - \a$. The reason for this is combinatorial: below the line $\y = \x - \a$ we will see only particles, as the partition $\mu^{(N-i)}$ has at most $i$ parts due to the interlacing constraints. Therefore the kernel will not be of interest around points $(\x, \y)$ in the half-space given by $y < \x - \a$.

 \begin{thm} \label{thm:k12_pp}  
  Let $(\x,\y) \in (0,\a) \times \R$ and  $\y > \x - \a$. Let 
  \begin{equation}
   (i, i') = \left( \left\lfloor \frac{\x}{r} \right\rfloor + \i, \left\lfloor \frac{\x}{r} \right\rfloor + \i' \right), \qquad
   (k, k') = \left( \left\lfloor \frac{\y}{r} \right\rfloor + \k, \left\lfloor \frac{\y}{r} \right\rfloor + \k' \right)
  \end{equation}
  where $\i', \i \in \N, \k, \k' \in \Z'$ are fixed. Then $ri, r i' \to \x$ and $r k, r k'\to \y$ as $r \to 0+$. When $rN \to \a$ we find that
\begin{equation}
  \lim_{r \to 0+} K_{1,2}(i, k; i', k') = 
  \begin{cases}  
  0, & \text{if } \y > \y_+, \\
  \frac{1}{2 \pi \im} \int_C (1-e^{-\x} z)^{\i-\i'} z^{\k'-\k-1} \dx z, & \text{otherwise } 
  \end{cases}
\end{equation}
where $C$ is 
\begin{itemize}
 \item $\gamma_{\pm}$ if $\y_- \leq y \leq \y_+$ ($\gamma_+$ if and only if $\i' \leq \i$);
 \item a positively oriented circle of radius $1 + \epsilon$ centered at the origin for some $0 < \epsilon \ll 1$ if $\y < \y_-$ and $\x < \x_T$;
 \item a positively oriented circle containing the cut $\mathcal{P}$ but passing to the right of 1 if $\y < \y_-$ and $\x > \x_T$.
\end{itemize}
 \end{thm}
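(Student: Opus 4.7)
The plan is to perform a standard steepest descent analysis of the double contour integral representation for $K_{1,2}$ given in Theorem~\ref{thm:pp_corr}, following the blueprint of~\cite{or,B2007cyl}. First I would split the $z$-integrand as
\[
\frac{F(N-i,z)}{z^{k+1/2}} = \exp\!\left(\tfrac{1}{r}S(z;\x,\y)\right)\cdot (1+o(1)),
\]
using the telescoping $F(N-i,z)/F(N-i',z) = (q^{i'+1}z;q)_{i-i'}$ (for $i\geq i'$) to separate the macroscopic exponential from the ``fluctuation factor'', which in the limit tends to $(1-\X z)^{\i-\i'}\cdot z^{\k'-\k-1}$ once the $\lfloor \y/r\rfloor$ part of $k-k'$ is absorbed into $S$. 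The prefactor $\kappa_{1,2}(z,w) = \sqrt{zw}(zw-1)/[(z+1)(w-1)(z-w)]$ is smooth away from $z=w$ and tends, at coinciding saddles $z=w=z_c$, to an innocuous nonzero constant, so the only thing one must control is the phase $S$ and the pole at $z=w$.

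Next I would deform $C_z$ and $C_w$ to the steepest descent contours $C_z^\ast, C_w^\ast$ passing through the saddles $z_\pm(\x,\y)$. The existence of a closed contour on which $\Re(S(z)-S(z_\pm))\leq 0$, passing to the right of $1$ and to the left of $0$, is exactly what is established in the discussion preceding the theorem (via the intermediate value theorem argument applied to $\Re S$ on $(-\infty,0)$ and $(1,1/\X)$), and serves as $C_z^\ast$; the contour $C_w^\ast$ is its specular analogue on which $\Re(-S(w)+S(z_\pm))\leq 0$. The analysis then splits into three cases according to the nature of the saddles.

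In Case A ($\y>\y_+$) the saddles lie on the real segment $(1,1/\X)$ and $C_z^\ast, C_w^\ast$ can be chosen to preserve the original nesting (no residue); the double integral decays like $\sqrt{r}$ via a Gaussian scaling around the real saddles, giving limit $0$. In Case B ($\y_-\leq\y\leq\y_+$), the saddles are complex conjugate and the two descent contours must cross in order to be pulled to $C_z^\ast, C_w^\ast$, so we pick up a residue at $z=w$ from the simple pole of $\kappa_{1,2}$. The resulting single integral is over the arc $\gamma_+$ or $\gamma_-$ (depending on whether $\i\geq\i'$ so that $C_z$ had to be shrunk through $C_w$, or the opposite); the remaining double integral vanishes by steepest descent, and the residue limit is exactly $\frac{1}{2\pi\im}\int_{\gamma_\pm}(1-\X z)^{\i-\i'} z^{\k'-\k-1}\,\dx z$. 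In Case C ($\y<\y_-$) the saddles are real but lie outside $[1,1/\X]$; a similar deformation yields a residue along a contour of the topology described in the theorem statement (a small circle around $0$ if $\x<\x_T$, a loop around the cut $\mathcal{P}$ if $\x>\x_T$), with the same integrand as in Case B.

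The main technical obstacle is the case-dependent identification of the residue arc: one must check carefully which of $C_z$ or $C_w$ is pulled across the other during the contour deformation, and how this depends jointly on the sign of $\i-\i'$ and on the geometry of the steepest descent contours. This bookkeeping is made unambiguous by working with the $C_0$ descent contour constructed in the paragraph before the theorem (whose existence, uniqueness up to homotopy, and positioning relative to $1$, $1/\X$ and the cut $\mathcal{P}$ are established there), and by matching orientations using the prescription of Remark~\ref{rem:contint}. Estimating the error from the non-residue piece is then routine Gaussian steepest descent around each saddle, with the correction factor $(1-\X z)^{\i-\i'} z^{\k'-\k-1}$ merely contributing a smooth nonzero value at the saddle and thus not affecting the $O(\sqrt{r})$ decay rate.
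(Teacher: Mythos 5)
Your proposal is correct and takes essentially the same route as the paper's proof: steepest descent on the phase $S(z;\x,\y)$, contour deformation through the critical points $z_\pm$ using the descent contour $C_0$, a case split according to the position of $(\x,\y)$ relative to the arctic curve, and picking up the $z=w$ residue of $\kappa_{1,2}$ precisely when the nested contours must cross. The only places you are slightly more careful than the paper are the explicit $O(\sqrt{r})$ rate for the non-residue piece in the liquid region and the explicit telescoping identity $F(N-i,z)/F(N-i',z)=(q^{i'+1}z;q)_{i-i'}$ used to separate the macroscopic phase from the fluctuation factor; the paper instead takes the residue at finite $r$ and then passes to the limit of the resulting single integral, and asserts (a little loosely) exponential decay of the error.
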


 \begin{proof}
   We will give a similar argument to the one given in Section 3.1 of \cite{or}. The interested reader will note the same argument has been applied before (modulo notation, conventions, and some minor technical details), in various related models (for both normal and/or strict as opposed to symmetric/free boundary plane partitions --- see \cite{or, vul2, fs, bmrt}) and the analysis carries over almost mutatis mutandis.

   Throughout the proof we restrict to the case $\i' \leq \i$, in which case the $z$ contour is on the outside in the double contour integral formula for $K_{1,2}$. The other case follows similarly. We also write $S(z)$ in lieu of $S(z; \x, \y)$ whenever possible.
   
   The idea is that we deform the original integration contours for $K_{1,2}$ around the complex plane and have them pass through the critical point $z_+$ (and in the case of complex conjugate critical points, both $z_+$ and $z_-$). We want to make $\Re (S(z)-S(z_+))$ negative everywhere except at $z_+$ (or $z_{\pm}$ in the complex conjugate case), while making $\Re (S(w)-S(z_+))$ positive everywhere except $z_+$ (respectively $z_{\pm}$). We then observe $S(z)-S(w) = (S(z)-S(z_+)) - (S(w)-S(z_+))$ and employ multiple times the following simple limit:
   \begin{align} \label{basic_asymptotics}
    \int_\gamma \exp \left( \frac{1}{r} f(z) \right) \dx z \to 0, \ r \to 0+
   \end{align}

   \noindent if $f$ is smooth and $\Re f < 0$ for all but finitely many points along the simple closed contour $\gamma$.
   
   In the four panels of Figure~\ref{fig:contours_pp} we illustrate, for various situations that will arise, the level lines of $\Re(S(z)-S(z_+))$. The first three figures correspond to points $(\x, \y)$ with $\x < \x_T$, sitting above the arctic curve, inside the liquid region $\mathcal{L}$, and below the arctic curve but above the line $\x = \y + \a$ respectively. The last corresponds to $(\x, \y)$ below the arctic curve but with $\x > \x_T$, the only case that needs special treatment when $\x > \x_T$.

   $K_{1,2}$ is explicitly given in Theorem~\ref{thm:pp_corr}. In the limit, the integrand $\frac{F(N-i, z)}{F(N-i', w)} \frac{w^{k'+1}}{z^{k+1}}$ is approximated by
   \begin{align} \label{eq:app_integrand}
     \exp \frac{1}{r} \left( S(z; \x, \y) - S(w; \x, \y) \right)
   \end{align}
   
   \noindent and it is this function that will provide the dominant asymptotic contribution.

   Throughout the proof, contours of integration for $z$ and $w$ will move around. One has to take care that the $z$ contour never crosses the interval $[\frac{1}{\X}, \frac{1}{A}]$ where the poles of the function $F$ accumulate in the limit (but the $w$ contour can certainly cross this interval), and that the $w$ contour never crosses the interval $[0,1]$ where the zeros of the function $F$ (so poles in the $w$ variable) accumulate in the limit (and again, the $z$ contour is of course allowed to cross said interval). None of the operations described below move the $z$ or $w$ contours in a way that their respective forbidden intervals are crossed, as can be explicitly checked case by case.

   First, the case $\y > \y_+, \x < \x_T$. We deform the contours so that the $z$ contour, which is on the outside, passes through $z_+ \in (1, \exp(\x))$ at an angle orthogonal to the real axis --- which is locally the direction of steepest descent for $\Re S$, and otherwise contains the contour $C_0$ where $\Re(S(z)-S(z_+))=0$ (to ensure everywhere else $\Re(S(z)-S(z_+))<0$). Similarly we deform the $w$ contour so that it is contained in $C_0$ (so that $\Re(S(z)-S(z_+))>0$) and passes through $z_+$ parallel to the real axis. See Figure~\ref{fig:contours_pp} (top right). We observe that the factor $1/(z-w)$ does not cause problems as it is integrable:  we can bound $\int \frac{\dx z \dx w}{z-w}$ by the converging $\int_{-\delta}^{\delta} \int_{-\delta}^{\delta} \frac{\dx x \dx y}{\sqrt{x^2+y^2}} = 8 \delta \log(1+\sqrt{2})$ for some small positive real $\delta$. We conclude the integral decays exponentially fast to 0 in the limit $r \to 0+$.

   Second, the case $\y \in (\y_-, \y_+), \x < \x_T$, which is to say $(\x, \y) \in \mathcal{L}$. We proceed as before by passing both contours through the two critical points $z_+$ and $z_-$, so that they intersect orthogonally at both. Based on the gradient of $\Re (S(z)-S(z_+))$  the final contours look like in Figure~\ref{fig:contours_pp} (top left). We note that for this to be possible, we have to pass the $w$ contour to the outside of the $z$ contour on an arc passing through the left of 1 and connecting $z_+$ with $z_-$ (this arc is $\gamma_+$). Doing so we pick up the residue at $z = w$ for every $z$ between $z_+$ and $z_-$ along $\gamma_+$. The total contribution is finite and equal to
   \begin{equation}
    \begin{split}
     & \frac{1}{2 \pi \im} \int_{\gamma_+} Res_{z \to w} \left(\frac{F(N-i, z)}{F(N-i', w)} w^{k'-1/2} z^{-k-1/2} \frac{zw-1}{(z+1)(w-1)(z-w)}\right) \dx z = \\
     & \qquad = \frac{1}{2 \pi \im} \int_{\gamma_+} \frac{F(N-i, w)}{F(N-i', w)} w^{k'-k+1} \dx w \to \frac{1}{2 \pi \im}  \int_{\gamma_+} (1-e^{-\x} w)^{\i-\i'} w^{\k'-\k-1} \dx w, \ r \to 0+
   \end{split}
  \end{equation}
where to take the limit we have used the estimates from Appendix~\ref{sec:theta}. We now argue that the integral on the remaining contours vanishes in the limit. The remaining integrals will converge to 0 exponentially again using the basic fact from \eqref{basic_asymptotics}. Again the denominator factor $1/(z-w)$ poses no further issues. 

   The situation repeats for a third time, for $\y < \y_-, \y > \x - \a, \x < \x_T$. We pass both contours through $z_+ < 0$, but now as in the previous case the gradient of $\Re (S(z)-S(z_+))$ forces the $w$ contour on the outside and the $z$ on the inside, the $z$ contour passing orthogonally to the real axis while the $w$ contour passing parallel to it at $z_+$. See Figure~\ref{fig:contours_pp} (bottom left). As we have to pass the whole $w$ contour to the outside, we pick up a residue along the line $z=w$ over a whole closed contour passing through $z_+$ and to the left of 1 of contribution: 
   \begin{equation}
    \begin{split}
     & \frac{1}{2 \pi \im} \int_C Res_{z \to w} \left(\frac{F(N-i, z)}{F(N-i', w)} w^{k'-1/2} z^{-k-1/2} \frac{zw-1}{(z+1)(w-1)(z-w)}\right) \dx z = \\
     & \qquad = \frac{1}{2 \pi \im} \int_C \frac{F(N-i, w)}{F(N-i', w)} w^{k'-k+1} \dx w \to \frac{1}{2 \pi \im}  \int_C (1-e^{-\x} w)^{\i-\i'} w^{\k'-\k-1} \dx w, \ r \to 0+
    \end{split}
  \end{equation}
where again we have used Appendix~\ref{sec:theta}. The remaining integrals, as before, converge exponentially fast to 0 in the limit.

For $\x > \x_T$, the two cases $\y > \y_+$ (above the arctic curve) and $\y \in (\y_-, \y_+)$ (inside the liquid region) are handled using the same contours as if it were in the regime $\x < \x_T$. For the third case $\y < \y_-$, the situation is somewhat different as now the critical points $z_{\pm}$ are in the interval $(\exp(\a), \infty)$. We pass both contours through $z_+$ as before. Since the $z$ contour cannot cross $\mathcal{P}$, we enlarge it on the left side and have it pass through infinity until it comes back on the other side and encircles $\mathcal{P}$. We can do this as there is no residue at $\infty$ in the $z$ variable. Locally at $z_+$ the final contour $C_z$ is parallel to the axis. We now inflate the contour $C_w$, passing it over $C_z$ and picking up the stated residue over the desired contour $C$, so that it goes through $z_+$ locally perpendicular to the real axis. See Figure~\ref{fig:contours_pp} (bottom right). Further note that if $\i = \i', \k=\k'$ (the diagonal of the kernel), the residue integral over $C$ is 0, since the origin is no longer inside $C$.
\end{proof}

\begin{figure}[!ht]
  \begin{center}
   \includegraphics[scale=0.29]{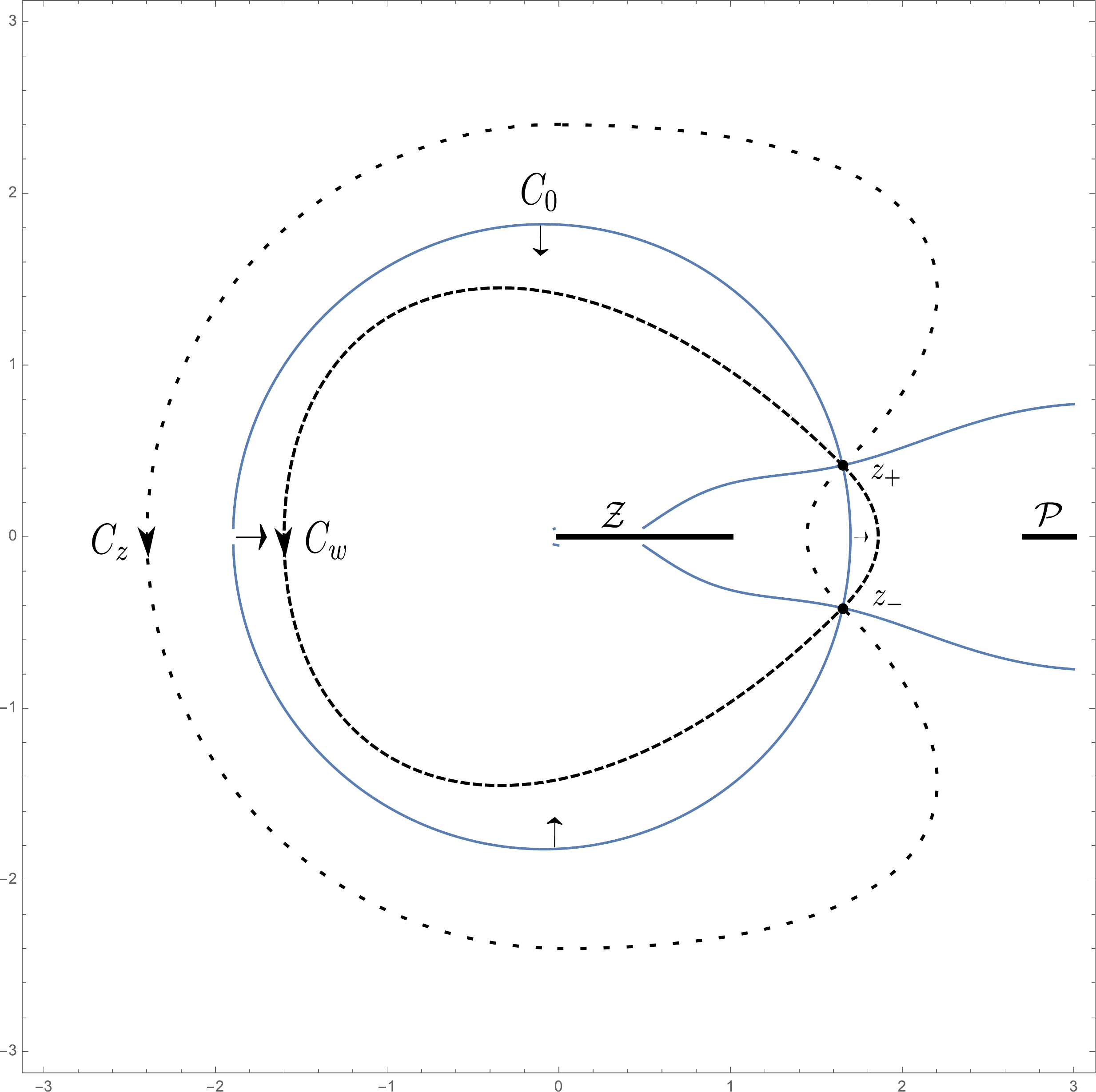} \quad \includegraphics[scale=0.29]{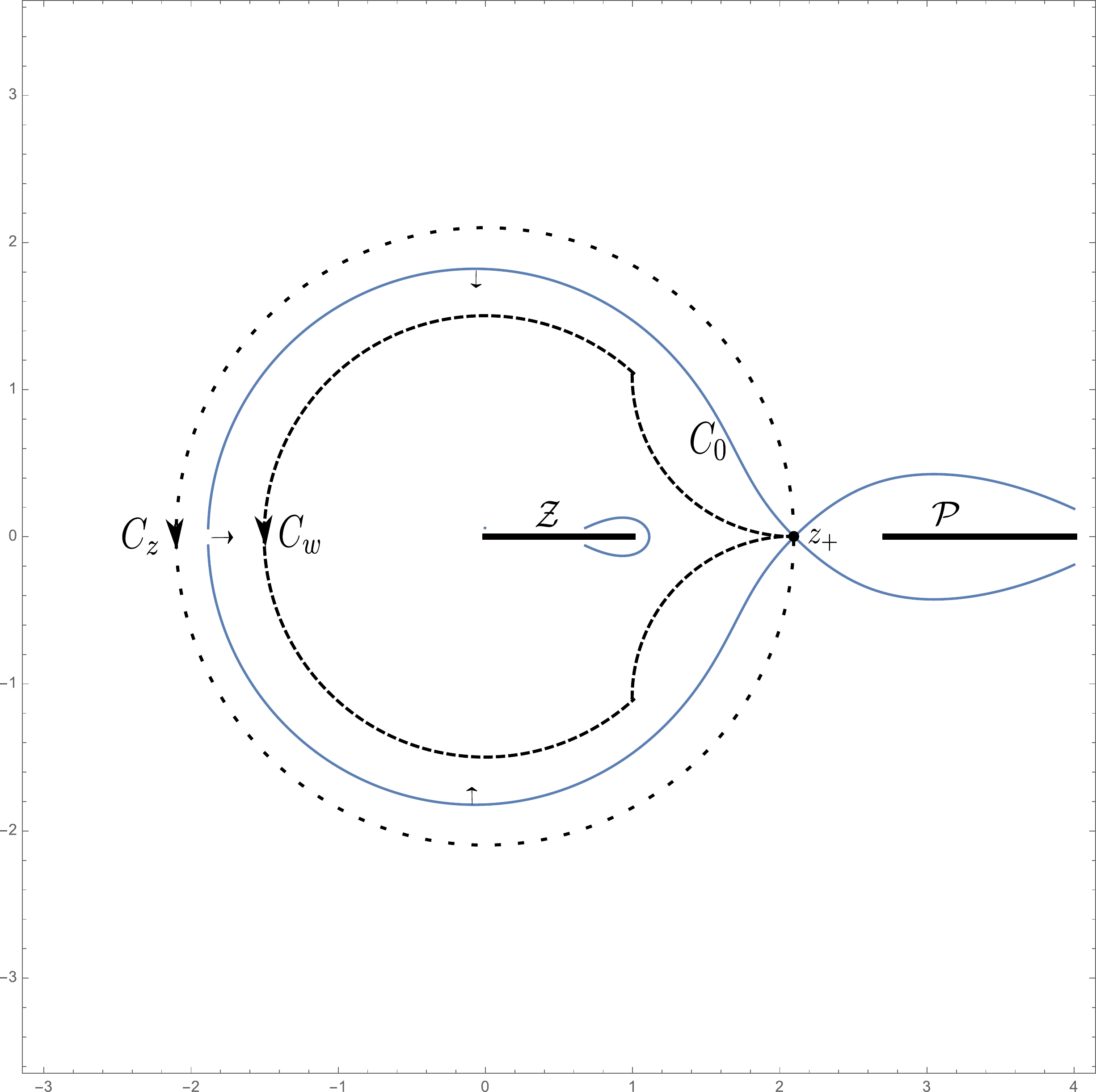}
  \end{center}

  \begin{center}
    \includegraphics[scale=0.29]{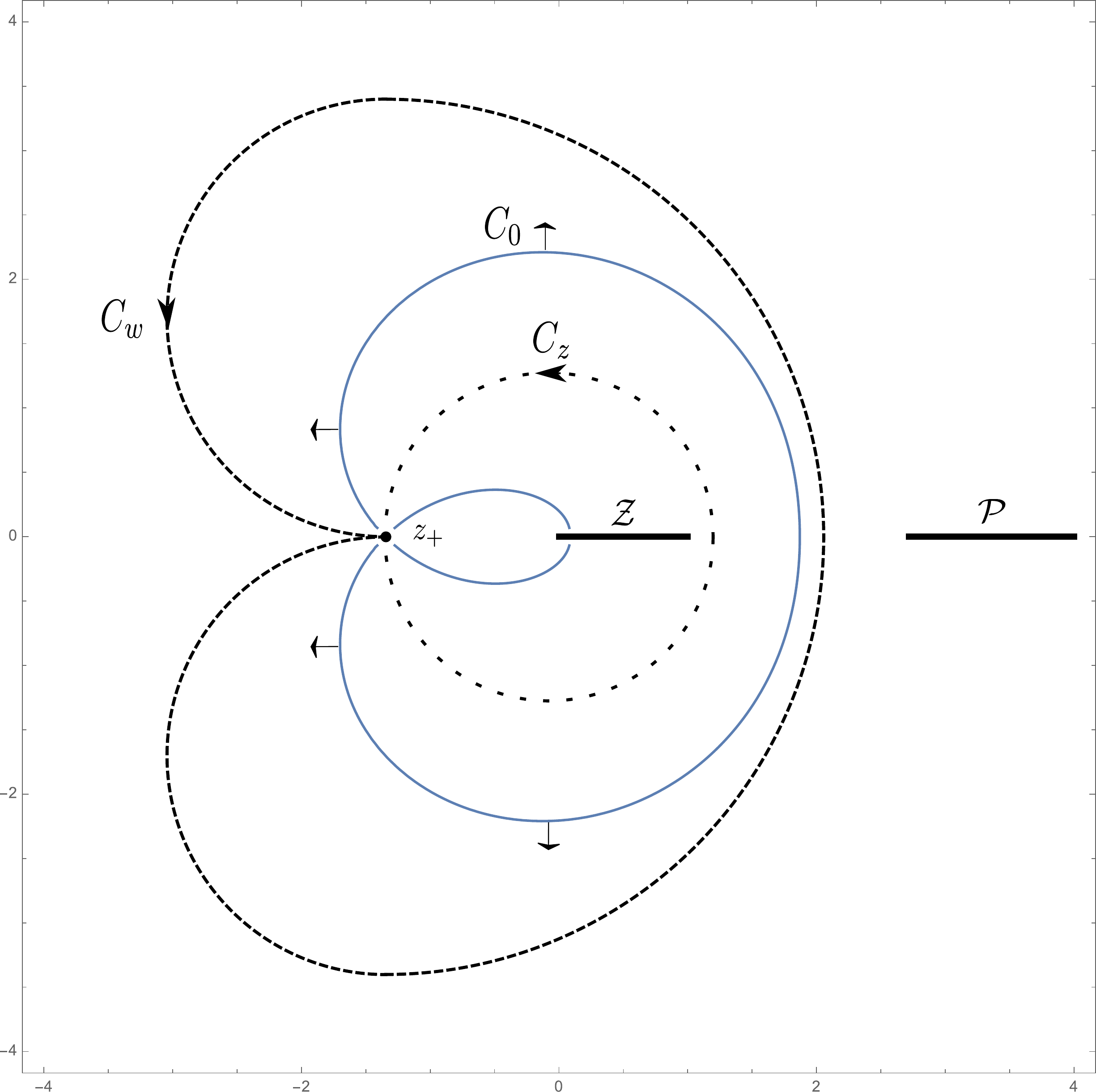} \quad \includegraphics[scale=0.29]{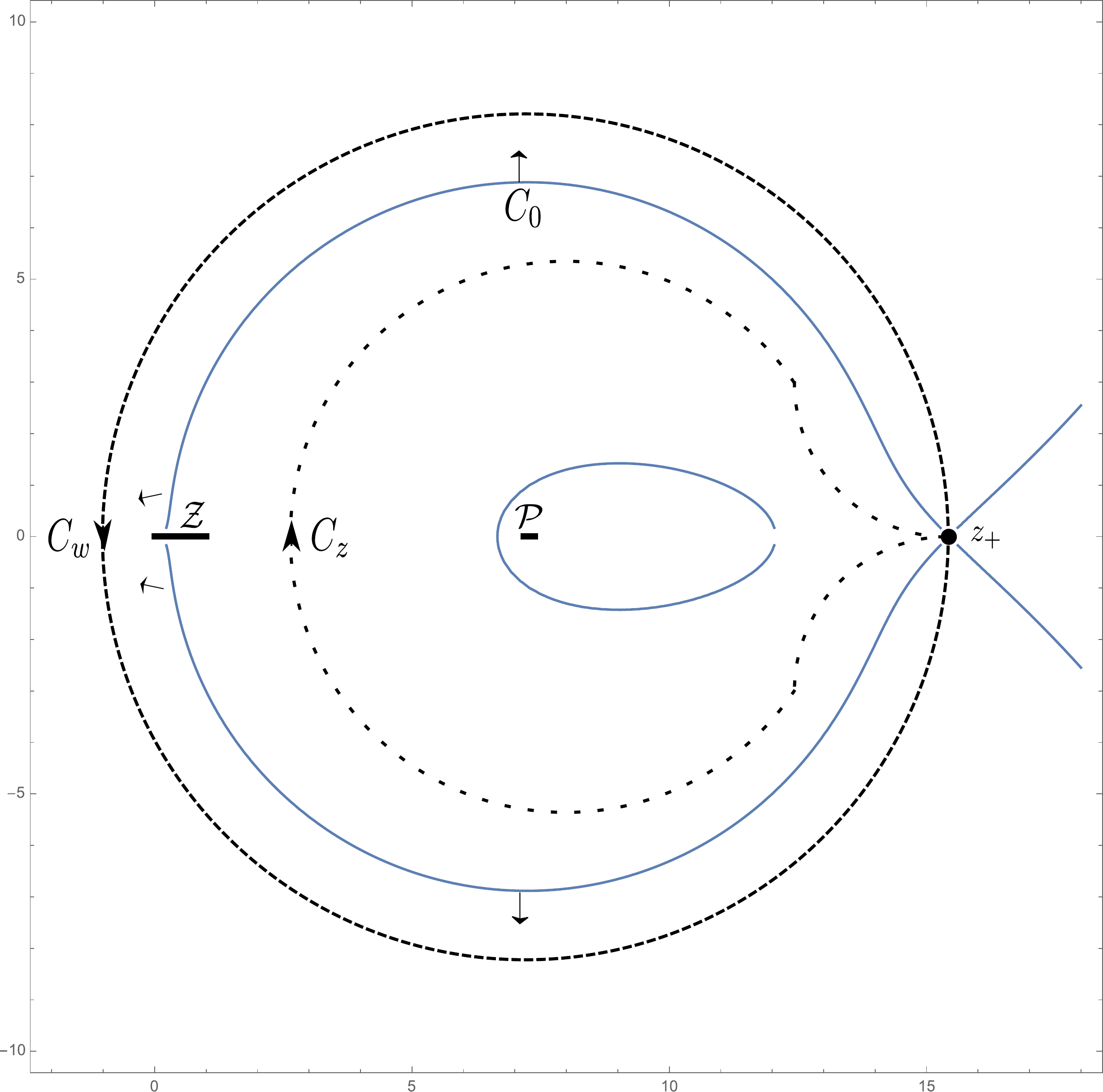}
   \end{center}
   \caption{\fs{The curves (solid) $\Re (S(z) - S(z_+)) = 0$ with arrows in the direction of ascent and the final $z$ and $w$ contours from the proof of Theorem~\ref{thm:k12_pp}. Top left: $z_+$ is one of the two complex conjugate critical points of $S$ corresponding to the liquid region; top right: $z_+$ is real corresponding to the region above the arctic curve; bottom left: $z_+$ is real and corresponds to the region below the arctic curve and $\x < \x_T$; bottom right: like bottom left but with $\x > \x_T$.}}
   \label{fig:contours_pp}
\end{figure}

\begin{rem}
 We mention one qualitative difference between the case $\x < \x_T$ handled in Figure~\ref{fig:contours_pp} (top left, top right, bottom left) and the case $\x > \x_T$. If $\x-\a < \y < \y_-$, asymptotically around $(\x, \y)$ (below the arctic curve), one sees only particles if $\x < \x_T$, while for $\x > \x_T$ one sees only holes. Both regions are frozen, but in different ways. See Figure~\ref{fig:arctic_curve_pp} for a numerical visualization of this.
\end{rem}

\begin{rem} \label{rem:incomplete_beta}
 Up to a change of variables $z \mapsto e^{-\x} z$ which introduces extra factors of the form $e^{\x(\k'-\k)}$ that nevertheless cancel in any pfaffian computation (as they are diagonal), the kernel
 \begin{equation}
  (\Delta \i, \Delta \k) \mapsto \frac{1}{2 \pi \im} \int_{\gamma_{\pm}} (1-z)^{\Delta \i} z^{-\Delta \k-1} \dx z
 \end{equation}
is called \emph{the incomplete beta kernel} \cite{or}; in our case $\Delta \i = \i - \i', \Delta \k = \k - \k'$. When we restrict ourselves to the same slice $\i= \i'$ a simple integration shows it becomes \emph{the discrete sine kernel}
 \begin{equation}
  (\k, \k') \mapsto \frac{\sin(\theta (\k - \k'))}{\pi (\k-\k')}
 \end{equation}
where $\theta = \theta(\x, \y)$ is the argument of $z_+ = z_+(\x, \y)$.
\end{rem}

Theorem~\ref{thm:k12_pp} justifies the word \emph{arctic curve} for $\mathcal{L}$ in the following sense: if one looks at a large system and scales appropriately, in the particle--hole description, above $\mathcal{C}$ we will see only holes in the sense that the probability of seeing a particle decays exponentially; below $\mathcal{C}$ we will see only particles --- the probability of seeing particles is exponentially close to 1 that is, as can be verified from the explicit limit of the diagonal elements of the kernel $K(i,k; i,k)$. Inside $\mathcal{C}$ we will see a mixture, and this region is called the \emph{liquid region} --- denoted above by $\mathcal{L}$. We illustrate this in Figure~\ref{fig:arctic_curve_pp}. 

Theorem~\ref{thm:k12_pp} also gives us the limiting density of particles, given below. Similar results for various related models of plane partitions can be found in e.g.\ \cite[Section 2]{ck}, \cite[Corollary 2]{or}, \cite[Sections 4 and 8]{ken2} and~\cite[Section 3]{bf}.
\begin{cor}
 In the limit \eqref{eq:var_scaling_pp} and around macroscopic point $(\x, \y)$ the density of particles is
\begin{equation}
    \rho(\x, \y) = \frac{\theta(\x, \y)}{\pi}, \qquad \theta(\x, \y) = \arg(z_+(\x,\y)).
\end{equation}
\end{cor}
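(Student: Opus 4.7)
The plan is to recognize $\rho(\x,\y)$ as the diagonal value of the limiting pfaffian kernel, then substitute directly into the formula of Theorem~\ref{thm:k12_pp} and evaluate an elementary integral.

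For a pfaffian point process with antisymmetric $2\times 2$ kernel $K=(K_{\alpha,\beta})$, the antisymmetry $K_{\alpha,\beta}(x,y)=-K_{\beta,\alpha}(y,x)$ forces $K_{1,1}(x,x)=K_{2,2}(x,x)=0$, so the $1$-point intensity at $x$ equals $\pf K(x,x)=K_{1,2}(x,x)$. Applied to our rescaled process, with $\i=\i'=0$ and $\k=\k'=0$, Theorem~\ref{thm:k12_pp} will yield, for $(\x,\y)\in\mathcal L$,
\begin{equation*}
\rho(\x,\y)=\frac{1}{2\pi\im}\int_{\gamma_+}\frac{\dx z}{z},
\end{equation*}
the factor $(1-e^{-\x}z)^{\i-\i'}$ collapsing to $1$ and the theorem's rule (take $\gamma_+$ iff $\i'\leq\i$) selecting $\gamma_+$ in the borderline equality case.

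It then remains to evaluate this integral. Since $\gamma_+$ is a simple arc from $z_-$ to $z_+$ staying to the right of $1$, it lies in the open right half-plane, where a single-valued branch of $\log z$ is holomorphic. Inside $\mathcal L$ the critical points are complex conjugates, $z_-=\overline{z_+}$, so $|z_+|=|z_-|$ and $\arg z_+=-\arg z_-=\theta(\x,\y)\in(0,\pi)$. Hence
\begin{equation*}
\int_{\gamma_+}\frac{\dx z}{z}=\log z_+-\log z_-=2\im\,\theta(\x,\y),
\end{equation*}
and dividing by $2\pi\im$ gives $\rho(\x,\y)=\theta(\x,\y)/\pi$.

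No step poses a real obstacle: the argument amounts to the antisymmetry of the pfaffian kernel on the diagonal, a specialization of Theorem~\ref{thm:k12_pp}, and the computation of the principal logarithm of $z_+/z_-=\exp(2\im\theta)$ along a contour confined to the right half-plane. The only mild point of attention is verifying that $\gamma_+$ can be kept in the right half-plane throughout so as to legitimize the use of the principal branch of $\log$, which is built into the defining description of $\gamma_+$ preceding the theorem.
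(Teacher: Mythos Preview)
Your argument is correct and is exactly the route the paper takes (the corollary is stated as an immediate consequence of Theorem~\ref{thm:k12_pp}, and Remark~\ref{rem:incomplete_beta} records the same integral evaluation). One small inaccuracy worth fixing: you claim $\gamma_+$ ``lies in the open right half-plane,'' but when $\theta(\x,\y)>\pi/2$ the endpoints $z_\pm$ have negative real part, so this is impossible; what the paper's description actually gives---and what suffices---is that $\gamma_+$ crosses the real axis only to the right of $0$, so it avoids the negative real axis and the principal branch of $\log$ (or equivalently a direct count of the change in argument from $-\theta$ through $0$ to $+\theta$) still yields $\int_{\gamma_+}\dx z/z=2\im\theta$.
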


Note that, for $\x < \x_T$, the density is 0 above $\mathcal{C}$ (as the two critical points are real so $\theta = 0$), strictly between 0 and 1 inside $\mathcal{L}$ and 1 below (as the two roots are negative below $\mathcal{C}$). For $\x > \x_T$, the density is 0 above $\mathcal{C}$, strictly between 0 and 1 when inside $\mathcal{L}$ where the two roots are complex conjugate, 0 again (!) for $\y$ below $\mathcal{C}$ but above the line $\x = \a + \y$, and 1 below $\mathcal{C}$ and below the aforementioned line. This can immediately be translated into the density of horizontal tiles in the plane partition picture via the change of variables $\y = \h + \frac{\x}{2}$ --- here $\h = \lim_{r \to 0+} hr$ is the macroscopic ordinate/height for lozenges.

In the case $\a \to \infty$ ($\A \to 0$) the arctic curve $\mathcal{C}$ becomes particularly simple. It can be written as half the zero locus (the part $\x > 0$) of
\begin{equation}
  \begin{split}
 (1+U+V)(1+U-V)(1-U+V)(1-U-V) = 0, \qquad (U, V) = (e^{-\frac{\x}{2}},e^{-\frac{\y}{2}}).
  \end{split}
\end{equation}
It is thus half the boundary of the amoeba of the polynomial $p(U,V) = 1 + U + V$ \cite{ko, ko2, kos} and can be recovered using the techniques in \cite{ko}, independent of the description as a Schur process. We recall that the amoeba of a polynomial $p(U,V) \in \C[U,V]$ is the set 
\begin{align}
 \{ (\log |U|, \log|V|): (U,V) \in (\C \setminus 0)^2, p(U,V)=0 \}.
\end{align}

We make a small digression and address the three-dimensional nature of the picture. It is possible to recover the three-dimensional limit surface from the above formulas. This was first obtained by Cerf and Kenyon \cite{ck} in the mathematical literature and by Bl\"ote, Hilhorst and Nienhuis in the physics literature \cite{bnh}, and while we can recover their formulas, as already noted in \cite{or} (whose exposition we follow for this purpose), further analysis --- in particular a concentration inequality type of result --- is needed to make the computations rigorous (but see \cite{ck} where this is done using an alternative method). We also note that all the cited references deal with regular (non-symmetric) plane partitions, but the limit surface is the same after a mild reparametrization (so that indeed the symmetric plane partitions, as opposed to just the free boundary halves, are distributed according to the same $q^{\text{Volume}}$ measure as the ordinary plane partitions). Here and below we consider $\rho$ as a function of $(\x, \h)$, and we extend to the symmetric case by $\rho(\x, \h) = \rho(-\x, \h)$. We denote $X, Y, Z$ the three-dimensional coordinates of space, with $Z$ the vertical, $X$ the south-west and $Y$ the south-east coordinates respectively in the $(1,1,1)$ projection. Then a parametrization of the limit surface, depicted in Figure~\ref{fig:pp_large} for the case of unbounded bottom, is:
\begin{equation}
 Z(\x, \h) = \int_{-\infty}^{\h} (1-\rho(\x,s))ds, \qquad X(\x, \h) = Z(\x, \h) - \h - \frac{\x}{2}, \qquad Y(\x, \h) = Z(\x, \h) - \h + \frac{\x}{2}.
\end{equation}

\begin{SCfigure}[][ht]
  \includegraphics[scale=0.5]{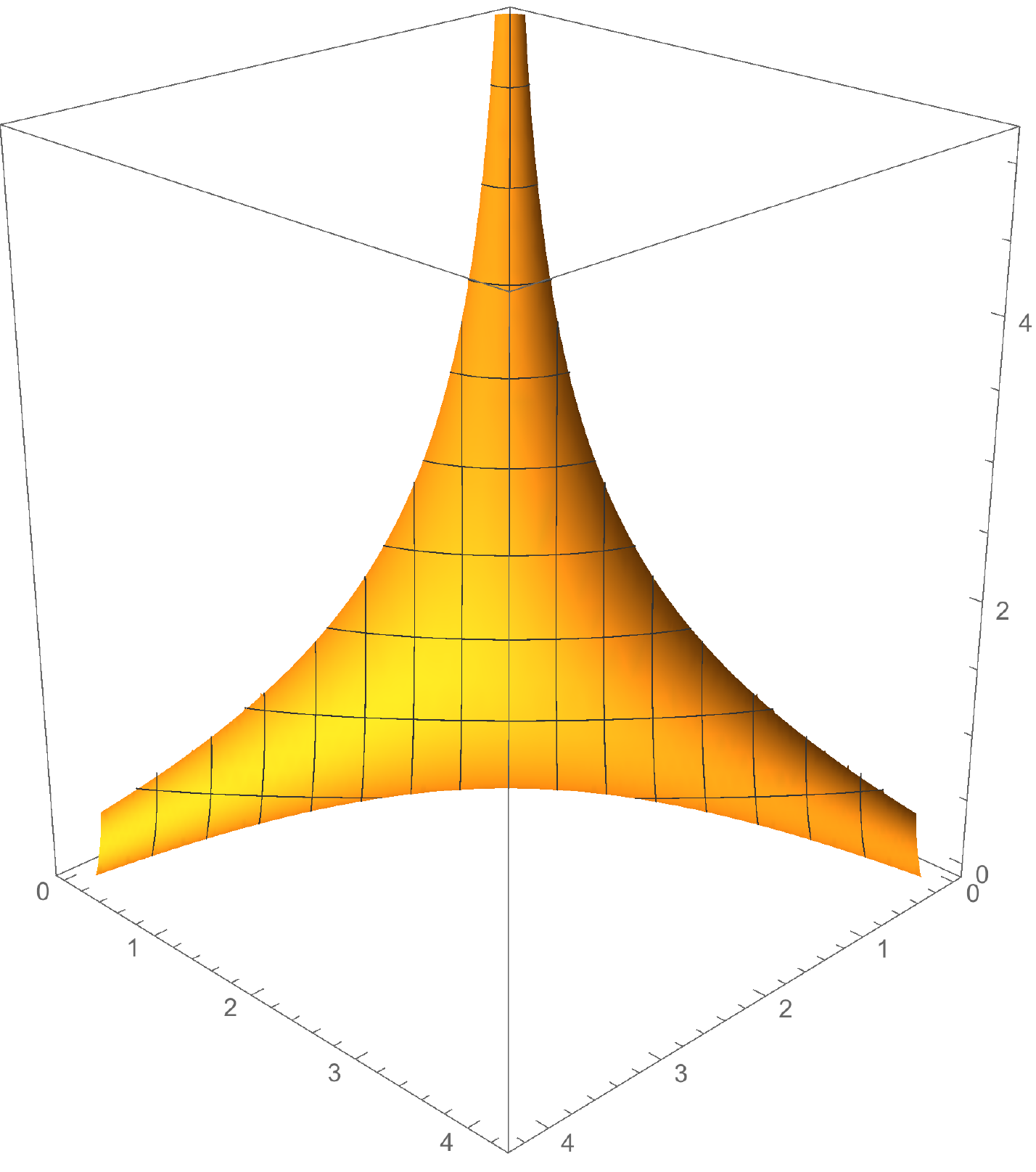}
  \caption{\fs{The limit surface for both symmetric and non-symmetric random plane partitions.}} 
  \label{fig:pp_large}
  \end{SCfigure}

We now address, again in the case $\x > 0$, the limits of $K_{1,1}$ and $K_{2,2}$. It will be easier to work with the conjugated kernel $\tilde{K} (i, k; i', k')$ defined thusly. We fix $(\x, \y)$ and consider the corresponding $z_+$. Then (with $S(z_+) := S(z_+; \x, \y)$):
\begin{align}
 \left( \begin{array}{rr}
  \tilde{K}_{1,1} & \tilde{K}_{1,2} \\
  \tilde{K}_{2,1} & \tilde{K}_{2,2} 
 \end{array}
 \right) 
 = 
   \left( \begin{array}{rr}
  e^{- \Re S(z_+)/r} & 0 \\
  0 & e^{\Re S(z_+)/r} 
 \end{array}
 \right)
 \left( \begin{array}{rr}
  K_{1,1} & K_{1,2} \\
  K_{2,1} & K_{2,2} 
 \end{array}
 \right) 
    \left( \begin{array}{rr}
  e^{-\Re S(z_+)/r} & 0 \\
  0 & e^{ \Re S(z_+)/r} 
 \end{array}
 \right).
 \end{align}
 
It follows that diagonal elements of $\tilde{K}$ converge to zero.

 \begin{thm} \label{thm:diagonal_pp}
With the same assumptions as in Theorem~\ref{thm:k12_pp}, in the asymptotic regime given by \eqref{eq:var_scaling_pp} we find that 
$\lim_{r \to 0+} \tilde{K}_{1,1}(i, k; i', k') = \lim_{r \to 0+} \tilde{K}_{2,2}(i, k; i', k') = 0$.
 \end{thm}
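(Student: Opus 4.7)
The plan is to adapt the steepest-descent analysis from the proof of Theorem~\ref{thm:k12_pp} to the double contour integrals defining $K_{1,1}$ and $K_{2,2}$ in Theorem~\ref{thm:pp_corr}. In the scaling regime~\eqref{eq:var_scaling_pp}, the integrands behave like $\exp((S(z)+S(w))/r)\cdot g(z,w)$ for $K_{1,1}$ and $\exp(-(S(z)+S(w))/r)\cdot h(z,w)$ for $K_{2,2}$, where $g$ and $h$ are rational prefactors, both containing $(z-w)$ in their numerator. The conjugation by $\mathrm{diag}(e^{-\Re S(z_+)/r}, e^{\Re S(z_+)/r})$ is chosen precisely so that, after deforming both $z$ and $w$ onto the steepest-descent contour through the saddles $z_\pm(\x,\y)$ for $\tilde K_{1,1}$ (respectively the ascent contour for $\tilde K_{2,2}$), the conjugated integrand is bounded pointwise, achieving absolute value $1$ only at the saddle(s).

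The contour deformations are the natural analogues of those used in Theorem~\ref{thm:k12_pp}, with the same three-region case analysis (and the subcase $\x \lessgtr \x_T$), except that now both $z$ and $w$ travel along the same contour rather than conjugate descent/ascent contours. The crucial new input is that, unlike $K_{1,2}$ whose rational prefactor contains the pole $1/(z-w)$, the prefactors of $K_{1,1}$ and $K_{2,2}$ contain the zero $(z-w)$ in their numerator. Performing local Gaussian analysis near a coincident saddle $z_0 \in \{z_+, z_-\}$ via $z = z_0 + r^{1/2}Z$, $w = z_0 + r^{1/2}W$ yields $(z-w) = r^{1/2}(Z-W)$ and Jacobian $\dx z\,\dx w = r\,\dx Z\,\dx W$; the leading Gaussian $\iint e^{S''(z_0)(Z^2+W^2)/2}(Z-W)\,\dx Z\,\dx W$ vanishes by odd symmetry, so each coincident-saddle contribution is $O(r^{3/2})$. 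In the liquid region the cross-saddle contribution (with $z$ near $z_+$, $w$ near $z_-$ or vice versa) does not enjoy the $(z-w)$ cancellation, but the oscillatory phases $\exp(i\Im S(z_\pm)/r)$ cancel between $z_+$ and $z_-$ since $\Im S(z_+) + \Im S(z_-) = 0$, leaving only the $O(r)$ from the Jacobian. In the frozen regions, only coincident-saddle contributions arise, giving $O(r^{3/2})$. In all cases, $\tilde K_{1,1}, \tilde K_{2,2} = O(r) \to 0$.

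The main obstacle is the careful bookkeeping of residues picked up during the contour deformations. Beyond the accumulating pole loci ($[1/\X, 1/\A]$ for $z$ in $F(N-i,z)$, and $[0,1]$ for $w$ in the $1/F(N-i',w)$ factor appearing in $K_{2,2}$), the integrand has isolated poles at $z=-1,w=-1,zw=1$ for $K_{1,1}$ and at $z=1,w=1,zw=1$ for $K_{2,2}$. The pole $zw=1$ is particularly delicate for $\tilde K_{2,2}$ in the frozen region below the arctic curve, where the saddles lie beyond $\mathcal{P}$. Each crossed pole contributes a single-variable contour integral whose integrand is of exponential type $\exp(\pm S(z)/r)$; applying a one-variable steepest-descent argument to these residues, using the same descent/ascent contour that controls the main integrals, yields matching $o(1)$ contributions, completing the proof.
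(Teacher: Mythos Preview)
Your approach is broadly correct but significantly more elaborate than the paper's, and one step in your residue discussion is wrong.

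The paper's proof is much shorter because it exploits the absence of the $1/(z-w)$ pole more directly. Since the rational prefactor of $K_{1,1}$ (resp.\ $K_{2,2}$) is \emph{bounded} near $z=w$, both the $z$ and $w$ contours can be deformed onto the \emph{same} curve, namely the descent contour $C_z$ (resp.\ the ascent contour $C_w$) from the proof of Theorem~\ref{thm:k12_pp}. After conjugation, the integrand has modulus $\leq |g(z,w)|$ with the exponential factor $\leq 1$ and $=1$ only at the isolated saddle(s). The elementary fact~\eqref{basic_asymptotics} then gives convergence to $0$ immediately --- no local Gaussian expansion, no tracking of $O(r)$ versus $O(r^{3/2})$, and no odd-symmetry argument are needed. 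Your local analysis (the $(z-w)$ cancellation at coincident saddles and the phase-sum-zero observation at cross saddles) is valid, but it is machinery the paper deliberately avoids.

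Your residue discussion diverges from the paper and contains an error. For $\x>0$ the paper observes that the deformation can be carried out without crossing any of the isolated poles at $\pm 1$ or $zw=1$ (for $\a=\infty$ one sees this directly since $C_0=\{|z|=e^{\x/2}\}$ lies strictly outside the unit circle; the general case is analogous). So no single-variable residues ever appear. More importantly, your claim that the $zw=1$ residue has exponent of the form $\exp(\pm S(z)/r)$ is incorrect: taking $w=1/z$ in the $K_{2,2}$ integrand produces the exponent $-(S(z)+S(1/z))/r$, which is a genuinely different function requiring its own saddle analysis (and which does \emph{not} vanish identically unless $\x=0$). This is precisely why, in the paper's subsequent treatment of the boundary case $\x=0$ (Theorem~\ref{thm:boundary_corr_pp}), the $zw=1$ residue survives and produces the nontrivial pfaffian kernel.
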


\begin{proof}
 It turns out the analysis was already carried out in the proof of Theorem~\ref{thm:k12_pp}, with one particular difference: now we can move both the $z$ and the $w$ contours around and deform them into one and the same contour, called $\gamma_{1,1}$ for $\tilde{K}_{1,1}$ and $\gamma_{2,2}$ for $\tilde{K}_{2,2}$ --- we can do this as there is no more residue/singularity to take care of at $z=w$ as the diagonal elements of the kernel do not contain a $(z-w)$ term in the denominator of the integrand. In the frozen regions ($\y > \y_+$ or $\y < \y_-$) the contours both pass through $z_+$, while in the liquid region ($\y \in (\y_-, \y_+)$) both pass through $z_+$ and $z_- = \overline{z_+}$ respectively. In all three cases they intersect in the critical point(s) at right angles, and away from them $\gamma_{1,1}$ follows a direction of descent, while $\gamma_{2,2}$ one of ascent. In fact in all three cases, $\gamma_{1,1}$ can be taken to be the final $z$ contour from the proof of Theorem~\ref{thm:k12_pp}, while $\gamma_{2,2}$ the final $w$ contour. See Figure~\ref{fig:contours_pp} (top left, top right, bottom left): $\gamma_{1,1} = C_z$ and $\gamma_{2,2} = C_w$.
 
 More precisely, the integrands in $\tilde{K}_{1,1}$ and $\tilde{K}_{2,2}$ can be approximated by 
\begin{equation}
    \exp \frac{1}{r} \left( \tilde{S}(z; \x, \y) + \tilde{S}(z; \x, \y) \right) \text{ and\ } \exp -\frac{1}{r} \left( \tilde{S}(z; \x, \y) + \tilde{S}(z; \x, \y) \right)
\end{equation}
respectively as $r \to 0+$ where $ \tilde{S}(z; \x, \y) = S(z; \x, \y) - \Re S(z_+; \x, \y)$ with the observation that throughout the proof the dependence on $\x, \y$ will be omitted.

 The important remark is that $\tilde{S}$ has real part zero at $z_+$ (and in the liquid region, at $z_-$ as well), and so we deform the two contours into one and the same such that for $\tilde{K}_{1,1}$ the real part of $S(z)-S(z_+)$ decreases away from $z_+$ (and so will become negative), while for $\tilde{K}_{2,2}$ it increases away from $z_+$ thus becoming positive. Then in both cases the integrals will converge to 0 as $r$ goes to 0 using the basic asymptotic fact \eqref{basic_asymptotics}. We finally remark that for $\tilde{K}_{1,1}$ deformation of the contours has to avoid the interval $(\frac{1}{\X}, \frac{1}{\A})$, while for $\tilde{K}_{2,2}$ we need to avoid the interval $(0,1)$ --- both of which can be achieved.
 \end{proof}

\begin{rem}
$\Re S(z_+)$ can be made more explicit as both $S$ and $z_+$ have explicit formulas, but we decided to avoid this to streamline the proof. In the case $\a \to \infty$ the argument (originally the one from \cite{or}) simplifies considerably, and it will be presented, in a slightly modified form (i.e., for a different model) in Section~\ref{sec:overpartitions}.
\end{rem}

The discussion above, especially Theorems~\ref{thm:k12_pp} and \ref{thm:diagonal_pp} and Remark~\ref{rem:incomplete_beta}, allows us to formulate the main result as: ``away from the arctic curve, for $\x > 0$, the local pfaffian correlations, in the limit, become determinantal with kernel given by the incomplete beta kernel.''

\begin{thm} \label{thm:pp_bulk_overall}
    Let $n > 0$ be a natural number, $\a \in (0, \infty)$, $\x, \y$ reals with $\x \in (0, \a), \y > \x-\a$,  and for $1 \leq s \leq n$ fix $n$ pairs $\i_s \in \N, \k_s \in \Z'$. As $r \to 0+$ assume we have $i_s \in \N, k_s \in \Z'$ depending on $r$ and converging $r i_s \to \x$, $r k_s \to \y$ thusly:
   \begin{align}
   i_s = \left\lfloor \frac{\x}{r} \right\rfloor + \i_s, \qquad k_s = \left\lfloor \frac{\y}{r} \right\rfloor + \k_s, 
   \end{align}

   \noindent and denote $U = \{ (i_1, j_1), \dots, (i_n, j_n)\}.$ Then in the asymptotic regime given by \eqref{eq:var_scaling_pp} we have
   \begin{align}
    {\varrho}(U) \to
    \begin{cases} 
    0, & \text{ if } \y > \y_+ \text{ or } (\x > \x_T \text{ and } \y < \y_-), \\
    \det_{1 \leq u, v \leq n} B(\Delta \i_{uv}, \Delta \k_{uv}),  & \text{ if } \y \in (\y_-, \y_+), \\
    1, & \text{ if } \x < \x_T \text{ and } \y < \y_-
       \end{cases}
   \end{align} 
   
   \noindent where $\Delta \i_{uv} = \i_u - \i_v, \Delta \k_{uv} = \k_u - \k_v$, $B(\Delta \i, \Delta \k)$ is the incomplete beta kernel 
   \begin{align}
    (\Delta \i, \Delta \k) \mapsto \frac{1}{2 \pi \im} \int_{\gamma_{\pm}} (1-z)^{\Delta \i} z^{-\Delta \k-1} \dx z
   \end{align}
   
   \noindent with $\gamma_{\pm}$ connecting the two critical points $z_{\pm}$, $\gamma_+$ passing to the right of 0 (and $\gamma_-$ passing to the left), with $\gamma_+$ being chosen in the case $\i_v \leq \i_u$ (and $\gamma_-$ otherwise).
\end{thm}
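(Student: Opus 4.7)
The plan is to express the $n$-point correlation function via the pfaffian structure of Theorem~\ref{thm:onebound},
\begin{equation}
  \varrho(U) = \pf\bigl[K(i_u,k_u;i_v,k_v)\bigr]_{1 \leq u,v \leq n},
\end{equation}
with $K$ the $2\times 2$ block kernel from Theorem~\ref{thm:pp_corr}, and then to analyze the three regimes separately. The liquid region will be handled by the asymptotics already developed in Theorems~\ref{thm:k12_pp} and~\ref{thm:diagonal_pp}, while the two frozen regions will be reduced to simple probabilistic inequalities using the one-point density.

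First, in the liquid region $\y \in (\y_-, \y_+)$, I would conjugate the full $2n\times 2n$ antisymmetric matrix by $D = \operatorname{diag}(e^{-\Re S(z_+)/r}, e^{\Re S(z_+)/r})^{\oplus n}$. Since $\det D = 1$, the pfaffian is preserved. By Theorem~\ref{thm:diagonal_pp}, the conjugated diagonal blocks $\widetilde{K}_{1,1}$ and $\widetilde{K}_{2,2}$ tend pointwise to zero, while $\widetilde{K}_{1,2}=K_{1,2}$ converges by Theorem~\ref{thm:k12_pp} to the incomplete beta kernel, up to the diagonal rescaling of Remark~\ref{rem:incomplete_beta} that does not affect determinants. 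Since the pfaffian of a block antisymmetric matrix with vanishing diagonal blocks collapses to the determinant of the off-diagonal block, this yields the announced determinantal limit.

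For the empty frozen regions, namely $\y > \y_+$ or ($\x > \x_T$ and $\y < \y_-$), the one-point density is $\rho_1(x_u) = K_{1,2}(x_u;x_u)$, and Theorem~\ref{thm:k12_pp} gives $\rho_1(x_u) \to 0$ in both cases: directly from $K_{1,2}\to 0$ in the first, and from the explicit observation in its statement that the residue integral over the enclosing contour vanishes on the diagonal (since the origin is excluded) in the second. Monotonicity of probabilities, $\varrho(U) \leq \rho_1(x_u)$ valid for any $u$, then forces $\varrho(U)\to 0$, avoiding any further pfaffian manipulation.

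The hardest case is the packed frozen region $\x < \x_T$, $\y < \y_-$, where each $\rho_1(x_u) \to 1$ but the naive determinantal reduction breaks down: the limit of $K_{1,2}^\infty$ becomes degenerate as soon as two sampled points share the same $\k$-coordinate, so the determinant of the off-diagonal block does not recover the correct value $1$. The obstacle is that here $z_+ < -1$, and the contour deformations used in the proof of Theorem~\ref{thm:diagonal_pp} would cross the poles at $z=-1$ and $w=-1$, producing residues that contribute nontrivially to $\widetilde{K}_{1,1}$ and $\widetilde{K}_{2,2}$ and precisely compensate the degeneracy of the off-diagonal block. Rather than track these cancellations, I would sidestep the issue with the probabilistic union bound
\begin{equation}
  1 - \varrho(U) = \Prob\Bigl(\bigcup_{u=1}^n \{x_u \notin \mathfrak{S}(\vec{\lambda})\}\Bigr) \leq \sum_{u=1}^n \bigl(1-\rho_1(x_u)\bigr) \longrightarrow 0,
\end{equation}
which gives $\varrho(U) \to 1$ and completes the argument.
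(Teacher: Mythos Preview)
Your proof is correct, and in the liquid region it is exactly the paper's: the paper presents Theorem~\ref{thm:pp_bulk_overall} as a direct corollary of Theorems~\ref{thm:k12_pp} and~\ref{thm:diagonal_pp}, via the same conjugation and pfaffian-to-determinant collapse.

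In the two frozen regions you take a genuinely different route. The paper's implicit argument is again to invoke Theorem~\ref{thm:diagonal_pp} (stated for all three regimes) so that $\tilde K_{1,1},\tilde K_{2,2}\to 0$, and then to read off $\varrho(U)\to\det[K_{1,2}^\infty]$ from Theorem~\ref{thm:k12_pp}. Your worry about the packed case is somewhat overstated on two counts. First, the limiting determinant does equal $1$: for $i<i'$ no contour swap is needed in the packed region, and the anticommutator identity $\{\Psi_k(i),\Psi^*_{k'}(i')\}$ equals precisely the full-circle integral, so $K_{1,2}^\infty$ vanishes for $\i_u<\i_v$ and the matrix, ordered by increasing $\i$, is lower triangular with $1$'s on the diagonal. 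Second, the steepest-descent contour $\gamma_{1,1}$ through $z_+<-1$ is still a Jordan curve around the origin meeting the negative axis only at $z_+$, hence it still encircles $-1$ and the deformation in Theorem~\ref{thm:diagonal_pp} need not cross that pole. That said, your probabilistic shortcuts are entirely valid and buy you a cleaner argument: they require only the one-point density $\rho_1$ from Theorem~\ref{thm:k12_pp} and let you bypass both the diagonal-block asymptotics and the triangularity computation in the frozen regimes. The paper's route extracts a bit more structure (the full limiting kernel everywhere); yours is shorter and immune to the contour subtleties you flagged.
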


In the case $\x = 0$ ($\X = 1$), the limiting regime $r \to 0+$ for the local microscopic coordinates becomes $(i, k)$ 
\begin{equation}
  i = \i, \qquad k = \left\lfloor \frac{\y}{r} \right\rfloor + \k
\end{equation}
which for $i$ is of course a relabeling we nevertheless use to be consistent with our notation so far. Because $\y_+ = \infty$, the coordinate $\y$ is either in the liquid region ($\y > \y_- = - \log \frac{4}{(1+ \A)^2}$) in which case the double critical points are complex conjugate of modulus 1, or in the frozen region ($-\a < \y < \y_-$) in which case the double critical points are real negative. Moreover, on the circle $\{ |z| = 1 \}$ (of interest in the liquid region) the real part of $S$ is constant and equal to 0:
\begin{align}
2 \Re S(z) = S(z) + S(1/z) = 0.
\end{align}

\noindent with values increasing (positive) and decreasing (negative) outside (remark that in the notation from above $C_0 = \{ |z| = 1\}$).

When one passes the contours through the critical points, the preceding arguments work almost unchanged. The few observations we make are as follows. 

First, the contours for $z$ and $w$ exist even when $\x = 0$ since for any finite $q = \exp(-r)$ there is enough space to the right of 1 between the largest zero of $F$ ($ = q < 1$) and its smallest pole ($ = q^{-\i-1} > 1$). Thus, even though we have a pole at $1$ in the integrand for $K_{1,2}$ and $K_{2,2}$, this will never give a residual contribution as the contours can avoid passing through 1. 

Second, in both cases --- $\y$ inside/outside the liquid region, we have to exchange the two contours (and if $\y$ is inside the liquid region, both have to pass through $z_+ = \overline{z_-}$), in which case we will pick up the contributions from the residues along the hyperbola $z w = 1$ in the analysis of $K_{1,1}$ and $K_{2,2}$. Their contributions are, in the limit $r \to 0+$:
\begin{equation}
  \begin{split}
Res_{z \to \frac{1}{w}} K_{1,1} &\to  \int_{\gamma_+} (1-w)^{\i} \left(1-\frac{1}{w}\right)^{\i'} w^{\k - \k' - 1} \frac{1-w}{1+w} \frac{\dx w}{2 \pi \im},\\
Res_{z \to \frac{1}{w}} K_{2,2} &\to  \int_{-\gamma_-} (1-w)^{-\i} \left(1-\frac{1}{w}\right)^{-\i'} w^{\k' - \k - 1} \frac{w+1}{w-1} \frac{\dx w}{2 \pi \im}
  \end{split}
\end{equation}
where $\gamma_+$ is a closed contour passing through $z_+ < 0$ and to the right of 1 if $\y < \y_-$ or otherwise joins the two complex conjugate critical points (again to the right of 1) if $\y > \y_-$. The minus sign in front of $\gamma_-$ in the second integral can be brought inside the integral and appears due to the fact $\gamma_-$ is oriented bottom-to-top, while our $z$ and $w$ contours are counterclockwise.

Third, $\tilde{K} = K$ if $\y > \y_-$ as the real part of $S$ at the critical points is 0.

In view of the above, we have shown the correlations remain pfaffian when $\x=0$ with a kernel we have explicitly computed. We formally state the result.

\begin{thm} \label{thm:boundary_corr_pp}
    Let $n > 0$ be a natural number, $\a \in (0, \infty)$, $\y > -\a$ real,  and for $1 \leq s \leq n$ fix $n$ pairs $\i_s \in \N, \k_s \in \Z'$. As $r \to 0+$ assume we have $i_s \in \N$, as well as $k_s \in \Z'$ depending on $r$ converging thusly:
    \begin{equation}
   i_s = \i_s, \qquad k_s = \left\lfloor \frac{\y}{r} \right\rfloor + \k_s.
    \end{equation}
   
  \noindent Denote $U = \{ (i_1, j_1), \dots, (i_n, j_n)\}.$ Then in the asymptotic regime given by \eqref{eq:var_scaling_pp} the correlations ${\varrho}(U)$ converge to $1$ if $\y < \y_-$ and are otherwise pfaffian with $2 \times 2$ matrix kernel given by:

  \begin{equation}
    \begin{split}
    \mathsf{K}_{1,1}(\i, \k; \i', \k') &= \int_{\gamma_{+}} (1-z)^{\i} \left(1-\frac{1}{z}\right)^{\i'} z^{\k - \k' - 1} \frac{1-z}{1+z} \frac{\dx z}{2 \pi \im}, \\
    \mathsf{K}_{1,2}(\i, \k; \i', \k') &= \int_{\gamma_{\pm}} (1-z)^{\i - \i'} z^{\k' - \k - 1} \frac{\dx z}{2 \pi \im},\\
    \mathsf{K}_{2,2}(\i, \k; \i', \k') &= \int_{\gamma_{-}} (1-z)^{-\i} \left(1-\frac{1}{z}\right)^{-\i'} z^{\k' - \k - 1} \frac{1+z}{1-z} \frac{\dx z}{2 \pi \im}
    \end{split}
  \end{equation}
   
   \noindent where $\gamma_{\pm}$ are as in Theorem~\ref{thm:pp_bulk_overall} and $\gamma_+$ is taken if and only if $\i' \leq \i$.
   \end{thm}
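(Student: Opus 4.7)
The plan is to carry out a steepest descent analysis of the three contour integrals of Theorem~\ref{thm:pp_corr} with the scaling $i = \i$ fixed, $k = \lfloor \y/r \rfloor + \k$, and $q = e^{-r} \to 1$ under $rN \to \a$, essentially as in the proof of Theorem~\ref{thm:k12_pp} but adapted to the boundary regime $\x = 0$. In this regime the integrands decompose as $e^{(\epsilon_1 S(z) + \epsilon_2 S(w))/r}$ times algebraic factors, with $S(z) := S(z;0,\y)$ and signs $(\epsilon_1,\epsilon_2) = (+,+), (+,-), (-,-)$ for $K_{1,1}, K_{1,2}, K_{2,2}$ respectively. The crucial simplification at $\x = 0$ is that $2\Re S(z) = S(z) + S(1/z) \equiv 0$ on the unit circle, so that the steepest contour $C_0$ coincides with $\{|z|=1\}$, the critical points $z_\pm$ of $S$ lie on $|z|=1$ as complex conjugates when $\y > \y_-$ with $\Re S(z_\pm) = 0$, and the diagonal conjugation used in Theorem~\ref{thm:diagonal_pp} becomes trivial; this is precisely what prevents the correlations from collapsing to the determinantal form of Theorem~\ref{thm:pp_bulk_overall}.

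For $\y > \y_-$ I would deform both contours to pass through $z_+$ and $z_- = \overline{z_+}$ along small deformations of the unit circle --- inward for descent of $\Re S$ and outward for ascent --- which exist because $\Re S$ changes sign transversally across $|z|=1$ away from $z_\pm$. Moving one contour past the other crosses the pole at $z=w$ (for $K_{1,2}$) or at $zw=1$ (for $K_{1,1}$ and $K_{2,2}$). The $z=w$ swap produces the incomplete-beta expression $\mathsf{K}_{1,2}$ exactly as in the proof of Theorem~\ref{thm:k12_pp} specialized to $\x=0$. For $K_{1,1}$, the residue at $w = 1/z$ combined with the telescoping identity
\begin{equation*}
F(N-\i,z)\,F(N-\i',1/z) = \prod_{j=1}^{\i}(1-q^j z)\,\prod_{j=1}^{\i'}(1-q^j/z) \xrightarrow[r\to 0+]{} (1-z)^{\i}(1-1/z)^{\i'}
\end{equation*}
reduces the limit to a single contour integral that matches $\mathsf{K}_{1,1}$, the integration contour being $\gamma_+$ or $\gamma_-$ according to the initial nesting of $C_z$ and $C_w$ (hence to the sign of $\i-\i'$). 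An analogous computation yields $\mathsf{K}_{2,2}$. The remaining double contour integrals on the steepest-descent contours vanish as $r \to 0+$ by the standard Laplace bound~\eqref{basic_asymptotics}.

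For $\y < \y_-$ the two critical points are real negative and $\Re S(z_\pm) \neq 0$, and the argument mirrors Theorem~\ref{thm:diagonal_pp}: after conjugating the kernel by a suitable diagonal matrix to kill the exponential blow-up, the conjugated diagonal entries $\tilde{K}_{1,1}, \tilde{K}_{2,2}$ tend to $0$, while the conjugated $\tilde{K}_{1,2}$ tends to $1$ on the diagonal and to $0$ off it, so that $\varrho(U) = \pf K|_U \to 1$ for every finite $U \subset \N \times \Z'$; this is the solid regime where every lattice site is occupied by a particle. The main obstacle will be the careful bookkeeping of signs and orientations when exchanging contours past the $zw=1$ pole --- in particular matching $\gamma_\pm$ to the sign of $\i-\i'$ --- together with verifying that the contour deformations avoid the cut $[0,1]$ of accumulating zeros of $F$ and the cut $[1,1/\A]$ of its poles, which is guaranteed by the room argument stated immediately before the theorem.
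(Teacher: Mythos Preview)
Your approach is correct and matches the paper's: steepest descent on the unit circle at $\x=0$, picking up residues at $z=w$ for $K_{1,2}$ and at $zw=1$ for $K_{1,1}$, $K_{2,2}$, with the telescoping identity for $F(N-\i,z)F(N-\i',1/z)$ doing the work.

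There is one genuine confusion, however. You write that for $\mathsf{K}_{1,1}$ the residue contour is ``$\gamma_+$ or $\gamma_-$ according to the initial nesting of $C_z$ and $C_w$ (hence to the sign of $\i-\i'$)''. This is wrong: the integrands of $K_{1,1}$ and $K_{2,2}$ have \emph{no pole at $z=w$} (the numerator $z-w$ cancels it), so there is no nesting constraint on the initial contours, and nothing in the setup distinguishes $\i \geq \i'$ from $\i < \i'$. The contour is always $\gamma_+$ for $\mathsf{K}_{1,1}$ and always $\gamma_-$ for $\mathsf{K}_{2,2}$, and the reason is purely geometric. On the unit circle the radial derivative $R\partial_R\Re S = 2\log|1-\A z| - 2\log|1-z| - \y$ is positive on the arc through $+1$ and negative on the arc through $-1$ (for $\y>\y_-$). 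For $K_{1,1}$ you need $\Re S<0$ on both contours, so you must push both inward on the arc through $+1$; this is where the hyperbola $zw=1$ is crossed, yielding $\gamma_+$. For $K_{2,2}$ you need $\Re S>0$, so both contours must go inward on the arc through $-1$, yielding $\gamma_-$ (with an orientation sign the paper absorbs). If you followed your stated reasoning literally you would get $\gamma_-$ for $\mathsf{K}_{1,1}$ whenever $\i<\i'$, which is not what the theorem claims and would give the wrong answer (the full-circle integral does not vanish).

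A minor point on the frozen case: your claim that $\tilde{K}_{1,2}$ tends to $0$ off the diagonal is not correct; the full-circle residue integral picks out a binomial coefficient which is generically nonzero. The determinant is still $1$ because the resulting matrix is, after ordering, unitriangular, but this requires a word of justification rather than the identity-matrix argument you sketch.
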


   \begin{rem}
     \label{rem:fincrucial}
It is crucial that we keep $i_s=\i_s$ finite as $r \to 0+$ to obtain a pfaffian process. If instead we rescale
$i_s=f(r)+\i_s$ with $1 \ll f(r) \ll r^{-1}$, then we obtain a determinantal process with the kernel $\mathsf{K}_{1,2}$. To see this, first observe that under this rescaling, by deforming the contours as before, we may still reduce the double contour integral representation for
$\mathsf{K}_{1,2}$ (resp.\ $\mathsf{K}_{1,1}$ and $\mathsf{K}_{2,2}$) to a single integral of the residue of the integrand at $z=w$ (resp.\ at $zw=1$). The expression for the residue involves some $q$-Pochhammer symbols which may be estimated using \eqref{eq:zqnas} --- namely $(z;q)_i = (1-z)^i e^{o(i)}$ for $i \ll r^{-1}$.
For $\mathsf{K}_{1,2}$, the residue has the same finite limit as before, since the dependency on $f(r)$ disappears in the ratio $\frac{F(N-i,w)}{F(N-i',w)}$. But this is not the case for $\mathsf{K}_{1,1}$ and $\mathsf{K}_{2,2}$ which involve respectively the product $F(N-i,w)F(N-i',w)$ and its inverse. Instead, we find that the dominant contribution to the integral comes from the endpoints,
since $|1-z|$ is maximal on $\gamma_+$ and minimal on $\gamma_-$ at the endpoints. Setting $R(i,z)=(z;q)_{i}$, we conclude that $R^{-1}(i,z_+)R^{-1}(j,z_+){K}_{1,1}$ and $R(i,z_+)R(j,z_+){K}_{2,2}$ both converge to zero. Using the conjugation trick of Proposition~\ref{ConjPfaff}, the correlation functions are found to be determinantal in the limit. 
\end{rem}

\section{Plane overpartitions}
 \label{sec:overpartitions}

A \emph{plane overpartition} is a plane partition where in each row the last occurrence of an integer can be overlined or not while all the other occurrences of this integer are not overlined, and in each column the first occurrence of an integer can be overlined or not while all the other occurrences of this integer are overlined. A plane overpartition with the largest entry at most $N$ and shape $\lambda$ can be recorded as a sequence of partitions
$
\emptyset \prec \lambda^{(1)} \prec'\lambda^{(2)}\prec\cdots \prec \lambda^{(2n-1)} \prec' \lambda^{(2N)}=\lambda
$
where $\lambda^{(i)}$ is the partition whose shape is formed by all fillings greater than ${N-i/2}$, where the convention is that $\overline{k}=k-1/2$. An example of a plane overpartition is given in Figure~\ref{fig:plane-overpartitions}.
\begin{SCfigure}[][h]
%\centering
  { \begin{tikzpicture}[scale=0.6]
\draw[thick, smooth] (0,0)--(0,4)--(5,4);
\draw[thick, smooth] (0,0)--(1,0);
\draw[thick, smooth] (0,1)--(2,1);
\draw[thick, smooth] (0,2)--(4,2);
\draw[thick, smooth] (0,3)--(5,3);
\draw[thick, smooth] (1,0)--(1,4);
\draw[thick, smooth] (2,1)--(2,4);
\draw[thick, smooth] (3,2)--(3,4);
\draw[thick, smooth] (4,2)--(4,4);
\draw[thick, smooth] (5,3)--(5,4);
\node at (0.5,0.5) { 1};
\node at (0.5,1.5) {$\overline{3}$};
\node at (0.5,2.5) { 3};
\node at (0.5,3.5) { 4};
\node at (1.5,1.5) { $\overline{1}$};
\node at (1.5,2.5) { 3};
\node at (1.5,3.5) { $\overline{4}$};
\node at (2.5,2.5) { $\overline{3}$};
\node at (2.5,3.5) { $\overline{3}$};
\node at (3.5,2.5) {$\overline{2}$};
\node at (3.5,3.5) { 2};
\node at (4.5,3.5) {2};
\end{tikzpicture}
}
\caption{\fs{A plane overpartition $\emptyset \prec (1) \prec' (2) \prec (2,2) \prec' (3,3,1) \prec (5,3,1) \prec' (5,4,1) \prec (5,4,1,1) \prec' (5,4,2,1)$.}}  
\label{fig:plane-overpartitions}
\end{SCfigure}

A plane partition $\pi$ is called a (diagonally) strict plane partition if its diagonals $\pi^{(t)}=(\pi_{i,i+t})_{i\geq1}$ are strict partitions, i.e. strictly decreasing sequences of integers.  By deleting the overlines in a plane overpartition one obtains a strict plane partition. Conversely, a strict plane partition can be overlined to obtain a plane overpartition and there are $2^{\# \textrm{border components}}$ different ways to do it. A border component of a strict plane partitions is a set of rookwise-connected boxes (i.e., a connected ribbon/border strip) filled with the same number. The strict plane partition obtained by deleting the overlines in Figure~\ref{fig:plane-overpartitions} has five border components.

A measure that to a plane overpartition with the largest entry at most $N$ assigns a weight $q^{|\textrm{sum of all entries}|}$ is an HV-ascending Schur process with $t=1$ and $x_1=x_2=q^N, \dots   ,x_{2N-1}=x_{2N}=q^1$.

The asymptotics of strict plane partitions was studied in \cite{vul2} using a variant of the original  Schur process, where in the definition of the process the skew Schur $P$ and $Q$ functions were used instead of the standard Schur symmetric functions. There a strict plane partition  is represented by a finite point configuration where each point in the configuration corresponds  to one entry in the strict plane partition. Precisely, a strict partition can be represented by the set of its parts, since all parts are different, and hence a strict plane partition $\pi$ can be represented by a finite subset of  $\Z \times \Z_{>0}$, where $(t,x)$ belongs to it if $x$ is a part of $\pi^{(t)}$. See Figure~\ref{fig:overpartitions-pointconfig} (left). The set of blue points is the point configuration corresponding to the strict plane partition obtained by deleting the overlines in Figure~\ref{fig:plane-overpartitions}.

The above point configuration is not a suitable representation of a plane overpartition, since there are $2^{\# \textrm{border components}}$ different ways to overline a strict plane partition. We need a new set of point configurations to represent these different overlinings. We explain briefly how these point configurations are obtained and refer the reader to \cite{bcc} and \cite{ccv} for details and proofs.

Starting from the point configuration of the corresponding strict plane partition we first construct a set of nonintersecting paths, where each path corresponds to a row of the plane partition. Going from $\pi_{i,j}$ to $\pi_{i,j+1}$ we take an eastbound edge followed by $\pi_{i,j}-\pi_{i,j+1}$ southbound edges if $\pi_{i,j}$ is not overlined and a southeastbound edge followed by $\pi_{i,j}-\pi_{i,j+1}-1$ southbound edges if $\pi_{i,j}$ is overlined. See Figure~\ref{fig:overpartitions-pointconfig} (left). Once we have the nonintersecting paths we can produce a domino tiling.  Dominos are placed diagonally and cover $\Z \times \Z_{\geq0}$. It is possible to place them in such a way so that different types of dominos (divided by the direction of the position and the color of the top corner in a chessboard fashion coloring) correspond to different edges. See Figure~\ref{fig:overpartitions-pointconfig} (right). Finally,  place two particles (holes) at the center of each of the two domino squares if the top corner of that domino is black (white). The point configuration is the set of all particles. See Figure~\ref{fig:overpartitions-pointconfig} (right), where particles are shown in solid colors. It corresponds to the example from Figure~\ref{fig:plane-overpartitions}.

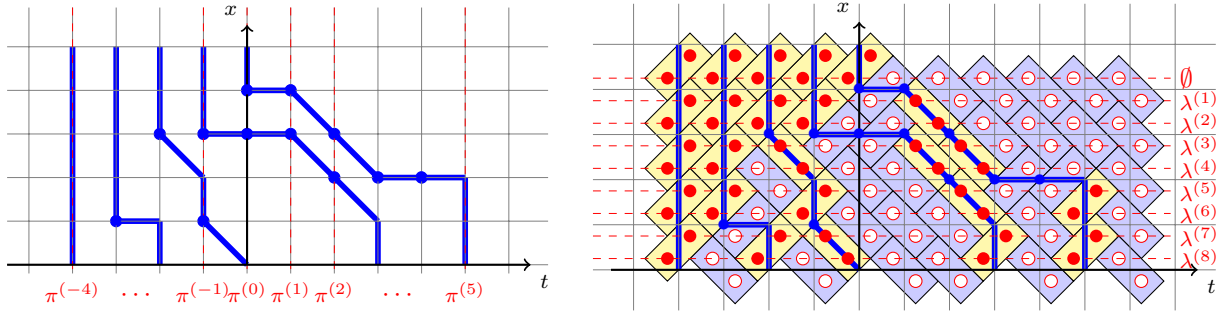
\begin{figure}[ht]
\begin{tikzpicture}[scale=0.58]
\foreach \x in {0}{ \foreach \y in {3,4}
 {
 \draw[line width=0.75mm,blue] (\x,\y) -- (\x+1,\y);
 \fill[blue] (\x,\y) circle[radius=4pt] ;
 }};

 \foreach \x in {-1}{ \foreach \y in {3}
 {
 \draw[line width=0.75mm,blue] (\x,\y) -- (\x+1,\y);
  \fill[blue] (\x,\y) circle[radius=4pt] ;
 }};

 \foreach \x in {-3}{ \foreach \y in {1}
 {
 \draw[line width=0.75mm,blue] (\x,\y) -- (\x+1,\y);
  \fill[blue] (\x,\y) circle[radius=4pt] ;
 }};

 \foreach \x in {3}{ \foreach \y in {2}
 {
 \draw[line width=0.75mm,blue] (\x,\y) -- (\x+1,\y);
  \fill[blue] (\x,\y) circle[radius=4pt] ;
 }};

 \foreach \x in {4}{ \foreach \y in {2}
 {
 \draw[line width=0.75mm,blue] (\x,\y) -- (\x+1,\y);
  \fill[blue] (\x,\y) circle[radius=4pt] ;
 }};

 \foreach \x in {1}{ \foreach \y in {3,4}
 {
 \draw[line width=0.75mm,blue] (\x,\y) -- (\x+1,\y-1);
  \fill[blue] (\x,\y) circle[radius=4pt] ;
}};

  \foreach \x in {2}{ \foreach \y in {2,3}
 {
 \draw[line width=0.75mm,blue] (\x,\y) -- (\x+1,\y-1);
  \fill[blue] (\x,\y) circle[radius=4pt] ;
}};

 \foreach \x in {-1}{ \foreach \y in {1}
 {
 \draw[line width=0.75mm,blue] (\x,\y) -- (\x+1,\y-1);
  \fill[blue] (\x,\y) circle[radius=4pt] ;
}};

 \foreach \x in {-2}{ \foreach \y in {3}
 {
 \draw[line width=0.75mm,blue] (\x,\y) -- (\x+1,\y-1);
  \fill[blue] (\x,\y) circle[radius=4pt] ;
}};

\foreach \x in {5}{ \foreach \y in {2,1}
 {
 \draw[line width=0.75mm,blue] (\x,\y) -- (\x,\y-1);
}};

\foreach \x in {3}{ \foreach \y in {1}
 {
 \draw[line width=0.75mm,blue] (\x,\y) -- (\x,\y-1);
}};

\foreach \x in {-1,-2,-3,-4}{ \foreach \y in {4,5}
 {
 \draw[line width=0.75mm,blue] (\x,\y) -- (\x,\y-1);
}};

\foreach \x in {-3,-4}{ \foreach \y in {2,3}
 {
 \draw[line width=0.75mm,blue] (\x,\y) -- (\x,\y-1);
}};

\foreach \x in {-1}{ \foreach \y in {2}
 {
 \draw[line width=0.75mm,blue] (\x,\y) -- (\x,\y-1);
}};

\foreach \x in {-2,-4}{ \foreach \y in {1}
 {
 \draw[line width=0.75mm,blue] (\x,\y) -- (\x,\y-1);
}};

\foreach \x in {0}{ \foreach \y in {5}
 {
 \draw[line width=0.75mm,blue] (\x,\y) -- (\x,\y-1);
}};

\draw[step=1cm,gray,very thin] (-5.5,-0.2) grid (6.9,5.9);
\foreach \x in {-4,-1,0,1,2,5} \draw[dashed,red,very thin] (\x,5.9)--(\x,-0.2)node[anchor=north] {\fs{$\pi^{(\x)}$}};
\node[red, very thin] at (-2.5,-0.7) {$\cdots$};
\node[red, very thin] at (3.5,-0.7) {$\cdots$};
 \draw[thick,->] (-5.5,0) -- (6.5,0) node[anchor=north west] {\fs{$t$}};
\draw[thick,->] (0,0) -- (0,5.5) node[anchor=south east] {\fs{$x$}};
\foreach \xy in {(-3,1),(-2,3),(-1,3),(-1,1),(0,4), (0,3),(1,4),(1,3),(2,3),(2,2),(3,2),(4,2)} {\node at \xy {$\color{blue} \bullet$};};
\end{tikzpicture}
\quad
\begin{tikzpicture}[scale=0.6]
\foreach \x in {0}{ \foreach \y in {3,4}
 {\filldraw [fill=blue!20!white, draw=black]  (\x-0.25,\y-0.25) -- (\x+0.25,\y-0.75) --  (\x+1.25,\y+0.25) -- (\x+0.75,\y+0.75)-- cycle;
 \draw[line width=0.75mm,blue] (\x,\y) -- (\x+1,\y);
 \filldraw[fill=white, draw=red] (\x+0.25,\y-0.25) circle[radius=4pt] ;
  \filldraw[fill=white, draw=red] (\x+0.75,\y+0.25) circle[radius=4pt] ;}};

 \foreach \x in {-1}{ \foreach \y in {3}
 {\filldraw [fill=blue!20!white, draw=black]  (\x-0.25,\y-0.25) -- (\x+0.25,\y-0.75) --  (\x+1.25,\y+0.25) -- (\x+0.75,\y+0.75)-- cycle;
 \draw[line width=0.75mm,blue] (\x,\y) -- (\x+1,\y);
 \filldraw[fill=white, draw=red] (\x+0.25,\y-0.25) circle[radius=4pt] ;
 \filldraw[fill=white, draw=red] (\x+0.75,\y+0.25) circle[radius=4pt] ;}};

 \foreach \x in {-3}{ \foreach \y in {1}
 {\filldraw [fill=blue!20!white, draw=black]  (\x-0.25,\y-0.25) -- (\x+0.25,\y-0.75) --  (\x+1.25,\y+0.25) -- (\x+0.75,\y+0.75)-- cycle;
 \draw[line width=0.75mm,blue] (\x,\y) -- (\x+1,\y);
 \filldraw[fill=white, draw=red] (\x+0.25,\y-0.25) circle[radius=4pt] ;
 \filldraw[fill=white, draw=red] (\x+0.75,\y+0.25) circle[radius=4pt] ;}};

 \foreach \x in {3}{ \foreach \y in {2}
 {\filldraw [fill=blue!20!white, draw=black]  (\x-0.25,\y-0.25) -- (\x+0.25,\y-0.75) --  (\x+1.25,\y+0.25) -- (\x+0.75,\y+0.75)-- cycle;
 \draw[line width=0.75mm,blue] (\x,\y) -- (\x+1,\y);
 \filldraw[fill=white, draw=red] (\x+0.25,\y-0.25) circle[radius=4pt] ;
 \filldraw[fill=white, draw=red] (\x+0.75,\y+0.25) circle[radius=4pt] ;}};

 \foreach \x in {4}{ \foreach \y in {2}
 {\filldraw [fill=blue!20!white, draw=black]  (\x-0.25,\y-0.25) -- (\x+0.25,\y-0.75) --  (\x+1.25,\y+0.25) -- (\x+0.75,\y+0.75)-- cycle;
 \draw[line width=0.75mm,blue] (\x,\y) -- (\x+1,\y);
 \filldraw[fill=white, draw=red] (\x+0.25,\y-0.25) circle[radius=4pt] ;
 \filldraw[fill=white, draw=red] (\x+0.75,\y+0.25) circle[radius=4pt] ;}};

  \foreach \x in {2,3,4,5,6}{ \foreach \y in {4}
 {\filldraw [fill=blue!20!white, draw=black]  (\x-0.75,\y+0.25) -- (\x+0.25,\y-0.75) --  (\x+0.75,\y-0.25) -- (\x-0.25,\y+0.75)-- cycle;
 \filldraw[fill=white, draw=red] (\x+0.25,\y-0.25) circle[radius=4pt] ;
 \filldraw[fill=white, draw=red] (\x-0.25,\y+0.25) circle[radius=4pt] ;}};

  \foreach \x in {3,4,5,6}{ \foreach \y in {3}
 {\filldraw [fill=blue!20!white, draw=black]  (\x-0.75,\y+0.25) -- (\x+0.25,\y-0.75) --  (\x+0.75,\y-0.25) -- (\x-0.25,\y+0.75)-- cycle;
 \filldraw[fill=white, draw=red] (\x+0.25,\y-0.25) circle[radius=4pt] ;
 \filldraw[fill=white, draw=red] (\x-0.25,\y+0.25) circle[radius=4pt] ;}};

   \foreach \x in {-2,0,1,6}{ \foreach \y in {2}
 {\filldraw [fill=blue!20!white, draw=black]  (\x-0.75,\y+0.25) -- (\x+0.25,\y-0.75) --  (\x+0.75,\y-0.25) -- (\x-0.25,\y+0.75)-- cycle;
 \filldraw[fill=white, draw=red] (\x+0.25,\y-0.25) circle[radius=4pt] ;
 \filldraw[fill=white, draw=red] (\x-0.25,\y+0.25) circle[radius=4pt] ;}};

  \foreach \x in {0,1,2,4,6}{ \foreach \y in {1}
 {\filldraw [fill=blue!20!white, draw=black]  (\x-0.75,\y+0.25) -- (\x+0.25,\y-0.75) --  (\x+0.75,\y-0.25) -- (\x-0.25,\y+0.75)-- cycle;
 \filldraw[fill=white, draw=red] (\x+0.25,\y-0.25) circle[radius=4pt] ;
 \filldraw[fill=white, draw=red] (\x-0.25,\y+0.25) circle[radius=4pt] ;}};

  \foreach \x in {-3,-1,1,2,4,6}{ \foreach \y in {0}
 {\filldraw [fill=blue!20!white, draw=black]  (\x-0.75,\y+0.25) -- (\x+0.25,\y-0.75) --  (\x+0.75,\y-0.25) -- (\x-0.25,\y+0.75)-- cycle;
 \filldraw[fill=white, draw=red] (\x+0.25,\y-0.25) circle[radius=4pt] ;
 \filldraw[fill=white, draw=red] (\x-0.25,\y+0.25) circle[radius=4pt] ;}};

 \foreach \x in {1}{ \foreach \y in {3,4}
 {\filldraw [fill=yellow!40!white, draw=black]  (\x-0.25,\y-0.25) -- (\x+0.75,\y-1.25) --  (\x+1.25,\y-0.75) -- (\x+0.25,\y+0.25)-- cycle;
 \draw[line width=0.75mm,blue] (\x,\y) -- (\x+1,\y-1);
 \fill[red] (\x+0.25,\y-0.25) circle[radius=4pt];
\fill[red] (\x+0.75,\y-0.75) circle[radius=4pt];}};

  \foreach \x in {2}{ \foreach \y in {2,3}
 {\filldraw [fill=yellow!40!white, draw=black]  (\x-0.25,\y-0.25) -- (\x+0.75,\y-1.25) --  (\x+1.25,\y-0.75) -- (\x+0.25,\y+0.25)-- cycle;
 \draw[line width=0.75mm,blue] (\x,\y) -- (\x+1,\y-1);
 \fill[red] (\x+0.25,\y-0.25) circle[radius=4pt];
\fill[red] (\x+0.75,\y-0.75) circle[radius=4pt];}};

 \foreach \x in {-1}{ \foreach \y in {1}
 {\filldraw [fill=yellow!40!white, draw=black]  (\x-0.25,\y-0.25) -- (\x+0.75,\y-1.25) --  (\x+1.25,\y-0.75) -- (\x+0.25,\y+0.25)-- cycle;
 \draw[line width=0.75mm,blue] (\x,\y) -- (\x+1,\y-1);
 \fill[red] (\x+0.25,\y-0.25) circle[radius=4pt];
\fill[red] (\x+0.75,\y-0.75) circle[radius=4pt];}};

 \foreach \x in {-2}{ \foreach \y in {3}
 {\filldraw [fill=yellow!40!white, draw=black]  (\x-0.25,\y-0.25) -- (\x+0.75,\y-1.25) --  (\x+1.25,\y-0.75) -- (\x+0.25,\y+0.25)-- cycle;
 \draw[line width=0.75mm,blue] (\x,\y) -- (\x+1,\y-1);
\fill[red] (\x+0.25,\y-0.25) circle[radius=4pt];
\fill[red] (\x+0.75,\y-0.75) circle[radius=4pt];}};

\foreach \x in {5}{ \foreach \y in {2,1}
 {\filldraw [fill=yellow!40!white, draw=black]  (\x-0.75,\y-0.75) -- (\x-0.25,\y-1.25) --  (\x+0.75,\y-0.25) -- (\x+0.25,\y+0.25)-- cycle;
 \draw[line width=0.75mm,blue] (\x,\y) -- (\x,\y-1);
\fill[red] (\x+0.25,\y-0.25) circle[radius=4pt];
\fill[red] (\x-0.25,\y-0.75) circle[radius=4pt];}};

\foreach \x in {3}{ \foreach \y in {1}
 {\filldraw [fill=yellow!40!white, draw=black]  (\x-0.75,\y-0.75) -- (\x-0.25,\y-1.25) --  (\x+0.75,\y-0.25) -- (\x+0.25,\y+0.25)-- cycle;
 \draw[line width=0.75mm,blue] (\x,\y) -- (\x,\y-1);
\fill[red] (\x+0.25,\y-0.25) circle[radius=4pt];
\fill[red] (\x-0.25,\y-0.75) circle[radius=4pt];}};

\foreach \x in {-1,-2,-3,-4}{ \foreach \y in {4,5}
 {\filldraw [fill=yellow!40!white, draw=black]  (\x-0.75,\y-0.75) -- (\x-0.25,\y-1.25) --  (\x+0.75,\y-0.25) -- (\x+0.25,\y+0.25)-- cycle;
 \draw[line width=0.75mm,blue] (\x,\y) -- (\x,\y-1);
\fill[red] (\x+0.25,\y-0.25) circle[radius=4pt];
\fill[red] (\x-0.25,\y-0.75) circle[radius=4pt];}};

\foreach \x in {-3,-4}{ \foreach \y in {2,3}
 {\filldraw [fill=yellow!40!white, draw=black]  (\x-0.75,\y-0.75) -- (\x-0.25,\y-1.25) --  (\x+0.75,\y-0.25) -- (\x+0.25,\y+0.25)-- cycle;
 \draw[line width=0.75mm,blue] (\x,\y) -- (\x,\y-1);
\fill[red] (\x+0.25,\y-0.25) circle[radius=4pt];
\fill[red] (\x-0.25,\y-0.75) circle[radius=4pt];}};

\foreach \x in {-1}{ \foreach \y in {2}
 {\filldraw [fill=yellow!40!white, draw=black]  (\x-0.75,\y-0.75) -- (\x-0.25,\y-1.25) --  (\x+0.75,\y-0.25) -- (\x+0.25,\y+0.25)-- cycle;
 \draw[line width=0.75mm,blue] (\x,\y) -- (\x,\y-1);
\fill[red] (\x+0.25,\y-0.25) circle[radius=4pt];
\fill[red] (\x-0.25,\y-0.75) circle[radius=4pt];}};

\foreach \x in {-2,-4}{ \foreach \y in {1}
 {\filldraw [fill=yellow!40!white, draw=black]  (\x-0.75,\y-0.75) -- (\x-0.25,\y-1.25) --  (\x+0.75,\y-0.25) -- (\x+0.25,\y+0.25)-- cycle;
 \draw[line width=0.75mm,blue] (\x,\y) -- (\x,\y-1);
\fill[red] (\x+0.25,\y-0.25) circle[radius=4pt];
\fill[red] (\x-0.25,\y-0.75) circle[radius=4pt];}};

\foreach \x in {0}{ \foreach \y in {5}
 {\filldraw [fill=yellow!40!white, draw=black]  (\x-0.75,\y-0.75) -- (\x-0.25,\y-1.25) --  (\x+0.75,\y-0.25) -- (\x+0.25,\y+0.25)-- cycle;
 \draw[line width=0.75mm,blue] (\x,\y) -- (\x,\y-1);
\fill[red] (\x+0.25,\y-0.25) circle[radius=4pt];
\fill[red] (\x-0.25,\y-0.75) circle[radius=4pt];}};

\foreach \x in {0} \draw[dashed,red,very thin] (-5.9,4.25-\x/2)--(6.9,4.25-\x/2)node[anchor=west] {$\emptyset$};
\foreach \x in {2,4,6,8} \draw[dashed,red,very thin] (-5.9,4.25-\x/2)--(6.9,4.25-\x/2)node[anchor=west] {\fs{$\lambda^{(\x)}$}};
\foreach \x in {1,3,5,7} \draw[dashed,red,very thin] (-5.9,4.25-\x/2)--(6.9,4.25-\x/2)node[anchor=west] {\fs{$\lambda^{(\x)}$}};
\draw[step=1cm,gray,very thin] (-5.9,-0.9) grid (7.9,5.9);
 \draw[thick,->] (-5.5,0) -- (7.5,0) node[anchor=north west] {\fs{$t$}};
\draw[thick,->] (0,0) -- (0,5.5) node[anchor=south east] {\fs{$x$}};
\foreach \xy in {(-3,1),(-2,3),(-1,3),(-1,1),(0,4), (0,3),(1,4),(1,3),(2,3),(2,2),(3,2),(4,2)} {\node at \xy {$\color{blue} \bullet$};};
\end{tikzpicture}
\caption{\fs{Strict plane partition point configuration (left) and plane overpartition point configuration (right).}}
\label{fig:overpartitions-pointconfig}
\end{figure}

One can show that a point configuration representing a plane overpartition
$
\emptyset \prec \lambda^{(1)} \prec'\lambda^{(2)}\prec\cdots \prec \lambda^{(2n-1)} \prec' \lambda^{(2N)}=\lambda
$
is a subset of  $\left(\Z+\frac{1}{4}\right)\times \left(\Z +\frac{3}{4}\right) \cup \left(\Z+\frac{3}{4}\right)\times \left(\Z +\frac{1}{4}\right)$ consisting of points $(t,x)$ such that the Maya diagram of  $\lambda^{i(x)}$ contains a particle at the position $\overline{t}$: $\overline{t}=\lambda^{i(x)}_j-j+1/2$, for some $j\geq 1$ with $i(x)=2N-2(x-1/4)$ and
\begin{equation}
\label{toverline}
\overline{t}=\lceil{t-1/2}\rceil-1/2.
\end{equation}

For the HV-ascending Schur process with $x_1=x_2=q^N, \dots , x_{2N-1}=x_{2N}=q^1$:
\begin{equation}
F_{HV}(i,z)=\frac{(-q^{N+1-\lfloor{i/2}\rfloor}z;q)_{\lfloor{i/2}\rfloor}}{(q^{N+1-\lceil{i/2}\rceil}z;q)_{\lceil{i/2}\rceil}}\frac{(q/z;q)_{N}}{(-q/z;q)_N}, \quad i=1,\dots,2N.
\end{equation}
If we set $F(x,z)=F_{HV}(i(x),z) $ then
$
    F(x,z)=\frac{(-q^{\lceil x+1/2\rceil}z;q)_{N+1-{\lceil x+1/2\rceil}}}{(q^{\lceil x\rceil}z;q)_{N+1-{\lceil x\rceil}}}\frac{(q/z;q)_{N}}{(-q/z;q)_N}
$ and when $N \to \infty$
\begin{equation}
 F(x,z)=\frac{(-q^{\lceil x+1/2\rceil}z;q)_{\infty}}{(q^{\lceil x\rceil}z;q)_{\infty}}\frac{(q/z;q)_{\infty}}{(-q/z;q)_\infty}.
\end{equation}

\begin{rem}
\label{zerosandpoles}
For $q\in (0,1)$ and fixed $x$, zeros of $F$ as a function of $z$ are $-q^{-\lceil x+1/2\rceil},-q^{-\lceil x+1/2\rceil-1},\dots$, which belong to $(-\infty,-q^{-\lceil{x+1/2}\rceil}]$, and $q,q^2,\dots$, which belong to $(0,q]$. Poles are $q^{-\lceil x\rceil}, q^{-\lceil x\rceil-1},\dots$, which belong to $[q^{-\lceil{x}\rceil},\infty)$,  and $-q,-q^2,\dots$, which belong to $[-q,0)$.

In our asymptotic analysis we will consider $q\in (0,1)$ and $x_i$ such that $rx_i\to \chi$ when $r \to 0+$ where $q=e^{-{r}}$. We will need to avoid zeros or poles of $F(x_i,z)$ along certain contours in $z$ for $r$ close to $0+$. Zeros, respectively poles,  could be avoided if we make sure to choose contours that do not cut $\mathcal{Z}(\chi)=(-\infty,-e^{\chi}] \cup (0,1]$, respectively $\mathcal{P}(\chi)=[-1,0)\cup [e^{\chi},\infty).$
\end{rem}

\begin{thm}
\label{KHVkernel}
  Let $(t_1,x_1), \dots, (t_n,x_n) \in \left(\Z+\frac{1}{4}\right)\times \left(\Z +\frac{3}{4}\right) \cup \left(\Z+\frac{3}{4}\right)\times \left(\Z +\frac{1}{4}\right)$. Pfaffian correlations are given by the following matrix kernel:
  \begin{equation}
    \begin{split}
   K(t_i, x_i; t_j, x_j) =\left[\begin{array}{cc}K_{1,1}(t_i,x_i;t_j,x_j)&K_{1,2}(t_i,x_i;t_j,x_j)\\
-K_{1,2}(t_j,x_j;t_i,x_i)&K_{2,2}(t_i,x_i;t_j,x_j)\end{array}\right]
    \end{split}
  \end{equation}
  \noindent where
  \begin{equation}
    \begin{split}
  K_{1,1}(t_i,x_i;t_j,x_j)&=[ z^{\overline{t_i}} w^{\overline{t_j}} ] F(x_i, z) F(x_j, w) \frac{\sqrt{zw}(z-w)}{(z+1)(w+1)(zw-1)}, \quad 1<|w|, 1<|z|, \\
  K_{1,2}(t_i,x_i;t_j,x_j)&= \left[ \frac{z^{\overline{t_i}}}{w^{\overline{t_j}}} \right] \frac{F(x_i, z)}{F(x_j, w)} \frac{\sqrt{zw}(zw-1)}{(z-w)(z+1)(w-1)}, \quad 1<|z|, 1<|w|, |w|{\substack{< \\ >}} |z|, \textrm{ for } x_i{\substack{\ge \\ <}}x_j,\\
  K_{2,2}(t_i,x_i;t_j,x_j)&= \left[ \frac{1}{z^{\overline{t_i}} w^{\overline{t_j}}} \right] \frac{1}{F(x_i, z) F(x_j, w)} \frac{\sqrt{zw}(z-w)}{(z-1)(w-1)(zw-1)}, \quad 1<|z|, 1<|w|,
    \end{split}
  \end{equation}

\noindent and $\overline{t}$ is as in \eqref{toverline}.
\end{thm}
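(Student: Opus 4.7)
The plan is to realize the $q^{|\text{Volume}|}$-weighted plane overpartition as an $HV$-ascending Schur process with one free boundary, and then specialize Theorem~\ref{thm:onebound} together with Proposition~\ref{prop:H-HV}. As recalled just before the theorem, a plane overpartition with entries bounded by $N$ decomposes uniquely into a sequence $\emptyset \prec \lambda^{(1)} \prec' \lambda^{(2)} \prec \cdots \prec \lambda^{(2N-1)} \prec' \lambda^{(2N)}$, and the $q^{|\text{Volume}|}$ measure translates into the $HV$-ascending Schur process with parameters $x_{2k-1}=x_{2k}=q^{N+1-k}$ for $k=1,\ldots,N$ (with the free boundary at $\lambda^{(2N)}$). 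Applying Proposition~\ref{prop:H-HV}, the associated $F$-function of Theorem~\ref{thm:onebound} reads, after elementary re-indexing inside the products,
\begin{equation*}
F_{HV}(i,z) \;=\; \frac{(-q^{N+1-\lfloor i/2\rfloor}z;q)_{\lfloor i/2\rfloor}}{(q^{N+1-\lceil i/2\rceil}z;q)_{\lceil i/2\rceil}} \cdot \frac{(q/z;q)_{N}}{(-q/z;q)_{N}},
\end{equation*}
which, upon setting $i=i(x)=2N-2(x-1/4)$, becomes precisely the function $F(x,z)$ displayed above the theorem (after passing to the limit $N\to\infty$ when needed).

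Next I would translate coordinates. The bijection between plane overpartitions and the associated point configuration described in Section~\ref{sec:overpartitions} identifies a point $(t,x)$ of the configuration with the point $(i(x),\overline{t})$ of the Schur point process $\mathfrak{S}(\vec\lambda)\subset \mathbb{Z}\times\mathbb{Z}'$, where $\overline{t}=\lceil t-1/2\rceil-1/2$. Under this identification the multipoint correlation function at points $\{(t_1,x_1),\ldots,(t_n,x_n)\}$ coincides with the multipoint correlation function at $\{(i(x_1),\overline{t_1}),\ldots,(i(x_n),\overline{t_n})\}$ for the $HV$-ascending Schur process. Theorem~\ref{thm:onebound} then applies verbatim: correlations are pfaffian, with $2\times 2$ matrix kernel obtained by substituting $F\leftarrow F_{HV}$, $k\leftarrow\overline{t_i}$, $k'\leftarrow\overline{t_j}$ and the $\kappa_{a,b}$'s of \eqref{eq:kappasimp} into \eqref{eq:Kint}. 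Recognizing that the $2\im\pi$ contour integrations amount to Laurent-coefficient extractions $[z^{\overline{t_i}}w^{\overline{t_j}}]$ on annuli containing $|z|,|w|=1$ produces the announced expressions for $K_{1,1}$, $K_{1,2}$, $K_{2,2}$.

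The one step that requires a little care is the nesting condition on the contours. In Theorem~\ref{thm:onebound} the circle of $z$ lies outside that of $w$ iff $i\leq i'$. Because the map $x\mapsto i(x)=2N-2(x-1/4)$ is strictly decreasing, this translates, in the $x$-variable, to $x_i\geq x_j$, matching the condition $|w|<|z|$ for $x_i\geq x_j$ stated in $K_{1,2}$. The condition $|z|,|w|>1$ in each entry is inherited from the constraint $1<r,r'<R$ of Theorem~\ref{thm:onebound}; by Remark~\ref{zerosandpoles}, our choice of parameters indeed makes $F(x,z)$ analytic in a (sufficiently large) annulus containing $|z|=1$, so the contour integrals are well defined.

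The main bookkeeping obstacle is simply checking the identifications $\lceil i/2\rceil \leftrightarrow \lceil x\rceil$ and $\lfloor i/2\rfloor\leftrightarrow \lceil x+1/2\rceil$ for $x\in(\mathbb{Z}+\tfrac14)\cup(\mathbb{Z}+\tfrac34)$, together with the parity of $\overline{t}$ versus $t\in(\mathbb{Z}+\tfrac14)\cup(\mathbb{Z}+\tfrac34)$, so that the Laurent expansion exponents $\overline{t_i},\overline{t_j}$ have the right signs and the $\sqrt{zw}$ factors in $\kappa_{a,b}$ are compensated (as in Remark~\ref{rem:contint}). Once this elementary check is done, the result follows directly from Theorem~\ref{thm:onebound} without any further computation.
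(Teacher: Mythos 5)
Your proposal is correct and is exactly the (implicit) argument in the paper: the text preceding Theorem~\ref{KHVkernel} sets up the $HV$-ascending Schur process with $x_{2k-1}=x_{2k}=q^{N+1-k}$, derives $F_{HV}(i,z)$ and $F(x,z)$, and identifies the point-configuration coordinates $(t,x)\leftrightarrow(i(x),\overline t)$, after which the theorem is a direct specialization of Theorem~\ref{thm:onebound}. Your bookkeeping (the ceiling/floor identifications, the orientation reversal $i(x)=2N-2(x-\tfrac14)$ flipping $i\leq i'$ into $x_i\geq x_j$, and the analyticity annulus around $|z|=1$) is the content one needs to verify and you correctly identify it.
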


\begin{rem} \label{remarkpfaffianmultiplication} Let $\chi \in \R$. Define
\begin{equation}
\label{Ktilde}
\tilde{K}(t_i,x_i;t_j,x_j)=\left(\begin{matrix}e^{\chi \overline{t_i}/2} & 0\\0&e^{-\chi \overline{t_i}/2}\end{matrix}\right){K}(t_i,x_i;t_j,x_j)\left(\begin{matrix}e^{\chi \overline{t_j}/2} & 0\\0&e^{-\chi \overline{t_j}/2}\end{matrix}\right).
\end{equation}
Then
\begin{equation}
\pf [\tilde{K}(t_i,x_i;t_j,x_j)]_{1 \leq i,j \leq n} = \pf [K(t_i,x_i;t_j,x_j)]_{1 \leq i,j \leq n}.
\end{equation}
Let
\begin{equation}
\label{Gfunction}
G(x,t,z)=r\left(\log F(x,z)-\overline{t}(\log z -\chi/2)\right).
\end{equation}
Using Theorem~\ref{KHVkernel}
\begin{equation}
\label{K11tilde}
\tilde{K}_{1,1}(t_i,x_i;t_j,x_j)=\frac{1}{(2\pi \im)^2}\int_{C_z}\int_{C_w} \frac{z-w}{\sqrt{zw}(z+1)(w+1)(zw-1)}\exp\left[\frac{1}{r}\left(G(\overline{t_i},x_i,z)+G(\overline{t_j},x_j,w)\right)\right]\dx z\dx w
\end{equation}
where $C_z$ and $C_w$ are simple closed counterclockwise oriented contours such that $|z|>1$, $z\notin \mathcal{P}(\chi)$, $|w|>1$, and $w\notin \mathcal{P}(\chi)$ for all $z\in C_z$ and $w\in C_w$;
\begin{equation}
\label{K12tilde}
\tilde{K}_{1,2}(t_i,x_i;t_j,x_j)=\frac{1}{(2\pi \im)^2}\int_{C_z}\int_{C_w} \frac{zw-1}{\sqrt{zw}(z-w)(z+1)(w-1)}\exp\left[\frac{1}{r}\left(G(\overline{t_i},x_i,z)-G(\overline{t_j},x_j,w)\right)\right]\dx z\dx w
\end{equation}
where $C_z$ and $C_w$ are simple closed counterclockwise oriented contours such that  $|z|>1$, $z\notin \mathcal{P}(\chi)$, $|w|> 1$, $w\notin \mathcal{Z}(\chi)$, $|z|>|w|$ ($|z|<|w|$), for all  $z\in C_z$ and $w\in C_w$ when  $x_i \geq x_j$ ($x_i < x_j$);
\begin{equation}
\label{K22tilde}
\tilde{K}_{2,2}(t_i,x_i;t_j,x_j)=\frac{1}{(2\pi \im)^2}\int_{C_z}\int_{C_w} \frac{z-w}{\sqrt{zw}(z-1)(w-1)(zw-1)}\exp\left[\frac{1}{r}\left(-G(\overline{t_i},x_i,z)-G(\overline{t_j},x_j,w)\right)\right]\dx z\dx w
\end{equation}
where $C_z$ and $C_w$ are two simple closed counterclockwise oriented contours such that $|z|>1$, $z\notin \mathcal{Z}(\chi)$, $|w|>1$, $w\notin \mathcal{Z}(\chi)$ for all $z\in C_z$ and $w\in C_w$.
\end{rem}

Before we proceed with the asymptotics, we set up some notation. 

Define $C^{+}(R,\theta)$ ($C^-(R,\theta)$) to be the counterclockwise (clockwise) oriented arc on $|z|=R$ from $Re^{-i\theta}$ to $Re^{i\theta}$ for $R>0$ and $0\leq\theta\leq \pi$. The counterclockwise oriented circle $|z|=R$ is then $C^{+}(R,\theta) \cup -C^-(R,\theta)$. 

For a sequence of real numbers $a_i$ we define an (infinite) integer matrix
\begin{equation}
{\Delta}(a)_{ij}=\lceil a_i \rceil-\lceil a_j \rceil.
\end{equation}

Let $
\mathcal{D}=\left\{(\tau,\chi)|\tau \in \R, \chi \in \R_{\geq 0}, -1\leq f(\tau,\chi)\leq1\right\}=\left\{(\tau,\chi)|\chi \in \R_{\geq 0}, -\tau_c(\chi) \leq \tau \leq \tau_c(\chi)\right\}$
where
\begin{equation}\label{taucritical}
f(\tau,\chi)=\frac{(e^{\chi}+1)(e^\tau-1)}{2e^{\chi/2}(e^{\tau}+1)},  \quad
\tau_c(\chi)=2\log\frac{e^{\chi/2}+1}{e^{\chi/2}-1}.
\end{equation}
Note that $\mathcal{D}$ is a domain bounded by three curves: $\chi=0$ and $-1 \pm e^{\chi/2}-e^{\tau/2}\mp e^{\tau/2}e^{\chi/2}=0$ for $\chi>0.$ Amoeba of $P(z,w)=-1+z+w+zw$ is a domain bounded by the following four curves:
\begin{equation}
-1 \pm e^{\omega}-e^{\xi}\mp e^{\xi}e^{\omega}=0, \quad \omega>0, \text{\ and\ } -1 \pm e^{\omega}+e^{\xi}\pm e^{\xi}e^{\omega}=0, \quad \omega<0.
\end{equation}
If we set $(\xi,\omega)=(\tau/2,\chi/2)$ then $\mathcal{D}$ is the half of the amoeba of $-1+z+w+zw$ for $\omega \geq 0$. Let
\begin{equation}
\label{thetacritical}
\theta_c(\tau,\chi)=\begin{cases}
\arccos(f(\tau,\chi)),&-\tau_c(\chi) \leq \tau \leq \tau_c(\chi),\\
0&\tau > \tau_c(\chi),\\
\pi&\tau<-\tau_c(\chi).
\end{cases}
\end{equation}

\begin{thm}
\label{limitofK12}
 Let $rt_i, \, rt_j \to \tau$, $r x_i, \, rx_j \to \chi$ when $r \to 0+$ where  $q=e^{-r}$. Assume $\Delta(t-1/2)_{ij}$, $\Delta(x)_{ij}$, $\Delta(x+{1/2})_{ij}$ do not change with $r$. Then 
\begin{equation}
\lim_{r \to 0+} \tilde{K}_{1,2}(t_i,x_i;t_j,x_j)=\frac{e^{-\frac{\chi}{2} \Delta (t-1/2)_{ij}}}{2\pi \im}  \int_{C^{\pm}(e^{-\chi/2},\theta_c(\tau,\chi))}\frac{1}{w^{\Delta(t-1/2)_{ij}+1 }}\frac{(1-w)^{\Delta(x)_{ij}}}{(1+w)^{\Delta(x+1/2)_{ij}}}\dx w
\end{equation}
where we choose $C^{+}(e^{-\chi/2},\theta_c(\tau,\chi))$ if $x_i\geq x_j$ and $C^{-}(e^{-\chi/2},\theta_c(\tau,\chi))$ otherwise and where $\theta_c(\tau,\chi)$ is as in \eqref{thetacritical}.
\end{thm}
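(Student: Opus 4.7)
The plan is to adapt the saddle-point analysis of Theorem~\ref{thm:k12_pp} to the double contour integral \eqref{K12tilde} for $\tilde K_{1,2}$, whose integrand is controlled by the exponential factor $e^{(G(\overline{t_i},x_i,z)-G(\overline{t_j},x_j,w))/r}$. Using the standard Pochhammer asymptotic $\log(a;q)_\infty = -\dilog(a)/r + O(1)$ as $r = -\log q \to 0+$, the function $G(\overline{t},x,z)$ converges, for $z$ bounded away from $\mathcal Z(\chi)\cup\mathcal P(\chi)$, to
\begin{equation*}
  S(z;\tau,\chi) = -\dilog(-e^{-\chi}z)+\dilog(e^{-\chi}z)-\dilog(1/z)+\dilog(-1/z)-\tau(\log z-\chi/2).
\end{equation*}
A short computation shows that $z\partial_z S(z;\tau,\chi)=0$ reduces to the quadratic $z^2-pe^\chi z+e^\chi=0$ with $p=(1+e^{-\chi})\tanh(\tau/2)$. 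The product of the roots is $e^\chi$, so for $(\tau,\chi)\in\mathcal D$ they are complex conjugate of modulus $e^{\chi/2}$ with arguments $\pm\theta_c(\tau,\chi)$, the identity $\cos\theta_c=\cosh(\chi/2)\tanh(\tau/2)=f(\tau,\chi)$ following by an elementary rearrangement of \eqref{taucritical}. The degenerate regimes $\theta_c=0$ and $\theta_c=\pi$ correspond to coalesced real positive and real negative critical points, respectively.

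Exactly as in the proof of Theorem~\ref{thm:k12_pp}, I would then deform $C_z$ and $C_w$ through the two saddles $z_\pm$ along (respectively) the steepest-descent direction of $\Re S$ and the steepest-ascent direction. For $x_i\geq x_j$ the local configuration forces $C_w$ to cross $C_z$ along the arc on $|z|=e^{\chi/2}$ from $z_-$ to $z_+$ passing through the positive real axis, producing a residue at $z=w$ along this arc; when $x_i<x_j$ one picks up the residue on the opposite arc (through the negative real axis). The remaining double integral decays exponentially thanks to \eqref{basic_asymptotics}, the local $1/(z-w)$ singularity at the saddles being integrable as before. In the degenerate regime $\tau>\tau_c(\chi)$ the contours may be deformed without crossing at all, producing a vanishing limit, consistent with $C^+(R,0)$ being a single point; in the regime $\tau<-\tau_c(\chi)$ the residue is picked up along a complete closed curve, consistent with $C^-(R,\pi)$ being the full circle.

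Evaluating the residue at $z=w$ is straightforward: the rational prefactor $(zw-1)/((z-w)(z+1)(w-1)\sqrt{zw})$ in \eqref{K12tilde} contributes $1/\sqrt{w^2}=1/w$, the monomial $w^{\overline{t_j}}/z^{\overline{t_i}}$ hidden inside $e^{(G-G)/r}$ specializes to $w^{\overline{t_j}-\overline{t_i}}$, and the ratio of $F$'s survives, so that after including the conjugation factor $e^{\chi(\overline{t_i}-\overline{t_j})/2}$ from \eqref{Ktilde} the single-integral residue contribution reads
\begin{equation*}
  \frac{e^{\chi\Delta(t-1/2)_{ij}/2}}{2\pi\im}\int \frac{F(x_i,w)}{F(x_j,w)}\,w^{-\Delta(t-1/2)_{ij}-1}\dx w.
\end{equation*}
The $(\pm q/w;q)_\infty$ factors cancel between numerator and denominator of $F(x_i,w)/F(x_j,w)$, and the surviving infinite products telescope to the finite product $(q^{\lceil x_j\rceil}w;q)_{\Delta(x)_{ij}}/(-q^{\lceil x_j+1/2\rceil}w;q)_{\Delta(x+1/2)_{ij}}$, which by the elementary limit $(q^k w;q)_N\to(1-e^{-\chi}w)^N$ for fixed $N$ as $r\to 0+$, $rk\to\chi$, tends to $(1-e^{-\chi}w)^{\Delta(x)_{ij}}/(1+e^{-\chi}w)^{\Delta(x+1/2)_{ij}}$. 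Performing the change of variable $w\mapsto e^\chi w$ transports the circle $|w|=e^{\chi/2}$ to $|w|=e^{-\chi/2}$ (and the positive-real arc to $C^+(e^{-\chi/2},\theta_c)$), absorbs the factor $e^{-\chi}$ inside the two Pochhammer-like products, and a short bookkeeping of the exponents of $e^\chi$ converts the prefactor $e^{\chi\Delta(t-1/2)_{ij}/2}$ into the stated $e^{-\chi\Delta(t-1/2)_{ij}/2}$.

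The main technical obstacle is the global construction of the required steepest-descent and steepest-ascent contours: one needs closed curves through the saddles on which $\Re(S(z)-S(z_+))$ has the correct definite sign while respecting the contour restrictions of Remark~\ref{zerosandpoles}. As in Theorem~\ref{thm:k12_pp} and ultimately \cite[Lemma~6.4]{B2007cyl}, this is accomplished by applying the intermediate value theorem to the nodal set $\{\Re(S(z)-S(z_+))=0\}$ on the appropriate rays and intervals separating the accumulated zeros from the accumulated poles of $F$, and then gluing the local steepest directions into an admissible closed contour separately in each of the three regimes $|\tau|<\tau_c(\chi)$, $|\tau|=\tau_c(\chi)$, $|\tau|>\tau_c(\chi)$.
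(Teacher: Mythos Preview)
Your approach is correct and follows essentially the same route as the paper: pass to the asymptotic action $S(z;\tau,\chi)$, locate the complex-conjugate critical points on the circle $|z|=e^{\chi/2}$ with argument $\pm\theta_c(\tau,\chi)$, deform $C_z,C_w$ through them, and pick up the residue at $z=w$ along the appropriate arc. Your residue bookkeeping, the telescoping of $F(x_i,w)/F(x_j,w)$, and the change of variables $w\mapsto e^{\chi}w$ all match what the paper does.

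There is one point where the paper is more direct than you are. You treat the global existence of the descent/ascent contours as the main obstacle and defer it to the abstract intermediate-value argument of \cite[Lemma~6.4]{B2007cyl} (as in the paper's Section~\ref{sec:symm_pp}). For this particular $S$, the paper observes instead that $\Re S(z;\tau,\chi)$ vanishes identically on the circle $|z|=e^{\chi/2}$, so the tangential derivative of $\Re S$ along this circle is zero and the radial derivative $R\frac{\partial}{\partial R}\Re S$ equals the real part of $z\partial_z S$, namely $-\tau+\log\bigl|\frac{z+1}{z-1}\bigr|^2$. This is negative precisely when $\cos\theta<f(\tau,\chi)$, so the required $\gamma_z$ and $\gamma_w$ are obtained simply by pushing the circle $|z|=e^{\chi/2}$ slightly inward on one arc and slightly outward on the complementary arc. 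No intermediate-value argument is needed. This shortcut is specific to the symmetry of the HV-ascending specialization and is worth noting.

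Finally, the paper flags a small subtlety you do not mention: when $\chi=0$ the circle $|z|=e^{\chi/2}=1$ passes through the fixed poles at $z=-1$ and $w=1$ of the rational prefactor in \eqref{K12tilde}. One has to check that the deformed $C_z$ still encircles $-1$ and the deformed $C_w$ still encircles $1$, so that no extra residues are acquired there, and that the contours stay away from $\mathcal P(0)$ and $\mathcal Z(0)$ respectively. This is easy but should be said.
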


\begin{proof}
We start with \eqref{K12tilde}. The asymptotics is determined by the limit of $G$ when $r\to 0+$:
\begin{equation}
S(z;\tau,\chi)= -\dilog(-e^{-\chi}z)-\dilog(\frac{1}{z})+\dilog(e^{-\chi}z)+\dilog(-\frac{1}{z})-\tau(\log z-\chi/2)
\end{equation}
where $\dilog(z)$ is defined in Appendix~\ref{sec:theta}. Ignoring the $\log$ part of $S$, the function is analytic on $\C \backslash{\mathcal{P}(\chi)}$.
We can easily compute
\begin{equation}
\label{SDer}
z\frac{\dx}{\dx z}S(z;\tau,\chi)=-\tau+\log \frac{(1+e^{-\chi}z)(z+1)}{(1-e^{-\chi}z)(z-1)}
\end{equation}
where by the Cauchy--Riemann equations
\begin{equation}
z\frac{\dx}{\dx z}S(z;\tau,\chi)=x\frac{\dx}{\dx x}\Re S +y\frac{\dx}{\dx y}\Re S+\im \left(y\frac{\dx}{\dx x}\Re S -x\frac{\dx}{\dx y}\Re S\right).
\end{equation}

The real part of $S(z;\tau,\chi)$ vanishes on the circle $z=e^{\chi/2}e^{i\theta}$. This implies that on this circle the imaginary part of $z\frac{\dx}{\dx z}S(z;\tau,\chi)$ vanishes too, which is the derivative of $\Re S$ in the direction of the tangent, while the real part is equal to  $R\frac{\dx}{\dx R}\Re S(z;\tau,\chi)$. Then for $z=e^{\chi/2}e^{i\theta}$
\begin{equation}
R\frac{\dx}{\dx R}\Re S(z;\tau,\chi)=-\tau+\log\left| \frac{z+1}{z-1}\right|^2,
\end{equation}
which is negative if and only if $\cos \theta<f(\tau,\chi)$.

If $(\tau,\chi) \in \mathcal{D}$ and $z=e^{\chi/2}e^{i\theta}$ then $R\frac{\dx}{\dx R}\Re S(z;\tau,\chi)$ changes the sign along $|z|=e^{\chi/2}$ at  $\theta=\pm \theta_c(\tau,\chi)$ being positive for $|\theta|<|\theta_c(\tau,\chi)|$.  We then deform contours so that the real parts of $S(z,\tau,\chi)$ and $S(w,\tau,\chi)$ are negative everywhere on the new contours except at the critical points. The new contours look like $\gamma_z$ and $\gamma_w$ in Figure~\ref{newcontours}. The integrals over these new contours vanish as $r\to 0+$, but we pick up the residue at $z=w$ on $C^{+}(e^{\chi/2},\theta_c(\tau,\chi))$ ($C^{-}(e^{\chi/2},\theta_c(\tau,\chi)$) when $x_i\geq x_j$ ($x_i<x_j$).  The residue is equal to
\begin{equation}
\frac{1}{2\pi \im} e^{\frac{\chi}{2} \Delta (t-1/2)_{ij}} \int_{C^{\pm}(e^{\chi/2},\theta_c(\tau,\chi))}\frac{1}{w^{\Delta(t-1/2)_{ij}+1 }}\frac{(1-e^{-\chi}w)^{\Delta(x)_{ij}}}{(1+e^{-\chi}w)^{\Delta (x+1/2)_{ij}}}\dx w.
\end{equation}
The change of variables $w \mapsto e^{-\chi}w$ brings the expression from the statement of the theorem.
\begin{SCfigure}[][ht]%[!ht]
 \begin{tikzpicture}[scale=1.5]
  \draw[smooth] (-3,0)--(3,0);
  \draw[smooth] (0,-1.5)--(0,1.5);

\draw (0,0) circle (1.3);

\draw [thick, dotted, smooth] (1.1,0) .. controls ({1.3*cos(30)},{1.3*sin(30)}) and ({1.3*cos(50)},{1.3*sin(50)}).. ({1.3*cos(45)},{1.3*sin(45)});
\draw[thick, dotted, smooth] ({1.3*cos(45)},{1.3*sin(45)}) .. controls  ({1.9*cos(90)},{1.9*sin(90)}) and ({2*cos(140)},{2*sin(140)}) .. (-1.5,0);
\draw [thick, dotted, smooth] (1.1,0) .. controls ({1.3*cos(-30)},{1.3*sin(-30)}) and ({1.3*cos(-50)},{1.3*sin(-50)}).. ({1.3*cos(-45)},{1.3*sin(-45)});
\draw[thick, dotted, smooth] ({1.3*cos(-45)},{1.3*sin(-45)}) .. controls  ({1.9*cos(-90)},{1.9*sin(-90)}) and ({2*cos(-140)},{2*sin(-140)}) .. (-1.5,0);
  \node[label = above left: $\gamma_{z}$] at (-1,.9) {};

\draw [thick, dashed, smooth] (1.4,0) .. controls ({1.5*cos(20)},{1.5*sin(20)}) and ({1.4*cos(30)},{1.4*sin(30)}).. ({1.3*cos(45)},{1.3*sin(45)});
\draw[thick, dashed, smooth] ({1.3*cos(45)},{1.3*sin(45)}) .. controls  ({1.4*cos(90)},{1.4*sin(90)}) and ({1.6*cos(130)},{1.6*sin(130)}) .. (-1.1,0);
\draw [thick, dashed, smooth] (1.4,0) .. controls ({1.5*cos(-20)},{1.5*sin(-20)}) and ({1.4*cos(-30)},{1.4*sin(-30)}).. ({1.3*cos(-45)},{1.3*sin(-45)});
\draw[thick, dashed, smooth] ({1.3*cos(-45)},{1.3*sin(-45)}) .. controls  ({1.4*cos(-90)},{1.4*sin(-90)}) and ({1.6*cos(-130)},{1.6*sin(-130)}) .. (-1.1,0);
 \node[label = below right: $\gamma_{w}$] at (-.9,.9) {};

\draw[line width=1.25mm, smooth] (1.8,0)--(3,0);
  \draw[line width=1.25mm, smooth] (-1.8,0)--(-3,0);
  \draw[line width=1.25mm, smooth] (0,0)--(0.9,0);
  \draw[line width=1.25mm, smooth] (-0,0)--(-0.9,0);

  \node[label = below: \fs{$-1$}] at (-0.9,0) {};
  \node[label = below: \fs{$1$}] at (0.9,0) {};
  \node[label = below: \fs{$e^{\chi}$}] at (1.8,0) {};
    \node[label = below: \fs{$-e^{\chi}$}] at (-1.8,0) {};
  \node[label = above: \fs{$\mathcal{Z}(\chi)$}] at (-2.5,0) {};
  \node[label = above: \fs{$\mathcal{P}(\chi)$}] at (2.5,0) {};
  \node[label = above: \fs{$\mathcal{Z}(\chi)$}] at (.5,0) {};
  \node[label = above: \fs{$\mathcal{P}(\chi)$}] at (-.5,0) {};

  \draw[smooth] (2,1)--(2.5,1);
 \node[label=right:\fs{$|z|=e^{\chi/2}$}] at (2.5,1){};
 \end{tikzpicture}
 \vspace{-20pt}
 \caption{\fs{Deformed contours in $z$ and $w$.}} 
 \label{newcontours}
\end{SCfigure}
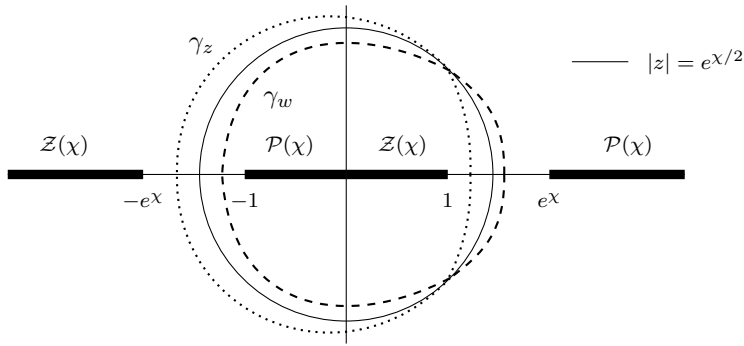

Some special care is needed when $\chi=0$ because we are deforming the contours in a neighborhood of  $|z|=1$ and $|w|=1$. In this case $(\tau,\chi)\in \mathcal{D}$ for all $\tau$. Observe that if we deform the contours as described above, the new contour in $z$, respectively $w$,  will still encompass $z=-1$, respectively $w=1$, and we will not be picking up any residues coming from  $z=-1$ or $w=1$. Also note that while deforming the contours we remain away from $\mathcal{P}(\chi)$ for the $z$-contour and $\mathcal{Z}(\chi)$ for the $w$-contour.

Lastly, the integral $1/(z-w)$ converges at the points of intersection of the contours. We can for simplicity make them cross orthogonally; then the integral of $1/(z-w)$ along these contours is absolutely bounded by the convergent integral $\iint 1/\sqrt{x^2+y^2}dxdy$.

If $(\tau,\chi)$ does not belong to $\mathcal{D}$ then $R\frac{\dx}{\dx R}\Re S(z;\tau,\chi)$ does not change sign. It is strictly positive for $\tau<0$ and strictly negative for $\tau>0$. We just need to push the contour $C_z$ inside (for $\tau<0$) or outside (for $\tau>0$) of $|z|=e^{\chi/2}$ and push $C_w$ outside (for $\tau<0$) or inside (for $\tau>0$) of $|w|=e^{\chi/2}$ to obtain that both  $\Re S(z;\tau,\chi)$ and $\Re S(w;\tau,\chi)$ are negative along the new contours. Then when $\tau>0$ we get that $K_{1,2}$ vanishes in the limit and when $\tau<0$ we pick up the residue at $z=w$ along  the whole circle $|z|=e^{\chi/2}$.

\end{proof}

\begin{rem}
The density function of a particle present at $(\tau,\chi)$ is
\begin{equation}
\rho(\tau,\chi)=\lim_{r \to 0+} K_{1,2}(t,x;t,x), \quad \textrm{for } rt \to \tau \textrm{ and } rx \to \chi.
\end{equation}
From the theorem above
\begin{equation}
\rho(\tau,\chi)=\frac{1}{2\pi \im} \int_{C^+(e^{-\chi/2},\theta_c(\tau,\chi))}\frac{1}{z}\dx z=\frac{\theta_c(\tau,\chi)}{\pi}.
\end{equation}
\end{rem}
A three-dimensional depiction of a plane overpartition $\pi$, obtained by stacking $\pi_{i,j}$ unit cubes above the $(i,j)$-th position, is deciphered from the corresponding point configuration in the following way. Each particle $(t,x)$ in the configuration is mapped to $(X,Y,Z) \in \Z_{\geq 0}^3$ where $X$ is the number of particles to the right of $(t,x)$ counting the particle itself, $Y$ is the number of holes to the left of $(t,x)$, or equivalently $X+\lfloor{t-1/2}\rfloor$, and  $Z$ is equal to $\lceil x+1/2 \rceil$.
The limit shape of strict plane partitions is:
\begin{equation}
X(\tau,\chi)=\int_{\tau}^{\tau_c(\chi)}\rho(t,\chi)\dx t, \quad
Y(\tau,\chi)=\int_{-\tau_c(\chi)}^\tau(1-\rho(t,\chi))\dx t=\int_{\tau}^{\tau_{c}(\chi)} \rho(t,\chi)\dx t+\tau, \quad
Z(\tau,\chi)=\chi
\end{equation}
where $\tau_c(\chi)$ is given by \eqref{taucritical}.
The two formulas for $Y$ are indeed equal from the properties of the model, but this can be confirmed directly since
\begin{equation}
\int_{-\tau_c(\chi)}^0(1-\rho(t,\chi))\dx t=\int_{-\tau_c(\chi)}^0\frac{\pi-\arccos{f(t,\chi)}}{\pi}\dx t=\int_{-\tau_{c}(\chi)}^0\frac{\arccos{f(-t,\chi)}}{\pi}\dx t=\int_{0}^{\tau_{c}(\chi)} \rho(t,\chi)\dx t.
\end{equation}

\begin{figure}[t]
  \begin{center}
  \includegraphics[scale=0.135]{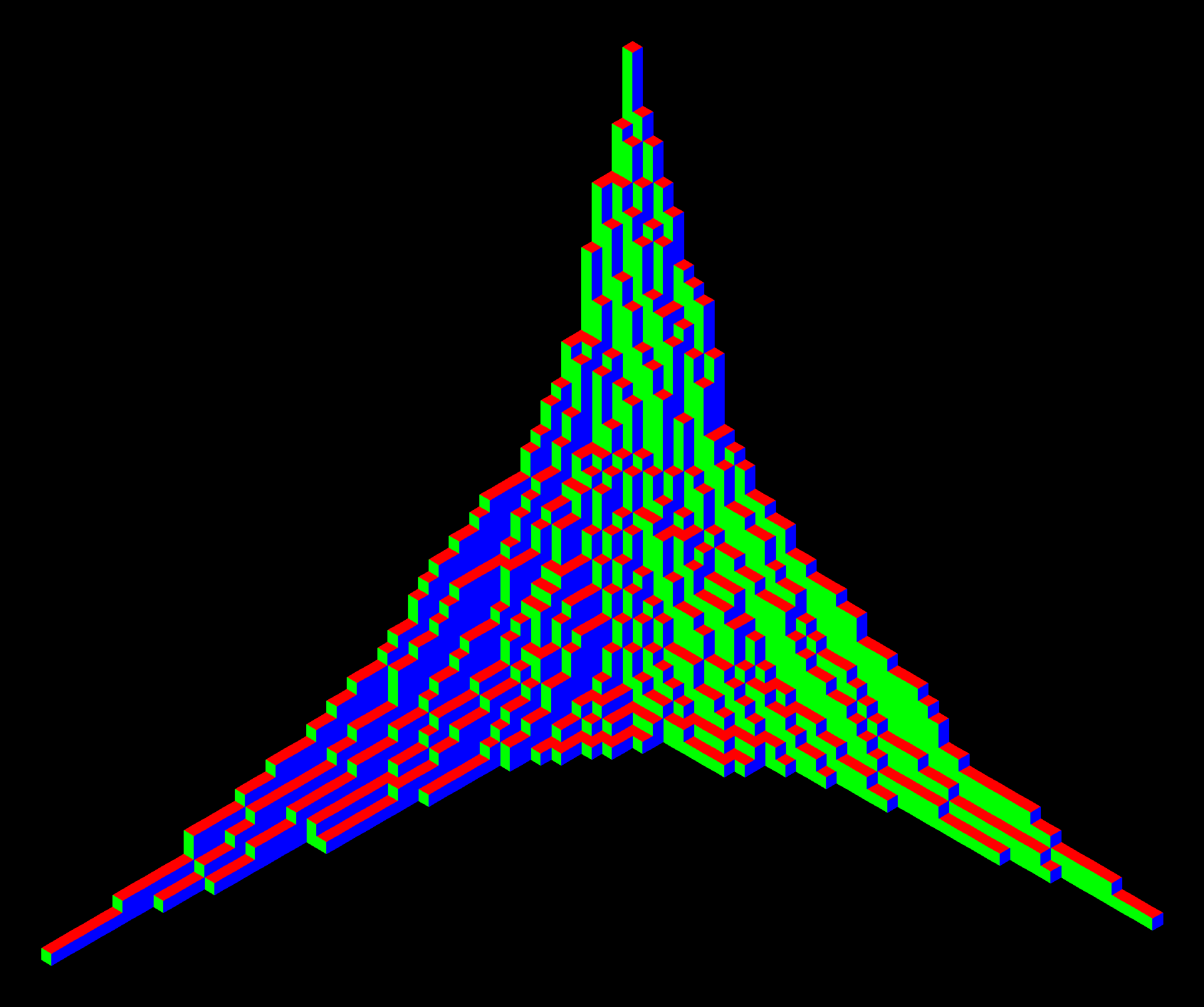}
  \end{center}
  \caption{\fs{A large random plane overpartition shown as a strict plane partition.}}
  \label{fig:large_pop}
\end{figure}

An exactly sampled large plane overpartition for $q=0.9$ is shown in Figure~\ref{fig:large_pop} as a strict plane partition and as a domino tiling in Figure~\ref{fig:po_large}.  The simulations were carried out using the algorithms of~\cite{bbbccv}.

Under the same conditions as in Theorem~\ref{limitofK12}, diagonal elements of $\tilde{K}$ vanish for $\chi>0$.
\begin{thm}
\label{limitofK11andK22}
 Let $rt_i, \, rt_j \to \tau$, $r x_i,\,  rx_j \to \chi$ when $r \to 0+$ where  $q=e^{-r}$. Assume $\Delta(t-1/2)_{ij}$, $\Delta(x)_{ij}$, $\Delta(x+{1/2})_{ij}$ do not change with $r$. Then for $\chi>0$
\begin{equation}
\lim_{r \to 0+} \tilde{K}_{1,1}(t_i,x_i;t_j,x_j)=\lim_{r \to 0+} \tilde{K}_{2,2}(t_i,x_i;t_j,x_j)=0.
\end{equation}
When $\chi=0$ assume in addition that $x_i$ does not change with $r$. Set $e_i=\lceil x_i \rceil -\lceil x_i+1/2 \rceil$. Then
\begin{align}
  \label{residueK11}
  \lim_{r\to0+}\tilde{K}_{1,1}(t_i,x_i;t_j,x_j)=\frac{1}{2\pi \im} \int_{C^+(1,\theta_c(\tau,0))}\frac{(-1)^{\lceil x_i \rceil-1}}{w^{-\Delta(t-1/2)_{ij}+e_i+1}}\frac{(1-w)^{\lceil x_i\rceil +\lceil x_j\rceil-1}}{(1+w)^{\lceil x_i+1/2\rceil +\lceil x_j+1/2\rceil-1}}\dx w, \\
  \label{residueK22}
  \lim_{r\to0+}\tilde{K}_{2,2}(t_i,x_i;t_j,x_j)=\frac{1}{2\pi \im} \int_{C^-(1,\theta_c(\tau,0))}\frac{(-1)^{\lceil x_i\rceil-1}}{w^{\Delta(t-1/2)_{ij}-e_i+1}}\frac{(1-w)^{1-\lceil x_i\rceil -\lceil x_j\rceil}}{(1+w)^{1-\lceil x_i+1/2\rceil -\lceil x_j+1/2\rceil}}\dx w.
  \end{align}
\end{thm}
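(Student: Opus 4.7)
The plan is to adapt the proof of Theorem~\ref{thm:diagonal_pp} for symmetric plane partitions to the present setting. Since the integrands in \eqref{K11tilde} and \eqref{K22tilde} have no pole at $z=w$ (unlike in \eqref{K12tilde}), I can take $C_z$ and $C_w$ to coincide into a single contour $\gamma$. For $\tilde K_{1,1}$, I will choose $\gamma$ so that $\Re S(z;\tau,\chi)\le 0$ along $\gamma$ with equality only at finitely many points; the basic asymptotic estimate \eqref{basic_asymptotics}, applied in each variable, then yields $\tilde K_{1,1}\to 0$. For $\tilde K_{2,2}$ the roles are reversed and I want $\Re S\ge 0$ on $\gamma$. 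The key input is the analysis already carried out in the proof of Theorem~\ref{limitofK12}: on the critical circle $|z|=e^{\chi/2}$ one has $\Re S\equiv 0$, the radial derivative $R\partial_R\Re S$ is positive for $|\theta|<\theta_c(\tau,\chi)$ and negative otherwise, and the critical points sit at $z=e^{\chi/2}e^{\pm i\theta_c}$ when $(\tau,\chi)\in\mathcal{D}$.

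For $\chi>0$ there is room on both radial sides of $|z|=e^{\chi/2}>1$. To build the descent contour for $\tilde K_{1,1}$ I will dip $\gamma$ inward in the arc $|\theta|<\theta_c$ (where inward decreases $\Re S$) and push outward in the arc $|\theta|>\theta_c$, crossing the critical circle orthogonally at $e^{\chi/2}e^{\pm i\theta_c}$; by construction $\gamma$ stays in $|z|>1$ and avoids $\mathcal{P}(\chi)$. The $\tilde K_{2,2}$ case is dual: swap the radial directions, keeping $\gamma$ away from $\mathcal Z(\chi)$. When $(\tau,\chi)\notin\mathcal D$ the construction trivializes, since $\Re S$ has a definite sign on the whole critical circle and $\gamma$ can simply be pushed radially to one side. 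This handles the $\chi>0$ vanishing of both diagonal kernels.

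The $\chi=0$ case is where the genuine complication lies: the critical circle $|z|=1$ coincides with the inner boundary of the allowed region, so the inward dip is no longer available. The plan is to take $C_z=C_w$ at $|z|=1+\epsilon$ initially, and then to deform the $z$-contour in such a way that the residue at the pole $z=1/w$, which lies just inside $|z|=1$ whenever $|w|>1$, is split off. For $\tilde K_{1,1}$, I will push $C_z$ inward past this pole on the arc $|\theta|<\theta_c$ (where the inward dip would otherwise be required); the residue is picked up with counterclockwise orientation, producing the arc $C^+(1,\theta_c(\tau,0))$, and the remainder can be completed into a descent contour that vanishes by \eqref{basic_asymptotics} as in the $\chi>0$ case. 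For $\tilde K_{2,2}$ the analogous pushing happens on the arc $|\theta|>\theta_c$ with the opposite radial sign, yielding $C^-(1,\theta_c(\tau,0))$.

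The last step is to compute the residue at $z=1/w$ of the integrand of \eqref{K11tilde} and take the limit $r\to 0+$. The rational prefactor simplifies to $(1-w)/[w(1+w)]$ (the $\sqrt{zw}$ factor contributes $1$ at $z=1/w$), multiplied by $w^{\overline{t_i}-\overline{t_j}}$; and $F(x_i,1/w)F(x_j,w)$ is a finite product of $q$-Pochhammer symbols with $x_i,x_j$ held fixed. Taking $q=e^{-r}\to 1$ using the estimates of Appendix~\ref{sec:theta}, each Pochhammer $(q^a z;q)_\infty$ collapses to an appropriate finite product of $(1-z)$-factors after cancellation between numerators and denominators. Using the identity $1/w-1=-(1-w)/w$ to reflect the factors $F(x_i,1/w)$, I obtain the powers $(1-w)^{\lceil x_i\rceil+\lceil x_j\rceil-1}$ and $(1+w)^{-(\lceil x_i+1/2\rceil+\lceil x_j+1/2\rceil-1)}$, the overall sign $(-1)^{\lceil x_i\rceil-1}$, and the residual $w$-power $-\Delta(t-1/2)_{ij}+e_i+1$ after combining $\overline{t}=\lceil t-1/2\rceil-1/2$ with the definition $e_i=\lceil x_i\rceil-\lceil x_i+1/2\rceil$. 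This gives \eqref{residueK11}; the argument for $\tilde K_{2,2}$ is identical modulo inverting the $F$-factors, giving \eqref{residueK22}. The principal obstacle is not conceptual but bookkeeping: tracking signs, ceilings, and the correspondence between the direction of the residue-producing deformation and the orientation ($C^+$ versus $C^-$) of the resulting single-contour integral.
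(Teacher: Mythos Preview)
Your proposal is correct and follows essentially the same route as the paper: deform both contours to the single steepest-descent contour $\gamma_z$ (resp.\ its dual for $\tilde K_{2,2}$), observe that for $\chi>0$ this can be done while keeping $|z|,|w|>1$ so that the pole at $zw=1$ is never crossed, and for $\chi=0$ pick up the residue at $z=1/w$ along the arc $C^+(1,\theta_c)$ (resp.\ $C^-$) before letting the remaining double integral vanish. Your residue bookkeeping---the reflection $1/w-1=-(1-w)/w$ producing the sign $(-1)^{\lceil x_i\rceil-1}$ and the power shift by $e_i$---is exactly what the paper summarizes in one line as ``computing the residues.''
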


\begin{proof}
We start with \eqref{K11tilde}. The analysis was done in the proof of Theorem~\ref{limitofK12}. We can conclude that we can deform $C_z$ and $C_w$ into new contours such that  the real parts of $S(z,\tau,\chi)$ and $S(w,\tau,\chi)$ are negative everywhere on the new contours except at the critical points. We can choose to deform them to the same contour $\gamma_z$ in Figure~\ref{newcontours}. When $\chi>0$ there is no residue to pick up since while deforming the contours we keep $|z|>1$, $z\notin \mathcal{P}(\chi)$, $|w|>1$ and $z\notin \mathcal{P}(\chi)$. The proof is analogous for $\tilde{K}_{2,2}$.

In the case where $\chi=0$ we can proceed as above, but now for $\tilde{K}_{1,1}$ we will be picking up the residue at $z=1/w$ along $C^+(1,\theta_c(\tau,0))$ for $\tilde{K}_{1,1}$ and along  $-C^-(1,\theta_c(\tau,0))$ for $\tilde{K}_{2,2}$.  Computing the residues we get  \eqref{residueK11} and \eqref{residueK22}.
\end{proof}

% \begin{definition} 
Let $(\tau,\chi)\in \R \times \R_{\geq 0}, t, x, y \in \Z$. Define
\begin{equation}
  \begin{split}
B_{\tau,\chi}^\pm(t,x,y)=
\displaystyle \frac{1}{2\pi \im}\int_{C^{\pm}(e^{-\chi/2},\theta_c(\tau,\chi))}\frac{1}{z^{t+1}}\frac{(1-z)^{x}}{(1+z)^y}\dx z
  \end{split}
\end{equation}
where $\theta_c(\tau,\chi)$ is as in \eqref{thetacritical}.
% \end{definition}

From Theorems~\ref{limitofK12} and \ref{limitofK11andK22} we obtain that in the limit the bulk has determinantal correlations. Precisely, after factoring out constants that cancel out in the pfaffian as in Remark~\ref{remarkpfaffianmultiplication}, we get the following.

\begin{thm}\label{PODetPP}
For $i=1,\dots,n$ let $rt_i \to \tau$, $r x_i \to \chi>0$ when $r \to 0+$ where  $q=e^{-r}$. Assume $\Delta(t-1/2)$, $\Delta(x)$, $\Delta(x+{1/2})$ do not change with $r$. Then
\begin{equation}
 \lim_{r\to0+}\varrho((t_1,x_1),\dots,(t_n,x_n))=\det \left[B^{\tau,\chi}_\pm\left(\Delta(t-1/2)_{ij},\Delta (x)_{ij},\Delta( x+1/2)_{ij}\right)\right]_{i,j=1,\dots,n}
\end{equation}
where we choose $+$ if $x_i\geq x_j$ and $-$ otherwise.
\end{thm}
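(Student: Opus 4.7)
The proof follows almost immediately from combining the two preceding asymptotic results with a standard pfaffian-to-determinant reduction. My plan is as follows.

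By Theorem~\ref{KHVkernel}, the $n$-point correlation function is the pfaffian $\pf[K(t_i,x_i;t_j,x_j)]_{1\le i,j\le n}$, and by the conjugation invariance recorded in Remark~\ref{remarkpfaffianmultiplication} this equals $\pf[\tilde K(t_i,x_i;t_j,x_j)]_{1\le i,j\le n}$ for any fixed $\chi\in\R$. I choose $\chi$ to be the same value appearing in the hypotheses of the theorem (the common limit of $rx_i$). Then I would pass to the $r\to 0+$ limit entrywise: under the stated assumption $\chi>0$, Theorem~\ref{limitofK11andK22} gives $\tilde K_{1,1}(t_i,x_i;t_j,x_j)\to 0$ and $\tilde K_{2,2}(t_i,x_i;t_j,x_j)\to 0$, while Theorem~\ref{limitofK12} provides an explicit finite limit for $\tilde K_{1,2}$. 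Since the pfaffian of a $2n\times 2n$ matrix is a polynomial in its entries, the Fredholm-free finite setting here allows one to conclude $\pf[\tilde K]\to \pf[\tilde K^\infty]$ where $\tilde K^\infty$ denotes the entrywise limiting kernel.

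The next step is a linear-algebra fact: if $A$ is a $2n\times 2n$ antisymmetric matrix in the natural block form arising from a pfaffian point process, indexed by pairs $(i,\sigma)$ with $\sigma\in\{1,2\}$, and if all diagonal-type entries $A_{(i,1),(j,1)}$ and $A_{(i,2),(j,2)}$ vanish, then
\begin{equation*}
\pf[A] = \det\bigl[A_{(i,1),(j,2)}\bigr]_{1\le i,j\le n}.
\end{equation*}
This is immediate from the expansion of the pfaffian: the only perfect matchings with nonzero contribution pair each ``type-$1$'' index with a ``type-$2$'' index, and reorganising the resulting sum over matchings reproduces the expansion of the $n\times n$ determinant of the off-diagonal block. (A quick direct check in the case $n=2$ confirms the sign.)

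Applied to our situation, this reduces the limit of the pfaffian to $\det\bigl[\tilde K_{1,2}^{\infty}(t_i,x_i;t_j,x_j)\bigr]$. According to Theorem~\ref{limitofK12}, this limiting entry equals $e^{-\chi\Delta(t-1/2)_{ij}/2}\,B^{\pm}_{\tau,\chi}(\Delta(t-1/2)_{ij},\Delta(x)_{ij},\Delta(x+1/2)_{ij})$, with the $\pm$ depending on the ordering $x_i\gtrless x_j$ exactly as required. The exponential prefactor has the form $c_i/c_j$ with $c_i=e^{-\chi\lceil t_i-1/2\rceil/2}$, so in the determinant it contributes a factor $\prod_i c_i/\prod_j c_j=1$ and drops out. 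What remains is precisely $\det[B^{\pm}_{\tau,\chi}(\Delta(t-1/2)_{ij},\Delta(x)_{ij},\Delta(x+1/2)_{ij})]$, which is the claimed expression.

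Since all the work has already been done in Theorems~\ref{limitofK12} and \ref{limitofK11andK22}, I do not anticipate a real obstacle. The only minor points requiring care are the bookkeeping of the sign in the pfaffian-to-determinant identity and the verification that the $\pm$ contour prescription from Theorem~\ref{limitofK12} matches the one in the statement; both are straightforward.
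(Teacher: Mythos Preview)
Your proposal is correct and follows the same route the paper indicates: combine Theorems~\ref{limitofK12} and \ref{limitofK11andK22} via the conjugation of Remark~\ref{remarkpfaffianmultiplication}, use the pfaffian-to-determinant reduction when the diagonal blocks vanish, and then cancel the residual $e^{-\chi\Delta(t-1/2)_{ij}/2}$ factors from the determinant. The paper compresses all of this into a single sentence, so your write-up simply makes explicit what the paper leaves to the reader.
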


At the boundary $\chi=0$ limit correlations remain pfaffian.
\begin{thm} \label{POPfaffPP}
For $i=1,\dots,n$ and $q=e^{-r}$ let $t_i$ and $x_i$ be such that $rt_i \to \tau$ when $r \to 0+$, and $\Delta(t-1/2)$, and $x_i$  do not change with $r$. Then
 \begin{equation}
 \lim_{r\to0+}\varrho((t_1,x_1),\dots,(t_n,x_n))=\pf \left[
 \begin{matrix}B_{1,1}(i,j)&B_{1,2}(i,j)\\
 -B_{1,2}(j,i)&B_{2,2}(i,j)
 \end{matrix}\right]_{1 \leq i,j \leq n}
\end{equation}
 where
 \begin{equation}
  \begin{split}
 B_{1,1}(i,j)&=(-1)^{\lceil x_i \rceil-1}B_{\tau,0}^+\left(-\Delta(t-1/2)_{ij}+e_i,\lceil x_i \rceil+\lceil{x_j}\rceil -1,\lceil x_i+1/2 \rceil+\lceil{x_j+1/2}\rceil -1\right), \\
B_{1,2}(i,j)&=B_{\tau,0}^\pm\left(\Delta(t-1/2)_{ij},\Delta(x)_{ij},\Delta(x+1/2)_{ij}\right), \\
B_{2,2}(i,j)&=(-1)^{\lceil x_i \rceil-1}B_{\tau,0}^-\left(\Delta(t-1/2)_{ij}-e_i,1-\lceil x_i \rceil-\lceil{x_j}\rceil ,1-\lceil x_i+1/2 \rceil-\lceil{x_j+1/2}\rceil \right)
  \end{split}
  \end{equation}
 where $e_i=\lceil x_i \rceil -\lceil x_i+1/2 \rceil$ and we choose $+$ if $x_i\geq x_j$ and $-$ otherwise.
\end{thm}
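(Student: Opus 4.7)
The plan is to follow exactly the same scheme that leads to Theorem~\ref{PODetPP}, but keep the pfaffian structure alive all the way to the limit rather than collapsing it to a determinant. First I would start from Theorem~\ref{KHVkernel}, which expresses the correlation function $\varrho((t_1,x_1),\ldots,(t_n,x_n))$ as the pfaffian of the $2n \times 2n$ antisymmetric matrix built from the $2 \times 2$ blocks $K(t_i,x_i;t_j,x_j)$. Since the pfaffian is invariant under the diagonal conjugation of Remark~\ref{remarkpfaffianmultiplication}, I may replace $K$ by $\tilde K$ (taking $\chi=0$ in the definition \eqref{Ktilde}) without altering the value. Thus
\begin{equation*}
  \varrho((t_1,x_1),\ldots,(t_n,x_n)) =
  \pf \left[
    \begin{matrix}
      \tilde K_{1,1}(t_i,x_i;t_j,x_j) & \tilde K_{1,2}(t_i,x_i;t_j,x_j) \\
      -\tilde K_{1,2}(t_j,x_j;t_i,x_i) & \tilde K_{2,2}(t_i,x_i;t_j,x_j)
    \end{matrix}
  \right]_{1 \le i,j \le n}.
\end{equation*}

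Next I would pass to the limit inside the pfaffian, entry by entry. The $(1,2)$ entries are handled by Theorem~\ref{limitofK12}: with $\chi = 0$, $e^{-\chi/2} = 1$, and the ambient factor $e^{-\chi \Delta(t-1/2)_{ij}/2}$ is trivial, so
\begin{equation*}
  \lim_{r \to 0+} \tilde K_{1,2}(t_i,x_i;t_j,x_j) = B_{\tau,0}^{\pm}\bigl(\Delta(t-1/2)_{ij},\, \Delta(x)_{ij},\, \Delta(x+1/2)_{ij}\bigr) = B_{1,2}(i,j),
\end{equation*}
with the appropriate sign governed by the relative ordering of $x_i$ and $x_j$. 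The diagonal entries are delivered by Theorem~\ref{limitofK11andK22}: since $x_i$ is kept fixed along the scaling, the formulas \eqref{residueK11} and \eqref{residueK22} apply verbatim. A straightforward identification against the definition of $B_{\tau,0}^{\pm}(t,x,y)$ shows that \eqref{residueK11} coincides with $B_{1,1}(i,j)$ and \eqref{residueK22} with $B_{2,2}(i,j)$; this is the only piece of bookkeeping in the argument, and it amounts to reading off the exponents $t+1=-\Delta(t-1/2)_{ij}+e_i+1$, $x=\lceil x_i\rceil+\lceil x_j\rceil-1$, $y=\lceil x_i+1/2\rceil+\lceil x_j+1/2\rceil-1$ (and analogously for $B_{2,2}$).

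Finally, because the pfaffian of a $2n \times 2n$ matrix is a polynomial function of its entries, entrywise convergence of a finite matrix trivially implies convergence of the pfaffian, and the theorem follows. I do not anticipate a genuine obstacle here: the real work has already been done in Theorems~\ref{limitofK12} and \ref{limitofK11andK22}, whose proofs identify the dominant residues obtained when the steepest-descent contours are exchanged at the critical points on $|z|=1$. The only subtle point worth underlining is that at $\chi=0$ one \emph{must} keep $x_i$ fixed (not just their pairwise differences) in order for the local parity factors $(-1)^{\lceil x_i \rceil - 1}$ and the parity shifts $e_i = \lceil x_i \rceil - \lceil x_i+1/2\rceil$ in $B_{1,1}$ and $B_{2,2}$ to be well-defined; this stronger assumption (absent in Theorem~\ref{PODetPP}) is precisely what makes the residues in \eqref{residueK11}--\eqref{residueK22} converge individually rather than only in ratios, and is what prevents the process from being determinantal on the boundary, in full analogy with Remark~\ref{rem:fincrucial} for symmetric plane partitions.
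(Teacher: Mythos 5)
Your proof is correct and follows the same route the paper takes (which is essentially implicit): conjugate to $\tilde K$ via Remark~\ref{remarkpfaffianmultiplication} at $\chi=0$, apply Theorems~\ref{limitofK12} and~\ref{limitofK11andK22} entrywise, match exponents against the definition of $B_{\tau,0}^{\pm}$, and invoke continuity of the pfaffian as a polynomial in its entries. Your closing observation about why $x_i$ must be held fixed at the boundary, in analogy with Remark~\ref{rem:fincrucial}, is also the right reading of the hypothesis.
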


\begin{rem}
If in Theorem~\ref{POPfaffPP} instead of keeping $x_i$ constant we let $x_i \to \infty$, while still keeping $rx_i \to 0$, we get that the correlations in the limit become determinantal with the kernel $B_{1,2}(i,j)$. To show this one can adapt the proof of Theorem \ref{limitofK11andK22} where, as before, we deform the contours and end up with the residue at $z=1/w$ along $C^\pm(1,\theta_c(\tau,0))$. We can then use the fact that  $(z;q)_x=(1-z)^xe^{o(x)}$ when $x \to \infty$ and $rx\to 0$ and that $|(1-z)/(1+z)|$ achieves its maximum (minimum) on $C^+$ ($C^-$) at the endpoints to show that $R(i,z_c)R(j,z_c)\tilde{K}_{1,1}$ and $R^{-1}(i,z_c)R^{-1}(j,z_c)\tilde{K}_{2,2}$ both vanish, where the conjugation factor $R(i,z)$ is $(z;q)_{x_i}/(-z;q)_{x_i}$ and $z_c$ is one of the endpoints $z_c=e^{i\theta_c(\tau,0)}$. See also Remark~\ref{rem:fincrucial}.
\end{rem}

\begin{rem} Theorem~\ref{MainTheoremPO} follows from Theorems~\ref{PODetPP} and~\ref{POPfaffPP} if we only consider the point process on points $(t_1,x_1), \dots, (t_n,x_n) \in  \left(\Z+\frac{3}{4}\right)\times \left(\Z +\frac{1}{4}\right)$, in which case $\lceil x_i \rceil=\lceil x_i+1/2\rceil$. To obtain Theorem~\ref{MainTheoremPO} we need to rescale our microscopic coordinates $(t,x)$ as
\begin{equation}
 t = \left\lfloor \frac{\tau}{r} \right\rfloor + \k, \qquad x = \left\lfloor \frac{\chi}{r}\right\rfloor + \i+\frac{1}{4}
\end{equation}
and note that there $(\x,\y)$ and $\mathcal{L}$ were used instead of $(\tau, \chi)$ and $\mathcal{D}$.
\end{rem}

\section{Conclusion}
\label{sec:conc}

In this paper we have introduced the Schur process with two free boundary partitions; computed its pfaffian correlations upon charge mixing; and provided a uniform asymptotic treatment, upon killing one of the free boundaries, of various combinatorial and statistical mechanical models: symmetric last passage percolation, symmetric plane partitions and plane overpartitions.

We mention a few directions that merit further investigation: asymptotic analysis of the full two free boundary correlations, where we expect new kernels and ``universality'' behavior to appear --- we plan to address this in a future work; possible generalizations to Hall--Littlewood, $q$-Whittaker, or even Macdonald processes --- Fock space does not quite exist here so new ideas in the spirit of~\cite{bbcw17} are needed; analysis of other combinatorial models that \emph{can} be treated using one free boundary --- see~\cite[Section 7]{rai} for a flavor; and possible connections to matrix models, $\tau$-functions, enumerative algebraic geometry and map counting.

\paragraph{Acknowledgments.}

We thank Jinho Baik, Guillaume Barraquand, Philippe Biane, Elia Bisi,
Alexei Borodin, C\'edric Boutillier, Ivan Corwin, Philippe Di Francesco,
Patrik Ferrari, Mikael de la Salle, Ole Warnaar,
Michael Wheeler, and Nikos Zygouras for useful conversations.
We also thank the anonymous referee for suggesting valuable improvements
to the paper.

Most of this work was done while the first three authors were
at the D\'epartement de math\'ematiques et applications, \'Ecole
normale sup\'erieure, Paris. We also acknowledge hospitality and
support from the Galileo Galilei Institute during the 2015 program on
``Statistical Mechanics, Integrability and Combinatorics''. Part of
this work was done while D.B.\ was visiting the ENS de Lyon and
L\'aszl\'o Erd\H{o}s at IST Austria, and remerciements are due to both
institutions and to L\'aszl\'o for their hosting.
J.B.\ recently enjoyed the hospitality from the PCMI 2017 Research
Program and from the ESI programme on ``Algorithmic and Enumerative
Combinatorics''.

We acknowledge financial support from the ``Combinatoire \`a Paris''
project funded by the City of Paris (D.B.\ and J.B.), from the Agence
Nationale de la Recherche via the grants ANR 12-JS02-001-01
``Cartaplus'' and ANR-14-CE25-0014 ``GRAAL'' (J.B.), from Paris
Sciences \& Lettres and from the ERC Advanced Grant No.\ 338804 (P.N.).

\appendix

\section{Fredholm pfaffians and point processes} \label{sec:pfaffian}

For an antisymmetric $2n \times 2n$ matrix $A,$ the \emph{pfaffian} of
$A$ is given by
\begin{equation}\label{defpfaff}
  \pf A =\frac{1}{2^{n}n!}\sum_{\sigma \in S_{2n}}\sgn(\sigma)A_{\sigma(1),\sigma(2)}A_{\sigma(3),\sigma(4)}\cdots A_{\sigma(2n-1),\sigma(2n)}
\end{equation}
and one can show that $(\pf A)^{2}=\det A$.

\paragraph{Matrix kernels and Fredholm pfaffians.}

Here we briefly introduce the notions needed for the text. For more
information, see \cite{rai} or \cite[Appendix B]{OQR16}.
A \emph{matrix kernel} on a space $X$  is a matrix-valued function on $X \times X$. It is said \emph{antisymmetric} if $K(x,y)=-K(y,x)^T$. We use $[K(x_i, x_j)]_{1 \leq i,j \leq n}$ to denote the block matrix whose block $(i,j)$ is $K(x_i,x_j)$. If the kernel $K$ is antisymmetric and of even size, then so is the matrix $[K(x_i, x_j)]_{1 \leq i,j \leq n}$ and we can compute its pfaffian. Note that this pfaffian is invariant under a permutation of $S=\{x_1,x_2,\dots,x_n\}$ and we can use $\pf K(S)$ to denote it.

\begin{prop}\label{ConjPfaff}
Let $K$, $\tilde{K}$ be $2 \times 2$ antisymmetric kernels on $X$ such that there exists a complex-valued function $f$ on $X$ such that
\begin{equation}
\tilde{K}(x,y):=
 \begin{pmatrix*}[l]
e^{f(x)}        &  0   \\
  0     &   e^{-f(x)} 
\end{pmatrix*} K(x,y) \begin{pmatrix*}[l]
e^{f(y)}        &  0   \\
  0      &   e^{-f(y)} 
\end{pmatrix*}.
\end{equation}
Then
\begin{equation}
    \pf [K(x_i, x_j)]_{1 \leq i,j \leq n} = \pf [\tilde{K}(x_i, x_j)]_{1 \leq i,j \leq n}.
\end{equation}
\end{prop}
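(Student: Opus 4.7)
The plan is to recognize the conjugation relation at the level of the full $2n \times 2n$ block matrices and then invoke the standard identity $\pf(B^{T} A B) = \det(B)\,\pf(A)$ for antisymmetric $A$. Specifically, let $M := [K(x_i,x_j)]_{1\leq i,j \leq n}$ and $\tilde M := [\tilde K(x_i,x_j)]_{1\leq i,j\leq n}$, both antisymmetric of size $2n$. Introduce the block-diagonal matrix
\begin{equation}
D := \mathrm{diag}\bigl(D(x_1), D(x_2), \ldots, D(x_n)\bigr), \qquad D(x) := \mathrm{diag}\bigl(e^{f(x)},\, e^{-f(x)}\bigr).
\end{equation}
The hypothesis reads exactly $\tilde K(x,y) = D(x)\, K(x,y)\, D(y)$, which, block by block, is the assertion $\tilde M = D M D$.

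Since $D$ is diagonal, it is symmetric, so $D M D = D^{T} M D$. Applying the classical transformation law for pfaffians under congruence (valid for any square $B$ and antisymmetric $A$ of the same even size) yields
\begin{equation}
\pf \tilde M = \pf\bigl(D^{T} M D\bigr) = \det(D)\,\pf M.
\end{equation}
It remains to observe that
\begin{equation}
\det(D) = \prod_{i=1}^{n} \det D(x_i) = \prod_{i=1}^{n} e^{f(x_i)} \cdot e^{-f(x_i)} = 1,
\end{equation}
so the two pfaffians coincide, as claimed.

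No step here is a real obstacle: the only thing worth flagging is that one must state the pfaffian congruence identity $\pf(B^{T}AB) = \det(B)\,\pf(A)$ (a standard fact, e.g.\ provable from $\pf(A)^{2} = \det(A)$ together with a sign/continuity argument, or directly from the definition~\eqref{defpfaff}) and check it applies here. Since this identity is completely standard and $D$ is manifestly of determinant~$1$, the proof is essentially immediate once the block structure $\tilde M = D M D$ is written down.
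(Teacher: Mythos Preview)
Your proof is correct. The paper itself states this proposition without proof, treating it as a standard fact; your argument via the congruence identity $\pf(B^{T}AB)=\det(B)\,\pf(A)$ together with $\det D=1$ is exactly the standard justification one would expect here.
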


 Let $(X,{m})$ be a measure space and $K$ a scalar kernel. The \emph{Fredholm determinant} is defined as
\begin{equation}\label{FredholmDeterminant}
\det(I+ K)_{X}:=\sum_{n=0}^{\infty}\frac{1}{n!}\int_{X^{n}}\dx m^{n}(x)\det [K(x_i, x_j)]_{1 \leq i,j \leq n}
\end{equation}  
where $I(x,y)=\delta_{x,y}$. For a $2 \times 2$  antisymmetric  matrix kernel $K$ the \emph{Fredholm pfaffian} is defined in analogous way by 
 \begin{equation}\label{FPF}
\pf(J+ K)_X=\sum_{n=0}^{\infty}\frac{1}{n!}\int_{X^{n}}\dx {m}^{n}(x) \pf [K(x_i, x_j)]_{1 \leq i,j \leq n}
\end{equation}    
where $J$ is the antisymmetric matrix kernel $
J(x,y)=\delta_{x,y} \begin{pmatrix*}[c]
0        &  1   \\
 -1      &   0 
\end{pmatrix*}.$

When $X$ is finite and $m$ is a counting measure then $\pf(J+K)_X$ coincides with the ordinary pfaffian of $J+K$. Identities involving ordinary pfaffians and determinants can usually be generalized to the Fredholm setting, e.g. $\pf \left( J+K  \right)_{X}^2=\det(I-JK)_X$ where $I=\delta_{x,y}\begin{pmatrix*}[c]
1        &  0   \\
 0      &   1 
\end{pmatrix*}.$

The Fredholm determinant \eqref{FredholmDeterminant} is absolutely convergent if we assume that  a scalar kernel $K$ satisfies $|K(x,y)|\leq D$ for all $(x,y) \in X \times X$ and $\int_X \dx m(x) =M < \infty$ since by Hadamard's bound   
$\lvert \det [K(x_i,x_j)]_{1 \leq i,j \leq n}\rvert \leq n^{n/2} D^n,
$ which gives us that 
\eqref{FredholmDeterminant} is dominated by the absolutely convergent series
\begin{equation} \label{absconser}
\sum _{n=0}^\infty\frac{n^{n/2}(DM)^n}{n!}.
\end{equation}

A little bit more generally, to check that the Fredholm pfaffian is dominated by the series \eqref{absconser} it is enough to have that the antisymmetric kernel $K$ satisfies  
\begin{equation}\label{AbsConvFP}
\lvert\pf [K(x_i, x_j)]_{1 \leq i,j \leq n}\rvert \leq n^{n/2} {D}^n  \prod_{i=1}^n f(x_i),
\end{equation}
for some measurable function $f$ such that $\int_X  \dx m(x) f(x)=M < \infty$.

\paragraph{Pfaffian point processes.}

We now briefly introduce point processes. For more detail see for example \cite{Joh05} or  \cite{bo}.  Let ${X}$ be a locally compact Polish space. A configuration is any subset of ${X}$ with no accumulation points. Let $
\mathit{Conf}({X})$ be the set of all configurations and let $\mathbb{P}$ be a probability measure on $\mathit{Conf}({X})$. $\mathbb{P}$ induces a Radon measure on $X^n$, assigning to a bounded Borel set $S$ the expected number of $n$-tuples of distinct points that fall in $S$. If this measure is absolutely continuous, then it has the Radon--Nikodym derivative $\rho_n$, known as the $n$-point correlation function, with respect to some reference measure $m$. For a bounded Borel set $B \subseteq X$, the gap probability $p_B$, which is the probability that a random configuration has no intersection with $B$, is given by
\begin{equation}
p_B =\mathbb{E} \left[\prod_i(1-\chi_B(x_i))\right]=\sum_{n=0}^{\infty}\frac{(-1)^{n}}{n!}\int_{B^{n}}\dx m^n(x)\rho_n(x_1,\dots,x_n).
\end{equation} 
 In the case of a discrete set ${X}$ endowed with the counting measure, the correlation function has a simple interpretation. For an $n$-tuple of distinct points $S=\{x_1,\dots,x_n\}$, the $n$-point correlation function $\rho_n(x_1,x_2,\dots,x_n)$
 is equal to $
 \mathbb{P}(C_S)$ where $C_S$  is the set of all configurations that contain $S$. For $B$ finite, the gap probability formula says
 \begin{equation}
 p_B= \sum_{S \subseteq B}(-1)^{\#S} \mathbb{P}(C_S),
 \end{equation}
 which is simply the inclusion--exclusion formula.
 
A point process is called \emph{pfaffian} if its $n$-point correlation function can be written in terms of a $2 \times 2$ antisymmetric matrix kernel $K$, called the correlation kernel, as
\begin{equation}
\rho_n(x_1,x_2,\dots,x_n)=\pf [K(x_i, x_j)]_{1 \leq i,j \leq n}.
\end{equation}
For a pfaffian point process, the gap probability for a bounded Borel set $B \subseteq X$ is given by the Fredholm pfaffian
\begin{equation}\label{gap}
p_B=\sum_{n=0}^{\infty}\frac{(-1)^{n}}{n!}\int_{B^{n}}\dx {m}^{n}(x) \pf [K(x_i, x_j)]_{1 \leq i,j \leq n}=\pf(J-K)_B.
\end{equation}

\begin{rem}
In Section~\ref{sec:slpp} we investigate the gap probabilities of $B=(s_1,\infty) \times \cdots \times (s_k,\infty) \subset (\Z')^k$ where $m$ is the counting measure. $B$ is not bounded in this case, but since it can be written as the union of bounded sets, \eqref{gap} holds by the dominated convergence theorem as long as we can claim the Fredholm pfaffian is absolutely convergent. To show it is absolutely convergent we use the lemma below which gives a sufficient condition for \eqref{AbsConvFP} to be satisfied. In this case we will use $f(x)=e^{-bx}$, $b>0$ for which $\int_B \dx m(x) e^{-bx}$ is indeed finite.  
\end{rem}

\begin{lem}\label{lemexp}
Let $K=\begin{pmatrix*}[l]
K_{1,1}       &  K_{1,2}   \\
 K_{2,1}    &  K_{2,2} 
\end{pmatrix*}$ be a $2\times 2$ antisymmetric kernel and $ c>d\geq 0$. Let $x, y \in \R$ and suppose there is a $C>0$ such that
\begin{equation}
|K_{1,1}(x, y)|\leq C e^{-c x-c y}, \quad |K_{1,2}(x, y)|\leq C e^{-c x+d y}, \quad |K_{2,2}(x,y)|\leq Ce^{dx+dy}.
\end{equation}
Then there exist $b>0$ and $D>0$ such that 
\begin{equation}
\lvert \pf [K(x_i, x_j)]_{1 \leq i,j \leq n}\rvert \leq n^{n/2}D^{n}e^{-b\sum_{i=1}^{n}x_i}.
\end{equation}
\end{lem}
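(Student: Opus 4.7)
The plan is to combine pfaffian invariance under diagonal conjugation (Proposition~\ref{ConjPfaff}) with Hadamard's inequality applied to an appropriately rescaled $2n \times 2n$ matrix, where the conjugation trick is used twice with different purposes.

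First I would symmetrize the three disparate entrywise bounds on $K_{1,1}, K_{1,2}, K_{2,2}$. Set $\alpha := (c+d)/2$ and $b := (c-d)/2 > 0$, and define
\begin{equation}
\tilde K(x,y) := \begin{pmatrix} e^{\alpha x} & 0 \\ 0 & e^{-\alpha x} \end{pmatrix} K(x,y) \begin{pmatrix} e^{\alpha y} & 0 \\ 0 & e^{-\alpha y} \end{pmatrix}.
\end{equation}
A direct calculation on each of the three blocks gives the uniform bound $|\tilde K_{i,j}(x,y)| \leq C e^{-b(x+y)}$ for all $(i,j) \in \{1,2\}^2$. Since the conjugating matrix is diagonal, $\tilde K$ is still antisymmetric, and Proposition~\ref{ConjPfaff} applied with $f(x) = \alpha x$ yields $\pf[\tilde K(x_i, x_j)]_{1 \leq i,j \leq n} = \pf[K(x_i, x_j)]_{1 \leq i,j \leq n}$, reducing the task to bounding $|\pf \tilde M|$ for $\tilde M := [\tilde K(x_i, x_j)]_{1 \leq i,j \leq n}$.

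The second step is to absorb the exponential decay into the determinant. Let $\Lambda$ be the $2n \times 2n$ diagonal matrix whose consecutive diagonal entries are $(e^{b x_1/2}, e^{b x_1/2}, e^{b x_2/2}, e^{b x_2/2}, \ldots, e^{b x_n/2}, e^{b x_n/2})$. Every entry of $\Lambda \tilde M \Lambda$ is then bounded by $C$ in absolute value, so Hadamard's inequality applied to its $2n$ rows---each of Euclidean norm at most $\sqrt{2n}\, C$---gives $|\det(\Lambda \tilde M \Lambda)| \leq (2n)^n C^{2n}$. Since $\det \Lambda = e^{b \sum_{i} x_i}$, multiplicativity of the determinant yields $|\det \tilde M| \leq (2n)^n C^{2n} e^{-2b \sum_i x_i}$, and taking square roots via $|\pf \tilde M|^2 = |\det \tilde M|$ delivers the claim with the explicit constants $b = (c-d)/2$ and $D = \sqrt{2}\, C$, upon using $(2n)^{n/2} = 2^{n/2} n^{n/2}$.

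There is no serious obstacle, the argument being a clean packaging of standard tools. The only conceptual subtlety is to recognize that the conjugation trick serves two distinct roles: first at the pfaffian level (via Proposition~\ref{ConjPfaff}) to unify three heterogeneous entry bounds into a single symmetric one $Ce^{-b(x+y)}$, and then at the determinant level to convert the remaining exponential weight into the prefactor $e^{-b \sum x_i}$ sitting in front of an entrywise bounded matrix to which Hadamard's inequality can be directly applied.
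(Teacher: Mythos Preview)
Your approach is the same as the paper's---diagonal conjugation followed by Hadamard's inequality---though you split into two steps what the paper does in one. There is, however, a slip in your second conjugation: with $\Lambda$ having diagonal entries $e^{b x_i/2}$, the entries of $\Lambda \tilde M \Lambda$ are bounded by $e^{b(x_i+x_j)/2}\cdot C e^{-b(x_i+x_j)} = C e^{-b(x_i+x_j)/2}$, not by $C$; this fails when $x_i + x_j < 0$, which the hypotheses allow. The fix is immediate: take $\Lambda$ with diagonal entries $e^{b x_i}$ instead (so that the entries of $\Lambda \tilde M \Lambda$ are bounded by exactly $C$); then $\det \Lambda = e^{2b\sum x_i}$ and the final decay rate becomes $2b = c-d$, with $D = \sqrt{2}\,C$ as you wrote.

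For comparison, the paper bypasses your first step altogether by conjugating directly with the \emph{asymmetric} diagonal $\mathrm{diag}(e^{cx}, e^{-dx})$, which already makes every block entry bounded by $C$. This uses the general identity $\pf(DMD^{T}) = (\det D)\,\pf M$ for diagonal $D$ rather than the special determinant-one case recorded in Proposition~\ref{ConjPfaff}; the exponential factor $e^{(c-d)\sum x_i}$ then appears as $\det D$ in one stroke. Your two-step route (symmetric conjugation via Proposition~\ref{ConjPfaff}, then a scalar rescaling at the determinant level) is equivalent once the exponent in $\Lambda$ is corrected, just slightly less economical.
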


 \begin{proof}  
Define    
\begin{equation}
\tilde{K}(x,x^{\prime})=
 \begin{pmatrix*}[l]
e^{cx}        &  0   \\
  0     &   e^{-dx} 
\end{pmatrix*} K(x,y) \begin{pmatrix*}[l]
e^{cy}        &  0   \\
  0      &   e^{-dy} 
\end{pmatrix*}.
\end{equation}
Let $S=\{x_1,\dots,x_n\}.$ Then $\lvert \pf K(S)\rvert=e^{(d-c)\sum_{i=1}^n{x_i}} \lvert \pf \tilde{K}(S)\vert \leq e^{(d-c)\sum_{i=1}^n{x_i}} \sqrt{(2n)^nC^{2n}}$ where the inequality comes from Hadamard's bound (since $\tilde{K}(x_i,x_j)<C$). We choose $D=C\sqrt{2}$ and $b=c-d$.
\end{proof}

\section{Pochhammer, theta and some elementary asymptotics} \label{sec:theta}

For $q$ a parameter, the $q$-Pochhammer symbol of length $n \in \N \cup \{ \infty \}$, the multiplicative, and the additive, theta functions are defined by:
\begin{equation}
 (x; q)_n := \prod_{i=0}^{n-1} (1-x q^i), \qquad  \theta_q (x) := (x; q)_{\infty} (q/x; q)_{\infty}, \qquad \theta_3 (z; q) := \sum_{n \in \Z} q^{\frac{n^2}{2}} z^n
\end{equation}
where $|q|<1$. For finite $n$ we have $(x; q)_n = (x; q)_{\infty}/(q^n x ; q)_{\infty}$\footnote{This should be taken as the definition for negative integer values of $n$.}. They are related by the Jacobi triple product identity~\cite{dol}:
\begin{equation} \label{eq:jtp}
 \theta_3(z; q) = (q;q)_{\infty} \prod_{i = \frac{1}{2}, \frac{3}{2}, \frac{5}{2},\dots} \left( 1 + q^i z \right) \left( 1+q^i/z \right) =  (q;q)_{\infty} \theta_q(-z \sqrt{q}).
\end{equation}
Finally note the quasi-periodicity relations:
\begin{equation}
    \theta_q(qx) = -\frac{1}{x} \theta_q(x), \qquad
    \theta_q\left(\frac{1}{x}\right) = -\frac{1}{x} \theta_q(x), \qquad
    \theta_q\left(\frac{x}{q}\right) = -\frac{x}{q} \theta_q(x).
\end{equation}
We use the convention that concatenation means product: $(a_1, a_2, \dots; q)_n := \prod_i (a_i; q)_n$ and similarly $\theta_q (a_1, a_2, \dots) := \prod_i \theta_q(a_i).$

We now turn to certain limits of Pochhammer symbols (and hence of theta functions). First we need the dilogarithm function $\di(z)$. It is defined by the power series representation
\begin{equation}
\di(z) = \sum_{n \geq 1} \frac{z^2}{n^2}, \qquad |z|<1
\end{equation}
with analytic continuation given by
\begin{equation}
\di (z)=-\int_{0}^{z}\frac{\log(1-u)}{u} \dx u,\quad z\in \mathbb{C} \backslash [1,\infty).
\end{equation}
Differentiating the dilogarithm, we obtain the usual logarithm $\left( z \frac{\dx }{\dx z} \right) \di(z) = -\log(1-z).$ If $q = \exp(-r)$ and $r \to 0+$, we have
\begin{equation}
  \label{eq:zqinfas}
  \log (z;q)_{\infty} \sim -\frac{\di(z)}{r}
\end{equation}
while
\begin{equation}
  \label{eq:zqnas}
  \log (z;q)_n \sim
  \begin{cases}
    \frac{1}{r}(\di(e^{-A} z)-\di(z)) & \text{if $n r \to A > 0$,} \\
    n \log (1-z) & \text{if $n r \to 0$.}
  \end{cases}
\end{equation}
Using this, one can prove that if $u(q)$ is a function that tends to $u \in (0,1)$ as $r \to 0+$, then for fixed $a$ and $b$
\begin{equation}
 \lim_{\substack{q = e^{-r},\\ r \to 0+}} \frac{(q^a u(q);q)_{\infty}}{(q^b u(q);q)_{\infty}} = (1-u)^{b-a}.
\end{equation}

\section{Fermionic propagators}
\label{sec:fermionic_expectations}

\begin{prop} \label{prop_inner_prod_right}
Using the notation of \eqref{def:free_boundary} and \eqref{def:variation_free_boundary}, we have:
\begin{align}
      \vcv \psi(z) \psi(w) \ket{\underline{v}} &= \frac{v^2 \sqrt{zw} (z-w)}{(z+v)(w+v)(zw-v^2)},\ \text{for} \  \left| \frac{v}{w} \right| < 1, \left| \frac{v}{z} \right| < 1,\notag \\ 
      \vcv \psi(z) \psi^*(w) \ket{\underline{v}} &= \frac{\sqrt{zw} (zw - v^2)}{(z-w)(w-v)(z+v)},\  \text{for} \  \left| \frac{v}{z} \right| < 1, \left| \frac{v}{w} \right| < 1, \left| \frac{w}{z} \right| < 1,\notag \\
      \vcv \psi^*(z) \psi(w) \ket{\underline{v}} &= \frac{\sqrt{zw} (zw - v^2)}{(z-w)(w+v)(z-v)},\  \text{for} \  \left| \frac{v}{z} \right| < 1, \left| \frac{v}{w} \right| < 1, \left| \frac{w}{z} \right| < 1,\notag \\
      \vcv \psi^*(z) \psi^*(w) \ket{\underline{v}} &= \frac{v^2 \sqrt{zw} (z-w)}{(z-v)(w-v)(zw-v^2)},\ \text{for} \  \left| \frac{v}{z} \right| < 1, \left| \frac{v}{w} \right| < 1,\notag \\
      \vcv \psi(z) \psi(w) \ket{\underline{v^{er}}} &= \frac{v^2 (z-w)}{\sqrt{zw} (zw-v^2)},\ \text{for} \  \left| \frac{v^2}{zw} \right|<1, \notag \\
      \vcv \psi(z) \psi^*(w) \ket{\underline{v^{er}}} &= \frac{w \sqrt{w} (zw-v^2)}{\sqrt{z} (z-w) (w^2-v^2)},\ \text{for} \  \left| \frac{w}{z} \right| < 1, \left| \frac{v}{w} \right| < 1,\notag  \\
      \vcv \psi^*(z) \psi(w) \ket{\underline{v^{er}}} &= \frac{z \sqrt{z} (zw-v^2)}{\sqrt{w} (z-w) (z^2-v^2)},\ \text{for} \  \left| \frac{w}{z} \right| < 1, \left| \frac{v}{z} \right| < 1,\notag  \\
      \vcv \psi^*(z) \psi^*(w) \ket{\underline{v^{er}}} &= \frac{v^2 zw \sqrt{zw} (z-w)}{(z^2-v^2) (w^2-v^2) (zw-v^2)},\ \text{for} \  \left| \frac{v}{z} \right| < 1, \left| \frac{v}{w} \right| < 1,\notag  \\
      \vcv \psi(z) \psi(w) \ket{\underline{v^{ec}}} &= \frac{v^2 zw \sqrt{zw} (z-w)}{(z^2-v^2) (w^2-v^2) (zw-v^2)},\ \text{for} \  \left| \frac{v}{z} \right| < 1, \left| \frac{v}{w} \right| < 1,\notag \\
      \vcv \psi(z) \psi^*(w) \ket{\underline{v^{ec}}} &= \frac{z \sqrt{z} (zw-v^2)}{\sqrt{w} (z-w) (z^2-v^2)},\ \text{for} \  \left| \frac{w}{z} \right| < 1, \left| \frac{v}{z} \right| < 1, \\
      \vcv \psi^*(z) \psi(w) \ket{\underline{v^{ec}}} &= \frac{w \sqrt{w} (zw-v^2)}{\sqrt{z} (z-w) (w^2-v^2)},\ \text{for} \  \left| \frac{w}{z} \right| < 1, \left| \frac{v}{w} \right| < 1,\notag  \\
      \vcv \psi^*(z) \psi^*(w) \ket{\underline{v^{ec}}} &= \frac{v^2 (z-w)}{\sqrt{zw} (zw-v^2)},\ \text{for} \  \left| \frac{v^2}{zw}\right| < 1,\notag  \\
      \vcv \psi(z) \psi(w) \ket{\underline{v^{\beta, \ or}}} &= \frac{v^2 \sqrt{zw} (z-w) }{ (zw-v^2) (z+\beta) (w+\beta)},\ \text{for} \ \left| \frac{v^2}{zw} \right|<1, \left| \frac{\beta}{z} \right|<1, \left| \frac{\beta}{w} \right|<1,\notag \\
      \vcv \psi(z) \psi^*(w) \ket{\underline{v^{\beta, \ or}}} &= \frac{\sqrt{zw} (zw-v^2) (w + \beta)}{(z-w) (w^2-v^2) (z+\beta)},\ \text{for} \  \left| \frac{w}{z} \right| < 1, \left| \frac{v}{w} \right| < 1, \left| \frac{\beta}{z} \right| < 1,\notag  \\
      \vcv \psi^*(z) \psi(w) \ket{\underline{v^{\beta, \ or}}} &= \frac{\sqrt{zw} (zw-v^2) (z+\beta)}{(z-w) (z^2-v^2) (w+\beta)},\ \text{for} \  \left| \frac{w}{z} \right| < 1, \left| \frac{v}{z} \right| < 1, \left| \frac{\beta}{w} \right| < 1,\notag  \\
      \vcv \psi^*(z) \psi^*(w) \ket{\underline{v^{\beta, \ or}}} &= \frac{v^2 \sqrt{zw} (z-w) (z+\beta) (w+\beta)}{(z^2-v^2) (w^2-v^2) (zw-v^2)},\ \text{for} \  \left| \frac{v}{z} \right| < 1, \left| \frac{v}{w} \right| < 1,\notag  \\
      \vcv \psi(z) \psi(w) \ket{\underline{v^{\alpha, \ oc}}} &= \frac{v^2 \sqrt{zw} (z-w) (z-\alpha) (w-\alpha)}{(z^2-v^2) (w^2-v^2) (zw-v^2)},\ \text{for} \  \left| \frac{v}{z} \right| < 1, \left| \frac{v}{w} \right| < 1,\notag \\
      \vcv \psi(z) \psi^*(w) \ket{\underline{v^{\alpha, \ oc}}} &= \frac{\sqrt{zw} (zw-v^2) (z-\alpha)}{(z-w) (z^2-v^2) (w-\alpha)},\ \text{for} \  \left| \frac{w}{z} \right| < 1, \left| \frac{v}{z} \right| < 1, \left| \frac{\alpha}{w} \right| < 1,\notag  \\
      \vcv \psi^*(z) \psi(w) \ket{\underline{v^{\alpha, \ oc}}} &= \frac{\sqrt{zw} (zw-v^2) (w-\alpha)}{(z-w) (w^2-v^2) (z-\alpha)},\ \text{for} \  \left| \frac{w}{z} \right| < 1, \left| \frac{v}{w} \right| < 1, \left| \frac{\alpha}{z} \right| < 1,\notag  \\
      \vcv \psi^*(z) \psi^*(w) \ket{\underline{v^{\alpha, \ oc}}} &= \frac{v^2 \sqrt{zw} (z-w)}{ (zw-v^2) (z-\alpha) (w-\alpha)},\ \text{for} \  \left| \frac{v^2}{zw}\right| < 1, \left| \frac{\alpha}{z} \right| < 1, \left| \frac{\alpha}{w} \right| < 1.\notag 
\end{align}
\end{prop}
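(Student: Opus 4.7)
\medskip

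\noindent\textbf{Proof plan.} The proof consists of a systematic computation of twenty fermionic matrix elements; all of them reduce to the base case $\ket{\underline{v}}$ by pushing vertex operators through the fermions. My plan is to proceed in three stages.

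First, I would reduce the twelve propagators involving the modified boundary states $\ket{\underline{v^{er}}}, \ket{\underline{v^{ec}}}, \ket{\underline{v^{\alpha, oc}}}, \ket{\underline{v^{\beta, or}}}$ to the four base propagators by means of the identities
\begin{equation}
\ket{\underline{v^{\alpha, oc}}} = \Gami(\alpha v) \Gatmi(-v) \vfv, \qquad \ket{\underline{v^{\beta, or}}} = \Gatmi(\beta v) \Gami(-v) \vfv
\end{equation}
from \eqref{eq:variation_normal_free_boundary}, together with the $\alpha=0$ (resp.\ $\beta=0$) specializations giving $\ket{\underline{v^{ec}}} = \Gatmi(-v) \vfv$ and $\ket{\underline{v^{er}}} = \Gami(-v) \vfv$. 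For a generic matrix element $\vcv \phi_1 \phi_2 \Gamma_{-}^{(\prime)}(\rho_1) \Gamma_{-}^{(\prime)}(\rho_2) \vfv$, I would commute the $\Gamma_{-}$ operators past the fermions $\phi_1,\phi_2 \in \{\psi(z), \psi^*(w)\}$ using \eqref{eq:gampsi}, picking up scalar factors $H(\rho;z^{-1})^{\pm 1}$ and $H(\rho;w^{-1})^{\pm 1}$. This effectively replaces the modified boundary state by $\vfv$ with the integrand multiplied by explicit rational prefactors, which match those appearing in the stated formulas.

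Second, I would establish the four base-case formulas for $\vcv \phi_1 \phi_2 \vfv$ by the method already illustrated in \eqref{eq:eval_prop_1fb}. The idea is to use the boson--fermion correspondence (Proposition~\ref{prop:boson_fermion}) to write, e.g., $\psi(z) \psi^*(w) = z^{C-1/2} w^{-C+1/2} \Gami(z) \Gatpl(-z^{-1}) \Gatmi(-w) \Gapl(w^{-1})$, where the $R$ operators cancel and the charge factors contribute $\sqrt{w/z}$ when acting on the charge-zero state. Commuting $\Gatpl(-z^{-1})$ past $\Gatmi(-w)$ via \eqref{eq:gamcomm} produces the factor $1/(1-w/z)$, then applying the reflection relation \eqref{eq:refl} to push $\Gatpl(-z^{-1})\Gapl(w^{-1})$ through $\vfv$ yields the crucial factor $\tilde{H}(\overline{-v/z}\cup v/w) = \frac{zw-v^2}{(z+v)(w-v)}$. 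Assembling these produces $\vcv \psi(z)\psi^*(w)\vfv = \frac{\sqrt{zw}(zw-v^2)}{(z-w)(z+v)(w-v)}$, matching the claim. The other three base propagators ($\psi\psi$, $\psi^*\psi^*$, and the reversed $\psi^*\psi$ case) are handled identically, with different charge prefactors and reflection factors.

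Finally, I would perform the combination of Steps 1 and 2 for each of the sixteen modified cases and verify the regions of convergence for each Laurent expansion, noting that each constraint $|v/z| < 1$, $|v/w| < 1$, $|\alpha/z| < 1$, $|\beta/w|<1$, etc.\ arises from the need to expand $H(\rho; z^{\pm 1})$ or $\tilde{H}$ as a convergent series in the appropriate variable. The main obstacle is purely bookkeeping: tracking all the scalar prefactors, signs, and convergence constraints across twenty formulas; no single computation is difficult, but verifying them uniformly requires care. An alternative check is to specialize the two-free-boundary propagators from \eqref{eq:eval_prop_2fb} and Appendix~\ref{sec:fermionic_expectations} at $u=0$, after appropriate conjugation by the $\Gami$ operators representing the modifications, which provides an independent verification route.
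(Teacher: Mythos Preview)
Your proposal is correct and matches the paper's own (very terse) proof, which simply invokes ``ping-pong (one-sided) and the boson--fermion correspondence'' and notes that the direct summation argument of \eqref{eq:eval_prop_1fb} applies throughout. Your decomposition into base-case propagators via the boson--fermion correspondence plus reduction of the modified states via \eqref{eq:variation_normal_free_boundary} and \eqref{eq:gampsi} is exactly this strategy spelled out in more detail.
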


\begin{proof}
  Ping-pong (one-sided) and the boson--fermion correspondence~\eqref{eq:boson_fermion}. Equivalently, the argument proving~\eqref{eq:eval_prop_1fb} applies throughout.
\end{proof}

We now turn to the (unmodified) two free boundary propagators. Recall definition~\eqref{eq:utfcvdef}. 
\begin{prop} \label{prop_inner_prod_left_right} We have:
\begin{align}
      \utfcv \psi(z) \psi(w) \vtfv &= \frac{((uv)^2; (uv)^2)^2_{\infty} \theta_{(uv)^2}(\frac{w}{z})}  {(- \frac{v}{z}, - \frac{v}{w}, uz, uw; uv)_{\infty} \theta_{(uv)^2} (u^2 zw) } \cdot   \frac{\theta_3 \left( ( \frac{t zw}{v^2})^2 ; (uv)^4 \right)}{(uv; uv)_{\infty}} \cdot \frac{v^2 }{t w \sqrt{zw}}, \notag \\
      & for \left| \frac{v}{z} \right| < 1, \left| \frac{v}{w} \right| < 1, |uz| < 1,|uw| < 1, |uv|<1, \notag \\
      \utfcv \psi(z) \psi^*(w) \vtfv &= \frac{((uv)^2; (uv)^2)^2_{\infty} \theta_{(uv)^2}(u^2 zw)}  {(- \frac{v}{z}, \frac{v}{w}, uz, -uw; uv)_{\infty} \theta_{(uv)^2} (\frac{w}{z}) } \cdot \frac{\theta_3 \left( ( \frac{t z}{w})^2 ; (uv)^4 \right)}{(uv; uv)_{\infty}} \cdot \sqrt{\frac{w}{z}}, \notag \\
      & for \left| \frac{v}{z} \right| < 1, \left| \frac{v}{w} \right| < 1, |uz| < 1,|uw| < 1, \left| \frac{w}{z} \right| < 1, |uv|<1, \\
      \utfcv \psi^*(z) \psi(w) \vtfv &= \frac{((uv)^2; (uv)^2)^2_{\infty} \theta_{(uv)^2}(u^2 zw)}  {( \frac{v}{z}, - \frac{v}{w}, - uz, uw; uv)_{\infty} \theta_{(uv)^2} (\frac{w}{z}) } \cdot \frac{\theta_3 \left( ( \frac{t w}{z})^2 ; (uv)^4 \right)}{(uv; uv)_{\infty}} \cdot \sqrt{\frac{z}{w}}, \notag \\
      & for \left| \frac{v}{z} \right| < 1, \left| \frac{v}{w} \right| < 1, |uz| < 1,|uw| < 1, \left| \frac{w}{z} \right| < 1, |uv|<1, \notag \\
      \utfcv \psi^*(z) \psi^*(w) \vtfv &= \frac{((uv)^2; (uv)^2)^2_{\infty} \theta_{(uv)^2}(\frac{w}{z})}  {(\frac{v}{z}, \frac{v}{w}, -uz, -uw; uv)_{\infty} \theta_{(uv)^2} (u^2 zw) } \cdot \frac{\theta_3 \left( ( \frac{t v^2}{zw})^2 ; (uv)^4 \right)}{(uv; uv)_{\infty}} \cdot \frac{v^2 t}{w \sqrt{zw}}, \notag \\
      & for \left| \frac{v}{z} \right| < 1, \left| \frac{v}{w} \right| < 1, |uz| < 1,|uw| < 1, |uv|<1.\notag 
\end{align}
\end{prop}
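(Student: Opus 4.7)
The plan is to extend the strategy illustrated in Section~\ref{sec:corrker} for the evaluation of $\utfcv\psi(z)\psi^*(w)\vtfv$ to all four propagators. The key idea is to use the boson--fermion correspondence (Proposition~\ref{prop:boson_fermion}) to rewrite each $\psi(z)$ and $\psi^*(w)$ as a product of vertex operators together with powers of the charge operator $C$ and the shift operator $R$. Thus, for instance, one has
\begin{equation}
\psi(z)\psi^*(w) = z^{C-1/2} R\, \Gami(z)\Gatpl(-z^{-1})R^{-1}w^{-C+1/2}\Gatmi(-w)\Gapl(w^{-1}).
\end{equation}
Since $R$ commutes with vertex operators (shifting $C$ by one), and since $\vtfv$, $\utfcv$ decompose as $R$-shifted sums over $\vfv$, $\ufcv$ via \eqref{eq:vtRv} and \eqref{eq:utfcvdef}, inserting the above between $\utfcv$ and $\vtfv$ turns the scalar $z^C w^{-C}$ into $(z/w)^c$ on the charge-$c$ sector. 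The sum over $c \in 2\Z$ produces a factor $\theta_3((tz/w)^2;(uv)^4)$, while the remaining matrix element $\ufcv\Gami(z)\Gatpl(-z^{-1})\Gatmi(-w)\Gapl(w^{-1})\vfv$ is recognized as the partition function of a free boundary Schur process of length $2$.

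The next step is to invoke Proposition~\ref{prop:fbz} to evaluate that partition function explicitly, then divide by the normalization $\utfvtf = \theta_3(t^2;(uv)^4)(uv;uv)_\infty^{-1}$ from \eqref{eq:utfvtf}. A bit of rearrangement --- grouping the factors $H(\cdot;\cdot)$ and $\tilde{H}(\cdot)$ appearing in \eqref{eq:fbz} into the Pochhammer symbols $(-v/z,v/w,uz,-uw;uv)_\infty$ and into the theta functions $\theta_{(uv)^2}(u^2 zw)$ and $\theta_{(uv)^2}(w/z)$ --- yields the announced expression for $\utfcv\psi(z)\psi^*(w)\vtfv$. This is precisely the calculation behind \eqref{eq:eval_prop_2fb}. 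The three remaining propagators are obtained by the same procedure: the operators $\psi^*(z)\psi(w)$ produce the symmetric expression with $z\leftrightarrow w$ after sign changes, while $\psi(z)\psi(w)$ and $\psi^*(z)\psi^*(w)$ each contribute a factor of $R^{\pm 1}$ that shifts the charge sum, turning the theta argument from $(tz/w)^2$ into $(tzw/v^2)^2$ and $(tv^2/(zw))^2$ respectively, and producing the prefactors $v^2/(tw\sqrt{zw})$ and $tv^2/(w\sqrt{zw})$.

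Alternatively, and more economically, one may compute $\utfcv\psi(z)\psi^*(w)\vtfv$ by the above method and then obtain the other three propagators directly from the fermionic reflection relations \eqref{eq:fermrefl}, which are exactly the relations \eqref{eq:kapparel} at the level of $\kappa$'s. A short computation using the quasi-periodicity of $\theta_q$ recorded in Appendix~\ref{sec:theta} confirms that the substitutions $w \mapsto v^2/w$ and $z \mapsto v^2/z$ transform the $\kappa_{1,2}$ expression into the stated $\kappa_{1,1}$ and $\kappa_{2,2}$.

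The main technical obstacle is the bookkeeping: checking that the infinite ``ping-pong'' product of $\tilde{H}$ and $H$ factors produced by Proposition~\ref{prop:fbz} reassembles cleanly into the compact Pochhammer symbols and two-variable theta functions stated in the proposition. This reassembly relies on the Jacobi triple product identity \eqref{eq:jtp} and on repeated use of $H(\rho\cup\rho';\rho'')=H(\rho;\rho')H(\rho';\rho'')$. The convergence conditions on $|z|$, $|w|$, $|uz|$, $|uw|$, $|v/z|$, $|v/w|$ and $|uv|$ listed in the statement are precisely those that ensure absolute convergence of every infinite product involved, which validates all the formal manipulations above and in particular the passage from the fermionic sum over charge sectors to the theta series.
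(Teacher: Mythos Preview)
Your proof is correct and follows essentially the same approach as the paper, which summarizes it as ``ping-pong and the boson--fermion correspondence~\eqref{eq:boson_fermion}, all similar to the proof of~\eqref{eq:eval_prop_2fb}.'' You have in fact given a more detailed elaboration of exactly this strategy, including the alternative route via the fermionic reflection relations~\eqref{eq:fermrefl} that the paper also mentions (just before~\eqref{eq:kapparel}) as a way to derive the remaining propagators from the $\psi(z)\psi^*(w)$ case.
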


\begin{proof}
  Ping-pong and the boson--fermion correspondence~\eqref{eq:boson_fermion}. All are similar to the proof of~\eqref{eq:eval_prop_2fb}.
\end{proof}

\bibliographystyle{myhalpha}
\bibliography{freeboundaries}

\end{document}